\newcommand{\D}{\textrm{d}}
\newcommand{\R}{\mathbb{R}}
\newcommand{\A}{\mathbf{A}}
\newcommand{\bG}{\mathbf{G}}
\newcommand{\bbP}{\mathbb{P}}
\newcommand{\bP}{\mathbf{P}}
\newcommand{\cA}{\mathcal{A}}
\newcommand{\cE}{\mathcal{E}}
\newcommand{\cR}{\mathcal{R}}
\newcommand{\cV}{\mathcal{V}}
\newcommand{\EE}{\mathbb{E}}
\newcommand{\var}{\textrm{var}}
\newcommand{\one}{\mathbf{1}}
\newcommand{\eps}{\varepsilon} 
\newcommand{\Ct}{C_{*}}
\newcommand{\lin}{\text{deg}}
\newcommand{\chisqlin}{\text{trunc-deg}}
\newcommand{\chisqmax}{\text{max-trunc-deg}}
\newcommand\numberthis{\addtocounter{equation}{1}\tag{\theequation}}
\newtheorem{assumption}{Assumption}
\newtheorem{theorem}{Theorem}
\newtheorem{lemma}{Lemma}
\newtheorem{corollary}{Corollary}
\newtheorem{proposition}{Proposition}
\title{Optimal community detection in dense bipartite graphs}
\author{%
  Julien Chhor and Parker Knight\footnote{Direct correspondence to \texttt{pknight@g.harvard.edu}. The authors contributed equally to this work.} \\
  Toulouse School of Economics and Harvard University
}
\begin{document}

\maketitle

\begin{abstract}
  We consider the problem of detecting a community of densely connected vertices in a high-dimensional bipartite graph of size $n_1 \times n_2$.
  Under the null hypothesis, the observed graph is drawn from a bipartite Erd\H{o}s-Renyi distribution with connection probability $p_0$. 
  Under the alternative hypothesis, there exists an unknown bipartite subgraph of size $k_1 \times k_2$ in which edges appear with probability $p_1 = p_0 + \delta$ for some $\delta > 0$, while all other edges outside the subgraph appear with probability $p_0$. 
  Specifically, we provide non-asymptotic upper and lower bounds on the smallest signal strength $\delta^*$ that is both necessary and sufficient to ensure the existence of a test with small enough type one and type two errors. 
  We also derive novel minimax-optimal tests achieving these fundamental limits when the underlying graph is sufficiently dense. 
  Our proposed tests involve a combination of hard-thresholded nonlinear statistics of the adjacency matrix, the analysis of which may be of independent interest.
  In contrast with previous work, our non-asymptotic upper and lower bounds match for any configuration of $n_1,n_2, k_1,k_2$. 
  
\end{abstract}

\section{Introduction}\label{sec:intro}

The analysis of community structure is a fundamental task in statistical network science \cite{fortunato202220, dey2022community}. 
In the 2000's, a series of striking works including (but not limited to) \cite{albert2002statistical,barabasi2004network,girvan2002community, palla2005uncovering, rosvall2008maps} observe the ubiquity of community-based organization in real world network data. 
Inspired by these observations, a vast literature has emerged over the past two decades attempting to understand the statistical limits of detecting and recovering planted communities in random graphs \cite{coja2010graph, massoulie2014community, mossel2014belief, verzelen2015community, arias2014community, banks2016information, mossel2018proof},  detecting geometry in graphs defined via latent metric spaces \cite{bubeck2016testing, brennan2020phase, liu2023phase, duchemin2023random}, and establishing fundamental computational limits for the community detection and recovery problems \cite{hajek2015computational, chen2016statistical, wu2018statistical, ma2015computational, brennan2018reducibility}. 
Here, we draw a careful distinction between \textit{community detection} and \textit{community recovery}. 
In the community detection problem, also sometimes referred to as the Planted Dense Subgraph (PDS) problem, the statistician observes a random graph and aims to determine whether the graph was drawn from an Erd\H{o}s-Renyi family, or if it contains a densely connected community of vertices (i.e., a dense subgraph). In contrast, the community recovery problem involves assigning each vertex in the observed graph to a latent community, under the assumption that such community structure exists. We focus on community detection in the present work, and we refer readers to \cite{abbe2018community} for a comprehensive review of the results pertaining to the recovery problem in the stochastic block model.

Existing work studying community detection can be divided into two categories. The first camp, including \cite{arias2014community, verzelen2015community, rotenberg2024planted, yuan2022sharp}, focuses on detecting the presence of a small community of $k$ nodes in a high-dimensional graph of $n$ nodes, typically in an asymptotic setting with $k = o(n)$. In these papers, the authors focus on deriving sufficient and necessary conditions on the difference in connection probabilities between the planted subgraph and the rest of the vertices for which detection of the subgraph is possible. While these works are able to give asymptotically precise characterizations of the detection boundary for their problems of interest, their results are limited in generality, as they constrain themselves to fully symmetric settings (the adjacency matrices of the full graph and the subgraph, with the exception of \cite{rotenberg2024planted}, are square) and make asymptotic assumptions that prohibits the appearance of complex phenomena; see for instance equation (6) in \cite{arias2014community}.

A parallel line of work considers the community detection problem in stochastic block models (SBM's) \cite{janson2010asymptotic, massoulie2014community, mossel2015reconstruction, abbe2015community, banerjee2018contiguity, banerjee2017optimal, banks2016information, gao2017testing}. Here the focus is on the statistician's ability detect the presence of two or more communities in the observed graph, and deriving conditions on the difference in expected degree between the communities under which detection is possible. Early works \cite{abbe2015community, mossel2015reconstruction} give fundamental results related to the famed Kesten-Stigum threshold, although they are confined to the restricted case of two equally sized communities. While recent papers such as \cite{banks2016information, mossel2024weak} allow for a growing number of potentially asymmetric communities, these works still required the average expected degree across communities to be held constant, which prevents their results from applying to graphs with highly imbalanced communities. The restriction is lifted in \cite{gao2018community}, in which the authors study community detection in the degree-corrected block model. However, similar to the works of \cite{arias2014community, verzelen2015community, rotenberg2024planted}, the results of \cite{gao2018community} hold only in a particular asymptotic regime.

Given the limitations of existing work, there remains a great interest in studying community detection in random graph models beyond those covered by the literature. In particular, previous results do not apply to the family of \textit{bipartite graphs}, which are defined as sets of edges between two disjoint sets of vertices. In practice, the two sets of vertices may be of very different size, meaning that the results tailored for square adjacency matrices \cite{arias2014community, verzelen2015community, abbe2015community, mossel2015reconstruction} do not capture the subtleties of community detection in bipartite graphs. Furthermore, any planted subgraph in a bipartite graph is, by definition, itself bipartite. While the recent paper \cite{rotenberg2024planted} considers the detection of bipartite communities, they assume that these communities are planted in a larger non-bipartite graph and hence, similarly to \cite{arias2014community, verzelen2015community, abbe2015community, mossel2015reconstruction}, fail to fully characterize the detection boundary in the bipartite case. We also comment that bipartite graphs can be thought of as a special case of a multi-community SBM; however, as outlined above, the prior work studying this problem introduces its own limitations \cite{banks2016information, mossel2024weak, gao2018community}. As bipartite graphs are increasingly used as modeling tools in the diverse fields of biology and medicine \cite{pavlopoulos2018bipartite}, information science \cite{maier2022bipartite}, and game theory \cite{pena2012bipartite}, this lack of understanding in the bipartite setting constitutes a significant gap in the statistics and machine learning literature.

In this paper, we aim to address this gap. Following \cite{arias2014community, verzelen2015community}, we formalize the problem of detecting a community in a random bipartite graph as a hypothesis testing problem. We observe a random bipartite graph $\bG$ defined on disjoint sets of vertices (nodes) $\cV_1$ and $\cV_2$ of size $n_1$ and $n_2$ respectively. Under the null hypothesis, an edge appears between any two nodes in $\bG$ with equal probability $p_0 > 0$. Under the alternative hypothesis, there exist subsets of nodes $\cV_{K_1} \subset \cV_1$ and $\cV_{K_2} \subset \cV_2$ of size $k_1$ and $k_2$ such that any pair of nodes in $\cV_{K_1} \times \cV_{K_2}$ are connected with an elevated probability no less than $p_0 + \delta$, where $\delta > 0$. We derive the \textit{minimax rate of separation} $\delta^*$ \cite{ingster2012nonparametric}, which is defined as the smallest value of $\delta$ such that consistent community detection is possible (see Equation (\ref{eq_separation}) for a formal definition). Our results are non-asymptotic in nature, and hold for any values of $n_1, n_2, k_1,$ and $k_2$. This degree of precision allows us to describe subtle phase transitions in the separation rate $\delta^*$ which have not been observed in the existing community detection literature.   



The paper is structured as follows. In Sections \ref{sec_litreview}, \ref{sec_contributions}, and \ref{sec_notation} we review relevant literature, outline our contributions, and collect notation used through the paper. In Section \ref{sec:problem} we formally state our problem of interest. Our main results and a discussion thereof are given in Section \ref{sec:results} and Section \ref{sec_discussion}. We conclude with a summary of the limitations of our results and promising directions for future work in Section \ref{sec:conclusion}. All of the proofs are provided in the supplement.

\subsection{Prior work}\label{sec_litreview}

 The framework that we use to study optimal community detection in random graphs was pioneered by Arias-Castro and Verzelen \cite{arias2014community, verzelen2015community}. 
 In \cite{arias2014community}, the authors provide matching asymptotic upper and lower bounds on the minimax risk of detecting a community of vanishing size in a dense graph. 
 They also handle the cases when the size of the community and the baseline connection probability $p_0$ are unknown, and provide polynomial-time tests using a convex relaxation of the max degree test, following \cite{berthet2013optimal}. 
 In \cite{verzelen2015community}, the authors extend their analysis to sparse graphs. 
 While the papers \cite{arias2014community, verzelen2015community} broke significant ground, their results are still limited, as they only consider the restricted fully symmetric setting of $n_1 = n_2$ and $k_1 = k_2$. 
 Subsequent works derive analogous theoretical results for hypergraphs \cite{yuan2022testing, yuan2022sharp}, in which edges can be drawn between more than two vertices. 
 The work \cite{huleihel2024random} studies community detection under information constraints, namely that the statistician can only access small parts of the graph via non-adaptive edge queries. 
 More recently, \cite{rotenberg2024planted} considers the detection of a bipartite (i.e., imbalanced) community in a non-bipartite graph. We provide a detailed comparison of our results to the results of \cite{rotenberg2024planted} in Section \ref{sec:results}. 
 
 In the stochastic block model (SBM) literature, the community detection problem is posed as the task of distinguishing an SBM with a given set of parameters from an Erd\H{o}s-Renyi graph. This problem was first considered by \cite{janson2010asymptotic}, whose results were later refined by \cite{mossel2015reconstruction, massoulie2014community, abbe2015community}, which established that detection is possible if and only if the signal strength is above the Kesten-Stigum threshold, as conjectured by \cite{decelle2011asymptotic}. In \cite{banerjee2018contiguity, banerjee2017optimal}, the authors prove analogous results for asymptotically growing expected degrees and provide optimal hypothesis tests based on signed cycle statistics. The work of \cite{banks2016information} provides an impossibility result for distinguishability for SBM's with a possibly growing number of communities, and \cite{gao2017testing} analyzes a powerful test in this setting constructed using small subgraph statistics. 

 Under the framework of \cite{arias2014community, verzelen2015community}, community detection is equivalent to detecting a submatrix in the observed adjacency matrix. The seminal work of \cite{butucea2013detection} provides matching upper and lower bounds for detecting a sparse submatrix of elevated mean in a matrix of Gaussian random variables under a particular asymptotic regime. Following this, \cite{ma2015computational} initiates a rigorous study of statistical-computational gaps in the submatrix detection problem which was continued by \cite{chen2016statistical, hajek2017information, brennan2018reducibility}. In \cite{luo2022tensor}, the authors study the problem of detecting a planted sparse sub-tensor in a high dimensional Gaussian tensor. Finally, \cite{dadon2024detection} derives upper and lower bounds for detecting the presence of multiple sparse submatrices in Gaussian noise that are tight up to log factors.

 \subsection{Our contributions}\label{sec_contributions}

 In this paper, we make the following contributions.

 \begin{enumerate}
     \item We fully settle the non-asymptotic expression of the minimax rate $\delta^*$ defined in (\ref{eq_separation}), which holds provided the graph is sufficiently dense, as specified in Assumption~\ref{asmp:dense}. 
     In contrast to the previous literature, our bounds always match up to multiplicative constants for any possible values of $n_1,n_2,k_1$ and $k_2$, especially in the under-explored unbalanced regimes where $k_1 \ll k_2$ or $n_1 \ll n_2$. 
     The precision afforded by our results reveals subtle phase transitions that, to the best of our knowledge, had not been documented in previous work. 
     
     \item Our lower bound on $\delta^*$ stated in Theorem \ref{thm_lb} holds without requiring Assumption~\ref{asmp:dense} and uses a very precise application of the second moment method \cite{lecam1973convergence}. 
     It requires several non-trivial lemmas for controlling the moment-generating function of the product of binomial random variables, and substantially departs from the lower bound strategies proposed in~\cite{butucea2013detection} and~\cite{arias2014community}.
     \item Our upper bound stated in Theorem~\ref{thm_ub} is achieved by carefully 
     combining three tests: A standard \textit{total degree} test and two entirely novel tests referred to as the \textit{truncated degree} test and the \textit{max truncated degree} test. 
     The two novel tests are defined using truncated non-linear functions of the adjacency matrix, building on the truncated $\chi^2$ test recently studied in the Gaussian sequence model \cite{collier2017minimax, kotekal2023minimax, liu2021minimax,chhor2024sparse}. 
     However, substantial modifications of the truncated $\chi^2$ test are needed to address two challenges:
     \begin{enumerate}
         \item Moving from the vector case to the matrix case, which requires subtle Bonferroni corrections of the classical truncated $\chi^2$ test.
         \item Handling matrices whose entries are Bernoulli rather than Gaussian random variables, which require significant adjustments of the truncated $\chi^2$ test to carefully manage the complex sub-poissonian tails of the binomial distribution.
     \end{enumerate} 
     Our upper bound holds under Assumption~\ref{asmp:dense}, which we discuss thoroughly in later sections. 
     We also prove a series of lemmas in the supplement that we use to control the Type I and Type II errors of these tests, and we anticipate that these results will be of interest to researchers studying related problems.
 \end{enumerate}
\subsection{Notation}\label{sec_notation}

The following notation will be used throughout the paper. For $p \in \mathbb N$, let $[p] := {1,...,p}$. We use $\mathcal P_k(n)$ to denote the set of subsets of $[n]$ of size $k$. For $a,b \in \R$,
denote $a \lor b:= \max\{a,b\}$ and $a \land b =: \min\{a,b\}$.  We will use $a \lesssim b$ if there exists a constant $C > 0$ depending on $\eta$ such that $a \leq C b$. We say $a \asymp b$ if $a \lesssim b$ and $b \lesssim a$. For two sets $A_1$ and $A_2$, we denote $A_1 \times A_2 = \{(i,j) : i \in A_1, j \in A_2\}$ as the Cartesian product of $A_1$ and $A_2$. For a finite set $A$, we use $|A|$ to denote the cardinality of $A$. We use $\one_{\{\cdot\}}$ as the indicator function, meaning that $\one_{A} = 1$ if the event $A$ occurs and $\one_{A} = 0$ otherwise. For a matrix $\mathbf X$, we use $X_{ij}$ to denote its $(i,j)_{\text{th}}$ entry. Given two probability distributions $\bbP$ and $\mathbb Q$, we use $\text{TV}(\bbP, \mathbb Q) = \sup_A|\bbP(A) - \mathbb Q(A)|$ to denote the total variation distance between $\bbP$ and $\mathbb Q$. We use $c, C, \bar{C}, C', c_1, c_2, c_3$, and $c_4$ to denote constants whose value may change between instances of their usage in the paper. 
We will also use the convention that $[a,b] = \emptyset$ for any two real numbers $a,b$ such that $a>b$. 
For any two integers $n,k \in \mathbb N\cup \{0\}$, we denote by ${n \choose k}$ the binomial coefficient, equal to $\frac{n!}{k! (n-k)!}$.

\section{Problem statement}\label{sec:problem}

Let $n_1, n_2, k_1,k_2 \in \mathbb N$ be integers considered as fixed throughout the paper, and assume $k_1 \leq n_1$ and $k_2 \leq n_2$. 
Let $\bG = (\cE, \cV_1, \cV_2)$ be a random undirected bipartite graph defined on disjoint sets of vertices $\cV_1 = \{ V_{1,1}, \dots, V_{1, n_1}\}$ and $\cV_2 = \{V_{2,1}, \dots, V_{2,n_2}\}$. 
We observe the adjacency matrix $\A \in \R^{n_1 \times n_2}$ with entries $A_{ij} = \one\{\text{$V_{1, i}$ is connected to $V_{2,j}$ by an edge}\}$ and assume that the edges are mutually independent Bernoulli random variables. 
For some parameter $p_0 \in [0,1]$ assumed to be fixed throughout the paper,
our objective is to determine whether each pair of vertices in $\cV_1 \times \cV_2$ is connected with probability $p_0$, or if there exists a  community (i.e., a bipartite subset of vertices) that is connected with greater probability $p_1 \geq p_0 + \delta$ for some $\delta > 0$. 

Formally, we formulate this testing problem 
in terms of the mean structure of the adjacency matrix $\A$. 
Define the matrix $\bP = \EE[\A] \in \R^{n_1 \times n_2}$, which represents the matrix of connection probabilities of the random graph $\bG$.
For any two subsets $K_1 \in \mathcal{P}_{k_1}(n_1)$, $K_2 \in \mathcal{P}_{k_2}(n_2)$ and any $\delta \geq 0$, we define
\begin{align*}
    \Theta(K_1, K_2, \delta) = \left\{\bP \in \R^{n_1\times n_2} \text{ s.t. } \begin{cases}
        \forall (i,j) \in K_1\times K_2: P_{ij} \geq p_0 + \delta\\
        \forall (i,j) \notin K_1\times K_2: P_{ij} = p_0
    \end{cases}
    \right\}.
\end{align*}
A mean matrix $\bP$ belongs to $\Theta(K_1, K_2, \delta)$ if the bipartite community indexed by $K_1 \times K_2$ is more densely connected than the rest of the graph. 
Our focus in this paper is to optimally detect an unknown bipartite community of size $k_1\times k_2$. 
To this end, for any $\delta>0$, we let 
\begin{align*}
    \Theta(k_1,k_2, n_1, n_2, \delta) = \bigcup_{\substack{K_1 \in \mathcal{P}_{k_1}\!(n_1) } }  \bigcup_{K_2 \in \mathcal{P}_{k_2}\!(n_2)}\Theta(K_1, K_2, \delta).
\end{align*}
We consider the following testing problem
\begin{equation}\label{eq:hypothesis-theta}
    \text{H}_0: \forall (i,j) \in [n_1] \times [n_2], P_{ij} = p_0  \quad \text{against} \quad \text{H}_1: \bP \in \Theta(n_1, n_2, k_1, k_2, \delta).
\end{equation}
The hypothesis $\text{H}_0$ is equivalent to observing a bipartite Erd\H{o}s-Renyi random graph with parameter $p_0$. 
The hypothesis $H_1$ is equivalent to the existence of an unknown bipartite community of size $k_1\times k_2$ with connection probabilities at least $p_0 + \delta$.

A \textit{test} is a measurable function of the observed data $\A$ taking its values in $\{0,1\}$. 
We measure the quality of a test $\Delta$ by its \textit{risk}, defined as the sum of its Type I and worst-case Type II errors. 
Specifically, denoting by $\mathbb P_{\bP}$ the probability distribution of $\A$ for $\bP \in \Theta(n_1, n_2, k_1, k_2, \delta)$, and letting $\mathbb P_{0}$ denote the distribution of $\A$ under the null hypothesis, the risk of a test $\Delta$ is defined as
\begin{equation}\label{eq_def_risk}
    \cR(\Delta, \delta) = \mathbb P_{0}(\Delta = 1) + \sup_{\bP \in \Theta(n_1, n_2, k_1, k_2, \delta)} \mathbb P_{\bP}(\Delta = 0).
\end{equation}
The \textit{minimax risk} associated with the problem~\eqref{eq:hypothesis-theta} is defined as
\begin{equation}
    \cR^*(k_1,k_2,n_1,n_2, \delta) = \inf_{\Delta} \cR(\Delta, \delta),
\end{equation}
where the infimum is taken over all tests $\Delta$.
For a desired level of risk $\eta \in (0,1)$, considered as a fixed constant throughout, we are interested in the \textit{minimax separation rate} $\delta^*$ defined as 
\begin{equation}\label{eq_separation}
    \delta^* = \inf \left\{\delta>0: \cR^*(k_1,k_2,n_1,n_2,\delta) \leq \eta\right\}.
\end{equation}
The minimax separation rate $\delta^*$ encodes the difficulty of the testing problem. 
It is the infimal signal strength ensuring the existence of a test with Type I plus Type II errors controlled by the desired level of risk $\eta$. 
The goal of the paper is to derive the value of $\delta^*$ in terms of $n_1, n_2, k_1,k_2$ and $p_0$ up to absolute multiplicative constants, and to construct the minimax-optimal tests achieving a risk at most $\eta$ for the testing problem~\eqref{eq:hypothesis-theta} when the separation satisfies $\delta \geq C \delta^*$ for some constant $C>0$ depending only on $\eta$.

\section{Main results}\label{sec:results}

For any $k_1,k_2, n_1, n_2 \in \mathbb N$ and for a constant $C > 0$ to be chosen later, we define 
\begin{align}
    \psi(k_1,k_2,n_1,n_2) &= \frac{1}{k_1} \log\left(1+ \frac{n_2}{k_2^2} \log\left(e{n_1 \choose k_1}\right)\right) \label{eq_def_psi}\\[5pt]
    \beta(k_1, k_2, n_1, n_2) &= \frac{1}{k_1}\log\left(\frac{n_2}{k_2}\right)\one_{\{\frac{n_1k_2}{n_1^2}\log\left(\frac{n_2}{k_2}\right) > 1\}} \label{eq_def_nu} \\[5pt]
    \phi(k_1,k_2, n_1,n_2) &= \begin{cases}
        \frac{n_1}{k_1^2} \log\left(1+\frac{n_2}{k_2^2}\right)&  \text{ if } \frac{n_1}{k_1^2} \leq C\\
         \infty & \text{ otherwise.}
    \end{cases}\label{eq_def_phi}
\end{align}
To alleviate the notation, we will write
\begin{align*}
    &\phi_{12} = \phi(k_1,k_2, n_1,n_2)
    &\phi_{21} = \phi(k_2,k_1, n_2,n_1)\,\,\\
    &\beta_{12} = \beta(k_1,k_2, n_1,n_2)
    &\beta_{21} = \beta(k_2,k_1, n_2,n_1)\,\, \\
    &\psi_{12} = \psi(k_1,k_2, n_1,n_2)
    &\psi_{21} = \psi(k_2,k_1, n_2,n_1).
\end{align*}
Finally, we define the quantities
\begin{align}
R &:= R(k_1, k_2, n_1, n_2) =\big(\psi_{12} + \psi_{21}\big) \land \phi_{12}  \land \phi_{21}, \label{eq_rate} \\
\tilde{R} &:= \tilde{R}(k_1, k_2, n_1, n_2) = \big(\psi_{12} + \beta_{21}\big) \land \big(\psi_{21} + \beta_{12}\big) \land \phi_{12} \land \phi_{21}.
\end{align}
We will prove that the minimax separation rate $\delta^*$ satisfies 
\begin{align*}
    (\delta^*)^2 \asymp p_0(1-p_0) R,
\end{align*}
for any values of $k_1k_2,n_1$ and $n_2$, under Assumption~\ref{asmp:dense} on $p_0$ given in Section~\ref{sec:ub}.

\subsection{Lower bound}\label{sec:lb}
 
The following theorem gives a lower bound on $\delta^*$ that holds for any values of $n_1, n_2, k_1,k_2$ and $p_0$.
\begin{theorem}\label{thm_lb}
    Let $\eta \in [0,1]$ be given. There exist constants $c_\delta, \bar{C} > 0, $ such that if $\delta^2 \leq c_\delta p_0(1-p_0)R$ with (\ref{eq_def_phi}) defined with $C = \bar{C}$, then it holds
    \[\cR^*(k_1, k_2, n_1,n_2, \delta) > \eta.\]
\end{theorem}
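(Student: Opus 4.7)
The plan is to apply Le Cam's method via the second moment. I choose a prior $\pi$ on the alternative that samples $(K_1, K_2)$ uniformly from $\mathcal{P}_{k_1}(n_1)\times \mathcal{P}_{k_2}(n_2)$ and sets $\bP_{K_1,K_2}$ to equal $p_0+\delta$ on $K_1\times K_2$ and $p_0$ elsewhere; write $\bbP_\pi$ for the induced mixture. Using the bound $\mathsf{TV}(\bbP_0, \bbP_\pi)^2 \leq \chi^2(\bbP_\pi\|\bbP_0)$ together with the standard reduction from minimax testing risk to total variation, it suffices to prove that $\chi^2(\bbP_\pi\|\bbP_0)$ is bounded by a small constant depending only on $\eta$.

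Because all edges are independent Bernoulli under both the null and each alternative, the likelihood ratio factors over edges and the chi-squared takes the explicit form
\[
\chi^2(\bbP_\pi\|\bbP_0)+1 \;=\; \EE\!\left[\left(1+\tfrac{\delta^2}{p_0(1-p_0)}\right)^{U_1 U_2}\right],
\]
where $U_r := |K_r \cap K_r'|$ for $r \in \{1,2\}$ and $(K_r, K_r')$ are two independent uniform draws from $\mathcal{P}_{k_r}(n_r)$, so each $U_r$ is hypergeometric with parameters $(n_r, k_r, k_r)$ and the two are independent. The problem thus reduces to controlling the joint MGF of the \emph{product} $U_1 U_2$ of two independent hypergeometric variables when $\delta^2 \lesssim p_0(1-p_0) R$.

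My strategy is to peel the expectation one coordinate at a time: conditioning on $U_1$, the remaining inner expression is the MGF of $U_2$ evaluated at $t := U_1 \log(1+\delta^2/(p_0(1-p_0)))$. Bounding the hypergeometric MGF by $\EE[e^{tU_2}] \leq \exp(k_2^2(e^t-1)/n_2)$ (via stochastic domination by a binomial) reduces the problem to an MGF estimate for $U_1$ alone. I then split the range of $U_1$ into a ``bulk'' regime $U_1 \lesssim k_1^2/n_1$ and a ``large deviation'' regime $U_1 \gtrsim k_1^2/n_1$. I expect the bulk contribution to deliver the $\phi_{12}\wedge\phi_{21}$ terms of $R$ via a Taylor expansion $\log(1+x)\approx x$, while the tail contribution, balanced against the Chernoff rate of the hypergeometric, delivers the $\psi_{12}+\psi_{21}$ term. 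The $\log\binom{n_1}{k_1}$ factor distinguishing $\psi_{12}$ from a purely local bound should arise precisely by trading the size of the prior support against the tail probability of extreme alignments.

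The main obstacle is twofold. First, obtaining the \emph{additive} structure $\psi_{12}+\psi_{21}$ rather than a minimum requires treating both directions of rare alignment symmetrically, which forces a careful Bonferroni-style partition of the support of $(U_1,U_2)$ and precludes the crude substitution $\log(1+x)\leq x$ inside the exponent. Second, the hypergeometric/binomial MGF lacks the clean Gaussian form used in classical submatrix lower bounds \cite{butucea2013detection,arias2014community}, so honest bounds on $\EE[(1+\lambda)^{U_1 U_2}]$ require nontrivial non-asymptotic MGF lemmas for products of binomial-like variables, carefully tracking the sub-Poissonian tails. Pushing these estimates to match the upper bound at the level of constants, and doing so without imposing Assumption~\ref{asmp:dense}, is the delicate part that we expect will occupy the bulk of the argument.
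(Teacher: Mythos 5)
Your proposal follows essentially the same route as the paper's proof: a uniform prior over planted supports, reduction of the minimax risk to $\chi^2(\bbP_\pi\|\bbP_0)$ via Le~Cam and Cauchy--Schwarz, factorization of the likelihood ratio to obtain $\EE[(1+\delta^2/(p_0(1-p_0)))^{U_1U_2}]$ with $U_r$ hypergeometric, domination by independent binomials, and then peeling one coordinate (conditioning on the alignment $U_1$ and evaluating the binomial MGF of $U_2$) before partitioning the range of $U_1$. The paper likewise splits the range of $X=U_1$ at roughly $k_1^2/n_1$ and handles the head via a Taylor-type bound (giving $\phi_{12}$, Lemma~\ref{lemma:lb-principal-term}) and the tail via a geometric-series estimate on $\Pr(X=k)\cdot\EE[e^{\mu^2 kY}]$ (Lemma~\ref{lemma:geom-series}), with several intermediate cut points in the imbalanced regimes to separately generate the $\phi_{21}$ and $\psi$ contributions. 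Two small calibrations on your sketch: the ``bulk'' regime in the paper gives only $\phi_{12}$, not $\phi_{12}\wedge\phi_{21}$; the $\phi_{21}$ term comes from an intermediate band of the partition (Lemma~\ref{lem_lb_s2d2big_maxlin}), with $\phi_{21}$ also handled directly in the very-dense case. And the paper does in fact use $1+x\leq e^x$ to pass from $(1+\lambda)^{UV}$ to $e^{\lambda UV}$ at the outset; the extra care you anticipate is instead needed when optimizing the trade-off between the tail mass $\Pr(U_1=k)$ and the MGF of $U_2$ over $k$, which is exactly where the $\psi_{12}+\psi_{21}$ structure and the Bonferroni factor $\log\binom{n_1}{k_1}$ emerge.
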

Theorem~\ref{thm_lb} yields the lower bound $(\delta^*)^2 \geq c_\delta p_0(1-p_0) R$ by definition of $\delta^*$. 
The proof of Theorem \ref{thm_lb} uses a careful application of the second moment argument \cite{lecam1973convergence,ingster1982minimax, ingster1987minimax, ingster2010detection, butucea2013detection, arias2014community}. We refer the reader to the supplement for details.
%

\subsection{Upper bound}\label{sec:ub}

Now we present a matching upper bound on the minimax rate of separation $\delta^*$. We do so by carefully combining three testing procedures, which are each constructed from the observed adjacency matrix $\A$. 
Our three hypothesis tests are defined as follows.
\begin{enumerate}
    \item \textit{Total degree test:} Let $\sigma_{\lin} = \sqrt{n_1n_2p_0(1-p_0)}$ and define the test statistic
\begin{equation}\label{eq_def_linear}
        t_{\lin} = \frac{1}{\sigma_{\lin}}\sum_{\substack{i = 1}}^{n_1}\sum_{j = 1}^{n_2}(A_{ij} - p_0)
    \end{equation}
    The total degree test is defined as $\Delta_\lin^h = \one(t_\lin > h)$ for a choice of threshold $h > 0$.
    \item \textit{Truncated degree test:}
    To define this test, we introduce the Bennett function, defined as
\begin{align}
    &h_B(x) = (1+x) \log(1+x) - x,\qquad \forall x>-1.\label{eq_def_h_bennett}\\
    &h_B(-1) = 1.
\end{align}
For any $j \in [n_2]$, and $a \geq 1$, let $\bar \sigma = \sqrt{n_1p_0(1-p_0)}$ and define  
    \begin{align}
        \bar{A}_j &= \frac{1}{\bar \sigma}\sum_{i = 1}^{n_1}(A_{ij}-p_0)\nonumber\\
        W_j &= 2n_1(1-p_0) \,h_B\!\left(-\frac{\bar \sigma \bar A_j}{n_1(1-p_0)}\right) + 2n_1 p_0 \,h_B\!\left(\frac{\bar \sigma \bar A_j}{n_1p_0}\right).\label{eq_def_trunc_deg}
    \end{align}
    Letting $\nu_a^{n_1} =  \mathbb 
    E\left[W_1| \bar A_1 \geq a\right]$ and $\tau > 0$ denote a parameter to be chosen later, define\begin{equation}\label{eq_def_linchisq}
        t_{\chisqlin, 1} = \sum_{j = 1}^{n_2}\big(W_j - \nu_\tau^{n_1}\big)\one(\bar{A}_j \geq \tau).
    \end{equation}
    The truncated degree test is as $\Delta_{\chisqlin, 1}^h = \one(t_{\chisqlin, 1} > h)$ for some threshold $h > 0$. 
    \item \textit{Max truncated degree test:}  
    For any $J_1 \in \mathcal{P}_{k_1}(n_1)$ and any $j \in [n_2]$, we let $\sigma_{J_1,j} = \sqrt{k_1 p_0 (1 - p_0)}$ and define
    \begin{align}
        &t_{J_1, j} = \frac{1}{\sigma_{J_1,j}}\sum_{i \in J_1}\big(A_{ij} - p_0\big)\nonumber\\
        &W_{J_1,j} = 2k_1(1-p_0)h_B\left(-\frac{\sum_{i\in J_1} (A_{ij}-p_0)}{k_1(1-p_0)}\right) + 2k_1 p_0 h_B\left(\frac{\sum_{i\in J_1} (A_{ij}-p_0)}{k_1 p_0}\right)\label{eq_def_max_trunc_deg}
    \end{align}
    Letting $\nu_a^{k_1} =  \mathbb 
    E\left[W_{J_1}| t_{J_1, j} \geq a\right]$ and $\tau > 0$ be a parameter to be chosen later, define
\begin{equation}\label{eq_def_maxchisq}
        t_{\chisqmax, 1} = \max \left\{\sum_{j = 1}^{n_2}\big(W_{J_1,j} - \nu^{k_1}_{\tau}\big)\one(t_{J_1, j} > \tau)\,\Big|\, J_1 \in \mathcal{P}_{k_1}(n_1)\right\}.
    \end{equation}

    Finally, for a choice of threshold $h > 0$, we define the maximum truncated degree test as $\Delta_{\chisqmax, 1}^h = \one(t_{\chisqmax, 1} > h)$. 
\end{enumerate}
Analogously, we define the tests $\Delta_{\chisqmax, 2}^h$ and $\Delta_{\chisqlin, 2}^h$ by swapping the roles of $k_1,k_2$, and $n_1,n_2$ in the definitions of $\Delta_{\chisqmax, 1}^h$ and $\Delta_{\chisqlin, 1}^h$ respectively.

For some constant $c_1>0$ depending only on the desired level of risk $\eta$, we further combine the degree and truncated degree tests as follows. Let 
\begin{align*}
    &\Delta_a^{h_1,h_2} = \begin{cases}
        \Delta^{h_1}_{\chisqlin, 1} & \text{if  $\frac{n_2}{k_2^2} \geq c_1$,} \\ \Delta^{h_2}_{\lin} & \text{otherwise}
    \end{cases} \qquad \text{ and } 
    \qquad \Delta_b^{h_1',h_2'} = \begin{cases}
        \Delta^{h_1'}_{\chisqlin, 2} & \text{if  $\frac{n_1}{k_1^2} \geq c_1$,} \\ \Delta^{h_2'}_{\lin} & \text{otherwise.}
    \end{cases} 
\end{align*}
Using these procedures as our building blocks, we construct our final test $\Delta^*$ by applying the relevant test depending on what part dominates in the expression $\tilde R$. Specifically, our optimal test is defined as
\begin{align*}
    \Delta^* = \begin{cases}
        \Delta^{h_3}_{\chisqmax, 1}& \text{ if } \tilde R = \psi_{12} + \beta_{21}\\
        \Delta^{h_4}_{\chisqmax, 2}& \text{ if } \tilde R = \psi_{21} + \beta_{12} \\
        \Delta_a^{h_1, h_2} & \text{ if } \tilde R = \phi_{12}\\
        \Delta_b^{h'_1, h'_2} & \text{ if } \tilde R = \phi_{21}.
    \end{cases}
\end{align*}

Our upper bound on the risk of $\Delta^*$ holds under the following assumption, which guarantees that the observed graph $\bG$  is sufficiently dense.

\begin{assumption}[Graph density.]\label{asmp:dense}
    There exists a constant $C_\eta > 0$ depending only on $\eta$ such that the baseline connection probability $p_0$ satisfies $p_0 \leq 1/4$ and
    \begin{align*}
        p_0 \geq \begin{cases}
            \frac{C_\eta}{k_1k_2} \log\left(e{n_1 \choose k_1}{n_2 \choose k_2}\right) & \text{ if } \tilde R = \big(\psi_{12} + \beta_{21}\big) \land \big(\psi_{21} + \beta_{12}\big)\\[5pt]
            \frac{C_\eta}{n_1} \log\left(1+ \frac{n_2}{k_2^2}\right) & \text{ if } \tilde R = \phi_{12} \text{ and } n_2>k_2^2\\[5pt]
            \frac{C_\eta}{n_2} \log\left(1+ \frac{n_1}{k_1^2}\right) & \text{ if } \tilde R = \phi_{21} \text{ and } n_1>k_1^2\\[5pt]
            \frac{C_\eta}{n_1n_2} & \text{ otherwise.}
        \end{cases} 
    \end{align*}
\end{assumption}
The upper bound $p_0 \leq \frac14$ in Assumption \ref{asmp:dense} is used to apply Slud's inequality for binomial anti-concentration \cite{slud1977distribution}. We invoke the different cases of Assumption \ref{asmp:dense} depending on the exact construction of $\Delta^*$. 
Assumption \ref{asmp:dense} places us in a quasi-normal moderate deviation regime, which enables us to control the tails of the test statistics (\ref{eq_def_linear}), (\ref{eq_def_linchisq}), and (\ref{eq_def_maxchisq}) with sufficient precision. 
These tests are based on binomial statistics, which exhibit sub-Gaussian concentration properties in the moderate deviation regime and sub-poissonian concentration in the large deviation regimes (see, e.g.,~\cite{arias2015sparse,chhor2021goodness,kotekal2024locally,chhor2022sharp}).
This dichotomy has also been noted in the two companion papers~\cite{arias2014community} and \cite{verzelen2015community}, which split the analysis between the case of  \textit{dense} graphs, where a quasi-normal regime emerges (see the first row of Table 1 in~\cite{arias2014community}), and \textit{sparse} graphs, where fundamentally different behaviors occur due to lower connectivity. 
In this paper, we follow the same approach by restricting to dense graphs as specified in Assumption~\ref{asmp:dense}, and leave the case of sparse graphs for future work. 
 Furthermore, Assumption \ref{asmp:dense} has a natural probabilistic interpretation, as codified in the following propositions.

\begin{proposition}\label{prop_nonempty_subgraph}
    Let $c > 0$. There exists a constant $C > 0$ such that if
    \[\frac{C}{k_1k_2} \log\left(e{n_1 \choose k_1}{n_2 \choose k_2}\right) \leq p_0 \leq \frac{1}{4},\]
    then it holds
    \(\bbP_0\left(\text{$\bG$ has an empty $k_1 \times k_2$ bipartite subgraph}\right) \leq c.\)
    
    Here, we say that a subgraph $\bG_1$ of $\bG$ is empty if all the vertices in $\bG_1$ has degree equal to 0.
\end{proposition}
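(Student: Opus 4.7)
The plan is to establish the claim via a single first-moment union bound. I would start by fixing $K_1 \in \mathcal{P}_{k_1}(n_1)$ and $K_2 \in \mathcal{P}_{k_2}(n_2)$ and observing that, under $\bbP_0$, the bipartite subgraph of $\bG$ induced by $K_1 \cup K_2$ consists of $k_1 k_2$ mutually independent $\text{Bernoulli}(p_0)$ edges. Such a subgraph is empty (i.e., every vertex has zero degree inside it) if and only if none of these $k_1 k_2$ edges is present, an event of probability exactly $(1-p_0)^{k_1 k_2}$.

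Next I would combine a union bound over the $\binom{n_1}{k_1}\binom{n_2}{k_2}$ choices of $(K_1, K_2)$ with the elementary inequality $1 - p_0 \le e^{-p_0}$ to obtain
\[
\bbP_0\bigl(\text{$\bG$ has an empty $k_1 \times k_2$ bipartite subgraph}\bigr) \;\le\; \binom{n_1}{k_1}\binom{n_2}{k_2}\, e^{-p_0 k_1 k_2}.
\]
Inserting the hypothesis $p_0 \ge \tfrac{C}{k_1 k_2}\log\!\bigl(e\binom{n_1}{k_1}\binom{n_2}{k_2}\bigr)$ then yields
\[
\binom{n_1}{k_1}\binom{n_2}{k_2}\, e^{-p_0 k_1 k_2} \;\le\; \binom{n_1}{k_1}\binom{n_2}{k_2}\cdot\bigl(e\binom{n_1}{k_1}\binom{n_2}{k_2}\bigr)^{-C} \;\le\; e^{-C},
\]
which holds for $C \ge 1$ because $\binom{n_1}{k_1}\binom{n_2}{k_2} \ge 1$. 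Choosing $C$ large enough that $e^{-C} \le c$ (for instance $C \ge \log(1/c) \vee 1$) finishes the proof.

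No step poses any real difficulty; this is a clean first-moment calculation and there is no substantive obstacle. The only observation worth highlighting is that the threshold $p_0 \asymp \frac{1}{k_1 k_2}\log\!\bigl(e\binom{n_1}{k_1}\binom{n_2}{k_2}\bigr)$ that appears in Assumption~\ref{asmp:dense} is exactly the scale at which the expected number of empty $k_1 \times k_2$ bipartite subgraphs under $\bbP_0$ drops below one, so this row of the density hypothesis admits a transparent probabilistic reading through the first-moment method. Note that the upper bound $p_0 \le 1/4$ from Assumption~\ref{asmp:dense} is not actually needed for this particular proposition; it is used elsewhere in the paper for binomial anti-concentration via Slud's inequality.
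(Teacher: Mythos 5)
Your argument is correct and matches the paper's proof essentially step for step: the same union bound over $\binom{n_1}{k_1}\binom{n_2}{k_2}$ pairs $(K_1,K_2)$, the same use of $(1-p_0)^{k_1k_2} \le e^{-p_0 k_1 k_2}$, and the same substitution of the lower bound on $p_0$; the only difference is the purely cosmetic way the final numerical bound is simplified (you bound directly by $e^{-C}$, the paper by $\exp(-(C-1))$). Your remarks about the probabilistic interpretation of Assumption~\ref{asmp:dense} and the irrelevance of $p_0 \le 1/4$ here are accurate.
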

\begin{proposition}\label{prop_nonempty_half} 
    Let $c > 0$. There exists a constant $C > 0$ such that if
    \[\frac{C}{n_2k_1} \log\left(e{n_1 \choose k_1}\right) \leq p_0 \leq \frac{1}{4},\]
    then it holds
    \(\bbP_0\Big(\exists I \in \mathcal{P}_{k_1}(n_1): \sum_{i \in I} \sum_{j=1}^{n_2} A_{ij} = 0\Big) \leq c.\)
\end{proposition}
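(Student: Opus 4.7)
The plan is to bound the event in question via a straightforward union bound over all size-$k_1$ subsets $I \subset [n_1]$, combined with the independence of the Bernoulli edges. For a fixed $I \in \mathcal{P}_{k_1}(n_1)$, the sum $\sum_{i \in I}\sum_{j=1}^{n_2} A_{ij}$ involves exactly $k_1 n_2$ mutually independent Bernoulli$(p_0)$ random variables under $\bbP_0$, so the probability that all of them are zero is $(1 - p_0)^{k_1 n_2}$. A union bound over $\mathcal{P}_{k_1}(n_1)$ then yields
\[
\bbP_0\!\Big(\exists I \in \mathcal{P}_{k_1}(n_1): \sum_{i \in I}\sum_{j=1}^{n_2} A_{ij} = 0\Big) \;\leq\; \binom{n_1}{k_1}(1-p_0)^{k_1 n_2} \;\leq\; \binom{n_1}{k_1} e^{-p_0 k_1 n_2},
\]
using $1-p_0 \leq e^{-p_0}$.

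The next step is to choose $C$ large enough that the hypothesis $p_0 \geq \frac{C}{n_2 k_1}\log\!\big(e\binom{n_1}{k_1}\big)$ forces the right-hand side to be at most $c$. Substituting the lower bound on $p_0$ gives
\[
\binom{n_1}{k_1} e^{-p_0 k_1 n_2} \;\leq\; \binom{n_1}{k_1}\Big(e\binom{n_1}{k_1}\Big)^{-C} \;=\; e^{-C}\binom{n_1}{k_1}^{1-C},
\]
which is bounded by $e^{-C}$ as soon as $C \geq 1$. It then suffices to take $C = C(c) := \max\{1, \log(1/c)\}$ to make the probability at most $c$, completing the proof. The upper bound $p_0 \leq 1/4$ plays no role in this argument; it is inherited from the statement for consistency with Assumption~\ref{asmp:dense}.

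There is essentially no obstacle here: the proof is a one-line union bound followed by a crude bound on $(1-p_0)^{k_1 n_2}$ and a choice of constant. The only subtlety is cosmetic, namely ensuring the constant $C$ depends only on $c$ (and not on any of the graph parameters), which is immediate from the computation above.
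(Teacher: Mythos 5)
Your proof is correct and takes essentially the same approach as the paper: a union bound over $\mathcal{P}_{k_1}(n_1)$, the elementary bound $(1-p_0)^{k_1 n_2} \leq e^{-p_0 k_1 n_2}$, and substitution of the assumed lower bound on $p_0$. The only difference is cosmetic bookkeeping in how the final constant $C$ is extracted; the paper writes the bound as $\exp(-(C-1)\log(e\binom{n_1}{k_1})) \leq e^{-(C-1)}$ while you write $e^{-C}\binom{n_1}{k_1}^{1-C} \leq e^{-C}$, both of which are the same observation.
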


Propositions \ref{prop_nonempty_subgraph} and \ref{prop_nonempty_half} show that Assumption \ref{asmp:dense} prevents the occurrence of extreme events under $\bbP_0$ that would otherwise inflate the Type I error of our testing procedures, especially the max truncated degree test in (\ref{eq_def_maxchisq}). We remark that the lower bound on $p_0$ assumed in Proposition \ref{prop_nonempty_half} is stronger than what we need in Assumption \ref{asmp:dense}; we include this result to provide intuition.

The following theorem guarantees that the minimax risk of $\Delta^*$ is small whenever $\delta$ is large enough.
\begin{theorem}\label{thm_ub}
    Let $\eta \in [0,1]$ be given. Suppose that Assumption \ref{asmp:dense} holds. Then there exist constants $C_\delta, \bar{C} > 0$ and thresholds $h_1, h'_1, h_2, h'_2, h_3, h_4 > 0$ such that if $\delta^2 \geq C_\delta p_0(1- p_0)R$ with (\ref{eq_def_phi}) defined with $C = \bar{C}$, then the test $\Delta^*$ satisfies
    \[\cR(\Delta^*, \delta) < \eta.\]
\end{theorem}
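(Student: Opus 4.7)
The plan is to bound the Type I and Type II errors of $\Delta^*$ separately in each of the four cases determined by which of $\psi_{12} + \beta_{21}$, $\psi_{21} + \beta_{12}$, $\phi_{12}$, $\phi_{21}$ attains the minimum in $\tilde R$. In every case, Assumption~\ref{asmp:dense} puts the relevant column/block sums in a quasi-Gaussian moderate deviation regime, so the Bennett transform $h_B$ behaves essentially quadratically on the scale of the chosen thresholds; this will allow us to mimic the analysis of a Gaussian truncated $\chi^2$ test but with careful bookkeeping of the binomial tails. Throughout, I would fix a Type I error budget of $\eta/2$ and a Type II error budget of $\eta/2$, and choose the thresholds to realise them.

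For the cases $\tilde R \in \{\phi_{12}, \phi_{21}\}$ handled by $\Delta_a$ or $\Delta_b$, I split on the value of $n_2/k_2^2$ (resp.\ $n_1/k_1^2$). When this ratio is bounded by $c_1$, the test reduces to the total degree test $\Delta^{h_2}_\lin$; a direct Bennett bound controls $\bbP_0(t_\lin > h_2)$, and under $H_1$ the elevated block of size $k_1 \times k_2$ contributes a mean at least $k_1 k_2 \delta / \sigma_\lin$, which thanks to $k_2 \gtrsim \sqrt{n_2}$ exceeds any constant once $\delta^2 \gtrsim p_0(1-p_0)\phi_{12} \asymp p_0(1-p_0)n_1/k_1^2$. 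When the ratio exceeds $c_1$, I use $\Delta^{h_1}_{\chisqlin, 1}$: under $H_0$ the statistic is a centered sum of $n_2$ independent truncated Bennett transforms of column sums, so a Bernstein-type inequality plus the density hypothesis yields exponential concentration; under $H_1$, the $k_2$ community columns raise $\bar A_j$ on average by $k_1 \delta/\bar\sigma$, and choosing the truncation level $\tau$ proportional to $\sqrt{\log(1 + n_2/k_2^2)}$ selects enough of these columns to produce the rate $\phi_{12}$.

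The main obstacle is the analysis of the max truncated degree test $\Delta^{h_3}_{\chisqmax, 1}$, used when $\tilde R = \psi_{12} + \beta_{21}$. Here the statistic is the maximum of $\binom{n_1}{k_1}$ truncated sums, so the Bonferroni correction forces $h_3$ to scale with $\log\binom{n_1}{k_1}$, and the truncation level must be tuned to exactly absorb this factor while still leaving the community signal visible. For Type I control I would prove a sharp tail bound on a single $J_1$-statistic, essentially the same lemma used for the truncated degree test but with $k_1$ replacing $n_1$, and then take a union bound over $\mathcal{P}_{k_1}(n_1)$; Proposition~\ref{prop_nonempty_subgraph} (and the stronger Proposition~\ref{prop_nonempty_half}) guarantees that Assumption~\ref{asmp:dense} prevents pathological empty-subgraph events from inflating this tail. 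For Type II control I would restrict the maximum to $J_1 = K_1^*$: on the true community, the $k_2$ columns in $K_2^*$ contribute the $\psi_{12}$ part via the same selection argument as above, while the remaining $n_2 - k_2$ columns contribute the residual $\beta_{21}$ piece through a second Bonferroni-type argument on individual column exceedances. The case $\tilde R = \psi_{21} + \beta_{12}$ is symmetric.

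The hardest step is the precise moment and tail analysis of the Bennett-transformed column statistics $W_j$ and $W_{J_1, j}$, and in particular of their truncated and centered versions. Choosing the truncation threshold $\tau$ and computing $\nu^{n_1}_\tau, \nu^{k_1}_\tau$ to high accuracy requires understanding where the binomial tail transitions from sub-Gaussian to sub-Poissonian; the exact form of each case of Assumption~\ref{asmp:dense} is calibrated so that this transition occurs strictly outside the relevant deviation window, allowing one to approximate $h_B(x) \approx x^2/2$ at the scales dictated by $\tau$. Matching the resulting expressions to $\psi_{12}, \psi_{21}, \beta_{12}, \beta_{21}, \phi_{12}, \phi_{21}$ up to the claimed constants $C_\delta, \bar C$ is the delicate part of the argument and will consume most of the technical lemmas announced in Section~\ref{sec_contributions}.
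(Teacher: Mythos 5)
Your decomposition and overall plan match the paper's: case split on which term of $\tilde R$ dominates, apply the corresponding test, and for each test combine a Type~I concentration bound for the truncated Bennett statistic with a mean lower bound and variance upper bound under $H_1$, closing via Chebyshev (these are Lemmas~\ref{leq_truncchisq_concentration}, \ref{lem_truncchisq_mean} and~\ref{lem_variance_trunc_chi2} in the paper).

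However, one piece of your sketch of the max truncated degree test would lead you astray. You attribute the $\beta_{21}$ contribution to the separation requirement to "a second Bonferroni-type argument on individual column exceedances" by the $n_2 - k_2$ off-community columns in the Type~II analysis. That is not where $\beta_{21}$ comes from. The off-community columns contribute only variance to the statistic at the true support $K_1^*$, and the truncation at level $\tau \asymp \sqrt{\log\bigl(1 + \frac{n_2}{k_2^2}\log\binom{n_1}{k_1}\bigr)}$ already controls that variance without any further logarithmic penalty; there is no second union bound over columns. The $\beta_{21} \asymp \frac{1}{k_1 k_2}\log\binom{n_1}{k_1}$ piece enters entirely through the Type~I Bonferroni over $\mathcal{P}_{k_1}(n_1)$: the threshold $h_3$ must scale as $\log\binom{n_1}{k_1}$ to beat this union bound, and Type~II control then requires the mean on the true support, of order $k_2\theta\sigma\log(1+\theta/\sigma)$, to dominate $h_3$, which forces $\delta^2 k_1 k_2 \gtrsim p_0(1-p_0)\log\binom{n_1}{k_1}$. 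Looking for a $\log\binom{n_1}{k_1}$ factor in the column analysis, as your plan proposes, would not produce it. Two smaller slips: the empty-subgraph propositions are heuristic motivation only and play no role in the actual risk bounds (the proof of Theorem~\ref{thm_ub} cites only Lemmas~\ref{lem_linear_test}, \ref{lem_truncated_degree_test}, \ref{lem_max_truncated_degree_test} and~\ref{lem_simplified_rate_for_UB}); and in the regime $n_2 \lesssim k_2^2$ one has $\phi_{12} \asymp \frac{n_1 n_2}{k_1^2 k_2^2}$, the total-degree-test rate, not $\frac{n_1}{k_1^2}$.
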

Theorem \ref{thm_ub} demonstrates that our proposed test $\Delta^*$ is able to match the lower bound on $\delta^*$ given in Theorem \ref{thm_lb} under Assumption \ref{asmp:dense}. Combining Theorems \ref{thm_lb} and \ref{thm_ub}, we have therefore identified the minimax rate of separation $\delta^*$ up to constants. 

\section{Discussion}\label{sec_discussion}

\subsection{Discussion of the proposed tests}
Our test $\Delta^*$ is a careful combination of three testing procedures. 
The total degree test has been commonly applied in previous works~\cite{arias2014community,rotenberg2024planted}, along with the \textit{scan test}, which consists of scanning over all possible subgraphs of size $k_1 \times k_2$ and rejecting if one of them contains an unusually large number of edges. 
Notably, our results do not make use of the scan test, and highlight that it can always be successfully replaced by a max truncated degree test, with lower time complexity\footnote{ Computing $\Delta^{h_3}_{\chisqmax, 1}$ requires $O(k_1n_2{n_1 \choose k_1})$ operations, rather than $O(k_1k_2{n_1 \choose k_1}{n_2 \choose k_2})$ for the scan test. }. 

Our two novel tests $\Delta_{\chisqlin, 1}^h$ and $\Delta^{h}_{\chisqmax, 1}$ build on the truncated $\chi^2$ test recently developed in the Gaussian sequence model. 
Given a vector $X \in \R^d$, the truncated $\chi^2$ test statistic is given by
\begin{align*}
    T = \sum_{j=1}^d (X_j^2 - \nu_a) \one{\big(|X_j| \geq a\big)}
\end{align*}
for some $a>0$ and where $\nu_a = \mathbb E\left(Z^2 \big|\,|Z| \geq a\right)$. 
On top of being fast to compute, this test has been shown to be optimal in the Gaussian sequence model for detecting $s$-sparse alternatives separated in $\ell_2$ norm when $s \leq \sqrt{d}$~\cite{collier2017minimax}, whereas the max test, which rejects for large enough values of $\max \big\{\sum_{j \in J} X_j^2 | J \in \mathcal{P}_{s}(d)\big\}$, is known to be statistically suboptimal~\cite{baraud2002non} while exhibiting exponential time complexity. 

We aim to adapt this test to the matrix case where $X \in \R^{d_1 \times d_2}$. 
For the sake of discussion, we will focus on the truncated degree test~\eqref{eq_def_linchisq}, the ideas underlying the max truncated degree test~\eqref{eq_def_maxchisq} being analogous.
An idea for adapting the truncated $\chi^2$ test would be to use 
\begin{align*}
    \sum_{i=1}^{n_1} \Big(\Big(\sum_{j=1}^{n_2} X_{ij}\Big)^2 - \nu_a\Big)\one{\big(\sum_{j=1}^{n_2} X_{ij} \geq a\big)}
\end{align*}
for a suitable re-centering parameter $\nu_a$.
Whereas squares of Gaussian random variables concentrate sub-exponentially (see Lemma 5 in~\cite{liu2021minimax}), squares of binomial distributions do not exhibit such favorable concentration properties, due to their sub-poissonian tails. 
In the definition of the test statistics~\eqref{eq_def_trunc_deg} and~\eqref{eq_def_max_trunc_deg}, we therefore replace the square function with a carefully chosen function with slower growth. 
Focusing on the truncated degree test for illustration, and recalling~\eqref{eq_def_trunc_deg}, we can check that
\begin{align*}
    W_j &= 2n_1(1-p_0) \,h_B\!\left(-\frac{\bar \sigma \bar A_j}{n_1(1-p_0)}\right) + 2n_1 p_0 \,h_B\!\left(\frac{\bar \sigma \bar A_j}{n_1p_0}\right) \asymp 
        \bar \sigma \bar A_j \log(1+\frac{\bar A_j}{\bar \sigma}).
\end{align*}
When $\bar A_j \leq \bar \sigma$, which exactly corresponds to the moderate deviation regime for binomial distributions, we obtain $W_j \asymp \bar A_j^2$ and our test statistic $t_{\chisqlin, 1} = \sum_{j = 1}^{n_2}\big(W_j - \nu_\tau^{n_1}\big)\one(\bar{A}_j \geq \tau)$ becomes analogous to the truncated $\chi^2$ test described above. 
In the large deviation regime where $\bar A_j \gg \bar \sigma$, we have $W_j \asymp \bar \sigma \bar A_j \log(\frac{\bar A_j}{\bar \sigma})$, which grows less fast than $\bar A_j^2$ and mitigates the extreme values of the binomial distribution. 

\subsection{Comparison with existing results}

Through Theorems \ref{thm_lb} and \ref{thm_ub}, we have shown
\begin{equation}\label{eq_delta*}
    (\delta^*)^2 \asymp p_0 (1- p_0) R,
\end{equation}
with $R$ defined in (\ref{eq_rate}). We can understand the nuances of (\ref{eq_delta*}) by first considering our results when restricted to the fully balanced (effectively non-bipartite) setting. Suppose that $k_1 = k_2 = k$ and $n_1 = n_2 = n$, and that $n \lesssim k^2$. From the definition of $R$, we can rewrite (\ref{eq_delta*}) as
\begin{equation}\label{eq_delta*_square}
\frac{(\delta^*)
^2}{p_0(1-p_0)} \asymp \frac{1}{k}\log\Big(\frac{n}{k}\Big) \land \frac{n}{k^2}\log\Big(1 + \frac{n}{k^2}\Big).
\end{equation}
If $\frac{n}{k^2} \gg 1$ or $\frac{n}{k^2} \ll 1$, the form of $\delta^*$ in (\ref{eq_delta*_square}) matches that given by Theorem 2 of \cite{arias2014community} in the moderate deviation regime imposed by Assumption \ref{asmp:dense}. Our result is, in fact, more refined than that of \cite{arias2014community}, as we are able to precisely capture the phase transition around $\frac{n}{k^2} \asymp 1$ thanks to our truncated degree test.

The detection boundary for imbalanced bipartite graphs is more complex. The following proposition describes a new phase transition that emerges in the imbalanced bipartite setting, but not in the non-bipartite setting.

\begin{proposition}\label{prop_example}
Suppose that $k_1^2 \geq \bar{c}n_1 k_2$ for a constant $\bar{c} > 0$ and $k_j \leq c_j n_j$ for $j \in \{1, 2\}$ where $c_1,c_2 > 0$ are sufficiently small constants. Additionally, suppose that there exists a constant $\alpha > 0$ such that $n_2 \geq k_2^{2 + \alpha}$ and that $\frac{n_1}{k_1} \geq e \log(\frac{n_2}{k_2})$. Then it holds
\begin{equation}\label{eq_pt}
    \frac{(\delta^*)
^2}{p_0(1-p_0)} \asymp \frac{1}{k_2}\log\left(1 + \frac{n_1k_2}{k_1^2}\log(n_2)\right).
\end{equation}
In particular, this reveals a phase transition at $\frac{n_1k_2}{k_1^2}\log(n_2)\asymp 1$.
\end{proposition}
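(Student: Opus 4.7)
The plan is to evaluate $R := (\psi_{12} + \psi_{21}) \wedge \phi_{12} \wedge \phi_{21}$ from~(\ref{eq_rate}) under the proposition's assumptions and show $R \asymp \psi_{21}$; since $\psi_{21}$ will match the claimed rate, the conclusion then follows from Theorems~\ref{thm_lb} and~\ref{thm_ub}. First I will simplify each building block. Condition (c) yields $n_2/k_2^2 \geq k_2^\alpha$, which exceeds the threshold in~(\ref{eq_def_phi}) and forces $\phi_{21} = \infty$; condition (a) yields $n_1/k_1^2 \leq 1/(\bar{c} k_2) \leq 1/\bar{c}$, keeping $\phi_{12}$ finite with $\phi_{12} \asymp (n_1/k_1^2)\log n_2$ by~(c). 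The standard estimate $\log\binom{n_2}{k_2} \asymp k_2\log(n_2/k_2) \asymp k_2\log n_2$, valid under (b) and (c), then gives
\[\psi_{21} \asymp \frac{1}{k_2}\log\Big(1 + \frac{n_1 k_2 \log n_2}{k_1^2}\Big),\]
matching the claimed rate. Set $u := n_1 k_2 \log n_2/k_1^2$; by (a) one has $u \leq (\log n_2)/\bar{c}$.

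For the lower bound $R \gtrsim \psi_{21}$, I will note that each term in the minimum defining $R$ exceeds $\psi_{21}$: trivially $\psi_{12} + \psi_{21} \geq \psi_{21}$ and $\phi_{21} = \infty$, while $\phi_{12} \asymp u/k_2 \geq \log(1+u)/k_2 \asymp \psi_{21}$ from $\log(1+x) \leq x$. For the matching upper bound $R \lesssim \psi_{21}$, I will split on $u$. If $u \leq 1$, then $\log(1+u) \asymp u$, so $\psi_{21} \asymp u/k_2 \asymp \phi_{12}$, giving $R \leq \phi_{12} \lesssim \psi_{21}$. If $u > 1$, the bound $R \leq \psi_{12} + \psi_{21}$ reduces the task to showing $\psi_{12} \lesssim \psi_{21}$.

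The crux will be bounding $\psi_{12}$ when $u > 1$. Expanding $\log\binom{n_1}{k_1} \asymp k_1\log(en_1/k_1)$ and using (a) in the form $\log(en_1/k_1) \lesssim \log(k_1/k_2)$ (up to a constant depending on $\bar{c}$), I will reduce to $\psi_{12} \lesssim k_1^{-1}[\log n_2 + \log k_1]$ after absorbing the resulting $\log\log$ factors. Conditions (a) and (d) give $k_1/k_2 \geq \bar{c} e\log(n_2/k_2) \gtrsim \log n_2$ under (c), hence $k_2\log n_2 \lesssim k_1$; applying $\log x \leq x$ to $x = k_1/k_2$ together with $\log k_2 \lesssim \log n_2$ from (c) then yields $k_2\log k_1 \lesssim k_1$. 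Thus $\psi_{12} \lesssim 1/k_2$, while $u > 1$ forces $\psi_{21} \gtrsim (\log 2)/k_2$, giving $\psi_{12} \lesssim \psi_{21}$ with a constant depending on $\bar{c}$ and $\alpha$. The announced phase transition at $u \asymp 1$ is then immediate from $\log(1+u) \asymp u$ for $u \leq 1$ versus $\log(1+u) \asymp \log u$ for $u > 1$. The main obstacle is the last bound on $\psi_{12}$, which requires delicately combining all four hypotheses of the proposition to tame the potentially large logarithmic factors.
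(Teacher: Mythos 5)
Your proof is correct, and the end goal (showing $R\asymp\psi_{21}$ and identifying $\psi_{21}$ with the claimed rate) matches the paper's proof, but you arrive there by a genuinely different route: you rederive the simplification of $R$ from scratch, whereas the paper simply cites Lemma~\ref{lem_simplify_dense_allrates}. Under the proposition's hypotheses — $k_1^2\geq \bar c\,n_1k_2$, $k_j\leq c_j n_j$, and $n_1/k_1\geq e\log(n_2/k_2)$ — that lemma's claim~\eqref{eq_simplification_dense_case} already gives $(\psi_{12}+\psi_{21})\wedge\phi_{12}\asymp\psi_{21}\wedge\phi_{12}$, after which the paper only needs to observe $\phi_{21}=\infty$, $\log(1+n_2/k_2^2)\asymp\log n_2$, and $\psi_{21}\lesssim\phi_{12}$ via $\log(1+x)\leq x$. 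Your ``crux'' (the bound $\psi_{12}\lesssim\psi_{21}$ when $u>1$, obtained by combining $n_1/k_1\leq k_1/(\bar c k_2)$, $k_1/k_2\gtrsim\log n_2$, and $k_2\log k_1\lesssim k_1$) is a correct self-contained replacement for the relevant part of that lemma's proof, and your handling of both regimes $u\leq 1$ (match $\phi_{12}\asymp u/k_2$) and $u>1$ (absorb $\psi_{12}$ into $\psi_{21}$) is sound. What the paper's route buys is brevity through reuse of machinery already proved for the lower bound; what yours buys is that a reader can check the proposition without unwinding Lemma~\ref{lem_simplify_dense_allrates}. One small remark: the paper notes in passing that the hypothesis $n_1/k_1\geq e\log(n_2/k_2)$ can be taken WLOG by Lemma~\ref{lem_2.6}, though since it is listed as an assumption this does not affect correctness.
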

To our knowledge, (\ref{eq_pt}) is described nowhere in the existing community detection literature, as previous works either impose restrictions on the shape of the observed graph and the community of interest (i.e. enforcing $n_1 = n_2$ and/or $k_1 = k_2$) or provide results that are not sharp for all configurations of $n_1, n_2, k_1$, and $k_2$. The recent paper of \cite{rotenberg2024planted} presents the results that are most similar to ours in the literature. They consider the case $n_1 = n_2 = n$ with $k_1 \neq k_2$ in the asymptotic regime $k_1 + k_2 = o(n)$. We again emphasize that our results hold for $n_1 \neq n_2$ and any $k_1$ and $k_2$, and as such are immediately more general than those of \cite{rotenberg2024planted}. However, we argue that our results are more precise even in the restricted case of $n_1 = n_2 = n$ and $k_1 + k_2 = o(n)$. Under the assumption that $p_0, \delta^* = \Theta(n^{-\alpha})$ for $\alpha \in (0,2]$, in Theorem 1 of \cite{rotenberg2024planted} the authors show that 
\[\frac{(\delta^*)^2}{p_0(1-p_0)} = \Omega\left(\frac{1}{k_1 \land k_2} \land \frac{n^2}{k_1^2 k_2^2} \land \frac{n}{k_1^2 \lor k_2^2}\right),\]
and in Theorem 2, they show
\[\frac{(\delta^*)^2}{p_0(1-p_0)} = O \left(\frac{\log (n)}{k_1 \land k_2} \land \frac{n^2}{k_1^2 k_2^2} \land \frac{n\log(n)}{k_1^2 \lor k_2^2}\right).\]
We refer readers to equations (14) and (15) in \cite{rotenberg2024planted} and the surrounding discussion for details. 
Their lower bound on $\delta^*$ is loose by a logarithmic factor in $n$, and the asymptotic assumption $\delta^* = \Theta(n^{-\alpha})$ excludes the regime in which the complex rate that we derive in (\ref{eq_delta*}) emerges. 
As such, the results of \cite{rotenberg2024planted} do not describe, for instance, the phase transition (\ref{eq_pt}). We also comment on the assumptions on $p_0$ made in \cite{rotenberg2024planted} and in our work. Recall that the results of \cite{rotenberg2024planted} require $p_0 \gtrsim n^{-\alpha}$ for $\alpha \in (0,2]$, where under our Assumption \ref{asmp:dense}, we require at least $p_0 \gtrsim \frac{1}{n^2}$ and, in certain regimes, we need $p_0 \gtrsim \frac{1}{k_1k_2}\log( {n \choose k_1} {n \choose k_2})$. This assumption is stronger than that of \cite{rotenberg2024planted}, and hence excludes our results from holding for all values of $p_0$ considered by \cite{rotenberg2024planted}. However, as their results are not sharp, they are not able to describe the entirety of the detection boundary in the case when our Assumption \ref{asmp:dense} does not hold. We anticipate that the minimax rate of separation $\delta^*$ may actually differ non-trivially from the rate derived in \cite{rotenberg2024planted} in this setting, and we view the extension of our results beyond Assumption \ref{asmp:dense} as a crucial piece of future work. Finally, we remark that \cite{elimelech2025detecting} extends the results of \cite{rotenberg2024planted} beyond planted bipartite subgraphs, but they do so in the same asymptotic setting and thus suffer from the same limitations.

\section{Conclusion, limitations, and open problems}\label{sec:conclusion}

We have presented a rigorous study of the community detection problem in bipartite graphs. We present a lower bound on the minimax rate of separation, and describe a novel optimal testing strategy that achieves the minimax rate when the observed graph is dense. Our non-asymptotic results hold for all possible dimensions of the observed graph and the planted subgraph, which reveals new behavior in the detection boundary. Here, we outline some limitations and opportunities for future work.

\bigskip 

\noindent \textbf{Adaptivity to unknown $p_0, k_1,$ and $k_2$.} All three of our tests require knowledge of $p_0$, the connection probability under the null hypothesis, which is typically unavailable in practice. In \cite{arias2014community}, the authors consider this problem in the case of non-bipartite graphs, and find that community detection without knowledge of $p_0$ exhibits substantially different behavior. We anticipate that similar phenomena will arise in the bipartite setting.

Furthermore, our max truncated degree test requires knowledge of the size of the community as encoded in $k_1$ and $k_2$. Optimal adaptivity to unknown $k_1$ and $k_2$ is achieved in previous works such as \cite{butucea2013detection, arias2014community} by scanning over possible values of $k_1$ and $k_2$ and performing a Bonferroni correction. However, this not an option for us, since the tails of the test statistic (\ref{eq_def_maxchisq}) are too heavy to still be optimal after a union bound. One possible strategy is to use Lepski-style adaption as in \cite{kotekal2023sparsity}; we leave this to future work. 

\bigskip 

\noindent \textbf{Beyond dense bipartite graphs.} The tightness of our upper bound in Theorem \ref{thm_ub} relies on Assumption \ref{asmp:dense}. Prior work in the non-bipartite community detection literature \cite{verzelen2015community} suggests that the detection landscape is very different for sparse graphs. Understanding optimal detection in bipartite graphs when $p_0$ is extremely small will potentially require testing procedures and new techniques for proving lower bounds, likely based on the truncated second moment method \cite{ingster2012nonparametric, butucea2013detection, arias2014community, verzelen2015community}.

\bigskip 

\noindent \textbf{Computational considerations.} Our  max truncated degree test is unfeasible to compute on datasets of even moderate size. It is well established that the community detection problem in non-bipartite graphs exhibits a statistical-computational gap \cite{ma2015computational, arias2014community, hajek2015computational,brennan2018reducibility, dadon2024detection, rotenberg2024planted, elimelech2025detecting}, meaning that it is not, in general, possible to optimally detect a planted community with a test that runs in polynomial time. We conjecture that this gap persists in the bipartite case, and establishing this phenomenon formally is an open problem of great interest.

\subsubsection*{Acknowledgments}
This paper has been funded by the Agence Nationale de la Recherche under grant ANR-17-EURE-0010 (Investissements d'Avenir program). Parker Knight is support by an NSF Graduate Research Fellowship. The authors thank the Rose Traveling Fellowship of the Harvard T.H. Chan School of Public Health for supporting an international visit between the authors, without which this work would not have been possible.

\printbibliography

\newpage 

\appendix

\section{Proofs for lower bound}

\subsection{Proof of Theorem 1}

In this section, we prove the lower bound on the minimax rate of separation $\delta^*$. Our general strategy is to lower bound the minimax risk of testing with the Bayes risk defined with respect to a well-chosen prior over the parameter space. We then invoke the Neyman-Pearson lemma and carefully control the second moment of the resulting likelihood ratio statistic. 

\bigskip \noindent Let $\eta \in (0,1)$ be given. Recall that the minimax risk is defined as
\begin{align*}
    \cR^*(k_1,k_2,n_1,n_2, \delta) &= \inf_\Delta \cR(\Delta, \delta) \\ 
    &= \inf_\Delta\Big\{\mathbb P_{0}(\Delta = 1) + \sup_{\bP \in \Theta(k_1,k_2,n_1,n_2, \mu)} \mathbb P_\bP(\Delta = 0)\Big\}.
\end{align*}
Define the reduced parameter space $\overline{\Theta}(k_1, k_2, n_1, n_2, \delta)$ as
\[\overline{\Theta}(k_1, k_2, n_1, n_2, \delta) = \{\bP \in \Theta(k_1, k_2, n_1, n_2, \delta) : P_{ij} \neq 0 \implies P_{ij} = p_0 + \delta\}.\]
We let $\pi$ denote the uniform distribution over $\overline{\Theta}(k_1, k_2, n_1, n_2, \delta)$, meaning that for any $\mathbf M \in \overline{\Theta}(k_1, k_2, n_1, n_2, \delta)$ it holds $\bbP_{\bP \sim \pi}(\bP = \mathbf M) = \frac{1}{{n_1 \choose k_1}{n_2 \choose k_2}}$. Using $\pi$ as our prior on $\overline{\Theta}(k_1, k_2, n_1, n_2, \delta)$, we define the mixture distribution $\mathbb{P}_\pi$ on $\R^{n_1\times n_2}$ as
\[\mathbb P_{\pi}(B)=\int_{\bP \in \overline{\Theta}(k_1, k_2, n_1, n_2, \delta)}\bbP_\bP(B)\pi(\D \bP),\]
where $B$ is a measurable set. With this infrastructure in hand, we can lower bound the minimax risk as follows:
\begin{align*}
    \cR^*(k_1,k_2,n_1,n_2, \delta) &=\inf_\Delta\Big\{\mathbb P_{0}(\Delta = 1) + \sup_{\bP \in \Theta(k_1,k_2,n_1,n_2, \delta)} \mathbb P_\bP(\Delta = 0)\Big\} \\
    &\geq \inf_\Delta\Big\{\mathbb P_{0}(\Delta = 1) + \sup_{\bP \in \overline{\Theta}(k_1,k_2,n_1,n_2, \mu)} \mathbb P_\bP(\Delta = 0)\Big\} \\
    &\geq \inf_\Delta\Big\{\mathbb P_{0}(\Delta = 1) + \mathbb P_\pi(\Delta = 0)\Big\}.
\end{align*}
The final expression above is the minimax risk of a simple versus simple hypothesis testing problem, and we can characterize it precisely using the Neyman-Pearson lemma (\cite{rigollet2023high}, Lemma 4.3). Combining this result with standard equivalent formulations of the total variation distance (\cite{rigollet2023high}, Proposition 4.4), we have
\begin{align*}
    \inf_\Delta\Big\{\mathbb P_{0}(\Delta = 1) + \mathbb P_\pi(\Delta = 0)\Big\} &= 1 - \text{TV}(\bbP_0, \mathbb P_{\pi}) \\
    &= 1 - \frac12\int_{\A \in \R^{n_1\times n_2}}|\D\bbP_0(\A) - \D \mathbb P_\pi (\A)| \\
    &= 1 - \frac12\int_{\A \in \R^{n_1\times n_2}}\Big|\frac{\D\mathbb P_\pi (\A)}{\D \bbP_0(\A)} - 1\Big|\D \bbP_0(\A),
\end{align*}
where $\D \bbP_0$ and $\D \mathbb P_\pi$ denote the Radon-Nikodym derivatives with respect to the counting measure of $\bbP_0$ and $\mathbb P_\pi$ respectively. Letting $L_\pi = \frac{\D\mathbb P_\pi }{\D \bbP_0}$ denote the likelihood ratio, we apply the Cauchy-Schwarz inequality to obtain
\begin{align*}
    \int_{\A \in \R^{n_1\times n_2}}\Big|\frac{\D\mathbb P_\pi (\A)}{\D \bbP_0(\A)} - 1\Big|\D \bbP_0(\A) &= \int_{\A \in \R^{n_1\times n_2}}\Big|L_\pi(\A) - 1\Big|\D \bbP_0(\A) \\
    &\leq \sqrt{\int_{\A \in \R^{n_1\times n_2}}\Big(L_\pi(\A) - 1\Big)^2\D \bbP_0(\A)} \\
    &= \sqrt{\EE_0\big[L_\pi^2(\A)\big] - 1},
\end{align*}
where we arrive at the final expression by expanding the square and using $\EE_0[L_\pi(\A)] = 1$. This chain of calculations reveals
\[\cR^*(k_1, k_2, n_1, n_2) \geq 1 - \frac12\sqrt{\EE_0\big[L_\pi^2(\A)\big] - 1}.\]
Therefore, to prove that $\cR*(k_1, k_2, n_1, n_2) \geq \eta$, it suffices to show
\begin{align*}
    \EE_0\big[L_\pi^2\big] &\leq 1 + 4(1 - \eta)^2 \\
    &= 1 + \eps,
\end{align*}
where we define $\eps := 4(1 - \eta)^2$. By direct calculation, we have
\begin{align*}
    \EE_0\big[L_\pi^2\big] &= \int_{\A}\frac{\big(\mathbf \D \mathbf P_\pi(\A)\big)^2}{\D \bbP_0(\A) } \\
    &= \int_{\A}\frac{\int \D \bbP_\bP(\A)\pi(\D \bP)\int \D \bbP_{\bP'}(\A)\pi(\D \bP')}{\D \bbP_0(\A) } \\
    &= \int_\bP \int_{\bP'}\Big[\int_\A\frac{\D \bbP_\bP(\A)\D \bbP_{\bP'}(\A)}{\D \bbP_0(\A)}\Big]\pi(\D \bP)\pi(\D \bP'), 
\end{align*}
where $\bP$ and $\bP'$ are two mean matrices drawn independently from $\pi$. Recalling that the entries of $\A$ are Bernoulli random variables, we have
\[\D \mathbb P_\bP(\A) = \prod_{i=1}^{n_1}\prod_{j=1}^{n_2} P_{ij}^{A_{ij}} (1-P_{ij})^{1-A_{ij}}.\]
Using this, as well as the definition of our prior $\pi$, we can compute the above triple integral as follows. Here, we let $\cA = \{0,1\}^{n_1 \times n_2}$ denote the set of all possible values of the adjacency matrix $\A$.
\begin{align*}
    \EE_0\big[L_\pi^2\big] &= \int_\bP \int_{\bP'}\Big[\int_\A\frac{\D \bbP_\bP(\A)\D \bbP_{\bP'}(\A)}{\D \bbP_0(\A)}\Big]\pi(\D \bP)\pi(\D \bP')\\ 
    &= \frac{1}{{n_1 \choose k_1}^2 {n_2 \choose k_2}^2}\sum_{\bP}\sum_{\bP'}\sum_{\A \in \cA}\prod_{i=1}^{n_1}\prod_{j=1}^{n_2}\frac{P_{ij}^{A_{ij}} (1-P_{ij})^{1-A_{ij}}{P'_{ij}}^{A_{ij}} (1-P'_{ij})^{1-A_{ij}}}{p_0^{A_{ij}} (1-p_0)^{1-A_{ij}}} \\
    &= \frac{1}{{n_1 \choose k_1}^2 {n_2 \choose k_2}^2}\sum_{\bP}\sum_{\bP'}\prod_{i=1}^{n_1}\prod_{j=1}^{n_2}\sum_{\A \in \cA: A_{ij} = 0}^1\left(\frac{P_{ij}^{A_{ij}} (1-P_{ij})^{1-A_{ij}}{P'_{ij}}^{A_{ij}} (1-P'_{ij})^{1-A_{ij}}}{p_0^{A_{ij}} (1-p_0)^{1-A_{ij}}}\right) \\
    &= \frac{1}{{n_1 \choose k_1}^2 {n_2 \choose k_2}^2}\sum_{\bP}\sum_{\bP'}\prod_{i=1}^{n_1}\prod_{j=1}^{n_2}\left(\frac{P_{ij}\cdot P'_{ij}}{p_{0}}
    +    
    \frac{ (1-P_{ij}) \cdot (1-P'_{ij})}{1-p_{0}}\right).
\end{align*}
Now, fix two mean matrices $\bP$ and $\bP'$ with elevated entries on $K_1 \times K_2$ and $K_1' \times K_2'$ respectively. Letting $(i,j) \in [n_1] \times [n_2]$, suppose that $(i,j) \notin (K_1 \cap K_1') \times (K_2 \cap K_2')$. Without loss of generality, say $(i,j) \notin K_1 \times K_2$. Then it holds that $P_{ij} = p_0$, and we have
\[\frac{P_{ij}\cdot P'_{ij}}{p_{0}}
    +    
    \frac{ (1-P_{ij}) \cdot (1-P'_{ij})}{1-p_{0}} = P'_{ij} + (1-P'_{ij}) = 1.\]
On the other hand, if $(i,j) \in (K_1 \cap K_1') \times (K_2 \cap K_2')$, it holds
\begin{align*}
    \frac{P_{ij}\cdot P'_{ij}}{p_{0}}
    +    
    \frac{ (1-P_{ij}) \cdot (1-P'_{ij})}{1-p_{0}} &= \frac{(p_0 + \delta)^2}{p_{0}}
    +    
    \frac{ (1-p_0 - \delta)^2}{1-p_{0}}\\
    & = p_0 - 2 \delta + \frac{\delta^2}{p_0} + 1-p_0 - 2 \delta + \frac{\delta^2}{1-p_0}\\
    & = 1 + \frac{\delta^2}{p_0(1-p_0)}.
\end{align*}
With these calculations in hand, we have
\begin{align*}
    \EE_0\big[L_\pi^2\big] &= \frac{1}{{n_1 \choose k_1}^2 {n_2 \choose k_2}^2}\sum_{\bP}\sum_{\bP'}\prod_{i=1}^{n_1}\prod_{j=1}^{n_2}\left(\frac{P_{ij}\cdot P'_{ij}}{p_{0}}
    +    
    \frac{ (1-P_{ij}) \cdot (1-P'_{ij})}{1-p_{0}}\right) \\
    &= \frac{1}{{n_1 \choose k_1}^2 {n_2 \choose k_2}^2}\sum_{\bP}\sum_{\bP'}\prod_{i=1}^{n_1}\prod_{j=1}^{n_2}\left(1 + \frac{\delta^2}{p_0(1-p_0)}\one \{ (i,j) \in (K_1 \cap K_1') \times (K_2 \cap K_2')\}\right) \\
    &\leq \frac{1}{{n_1 \choose k_1}^2 {n_2 \choose k_2}^2}\sum_{\bP}\sum_{\bP'}\prod_{i=1}^{n_1}\prod_{j=1}^{n_2}\exp\left(\frac{\delta^2}{p_0(1-p_0)}\one \{ (i,j) \in (K_1 \cap K_1') \times (K_2 \cap K_2')\}\right) \\
    &= \frac{1}{{n_1 \choose k_1}^2 {n_2 \choose k_2}^2}\sum_{\bP}\sum_{\bP'}\exp\left(\frac{\delta^2}{p_0(1-p_0)}|K_1 \cap K_1'| |K_2 \cap K_2'|\right) \\
    &= \EE_{\substack{U \sim \text{HypGeom}(n_1, k_1, k_1) \\ V \sim \text{HypGeom}(n_2, k_2, k_2)}}\left[\exp\left(\frac{\delta^2}{p_0(1-p_0)} UV\right)\right] \\
    &\leq \EE_{\substack{X \sim \text{Bin}(k_1, \frac{k_1}{n_1 - k_1}) \\ Y \sim \text{Bin}(k_2, \frac{k_2}{n_2 - k_2})}}\left[\exp\left(\frac{\delta^2}{p_0(1-p_0)} XY\right)\right],
\end{align*}
where the final inequality follows from Lemma 3 of \cite{arias2011global}.Therefore, it suffices to show that
\begin{equation}\label{eq_mgf_goal}
    \EE[\exp(\mu^2XY)] \leq 1+\eps,
\end{equation}
where $X \sim \text{Bin}(k_1, \frac{k_1}{n_1 - k_1}), Y \sim \text{Bin}(k_2, \frac{k_2}{n_2 - k_2}),$ and $\mu^2 = \frac{\delta^2}{p_0(1 - p))}$. The remainder of the proof is structured as follows.
\begin{enumerate}
    \item In the \textit{general sparsity} setting, meaning that $k_1 \leq c_1 n_1$ and $k_2 \leq c_2 n_2$ for sufficiently small constants $c_1, c_2 > 0$, we show that there exists a constant $c_\mu > 0$ such that if $\mu^2 \leq c_\mu R$, then (\ref{eq_mgf_goal}) holds. The proof of this claim constitutes the primary technical difficulty of the derivation of our lower bound, and relies on a precise analysis of the moment generating function in the left hand side of (\ref{eq_mgf_goal}). The details are given in Section \ref{sec_pf_lb_general}, with the key lemmas collected in Section \ref{sec_lemmak_lb}.
    \item Otherwise, we place ourselves in the \textit{very dense} setting and assume without loss of generality that $k_1 \geq c n_1$ for a constant $c \in (0,1)$. In this case, the problem roughly reduces to the sparse signal detection problem in a standard Gaussian sequence model. In Section \ref{sec_pf_lb_dense}, we prove that there exists a constant $c_\mu > 0$ such that if $\mu^2 \leq c_\mu \frac{n_1}{k_1^2}\log\big(1 + \frac{n_2}{k_2^2}\big)$, then (\ref{eq_mgf_goal}) holds. We then show that $\frac{n_1}{k_1^2}\log\big(1 + \frac{n_2}{k_2^2}\big) \asymp R$ in this setting, which completes the proof.
\end{enumerate}

\subsection{Proof of lower bound in the general sparsity setting}\label{sec_pf_lb_general}
Throughout this proof, we assume that there exist sufficiently small constants $c_1, c_2 \in (0,1)$ which depend on $\eta$ such that $k_1 \leq c_1n_1$ and $k_2 \leq c_2n_2$. In this case, $\frac{k_1}{n_1 - k_1} \asymp \frac{k_1}{n_1}$ and $\frac{k_2}{n_2 - k_2} \asymp \frac{k_2}{n_2}$, and it suffices to control $\EE[\exp(\mu^2XY)]$ for $X \sim \text{Bin}(k_1, \frac{k_1}{n_1})$ and $Y \sim \text{Bin}(k_2, \frac{k_2}{n_2})$. We will also assume that $\frac{n_1}{k_1} \geq e \log(\frac{n_2}{k_2})$. In fact, this assumption may be made without loss of generality due to Lemma \ref{lem_2.6}. Recall that we aim to show that there exists a constant $c_\mu > 0$ such that if $\mu^2 \leq c_\mu R$, then $\EE[\exp(\mu^2XY)] \leq 1 + \eps$. We divide our analysis into two cases.

\bigskip 

\noindent\underline{Case 1:} Suppose that $k_1^2 \geq (2e)^{-4}n_1k_2$. Let $\Ct \geq 1$ and $c_{1, \mu} > 0$ be the constants obtained from applying Lemma \ref{lem_simplify_dense_allrates} with $\alpha = \eps/2$. We can write
\begin{align*}
\EE[\exp(\mu^2XY)] &= \EE[\exp(\mu^2XY)\one(X \leq \Ct \frac{k_1^2}{n_1})] + \EE[\exp(\mu^2XY)\one(X > \Ct \frac{k_1^2}{n_1})]
\end{align*}
By Lemma \ref{lem_simplify_dense_allrates}, if $\mu^2 \leq c_{1, \mu}R$, then $\EE[\exp(\mu^2XY)\one(X > \Ct \frac{k_1^2}{n_1})] < \eps/2$.
Now we turn our attention to the first term. Suppose that $\Ct \frac{k_1^2}{n_1} < 1$. Then
\begin{align*}
    \EE[\exp(\mu^2XY)\one(X \leq \Ct \frac{k_1^2}{n_1})] &= \bbP(X = 0) \\
    &\leq 1,
\end{align*}
and it holds that $\EE[\exp(\mu^2XY)] \leq 1+\eps/2$ and the proof is complete. Otherwise, suppose that $\Ct \frac{k_1^2}{n_1} \geq 1$. Then by Lemma \ref{lemma:lb-principal-term}, there exists a constant $c_{2, \mu} > 0$ such that if $\mu^2 \leq c_{2, \mu}R$, it holds $\EE[\exp(\mu^2XY)\one(X \leq \Ct \frac{k_1^2}{n_1})] < 1 + \eps/2$. Letting $c_\mu = \min(c_{1, \mu}, c_{2, \mu})$, it follows that if $\mu^2 \leq c_\mu R$, then
\[\EE[\exp(\mu^2XY)] \leq 1+\eps.\]
The proof in this case is complete.

\bigskip 

\noindent \underline{Case 2:} Suppose that there exists a constant $\bar{c} \in (0, (2e)^{-4})$ such that $k_1^2 \leq \bar{c}n_1k_2$. We split our analysis into two sub-cases. Assume first that $k_2 < k_1\log\big(\frac{n_1k_2}{k_1^2}\big)$. For a constant $C_* \geq 1$ whose value will be determined later, we form the partition
\begin{align*}
    \EE[\exp(\mu^2XY)] &= \EE[\exp(\mu^2XY)\one(X \leq \Ct \frac{k_1^2}{n_1})] \\
    &\quad +  \EE[\exp(\mu^2XY)\one(\Ct \frac{k_1^2}{n_1} < X < k_2^{-1}\log\big(\frac{n_1k_2}{k_1^2}\big))] \\
    &\quad + \EE[\exp(\mu^2XY)\one(X \geq \Ct \frac{k_1^2}{n_1} \lor k_2^{-1}\log\big(\frac{n_1k_2}{k_1^2}\big) )] \\
    &= \text{I}(C_*) + \text{II}(C_*) + \text{III}(C_*).
\end{align*}
By Lemma \ref{lem_simplification_rate_max_test}, there exist constants $C_{1,*} \geq 1$ and $c_{1, \mu} > 0$ such that if $\mu^2 \leq c_{1, \mu}R$, then $\text{III}(C_{1,*}) < \eps/3$. Suppose that $\lceil C_{1,*}\frac{k_1^2}{n_1}\rceil \geq \lfloor k_2^{-1}\log \big(\frac{n_1k_2}{k_1^2}\big)\rfloor$. In this case, we take $C_* = C_{1,*}$, which gives us $\text{II}(C_*) = 0$. If $C_* \frac{k_1^2}{n_1} < 1$, it immediately follows that $\text{I}(C_*) \leq 1$ from an elementary calculation in the proof of Case 1, and thus $\EE[\exp(\mu^2XY)] \leq 1 + \eps/3$ which completes the proof. Otherwise, if $C_* \frac{k_1^2}{n_1} \geq 1$, then by Lemma \ref{lemma:lb-principal-term} there exists a constant $c_{2,\mu} > 0$ such that if $\mu^2 \leq c_{2,\mu}R$, then $\text{I}(C_*) \leq 1 + \frac23\eps$. In this case, we take $c_\mu = \min(c_{1, \mu}, c_{2, \mu})$ and for $\mu^2 \leq c_{\mu}R$ it holds that $\EE[\exp(\mu^2XY)]\leq 1 + \eps$. This completes the proof in the case $\lceil C_{1,*}\frac{k_1^2}{n_1}\rceil \geq \lfloor k_2^{-1}\log \big(\frac{n_1k_2}{k_1^2}\big)\rfloor$.

Now suppose that $\lceil C_{1,*}\frac{k_1^2}{n_1}\rceil \leq \lfloor k_2^{-1}\log \big(\frac{n_1k_2}{k_1^2}\big)\rfloor$. From here, we consider two further subcases. First, suppose that $2e\frac{k_1^2}{n_1} \geq \frac{k_2^2}{n_2}$. Then, by Lemma \ref{lem_simplify_s1d1big_truncchi2}, there exist constants $C_{2,*} \geq 1$ and $c_{2,\mu}$ such that if $\mu^2 \leq c_{2,\mu}R$, then $\text{II}(C_{2,*}) < \eps/3$. Now we take $C_* = \max(C_{1,*}, C_{2,*})$. If $C_* \frac{k_1^2}{n_1} < 1$, it immediately follows that $\text{I}(C_*) \leq 1$, and thus $\EE[\exp(\mu^2XY)] \leq 1 + \frac23\eps$ which completes the proof. Otherwise, if $C_* \frac{k_1^2}{n_1} \geq 1$, then by Lemma \ref{lemma:lb-principal-term} there exists a constant $c_{3,\mu} > 0$ such that if $\mu^2 \leq c_{3,\mu}R$, then $\text{I}(C_*) \leq 1 + \eps/3$. We then take $c_\mu = \min(c_{1, \mu}, c_{2, \mu}, c_{3,\mu})$ and for $\mu^2 \leq c_{\mu}R$ it holds that $\EE[\exp(\mu^2XY)]\leq 1 + \eps$. Next, suppose that $2e\frac{k_1^2}{n_1} \leq \frac{k_2^2}{n_2}$. In this case, we partition $\text{II}(C_*)$ as follows:
\begin{align*}
    \text{II}(C_*) &= \EE[\exp(\mu^2XY)\one(\Ct \frac{k_1^2}{n_1} < X < k_2^{-1}\log\big(\frac{n_1k_2}{k_1^2}\big))] \\
    &= \EE\left[\exp\big(\mu^2XY\big)\one\bigg(1 \vee \Ct \frac{k_1^2}{n_1} \leq X \leq \frac{\Ct k_2^2/n_2}{\log\big(\frac{n_1k_2^2}{k_1^2n_2}\big)}\wedge k_1\bigg)\right] \\
    &\quad +\EE\bigg[\exp(\mu^2XY)\one\bigg( 1 \vee \Ct \frac{k_1^2}{n_1} \vee  \frac{\Ct k_2^2/n_2}{\log\big(\frac{n_1k_2^2}{k_1^2n_2}\big)} \leq X \leq \frac{k_2}{\log\big(\frac{n_1k_2}{k_1^2}\big)}\bigg)\bigg] \\
    &= \text{II}^{(a)}(C_*) + \text{II}^{(b)}(C_*).
\end{align*}
By Lemmas \ref{lem_simplify_s2d2big_maxlin} and \ref{lem_simplify_s2d2big_truncchi2}, there exist constants $C_{2,*}, C_{3,*} \geq 1$ and $c_{2,\mu}, c_{3,\mu} > 0$ such that if $\mu^2 \leq \min(c_{2,\mu}, c_{3,\mu})R$, then $\text{II}^{(a)}(C_{2,*}) + \text{II}^{(b)}(C_{3,*}) < \eps/3$. We take $C_* = \max(C_{1,*}, C_{2,*}, C_{3,*})$. If $C_* \frac{k_1^2}{n_1} < 1$, then $\text{I}(C_*) \leq 1$ and thus $\EE[\exp(\mu^2XY)] \leq 1 + \frac23\eps$ which completes the proof. Otherwise, if $C_* \frac{k_1^2}{n_1} \geq 1$, then by Lemma \ref{lemma:lb-principal-term} there exists a constant $c_{4,\mu} > 0$ such that if $\mu^2 \leq c_{4,\mu}R$, then $\text{I}(C_*) \leq 1 + \eps/3$. We then take $c_\mu = \min(c_{1, \mu}, c_{2, \mu}, c_{3,\mu}, c_{4,\mu})$ and for $\mu^2 \leq c_{\mu}R$ it holds that $\EE[\exp(\mu^2XY)]\leq 1 + \eps$. This completes the proof in the case $k_2 < k_1 \log \big(\frac{n_1k_2}{k_1^2}\big)$.

We now turn our attention to the case $k_2 \geq k_1 \log \big(\frac{n_1k_2}{k_1^2}\big)$. In this case, we perform the partition
\begin{align*}
    \EE[\exp(\mu^2XY)] &= \EE[\exp(\mu^2XY)\one(X \leq \Ct \frac{k_1^2}{n_1})] \\
    &\quad +  \EE[\exp(\mu^2XY)\one(\Ct \frac{k_1^2}{n_1} < X \leq k_1\big))] \\
    &= \text{I}(C_*) + \text{II}(C_*)
\end{align*}
The remainder of the proof follows exactly as in the $k_2 < k_1 \log \big(\frac{n_1k_2}{k_1^2}\big)$ setting. If $2e\frac{k_1^2}{n_1} \geq \frac{k_2^2}{n_2}$, we control $\text{II}(C_*)$ using Lemma \ref{lem_simplify_s1d1big_truncchi2}; otherwise, we use Lemmas \ref{lem_simplify_s2d2big_maxlin} and \ref{lem_simplify_s2d2big_truncchi2}. We then control $\text{I}(C_*)$ using Lemma \ref{lemma:lb-principal-term} if needed. We omit the details for brevity. The proof of the lower bound in the general sparsity setting is complete.
\subsection{Proof of lower bound in the very dense setting}\label{sec_pf_lb_dense}
Now suppose that there exists a constant $c > 0$ such that $k_1 > cn_1$ and that $k_2 < \bar{c}n_2$ for a constant $\bar{c} < 1$. This allows us to consider $Y \sim \text{Bin}(k_2, \frac{k_2}{n_2})$ rather than $Y \sim \text{Bin}(k_2, \frac{k_2}{n_2 - k_2})$. If, on the other hand, $k_2 > cn_2$, the steps of the proof follow identically by plugging in $Y \leq k_2$ and noticing $\frac{n_2}{k_2^2} \asymp \log(1 + \frac{n_2}{k_2^2})$. We omit the details for brevity. Suppose that $\mu^2 \leq c\frac{n_1}{k_1^2}\log\big(1 + c'\frac{n_2}{k_2^2}\big)$ for $c' = \log(1 + \eps)$. By the definition of $c$, it immediately holds that
\[\mu^2 \leq c\frac{n_1}{k_1^2}\log\big(1 + c'\frac{n_2}{k_2^2}\big) \leq \frac{1}{k_1}\log\big(1 + c'\frac{n_2}{k_2^2}\big)\]
By direct calculation, we have
\begin{align*}
    \EE[\exp(\mu^2XY)] &\leq \EE[\exp(\mu^2k_1Y)] \quad \text{(since $X \leq k_1$ almost surely)} \\
    &\leq \EE\Big[\exp\Big(\log\big(1+c'\frac{n_2}{k_2^2}\big)Y\Big)\Big] \quad \text{(since $\mu^2\leq \frac{1}{k_1}\log\big(1 + c'\frac{n_2}{k_2^2}\big)$)} \\
    &= \Big(1 + \frac{k_2}{n_2}\big(e^{\log\big(1 + c'\frac{n_2}{k_2^2}\big)} - 1\big)\Big)^{k_2} \\
    &= \big(1 + \frac{c'}{k_2}\big)^{k_2} \\
    &\leq e^{c'} \\
    &= 1 + \eps.
\end{align*}

It just remains to show that $\frac{n_1}{k_1^2}\log\big(1 + c'\frac{n_2}{k_2^2}\big) \asymp R$. Let $C' = \frac1c$. First, note that $\frac{n_1}{k_1^2} \leq \frac{C'}{k_1} \leq C'$, so hence if $R$ is defined with $C \geq C'$, it holds $R \leq \frac{n_1}{k_1^2}\log\big(1 + c'\frac{n_2}{k_2^2}\big)$ by definition. It remains to show that $\frac{n_1}{k_1^2}\log\big(1 + c'\frac{n_2}{k_2^2}\big) \lesssim R$. Since $\frac{n_1}{k_1} \leq C'$, we have
\begin{align*}
    \frac{n_1}{k_1^2}\log\big(1 + c'\frac{n_2}{k_2^2}\big) &\leq \frac{C'}{k_1}\log\big(1 + c'\frac{n_2}{k_2^2}\big) \\
    &\leq  \frac{C'}{k_1}\log\left(1 + c'\frac{n_2}{k_2^2}\log \left(e {n_1 \choose k_1}\right)\right) \\
    &= \psi_{12} \\
    &\leq \psi_{12} + \psi_{21}.
\end{align*}
Furthermore, using the inequality $\log(1 + x) \leq x$ for any $x > 0$, we have
\begin{align*}
    \frac{n_1}{k_1^2}\log\big(1 + c'\frac{n_2}{k_2^2}\big) 
    &\leq c'\frac{n_1n_2}{k_1^2k_2^2} \\
    &\leq c' C' \frac{n_2}{k_1k_2^2} \\
    &\leq C''\frac{n_2}{k_2^2}\log\Big(1 + \frac{1}{k_1}\Big) \quad \text{(since $x \lesssim \log(1 + x)$ for $x < 1$)}\\
    &\leq C''\frac{n_2}{k_2^2}\log\Big(1 + \frac{n_1}{k^2_1}\Big) \\
    &\leq C'' \phi_{21}.
\end{align*}
Clearly, $\frac{n_1}{k_1^2}\log\big(1 + c'\frac{n_2}{k_2^2}\big) \leq \phi_{12}$. Therefore we have $\frac{n_1}{k_1^2}\log\big(1 + c'\frac{n_2}{k_2^2}\big) \lesssim R$, and the proof is complete.
\subsection{Lemmas for proof of lower bound}\label{sec_lemmak_lb}
\begin{lemma}\label{lemma:lb-principal-term}
    Let $C > 0$ be a constant such that that $k_1^2 \geq n_1 / C$. Then for any $\alpha > 0$, there exists a constant $c_{\mu} > 0$ such that  if $\mu^2 \leq c_{\mu}\frac{n_1}{k_1^2}\log\big(1 + \frac{n_2}{k_2^2}\big)$, then
    \[\EE\Big[\exp(\mu^2XY)\one\big(X \leq C \frac{k_1^2}{n_1}\big)\Big] < 1 + \alpha.\]
\end{lemma}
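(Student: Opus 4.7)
The plan is to condition on $X$ and then exploit the explicit binomial moment generating function for $Y$. Since $Y \sim \mathrm{Bin}(k_2, k_2/n_2)$ is independent of $X$, I can write
\[
\EE\!\left[\exp(\mu^2 XY) \,\middle|\, X\right] \;=\; \left(1 - \frac{k_2}{n_2} + \frac{k_2}{n_2}\,e^{\mu^2 X}\right)^{k_2} \;=\; \left(1 + \frac{k_2}{n_2}\bigl(e^{\mu^2 X} - 1\bigr)\right)^{k_2}.
\]
Setting $\lambda := c_\mu C$, I intend to choose $c_\mu$ small enough that $\lambda \leq 1$. Then, on the truncation event $\{X \leq C k_1^2 / n_1\}$, the hypothesis $\mu^2 \leq c_\mu (n_1/k_1^2)\log(1 + n_2/k_2^2)$ gives
\[
\mu^2 X \;\leq\; \lambda \log\!\left(1 + \frac{n_2}{k_2^2}\right), \qquad \text{hence} \qquad e^{\mu^2 X} \;\leq\; \left(1 + \frac{n_2}{k_2^2}\right)^{\!\lambda}.
\]

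Next I will use the elementary concavity inequality $(1+t)^\lambda \leq 1 + \lambda t$, valid for $t \geq 0$ and $\lambda \in [0,1]$, to deduce $e^{\mu^2 X} - 1 \leq \lambda\, n_2/k_2^2$ on the truncation event. Substituting into the conditional MGF and applying the standard bound $(1 + y/k)^{k} \leq e^{y}$, I obtain
\[
\EE\!\left[\exp(\mu^2 XY)\,\middle|\, X\right]\, \one\!\bigl(X \leq C k_1^2 / n_1\bigr) \;\leq\; \left(1 + \frac{\lambda}{k_2}\right)^{\!k_2} \;\leq\; e^\lambda.
\]
Integrating over $X$ gives the unconditional estimate $\EE[\exp(\mu^2 XY)\,\one(X \leq Ck_1^2/n_1)] \leq e^\lambda$. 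To finish, I will pick $c_\mu$ so that $\lambda = c_\mu C \leq \min(1, \log(1+\alpha))$, which yields $e^\lambda \leq 1 + \alpha$, as required.

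I do not anticipate a genuine obstacle: the argument reduces to the binomial MGF together with the two elementary inequalities $(1+t)^\lambda \leq 1 + \lambda t$ and $(1 + y/k)^k \leq e^y$. The role of the hypothesis $k_1^2 \geq n_1/C$ is only to ensure that the indicator event $\{X \leq Ck_1^2/n_1\}$ is non-degenerate; in the complementary regime $Ck_1^2/n_1 < 1$ the indicator collapses to $\one\{X = 0\}$ and the bound is trivial. The essential mechanism is a balance between two opposing effects: exponentiating $\mu^2 X$ produces a factor polynomial in $n_2/k_2^2$ (with exponent $\lambda$), and this factor is exactly counterbalanced by the binomial success probability $k_2/n_2$ once raised to the $k_2$-th power, leaving only $e^\lambda$, which is made arbitrarily close to $1$ by shrinking $c_\mu$.
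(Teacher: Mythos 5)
Your proof is correct and follows essentially the same path as the paper's: both arguments reduce to the binomial moment generating function for $Y$, then invoke the two elementary inequalities $(1+t)^\lambda \leq 1 + \lambda t$ (for $\lambda \in [0,1]$) and $(1+y/k)^k \leq e^y$. The only cosmetic difference is the order — you apply the concavity inequality first and then $(1+y/k)^k \leq e^y$, whereas the paper does the reverse — but the mechanism and the final bound $e^{c_\mu C}$ are identical.
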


\begin{proof}
   By direct calculation, we have
    \begin{align*}
        \EE\Big[\exp(\mu^2XY)\one\big(X \leq C \frac{k_1^2}{n_1}\big)\Big] &\leq \EE\Big[\exp(\mu^2C \frac{k_1^2}{n_1}Y)\Big] \\ 
        &= \Big(1 + \frac{k_2}{n_2}\big(e^{\mu^2C (k_1^2/n_1)} - 1\big)\Big)^{k_2} \\ 
        &\leq \exp\Big(\frac{k_2^2}{n_2}\big(e^{\mu^2C (k_1^2/n_1)} - 1\big)\Big).
    \end{align*}
    Now since $\mu^2 \leq c_{\mu}\frac{n_1}{k_1^2}\log\big(1 + \frac{n_2}{k_2^2}\big)$, we have 
    \begin{align*}
        \exp\Big(\frac{k_2^2}{n_2}\big(e^{\mu^2C (k_1^2/n_1)} - 1\big)\Big) &\leq \exp\Big(\frac{k_2^2}{n_2}\big(e^{c_{\mu}\frac{n_1}{k_1^2}\log\big(1 + \frac{n_2}{k_2^2}\big)C (k_1^2/n_1)} - 1\big)\Big) \\ 
        &= \exp\Big(\frac{k_2^2}{n_2}\big(\big(1 + \frac{n_2}{k_2^2}\big)^{c_{\mu}C} - 1\big)\Big) \\ 
        &\leq \exp\Big(\frac{k_2^2}{n_2}\big(c_{\mu}C\frac{n_2}{k_2^2}\big)\Big) \\
        &= \exp\big(c_\mu C\big)
    \end{align*}
    where the second inequality holds for $c_\mu < C^{-1}$, by the inequality $(1+x)^y \leq 1+y x$ which holds for any $x\geq 0$ and $y \in [0,1]$. This final expression is at most $1 + \alpha$ for $c_\mu$ taken sufficiently small.
\end{proof}

For any $C \geq 1$, we define the set of indices
\begin{align}
    \bar{\cA}_C = \left\{ \Big\lceil C\frac{k_1^2}{n_1}\Big\rceil, \dots , k_1\right\}\label{eq_def_A_C}
\end{align}
The results in this section will depend largely on the following lemma.

\begin{lemma}\label{lemma:geom-series}
    Suppose that $k_1 \leq \frac12n_1$. Then for any $\alpha > 0$, there exist constants $\Ct \geq 1$ and $c_\mu > 0$ such that for any $\cA \subset \bar{\cA}_{\Ct}$, if
    \[\mu^2 \leq \min_{k \in \cA}\frac{1}{k}\log\Bigg(1 + \frac{n_2}{k_2}\bigg(\exp\Big[\frac{c_\mu k}{k_2}\log\big(\frac{kn_1}{2ek_1^2}\big)\Big] - 1\bigg)\Bigg)\]
    then 
    \[\EE\Big[\exp(\mu^2XY)\one\Big(X \in \cA \Big)\Big] < \alpha.\]

\end{lemma}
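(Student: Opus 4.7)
The plan is to condition on $X$ and use independence to decompose
\begin{align*}
\EE\bigl[\exp(\mu^2 XY)\one(X \in \cA)\bigr] = \sum_{k \in \cA} \bbP(X = k)\, \EE\bigl[\exp(\mu^2 k Y)\bigr],
\end{align*}
and then show that each summand is bounded by a term of a fast-decaying geometric series. Since $Y \sim \text{Bin}(k_2, k_2/n_2)$, its moment generating function evaluated at $\mu^2 k$ equals $\bigl(1 + (k_2/n_2)(e^{\mu^2 k} - 1)\bigr)^{k_2}$. The hypothesis on $\mu^2$ is calibrated exactly to this expression: multiplying by $k$, exponentiating, and rearranging gives
\begin{align*}
1 + \tfrac{k_2}{n_2}\bigl(e^{\mu^2 k} - 1\bigr) \leq \exp\!\Bigl[\tfrac{c_\mu k}{k_2}\log\tfrac{kn_1}{2ek_1^2}\Bigr],
\end{align*}
so raising to the $k_2$-th power collapses the MGF bound to $\EE\bigl[\exp(\mu^2 k Y)\bigr] \leq \bigl(kn_1/(2ek_1^2)\bigr)^{c_\mu k}$.

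For the $X$-side, combining $\binom{k_1}{k} \leq (ek_1/k)^k$ with $(1 - k_1/n_1)^{k_1-k} \leq 1$ yields $\bbP(X = k) \leq \bigl(ek_1^2/(kn_1)\bigr)^k$. Multiplying the two bounds and simplifying gives
\begin{align*}
\bbP(X = k)\, \EE\bigl[\exp(\mu^2 k Y)\bigr] \leq \Bigl(\tfrac{ek_1^2}{kn_1}\Bigr)^{k(1 - c_\mu)} 2^{-c_\mu k}.
\end{align*}
On $\bar{\cA}_{\Ct}$, one has $k \geq \Ct\, k_1^2/n_1$, hence $ek_1^2/(kn_1) \leq e/\Ct$. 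Setting $r := (e/\Ct)^{1 - c_\mu}\, 2^{-c_\mu}$, each summand is bounded by $r^k$.

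Finally, I fix $c_\mu = 1/2$ (or any constant in $(0,1)$), which guarantees $r < 1$ whenever $\Ct > e$, so $\sum_{k \geq 1} r^k = r/(1-r)$ is a convergent geometric sum. Choosing $\Ct = \Ct(\alpha)$ sufficiently large makes $r$, and hence this sum, smaller than any prescribed $\alpha > 0$, which yields the claim. The main obstacle is recognizing that the opaque-looking hypothesis on $\mu^2$ is precisely designed so that, after a single exponentiation-and-rearrangement, it telescopes against the binomial MGF of $Y$; once that reduction is identified, the remaining argument is a routine geometric series estimate with no further delicate trade-offs.
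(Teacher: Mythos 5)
Your proof is correct and follows essentially the same route as the paper: condition on $X$, evaluate the binomial MGF of $Y$, use the hypothesis on $\mu^2$ to collapse the MGF bound to $(kn_1/(2ek_1^2))^{c_\mu k}$, combine with a tail bound on $\Pr(X=k)$, and close with a geometric series on $\bar{\cA}_{\Ct}$.

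Two small differences, neither of which affects correctness. First, the paper reaches the bound $\Pr(X=k)\leq(2ek_1^2/(kn_1))^{k}e^{-k_1^2/n_1}$ via its Lemma~\ref{lemma:probXk}, which uses $(1-p)^{-k}\leq 2^k$ and hence carries an extra factor of $2e$; you instead bound $(1-k_1/n_1)^{k_1-k}\leq 1$ directly, giving the slightly tighter $(ek_1^2/(kn_1))^{k}$. The resulting ratio is then $r=(e/\Ct)^{1-c_\mu}\,2^{-c_\mu}$ rather than the paper's $(2e/\Ct)^{1-c_\mu}$; both are made small by taking $\Ct$ large. Second, you fix $c_\mu=1/2$ once and for all, whereas the paper keeps $c_\mu$ as a free small parameter. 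For this lemma in isolation your choice is fine (the constraint in the hypothesis is monotone in $c_\mu$, and fixing it only simplifies the bookkeeping), but note that elsewhere in the paper the same constant $c_\mu$ is propagated through several downstream lemmas, which is why the authors leave it adjustable.
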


\begin{proof}
    We have 
    \begin{align*}
        \EE\Big[\exp(\mu^2XY)\one\Big(X  \in \cA\Big)\Big] &= \sum_{k \in \cA}\EE\Big[\exp(\mu^2kY)\Big]\Pr(X = k) \\
        &= \sum_{k \in \cA}\Big(1 + \frac{k_2}{n_2}\big(e^{k\mu^2} - 1\big)\Big)^{k_2}\Pr(X = k) \\
        &\leq \sum_{k \in \cA}\Big(1 + \frac{k_2}{n_2}\big(e^{k\mu^2} - 1\big)\Big)^{k_2}\Big(\frac{k^2_12e}{kn_1}\Big)^ke^{-k_1^2/n_1}
    \end{align*}
    where the inequality follows from Lemma \ref{lemma:probXk}. Since we have assumed that
     \[\mu^2 \leq \min_{k \in \cA}\frac{1}{k}\log\Bigg(1 + \frac{n_2}{k_2}\bigg(\exp\Big[\frac{c_\mu k}{k_2}\log\big(\frac{kn_1}{2ek_1^2}\big)\Big] - 1\bigg)\Bigg),\]
    we can substitute this into the above calculation to obtain
    \begin{align*}
        &\sum_{k \in \cA}\Big(1 + \frac{k_2}{n_2}\big(e^{k\mu^2} - 1\big)\Big)^{k_2}\Big(\frac{k^2_12e}{kn_1}\Big)^k \\ 
        &\leq \sum_{k \in \cA}\Bigg(\Big(\frac{kn_1}{k_1^22e}\Big)^{c_\mu}\Bigg)^k\Bigg(\frac{k^2_12e}{kn_1}\Bigg)^k \\ 
        &= \sum_{k \in \cA}\Bigg(\Big(\frac{k^2_12e}{kn_1}\Big)^{1-c_\mu}\Bigg)^k.
    \end{align*}
    Since $\cA \subset \bar{\cA}_{\Ct}$, for every $k \in \cA$ we have     
    $$\frac{k_1^2}{kn_1} \leq \frac{1}{\Ct}.$$
    Therefore, by taking $\Ct$ sufficiently large, we can control the above sum with a geometric series
    \begin{align*}
        &\sum_{k \in \cA}\Bigg(\Big(\frac{k^2_12e}{kn_1}\Big)^{1-c_\mu}\Bigg)^k \\
        &\leq \sum_{k \in \cA}\Bigg(\Big(\frac{2e}{\Ct}\Big)^{1-c_\mu}\Bigg)^k \\
        &\leq \frac{\tilde c}{1 - \tilde c}
    \end{align*}
    where $\tilde c = (2e/\Ct)^{1 - c_\mu}$. This quantity can be made arbitrarily small by taking $c_\mu$ sufficiently small and $\Ct$ sufficiently large. This completes the proof.
\end{proof}

\begin{lemma}\label{lem_max_test_lower_bound}
    Suppose that there exists a constant $\bar{c} \in (0,(2e)^{-4})$ such that $k_1^2 \leq \bar{c}n_1k_2$, and $k_1 \leq \frac12n_1$. 
    Furthermore, suppose that $k_2 < k_1\log\big(\frac{n_1k_2}{k_1^2}\big)$. Then for any $\alpha > 0$, there exist  constants $c_\mu > 0$ and $C_* \geq 1$ such that if 
    $$\mu^2 \leq c_\mu\frac{1}{k_2}\log\left(\frac{k_2n_1}{2ek_1^2} \log\left(\frac{n_2}{k_2}\right)\right),$$
    then 
    \[\EE\left[\exp(\mu^2XY)\one\Big(X \geq \frac{k_2}{\log\big(\frac{n_1k_2}{k_1^2}\big)}\vee   \frac{\Ct k_1^2}{n_1}\Big)\right] < \alpha.\]
    If we further assume that $\frac{n_2}{k_2} \geq 2(1 - e^{-c_\mu})^{-1}$ and that $k_1 < k_2\log(n_2/k_2)$, then if $\mu^2 \leq \frac{c_\mu}{k_2}\log\big(\frac{n_1}{2ek_1}\big) + \frac{1}{k_1}\log\big((1-e^{-c_\mu})\frac{n_2}{k_2}\big)$ it holds
\[\EE\left[\exp(\mu^2XY)\one\Big(X > \frac{k_2}{\log\big(\frac{n_1k_2}{k_1^2}\big)}\vee   \frac{\Ct k_1^2}{n_1}\Big)\right] < \alpha.\]
\end{lemma}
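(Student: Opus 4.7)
The plan is to reduce both claims to applications of Lemma \ref{lemma:geom-series}. For the first claim, I would take the index set
\[\cA = \Big\{k \in \mathbb{N} : \big\lceil \tfrac{\Ct k_1^2}{n_1}\big\rceil \vee \big\lceil \tfrac{k_2}{\log(n_1 k_2/k_1^2)}\big\rceil \leq k \leq k_1\Big\} \subset \bar{\cA}_{\Ct},\]
so that the event inside the indicator agrees with $\{X \in \cA\}$; the hypothesis $k_2 < k_1\log(n_1 k_2/k_1^2)$ makes $\cA$ nonempty. It then suffices to show that the assumed upper bound on $\mu^2$ implies the per-index inequality required by Lemma \ref{lemma:geom-series}, namely
\[k\mu^2 \leq \log\bigg(1 + \tfrac{n_2}{k_2}\big(\exp[g(k)] - 1\big)\bigg) \qquad \text{for every } k \in \cA,\]
where $g(k) := \tfrac{c\,k}{k_2}\log(kn_1/(2ek_1^2))$ and $c > 0$ is the constant from Lemma \ref{lemma:geom-series}.

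I would verify this per-$k$ inequality via a case split on the size of $g(k)$. In the regime $g(k) \geq \log 2$, the bound $e^{g(k)}-1 \geq \tfrac12 e^{g(k)}$ together with $\log(1+x) \geq \log x$ for $x \geq 1$ (which is ensured since the general-sparsity assumption gives $n_2/(2k_2) \geq 1$) yields a lower bound of the form $\log(n_2/(2k_2)) + g(k)$. Since the assumed bound on $k\mu^2$ has the shape $c_\mu \tfrac{k}{k_2}[\log(k_2 n_1/(2ek_1^2)) + \log\log(n_2/k_2)]$, I would absorb the $\log\log(n_2/k_2)$ piece into $\log(n_2/k_2)$ and match the remaining $\tfrac{k}{k_2}\log(k_2 n_1/k_1^2)$ against $g(k)$, using that $\log(kn_1/(2ek_1^2))$ differs from $\log(k_2 n_1/(2ek_1^2))$ by at most $|\log(k/k_2)| \leq \log\log(n_1 k_2/k_1^2)$ by virtue of the lower bound $k \geq k_2/\log(n_1 k_2/k_1^2)$. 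In the complementary regime $g(k) < \log 2$, I would instead use $e^{g(k)}-1 \geq g(k)/2$ and $\log(1+x) \geq x/2$ for $x \leq 1$, so that the right-hand side is comparable to $\tfrac{n_2}{4k_2}\,g(k)$; the same lower bound $k \geq k_2/\log(n_1 k_2/k_1^2)$ then guarantees $g(k) \gtrsim \log(kn_1/(2ek_1^2))/\log(n_1 k_2/k_1^2)$, and a direct comparison after multiplication by $n_2/k_2$ finishes the case. Throughout, $\Ct$ is fixed large enough both to ensure $\log(kn_1/(2ek_1^2))$ is uniformly positive on $\cA$ and to satisfy the threshold of Lemma \ref{lemma:geom-series}.

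For the second claim, the upper bound on $\mu^2$ has the additive form $\mu^2 \leq A + B$ with $A = \tfrac{c_\mu}{k_2}\log(n_1/(2ek_1))$ and $B = \tfrac{1}{k_1}\log((1-e^{-c_\mu})n_2/k_2)$. I would apply Lemma \ref{lemma:geom-series} with the same set $\cA$, exploiting this additive structure: the summand $A$ feeds the inner exponent $g(k)$ since $\log(n_1/(2ek_1))$ and $\log(kn_1/(2ek_1^2))$ differ only by $\log(k/k_1)$, which can be absorbed into the constant by restricting $\cA$ to indices with $k/k_1$ bounded below; the summand $B$ contributes at most $\log((1-e^{-c_\mu})n_2/k_2)$ after multiplication by $k \leq k_1$, and the hypothesis $n_2/k_2 \geq 2(1-e^{-c_\mu})^{-1}$ then guarantees $(1-e^{-c_\mu})n_2/k_2 \geq 2$, so that after exponentiation this term is matched against the outer factor $\tfrac{n_2}{k_2}(e^{g(k)}-1)$ in Lemma \ref{lemma:geom-series}. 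The hypothesis $k_1 < k_2\log(n_2/k_2)$ ensures that $\cA$ extends up to $k_1$ and the relevant maximum is attained where the estimates remain valid.

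The main obstacle is the per-$k$ verification in the first claim: the exponent $g(k)$ can lie on either side of $\log 2$ depending on $k$, and the target upper bound on $k\mu^2$ mixes a principal $\tfrac{k}{k_2}\log(k_2 n_1/k_1^2)$ term with a secondary $\tfrac{k}{k_2}\log\log(n_2/k_2)$ term which must be routed to the correct piece of the lower bound in each regime. Choosing $c_\mu$ small enough and $\Ct$ large enough so that the resulting estimates are uniform over $\cA$ and over the full admissible range of $(n_1, n_2, k_1, k_2)$ is the delicate part of the argument; all other steps reduce to routine manipulation.
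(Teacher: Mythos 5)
Your reduction to Lemma~\ref{lemma:geom-series} and choice of index set $\cA$ match the paper, but the way you propose to verify the per-$k$ inequality departs from the paper's argument and has a genuine gap. The paper first observes that the inner exponent is uniformly bounded below on $\cA$ (namely $\tfrac{c_\mu k}{k_2}\log(kn_1/(2ek_1^2)) \geq c_\mu/4$ for all $k \in \cA$, using $k_1^2 \leq \bar{c}n_1k_2$ with $\bar{c} < (2e)^{-4}$), which lets it lower-bound the whole expression $g(k)$ by the explicit function
\[
f(k) = \frac{1}{k}\log\Big(c\,\frac{n_2}{k_2}\Big) + \frac{c_\mu}{k_2}\log\Big(\frac{kn_1}{2ek_1^2}\Big), \qquad c = 1 - e^{-c_\mu},
\]
and then analyzes $f$ by calculus: $f$ is minimized over $\R_+$ at $k^* = c_\mu^{-1}k_2\log(c\,n_2/k_2)$ and is decreasing for $k < k^*$. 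The first claim follows by checking $\mu^2 \leq f(k^*)$; the second follows by checking $k_1 < k^*$ (which uses the extra hypotheses), hence $\min_{k \in \cA} f = f(k_1)$, which matches the stated bound on $\mu^2$ exactly. Your case split on whether the inner exponent is above or below $\log 2$ does not recover this structure: in the small-exponent case you invoke $\log(1+x) \geq x/2$ for $x \leq 1$ on the quantity $x = \tfrac{n_2}{k_2}(e^{g(k)}-1)$, but you never verify $x \leq 1$, and in the general-sparsity regime $n_2/k_2 \geq 1/c_2$ can be arbitrarily large while $g(k) \geq c_\mu/4$ is uniformly bounded below, so typically $x \gg 1$ and the step is simply false. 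Without that bound, the comparison in the small-$g$ regime does not close, and the ``absorb the $\log\log$ piece and match the remaining term'' language in the large-$g$ case is not an argument: the comparison over the whole of $\cA$ is precisely a claim that $\mu^2$ sits below the minimum of a function of $k$, which is why the calculus step on $f$ is the heart of the matter.

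For the second claim, your proposal to ``restrict $\cA$ to indices with $k/k_1$ bounded below'' is not available: the lower endpoint of $\cA$ is $\lceil \Ct k_1^2/n_1 \rceil \vee \lceil k_2/\log(n_1k_2/k_1^2)\rceil$, so $k/k_1$ can be as small as roughly $\Ct k_1/n_1$ or $k_2/(k_1\log(n_1k_2/k_1^2))$, both of which go to $0$ over the admissible parameter range. Shrinking $\cA$ would weaken the conclusion, since the expectation to be controlled runs over the full original set. What actually makes the additive $A+B$ structure of the bound on $\mu^2$ work is exactly the inequality $k_1 < k^*$, which forces $f$ to be decreasing on all of $[\min\cA, k_1]$ and hence makes $f(k_1)$ the minimum; if you expand your additive matching at a general $k \in \cA$ you will find it reduces precisely to $k_1 \leq k^*$, so the calculus step is unavoidable.
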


\begin{proof}[Proof of Lemma~\ref{lem_max_test_lower_bound}]
     Note that the set $\cA = \{1 \vee \big\lceil \frac{k_2}{\log\big((n_1k_2)/k_1^2\big)}\big\rceil \vee \big\lceil \Ct \frac{k^2_1}{n_1}\big\rceil, ..., k_1\}$ is a subset of $\bar{\cA}_{\Ct}$, and to prove the claim it suffices to show
    \[\mu^2 \leq \min_{k \in \cA}\frac{1}{k}\log\Bigg(1 + \frac{n_2}{k_2}\bigg(\exp\Big[\frac{c_\mu k}{k_2}\log\big(\frac{kn_1}{2ek_1^2}\big)\Big] - 1\bigg)\Bigg)\]
    and invoke Lemma \ref{lemma:geom-series}. To this end, for $k \in \cA$ we define 
    \begin{equation}\label{eq:g}
    g(k) = \frac{1}{k}\log\Bigg(1 + \frac{n_2}{k_2}\bigg(\exp\Big[\frac{c_\mu k}{k_2}\log\big(\frac{kn_1}{2ek_1^2}\big)\Big] - 1\bigg)\Bigg).
    \end{equation}
    Notice that for $k \in \cA$, we have 
    \begin{align*}
        \frac{c_\mu k}{k_2}\log\big(\frac{kn_1}{2ek_1^2}\big) &\geq \frac{c_\mu}{\log\big(\frac{k_2n_1}{k_1^2}\big)}\log\Big(\frac{k_2n_1}{2ek_1^2\log\big(\frac{k_2n_1}{k_1^2}\big)}\Big) \\
        &= c_\mu\Big(1 - \frac{\log\big(2e\log(\frac{n_1k_2}{k_1^2})\big)}{\log\big(\frac{n_1k_2}{k_1^2}\big)}\Big).
    \end{align*}
    Recall that there exists a constant $\bar{c} \in (0, (2e)^{-4})$ such that $k_1^2 \leq \bar{c}n_1k_2$. In particular, this implies that $n_1k_2/k_1^2 \geq 16e^4$. Using this, as well as the bound $\log (x) / x \leq 1/2$ for $x > 1$, we have 
    \begin{align*}
        \frac{\log\big(2e\log(\frac{n_1k_2}{k_1^2})\big)}{\log\big(\frac{n_1k_2}{k_1^2}\big)} &= \frac{\log(2e)}{\log\big(\frac{n_1k_2}{k_1^2}\big)} + \frac{\log\big(\log(\frac{n_1k_2}{k_1^2})\big)}{\log\big(\frac{n_1k_2}{k_1^2}\big)} \\
        &\leq \frac{\log(2e)}{4\log(2e)} + \frac12 \\
        &= \frac34.
    \end{align*}
    Combining this with our calculation above, we have that, for any $k \in \cA$,
    \begin{align*}
        \frac{c_\mu k}{k_2}\log\big(\frac{kn_1}{2ek_1^2}\big) &\geq \frac{c_\mu}{4} \\ 
        &> 0.
    \end{align*}
    Thus, $\exp\big(\frac{c_\mu k}{k_2}\log\big(\frac{kn_1}{2ek_1^2}\big)\big)$ is bounded away from 1, and hence  for $c = 1 - e^{-c_\mu}$ we have
    \[f(k) =: \frac{1}{k}\log\Bigg(c\frac{n_2}{k_2}\exp\Big[\frac{c_\mu k}{k_2}\log\big(\frac{kn_1}{2ek_1^2}\big)\Big]\Bigg) \leq g(k)\]
    for each $k \in \cA$. So the prove the claim, it suffices to show that $\mu^2$ is at most the minimum value of $f(k)$ over $\cA$. We can write $f(k)$ as 
    \[f(k) = \frac{1}{k}\log\Big(c\frac{n_2}{k_2}\Big) + \frac{c_\mu}{k_2}\log\Big(\frac{kn_1}{2ek_1^2}\Big).\]
    By direct calculation, we have
    \[\frac{\D f}{\D k}(k) = -\frac{\log\big(c\frac{n_2}{k_2}\big)}{k^2} + \frac{c_\mu}{k_2k}\]
    from which we deduce that $f$ is minimized at $k^* = c_\mu^{-1}k_2\log\big(c\frac{n_2}{k_2}\big)$ and decreasing for $k < k^*$. Then if $\mu^2 < c_\mu\frac{1}{k_2}\log\big(\frac{k_2n_1\log(n_2/k_2)}{k_1^2}\big)$, it follows that 
    \begin{align*}
        \mu^2 &< c_\mu\frac{1}{k_2}\log\big(\frac{k_2n_1\log(n_2/k_2)}{2ek_1^2}\big) \\
        &\leq c_\mu\frac{1}{k_2}\Big(\log\big(\frac{k_2n_1\log(n_2/k_2)}{2e c_\mu k_1^2}\big) + 1\Big) \\
        &= f(k^*) \\
        &\leq \min_{k \in \cA}f(k) \\
        &\leq \min_{k\in \cA}g(k)
    \end{align*}
    and we may apply Lemma \ref{lemma:geom-series} to complete the proof. Now if $c\frac{n_2}{k_2} \geq 2$ and $k_1 < k_2\log(n_2/k_2)$, then, for some sufficiently small $c_\mu$, we have
    \begin{align*}
        k_1 &< k_2\log(n_2/k_2) \\
        &< \frac{k_2}{c_\mu}\frac{\log (2)}{\log (2) + \log \big(\frac{2e}{c}\big)}\log\big(\frac{n_2}{k_2}\big) \\
        &\leq \frac{k_2\log\big(\frac{cn_2}{k_2}\big)}{c_\mu\log\big(\frac{2en_2}{k_2}\big)}\log\big(\frac{n_2}{k_2}\big) \\
        &\leq \frac{k_2}{c_\mu}\log\big(c\frac{n_2}{k_2}\big) \\
        &= k^*
    \end{align*}
    where the third inequality uses the fact that $x \mapsto \frac{x}{x + c}$ is increasing in $x$. Therefore since $k_1 < k^*$, the minimum value of $f(k)$ over the set $\cA$ is attained at $k = k_1$. Thus for $\mu^2 < \frac{c_\mu}{k_2}\log\big(\frac{n_1}{2ek_1}\big) + \frac{1}{k_1}\log\big(c\frac{n_2}{k_2}\big)$, we have 
    \begin{align*}
        \mu^2 &< \frac{c_\mu}{k_2}\log\big(\frac{n_1}{2ek_1}\big) + \frac{1}{k_1}\log\big(c\frac{n_2}{k_2}\big) \\ 
        &= f(k_1) \\
        &= \min_{k \in \cA}f(k) \\
        &\leq \min_{k\in \cA}g(k).
    \end{align*}
    Applying Lemma \ref{lemma:geom-series} completes the proof.    
\end{proof}

%
%
%
\begin{lemma}\label{lem_lb_s1d1big_truncchi2}
    Suppose that $k_1^2 \leq \bar{c}n_1k_2$ for some $\bar{c} \in (0,(2e)^{-4})$ and $k_1 \leq \frac12n_1$. Furthermore, suppose that $\frac{k^2_2}{n_2} \leq 2e\frac{k_1^2}{n_1}$. For any small constant $c > 0$, we define the quantity
    \begin{align*}
        M(c) = \begin{cases}
            \frac{1}{k_1}\log\Big(1 + c \frac{n_2k_1}{k_2^2}\log(\frac{n_1}{2ek_1})\Big) & \text{ if } k_1 \leq \frac{k_2}{\log\big(\frac{n_1k_2}{k_1^2}\big)}\\
            \frac{1}{k_2}\log\big(\frac{n_1k_2}{k_1^2}\big)\log\Big(1 + c \frac{n_2}{k_2\log\big(\frac{n_1k_2}{k_1^2}\big)}\log\big(\frac{n_1k_2}{2ek_1^2\log(\frac{n_1k_2}{k_1^2})}\big)\Big) & \text{ otherwise.}
        \end{cases}
    \end{align*}
    Then for any $\alpha > 0$, there exist constants $c_\mu > 0$ and $C_* \geq 1$ such that if $\mu \leq M(c_\mu)$ and $1 \vee \Ct \frac{k_1^2}{n_1}\leq k_1 \wedge\frac{k_2}{\log\big(\frac{n_1k_2}{k_1^2}\big)}$, then 
    \[\EE\Big[\exp(\mu^2XY)\one\Big( 1 \vee \Ct \frac{k_1^2}{n_1} \leq X \leq k_1\wedge\frac{k_2}{\log\big(\frac{n_1k_2}{k_1^2}\big)}\Big)\Big] < \alpha.\]
    Moreover, it holds that    
    $$2M(c_\mu) \geq \frac{1}{k_1}\log\Big(1 + c_\mu \frac{n_2k_1}{k_2^2}\log(\frac{n_1}{2ek_1})\Big) + \frac{\log\big(\frac{n_1k_2}{k_1^2}\big)}{k_2}\log\Big(1 + c_\mu \frac{n_2}{k_2\log\big(\frac{n_1k_2}{k_1^2}\big)}\log\big(\frac{n_1k_2}{2ek_1^2\log(\frac{n_1k_2}{k_1^2})}\big)\Big).$$
\end{lemma}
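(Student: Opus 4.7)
The plan is to reduce the claim to Lemma \ref{lemma:geom-series} and then analyze a cleaner auxiliary function obtained by linearizing the inner exponential. Since the index set
\[
\cA := \left\{1 \vee \left\lceil \Ct \tfrac{k_1^2}{n_1}\right\rceil, \ldots, k_1 \wedge \left\lfloor \tfrac{k_2}{\log(n_1k_2/k_1^2)}\right\rfloor\right\}
\]
is contained in $\bar{\cA}_{\Ct}$, it will suffice to exhibit $c_\mu > 0$ and a sufficiently large $\Ct \geq 1$ such that $\mu^2 \leq \min_{k \in \cA} g(k)$, with $g$ as in \eqref{eq:g}. Using the elementary inequality $e^x \geq 1+x$, valid on $\cA$ because $kn_1/(2ek_1^2) \geq 1$ once $\Ct \geq 2e$, I would pass to the more transparent lower bound
\[
\tilde g(k) := \frac{1}{k}\log\!\Bigg(1 + \frac{c_\mu n_2 k}{k_2^2}\log\!\Big(\frac{kn_1}{2ek_1^2}\Big)\Bigg).
\]

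The core technical step will be to show that $\tilde g$ is monotonically decreasing on the real interval containing $\cA$. A direct differentiation, writing $u(k) := (c_\mu n_2/k_2^2)\, k \log(k n_1/(2ek_1^2))$ and $\ell(k) := \log(kn_1/(2ek_1^2))$, reduces the inequality $\tilde g'(k) < 0$ to $h_B(u(k))\,\ell(k) > u(k)$, with $h_B$ the Bennett function from \eqref{eq_def_h_bennett}. I plan to verify this by splitting on the size of $u(k)$: for $u(k) \leq 1$, the bound $h_B(u) \geq u^2/3$ reduces the condition to $u(k)\ell(k) \geq 3$, while for $u(k) \geq 1$ the fact that $h_B(u)/u$ is increasing with $h_B(1) = 2\log 2 - 1$ reduces it to $\ell(k) \geq 1/(2\log 2 - 1)$. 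Both constraints are monotonically increasing in $k$ on the relevant interval, so it suffices to verify them at the left endpoint $k = \Ct k_1^2/n_1$, where the hypothesis $k_2^2/n_2 \leq 2e\,k_1^2/n_1$ yields $n_2/k_2^2 \geq n_1/(2ek_1^2)$ and reduces each inequality to a condition of the form $c_\mu \Ct \log^2(\Ct/(2e)) \geq \text{const}$, which can be arranged for any fixed $c_\mu$ by enlarging $\Ct$. Cleanly handling the transition between the two subregimes $u \lessgtr 1$ is where I expect the main technical obstacle to lie.

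Once monotonicity is in hand, $\min_{k\in\cA} \tilde g(k) = \tilde g(\max\cA)$, and direct substitution of $\max\cA = k_1$ in the first case, or of $\lfloor k_2/\log(n_1k_2/k_1^2)\rfloor$ in the second, into $\tilde g$ reproduces the two expressions defining $M(c_\mu)$, with the floor absorbed by monotonicity (i.e., $\tilde g(\lfloor k_2/\log(\cdot)\rfloor) \geq \tilde g(k_2/\log(\cdot))$). Combining yields $\mu^2 \leq M(c_\mu) \leq \tilde g(\max\cA) \leq \min_{k\in\cA} g(k)$, and Lemma \ref{lemma:geom-series} delivers the stated moment generating function bound. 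For the closing inequality on $2M(c_\mu)$, observe that $M(c_\mu)$ equals $\tilde g$ evaluated at whichever of $k_1$ and $k_2/\log(n_1k_2/k_1^2)$ is smaller, while the other summand on the right-hand side of the claim is exactly $\tilde g$ at the larger one. By the same decreasingness of $\tilde g$, $M(c_\mu)$ dominates both summands, so $2M(c_\mu) = M(c_\mu) + M(c_\mu)$ dominates their sum, as required.
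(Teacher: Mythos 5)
Your proposal is correct and follows the same high-level plan as the paper: pass from $g$ to the linearised lower bound $\tilde g(k)=\frac{1}{k}\log(1+u(k))$ via $e^x\geq 1+x$, prove that $\tilde g$ is decreasing on $\bar{\cA}_{\Ct}$, read off $M$ as $\tilde g$ evaluated at $k_1\wedge k_2/\log(n_1k_2/k_1^2)$, and close with the averaging trick for the $2M$ inequality. Where you diverge, and in a genuinely useful way, is in how monotonicity is established. The paper drops the ``$1+$'' and shows instead that $k\mapsto\frac{1}{k}\log(u(k))$ is decreasing, then transfers this to $\tilde g$ via Lemma~\ref{lemma:log-plus-one}; the resulting condition is $\log(u(k)) > 1 + 1/\ell(k)$, checked by separate lower/upper bounds. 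Your direct differentiation of $\tilde g$ yields the exact equivalence $\tilde g'(k)<0 \iff u(k) < \ell(k)\,h_B(u(k))$, which is cleaner, bypasses the auxiliary lemma, and gives a strictly weaker sufficient condition (the two agree for large $u$, where $\ell\,h_B(u)>u$ reduces to $\log u > 1 + 1/\ell$, but yours also covers moderate $u$ that the paper's sufficient condition would miss).

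One remark: the ``main technical obstacle'' you anticipate at the transition $u\lessgtr 1$ does not actually arise. Your own endpoint bound $u(\Ct k_1^2/n_1)\geq\frac{c_\mu\Ct}{2e}\log(\Ct/(2e))$, together with the monotonicity of $u$, shows that once $\Ct$ is taken large enough (relative to the fixed $c_\mu$ from Lemma~\ref{lemma:geom-series}) one has $u(k)>1$ on all of $\cA$, so the $u\leq 1$ branch is vacuous and the whole check collapses to $\ell(\Ct k_1^2/n_1)=\log(\Ct/(2e))>1/(2\log 2-1)$. You may wish to say this explicitly rather than leaving the split as a potential obstacle. Finally, note the lemma statement's hypothesis ``$\mu\leq M(c_\mu)$'' is a typo for ``$\mu^2\leq M(c_\mu)$'' (the paper's own proof, like yours, works with $\mu^2$), so your chain $\mu^2\leq M\leq\tilde g(\max\cA)\leq\min_{\cA}g$ is exactly what is needed before invoking Lemma~\ref{lemma:geom-series}.
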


\begin{proof}[Proof of Lemma~\ref{lem_lb_s1d1big_truncchi2}]
    Define $\cA = \{ 1 \vee \lceil\Ct \frac{k_1^2}{n_1} \rceil, ...k_1 \wedge \big\lfloor\frac{k_2}{\log\big(\frac{n_1k_2}{k_1^2}\big)} \big\rfloor \}$. Clearly $\cA \subset \bar{\cA}_{\Ct}$, and hence it suffices to show that $\mu^2 \leq \min_{k \in \cA}g(k)$ where $g(k)$ is defined as in (\ref{eq:g}). Using the inequality $e^x \geq 1 + x $, we have 
    \begin{align*}
        g(k) &= \frac{1}{k}\log\Bigg(1 + \frac{n_2}{k_2}\bigg(\exp\Big[\frac{c_\mu k}{k_2}\log\big(\frac{kn_1}{2ek_1^2}\big)\Big] - 1\bigg)\Bigg) \\
        &\geq \frac{1}{k}\log\Bigg(1 + c_\mu\frac{n_2}{k^2_2}k\log\Big(\frac{kn_1}{2ek_1^2}\Big)\Bigg) \\
        &=: f^{(1)}(k).
    \end{align*}
    
%
%
    We define $\widetilde{\mathcal{A}} = \{ 1 \vee \lceil\Ct \frac{k_1^2}{n_1} \rceil, \dots \}$ and we will now show that $f^{(1)}(k)$ is decreasing in $k$ over the set $\widetilde{\mathcal{A}}$. To do so, it suffices to consider the function $f(k) = \frac{1}{k}\log\Big(c_\mu\frac{n_2}{k^2_2}k\log\big(\frac{kn_1}{2ek_1^2}\big)\Big)$ and apply Lemma \ref{lemma:log-plus-one}. Direct calculation reveals
    \[\frac{\D f}{\D k}(k) = -\frac{\log\big(c_\mu\frac{n_2}{k_2^2}\big)}{k^2} - \frac{\log(k)}{k^2} + \frac{1}{k^2} - \frac{\log \log \big(\frac{kn_1}{2ek_1^2}\big)}{k^2} + \frac{1}{k^2\log\big(\frac{kn_1}{2ek_1^2}\big)}.\]
    The right-hand side is less than zero if   
    \[1 + \frac{1}{\log\big(\frac{kn_1}{2ek_1^2}\big)} < \log\Big(c_\mu\frac{n_2k}{k_2^2} \log \big(\frac{kn_1}{2ek_1^2}\big)\Big).\]
    
    For $k \in \widetilde{\mathcal{A}}$, it holds 
    \begin{align*}
        \log\Big(c_\mu\frac{n_2k}{k_2^2}\log \big(\frac{kn_1}{2ek_1^2}\big)\Big) 
        &\geq \log\Big(c_\mu \Ct \frac{n_2k_1^2}{k_2^2n_1}\log \big(\frac{\Ct}{2e}\big)\Big) \\
        &\geq \log\Big(\frac{c_\mu C_*}{2e} \log \big(\frac{\Ct}{2e}\big)\Big)\\
        &\geq 2
    \end{align*}
    where the second inequality uses the assumption $\frac{k_2^2}{n_2} \leq 2e\frac{k_1^2}{n_1}$ and the third holds as long as $\Ct$ is taken sufficiently large. Furthermore, for $k \in \widetilde{\mathcal{A}}$ we have 

    \begin{align*}
        1 + \frac{1}{\log \big(\frac{kn_1}{2ek_1^2}\big)} &\leq 1 + \frac{1}{\log\big(\frac{\Ct}{2e}\big)} \\
        &< 2.
    \end{align*}

    Therefore, $f(k)$ is decreasing over $\widetilde{\mathcal{A}}$, and by Lemma \ref{lemma:log-plus-one}, $f^{(1)}(k)$ is decreasing over $\widetilde{\mathcal{A}}$ as well. 
    We also observe that 
    \begin{align*}
        M = \begin{cases} f^{(1)}(k_1) & \text{ if } k_1 \leq \frac{k_2}{\log(\frac{n_1k_2}{k_1^2})}\\
        f^{(1)}(\frac{k_2}{\log(\frac{n_1k_2}{k_1^2})}) & \text{ otherwise,}
        \end{cases}
    \end{align*}
    that is, $M= f^{(1)}\Big(k_1 \land \frac{k_2}{\log(\frac{n_1k_2}{k_1^2})}\Big)$. 
    Moreover, since $f^{(1)}$ is decreasing over $\widetilde{\mathcal{A}}$, we also have
    \begin{align*}
        M &= \max\bigg(f^{(1)}(k_1), ~f^{(1)}\bigg(\frac{k_2}{\log\big(\frac{n_1k_2}{k_1^2}\big)}\bigg)\bigg) \\
        &\geq \frac{1}{2} \left[f^{(1)}(k_1) + ~f^{(1)}\bigg(\frac{k_2}{\log\big(\frac{n_1k_2}{k_1^2}\big)}\bigg)\right]\\
        & = \frac{1}{2} \left[\frac{1}{k_1}\log\Big(1 + c_\mu \frac{n_2k_1}{k_2^2}\log(\frac{n_1}{2ek_1})\Big) + \frac{\log\big(\frac{n_1k_2}{k_1^2}\big)}{k_2}\log\Big(1 + c_\mu \frac{n_2}{k_2\log\big(\frac{n_1k_2}{k_1^2}\big)}\log\big(\frac{n_1k_2}{2ek_1^2\log(\frac{n_1k_2}{k_1^2})}\big)\Big)\right]
    \end{align*}
    as claimed. 
    Using our assumed upper bound on $\mu^2$, we have 
    \begin{align*}
        \mu^2 &\leq M =  f^{(1)}\bigg(k_1 \land \frac{k_2}{\log(\frac{n_1k_2}{k_1^2})}\bigg) \\
        &\leq \min_{k \in \cA}f^{(1)}(k) \\
        &\leq \min_{k\in \cA}g(k).
    \end{align*}
    We conclude the proof by invoking Lemma \ref{lemma:geom-series}. 
\end{proof}

\begin{lemma}\label{lem_lb_s2d2big_maxlin}
    Suppose that $k_1^2 \leq \bar{c}n_1k_2$ for some $\bar{c} \in (0, (2e)^{-4})$ and $k_1 < \frac12 n_1$. 
    Furthermore, suppose that $\frac{k^2_2}{n_2} \geq 2e\frac{k_1^2}{n_1}$ . Then for any $\alpha > 0$, there exist constants $c'_{\mu} > 0$ and $\Ct > 0$ such that if $1 \vee \lceil \Ct \frac{k_1^2}{n_1}\rceil \leq \big\lfloor \frac{\Ct k_2^2/n_2}{\log\big(\frac{n_1k_2^2}{k_1^2n_2}\big)}\big\rfloor\wedge k_1$ and
    \[\mu^2 \leq c'_\mu \frac{n_2}{k_2^2}\log\big(\frac{\Ct}{2e}\vee \frac{n_1}{2ek^2_1}\big),\]
    then
    \[\EE\left[\exp\big(\mu^2XY\big)\one\bigg(1 \vee \Ct \frac{k_1^2}{n_1} \leq X \leq \frac{\Ct k_2^2/n_2}{\log\big(\frac{n_1k_2^2}{k_1^2n_2}\big)}\wedge k_1\bigg)\right] < \alpha.\]
\end{lemma}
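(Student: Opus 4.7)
The plan is to reduce the problem to an application of Lemma~\ref{lemma:geom-series}. Define
\[
\cA = \left\{1 \vee \lceil \Ct k_1^2/n_1 \rceil,\ \ldots,\ \bigl\lfloor \Ct k_2^2/(n_2 \log(n_1k_2^2/(k_1^2 n_2))) \bigr\rfloor \wedge k_1\right\},
\]
so that $\cA \subset \bar{\cA}_{\Ct}$ by construction, and the expectation in the statement equals $\EE[\exp(\mu^2 XY)\one(X \in \cA)]$. By Lemma~\ref{lemma:geom-series}, it then suffices to choose $\Ct \geq 1$ and $c_\mu > 0$ so that $\mu^2 \leq \min_{k \in \cA} g(k)$, where $g$ is the function from \eqref{eq:g}; note that $\Ct$ may be enlarged and $c_\mu$ shrunk beyond the thresholds supplied by that lemma without affecting its conclusion.

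I would first use $e^x - 1 \geq x$ to lower bound $g(k) \geq \frac{1}{k}\log(1 + u(k))$ with $u(k) := \frac{c_\mu n_2 k}{k_2^2}\log\!\bigl(\tfrac{kn_1}{2ek_1^2}\bigr)$, and then verify that $u(k) \leq 1$ uniformly over $k \in \cA$ once $c_\mu$ is small enough depending on $\Ct$. Using $k \leq \Ct k_2^2/n_2$ on $\cA$, the resulting bound $\tfrac{kn_1}{2ek_1^2} \leq \tfrac{\Ct n_1 k_2^2}{2ek_1^2 n_2}$, and the hypothesis $k_2^2/n_2 \geq 2e k_1^2/n_1$ (which yields $\log(n_1 k_2^2/(k_1^2 n_2)) \geq \log(2e)$), one obtains
\[
u(k) \ \leq\ c_\mu \Ct \frac{\log\!\bigl(\tfrac{\Ct n_1 k_2^2}{2ek_1^2 n_2}\bigr)}{\log\!\bigl(\tfrac{n_1 k_2^2}{k_1^2 n_2}\bigr)} \ \leq\ c_\mu \Ct \left(1 + \frac{\log(\Ct/(2e))}{\log(2e)}\right).
\]
Shrinking $c_\mu$ so this quantity is at most $1$, the inequality $\log(1+u(k)) \geq u(k)/2$ for $u(k)\in[0,1]$ then yields $g(k) \geq \frac{c_\mu n_2}{2k_2^2}\log\!\bigl(\tfrac{kn_1}{2ek_1^2}\bigr)$.

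The final step is to lower bound $\log(kn_1/(2ek_1^2))$ uniformly over $\cA$ by $\log\!\bigl(\tfrac{\Ct}{2e} \vee \tfrac{n_1}{2ek_1^2}\bigr)$. When $\Ct k_1^2/n_1 \geq 1$, one has $\Ct \geq n_1/k_1^2$, and every $k \in \cA$ satisfies $k \geq \Ct k_1^2/n_1$, giving $\log(\Ct/(2e))$; when $\Ct k_1^2/n_1 < 1$, one has $\Ct < n_1/k_1^2$, and every $k \in \cA$ satisfies $k \geq 1$, giving $\log(n_1/(2ek_1^2))$; in either regime the displayed max is the correct uniform lower bound. Setting $c'_\mu = c_\mu/2$, the hypothesis $\mu^2 \leq c'_\mu \tfrac{n_2}{k_2^2}\log\!\bigl(\tfrac{\Ct}{2e} \vee \tfrac{n_1}{2ek_1^2}\bigr)$ then guarantees $\mu^2 \leq g(k)$ for every $k \in \cA$, and Lemma~\ref{lemma:geom-series} concludes. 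The main subtlety is the order in which constants are fixed: $\Ct$ is chosen first, large enough for both the geometric-series bound of Lemma~\ref{lemma:geom-series} and for cleanly controlling the ratio of logarithms above; only then is $c_\mu$ shrunk to enforce $u(k) \leq 1$ throughout $\cA$.
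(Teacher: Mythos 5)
Your proposal is correct and follows essentially the same strategy as the paper: define the same index set $\cA$, reduce to showing $\mu^2 \leq \min_{k\in\cA} g(k)$ via Lemma~\ref{lemma:geom-series}, lower bound $g$ by pulling the factor $\tfrac{c_\mu n_2 k}{k_2^2}\log(\tfrac{kn_1}{2ek_1^2})$ out of the logarithm (after verifying it stays $O(1)$ on $\cA$), and finish by observing that the resulting function of $k$ is minimized at the left endpoint of $\cA$, where it evaluates to $\log\bigl(\tfrac{\Ct}{2e}\vee\tfrac{n_1}{2ek_1^2}\bigr)$ in both of the two regimes determined by whether $\Ct k_1^2/n_1 \geq 1$ or not. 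The only cosmetic differences are that the paper uses the slightly tighter upper bound on $k$ (keeping the $\log$ in the denominator) when controlling the argument of the logarithm and then invokes the inequality $\log(x/\log x)/\log x \leq 1$, whereas you drop that denominator up front and instead split the ratio as $1 + \log(\Ct/2e)/\log(2e)$; and the paper phrases the final $\log(1+u)\gtrsim u$ step via an unspecified constant $c\in(0,1)$, whereas you force $u(k)\leq 1$ and use $\log(1+u)\geq u/2$. Both are fine, and your version makes the case analysis at the endpoint and the order in which $\Ct$ and $c_\mu$ are fixed more explicit.
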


\begin{proof}[Proof of Lemma~\ref{lem_lb_s2d2big_maxlin}]
    Let $g(k)$ be defined as in (\ref{eq:g}). For $\cA = \{1 \vee \lceil \Ct \frac{k_1^2}{n_1}\rceil, \dots ,  \Big\lfloor \frac{k_2^2/n_2}{\log\big(\frac{n_1k_2^2}{k_1^2n_2}\big)}\Big\rfloor\wedge k_1\}$, it suffices to show that $\mu^2 \leq \min_{k\in\cA}g(k)$. Using $e^x \geq x + 1,\, \forall x \in \R$, we have 
    \begin{align*}
        g(k) &= \frac{1}{k}\log\Bigg(1 + \frac{n_2}{k_2}\bigg(\exp\Big[\frac{c_\mu k}{k_2}\log\big(\frac{kn_1}{2ek_1^2}\big)\Big] - 1\bigg)\Bigg) \\
        &\geq \frac{1}{k}\log\Bigg(1 + c_\mu\frac{n_2}{k^2_2}k\log\Big(\frac{kn_1}{2ek_1^2}\Big)\Bigg).
    \end{align*}
    For $k \in \cA$,  we can control the term within the logarithm as
    \begin{align*}
        c_\mu\frac{n_2}{k^2_2}k\log\Big(\frac{kn_1}{2ek_1^2}\Big) 
        &\leq c_\mu \Ct \frac{\log\Big(\frac{\Ct n_1k_2^2}{2ek_1^2n_2\log\big(n_1k_2^2/(k_1^2n_2)\big)} \Big)}{\log\big(\frac{n_1k_2^2}{k_1^2n_2}\big)} \\
        &\leq c_\mu\Ct \big(\log (\Ct) + 1 \big) \\
        &< 2c_\mu\Ct \log (\Ct)
    \end{align*}
    where the second inequality follows from $\log(x/\log(x))/\log(x) \leq 1$ for $x \geq 2e$, and the final inequality holds for $\Ct > e$. Therefore, there exists a constant $c \in (0,1)$ which depends on $c_\mu$ and $\Ct$ such that 
    \begin{align*}
        g(k) &\geq \frac{1}{k}\log\Bigg(1 + c_\mu\frac{n_2}{k^2_2}k\log\Big(\frac{kn_1}{2ek_1^2}\Big)\Bigg) \\
        &\geq cc_\mu \Ct \frac1k\frac{n_2}{k_2^2}k\log\big(\frac{kn_1}{2ek_1^2}\big) \\
        &= c'_\mu\frac{n_2}{k_2^2}\log\big(\frac{kn_1}{2ek_1^2}\big) \\
        &=: f(k)
    \end{align*}
    for $c'_\mu = c c_\mu \Ct$. The function $f(k)$ is clearly increasing in $k$, and hence is minimized at the smallest value of $k$ in $\cA$. 
    Now for $\mu^2 \leq c'_\mu \frac{n_2}{k_2^2}\log\big(\frac{\Ct}{2e}\vee \frac{n_1}{2ek^2_1}\big)$, we have
    \begin{align*}
        \mu^2 &\leq c'_\mu \frac{n_2}{k_2^2}\log\big(\frac{\Ct}{2e}\vee \frac{n_1}{2ek^2_1}\big) \\
        &\leq \min_{k \in \cA}f(k) \\
        &\leq \min_{k \in \cA}g(k).
    \end{align*}
    We apply Lemma \ref{lemma:geom-series} to complete the proof.
\end{proof}
\begin{lemma}\label{lem_lb_s2d2big_truncchi2}
    Suppose that $k_1^2 \leq \bar{c}n_1k_2$ for some $\bar{c} \in (0,(2e)^{-4})$ and $k_1 \leq \frac{1}{2e}n_1$. Furthermore, suppose that $\frac{k^2_2}{n_2} \geq 2e\frac{k_1^2}{n_1}$. Then for any $\alpha > 0$, there exist constants $c_\mu > 0$ and $C_* \geq 1$ such that if $1 \vee \Ct \frac{k_1^2}{n_1} \vee  \frac{\Ct k_2^2/n_2}{\log\big(\frac{n_1k_2^2}{k_1^2n_2}\big)} \leq k_1\wedge\frac{k_2}{\log\big(\frac{n_1k_2}{k_1^2}\big)}$ and
    $$\mu^2 \leq \frac12\bigg(\frac{1}{k_1}\log\Big(1 + c_\mu \frac{n_2k_1}{k_2^2}\log(\frac{n_1}{2ek_1})\Big) + \frac{\log\big(\frac{n_1k_2}{k_1^2}\big)}{k_2}\log\Big(1 + c_\mu \frac{n_2}{k_2\log\big(\frac{n_1k_2}{k_1^2}\big)}\log\big(\frac{n_1k_2}{2ek_1^2\log(\frac{n_1k_2}{k_1^2})}\big)\Big)\bigg),$$
    then 
    \[\EE\bigg[\exp(\mu^2XY)\one\bigg( 1 \vee \Ct \frac{k_1^2}{n_1} \vee  \frac{\Ct k_2^2/n_2}{\log\big(\frac{n_1k_2^2}{k_1^2n_2}\big)} \leq X \leq k_1\wedge \frac{k_2}{\log\big(\frac{n_1k_2}{k_1^2}\big)}\bigg)\bigg] < \alpha.\]
\end{lemma}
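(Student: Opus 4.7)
The plan is to mirror the analysis of Lemma \ref{lem_lb_s1d1big_truncchi2} but in the opposite density regime $k_2^2/n_2 \geq 2e\, k_1^2/n_1$. First, I would define
\[
\cA = \Bigl\{k \in \mathbb{N} : 1 \vee \bigl\lceil \Ct\tfrac{k_1^2}{n_1}\bigr\rceil \vee \Bigl\lceil\tfrac{\Ct k_2^2/n_2}{\log(n_1 k_2^2/(k_1^2 n_2))}\Bigr\rceil \leq k \leq k_1 \wedge \Bigl\lfloor\tfrac{k_2}{\log(n_1 k_2/k_1^2)}\Bigr\rfloor\Bigr\},
\]
observe that $\cA \subset \bar{\cA}_{\Ct}$, and use Lemma \ref{lemma:geom-series} to reduce the task to showing $\mu^2 \leq \min_{k\in\cA} g(k)$ with $g$ as in \eqref{eq:g}. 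The inequality $e^x \geq 1+x$ then gives $g(k) \geq f^{(1)}(k) := \tfrac{1}{k}\log\bigl(1 + c_\mu \tfrac{n_2 k}{k_2^2} \log\bigl(\tfrac{kn_1}{2ek_1^2}\bigr)\bigr)$, so it suffices to bound $\mu^2$ by $\min_{k\in\cA} f^{(1)}(k)$.

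The main obstacle is showing that $f^{(1)}$ is monotonically decreasing on $\cA$, because the analogous step in Lemma \ref{lem_lb_s1d1big_truncchi2} crucially used $k_2^2/n_2 \leq 2e\, k_1^2/n_1$ to lower bound $c_\mu (n_2 k/k_2^2) \log(kn_1/(2ek_1^2))$, a strategy that is unavailable here. The remedy is to exploit the additional constraint $k \geq \Ct(k_2^2/n_2)/\log(n_1 k_2^2/(k_1^2 n_2))$ built into $\cA$. Setting $u := n_1 k_2^2/(k_1^2 n_2) \geq 2e$, this yields
\[
c_\mu \tfrac{n_2 k}{k_2^2} \log\Bigl(\tfrac{kn_1}{2ek_1^2}\Bigr) \;\geq\; \tfrac{c_\mu \Ct}{\log u}\, \log\Bigl(\tfrac{\Ct u}{2e \log u}\Bigr) \;=\; c_\mu \Ct\Bigl(1 - \tfrac{\log \log u}{\log u} + \tfrac{\log(\Ct/(2e))}{\log u}\Bigr).
\]
A short analysis (using that $x \mapsto \log x / x$ attains its maximum $1/e$ at $x=e$, applied with $x = \log u$) gives $1 - (\log\log u)/\log u \geq 1 - 1/e$ for all $u \geq 2e$. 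Thus by taking $\Ct$ sufficiently large the above quantity exceeds $e^2$, which recovers the estimate $\log(c_\mu (n_2 k/k_2^2)\log(kn_1/(2ek_1^2))) \geq 2$ used in Lemma \ref{lem_lb_s1d1big_truncchi2}. The identical derivative computation together with Lemma \ref{lemma:log-plus-one} then delivers the desired monotonicity of $f^{(1)}$ on $\cA$.

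Finally, since $f^{(1)}$ is decreasing on $\cA$, the minimum is attained at the largest element $k_{\max} = k_1 \wedge \lfloor k_2/\log(n_1 k_2/k_1^2)\rfloor$, and monotonicity gives $f^{(1)}(k_{\max}) = \max\bigl(f^{(1)}(k_1),\, f^{(1)}(k_2/\log(n_1 k_2/k_1^2))\bigr)$. Substituting into the definition of $f^{(1)}$ shows that $f^{(1)}(k_1)$ and $f^{(1)}(k_2/\log(n_1 k_2/k_1^2))$ are exactly the two summands appearing in the hypothesized bound on $\mu^2$; combining with the elementary inequality $\max(a,b) \geq (a+b)/2$ then yields $\mu^2 \leq f^{(1)}(k_{\max}) \leq \min_{k\in\cA} g(k)$. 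Invoking Lemma \ref{lemma:geom-series} with $c_\mu$ sufficiently small and $\Ct$ sufficiently large completes the proof.
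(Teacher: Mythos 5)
Your proposal is correct and mirrors the paper's proof essentially step for step: same index set $\cA$, same reduction to $\min_{k\in\cA}g(k)$ via Lemma~\ref{lemma:geom-series}, same lower bound $g\geq f^{(1)}$, the same use of the constraint $k\geq \Ct(k_2^2/n_2)/\log(n_1k_2^2/(k_1^2n_2))$ to establish monotonicity via Lemma~\ref{lemma:log-plus-one}, and the same $\max(a,b)\geq(a+b)/2$ trick at the end (the paper bounds $\log\log u/\log u$ by $1/2$ rather than your sharper $1/e$, but this is immaterial). One cosmetic note: because of the floor in $k_{\max}$, you only have $f^{(1)}(k_{\max})\geq\max\bigl(f^{(1)}(k_1),f^{(1)}(k_2/\log(\cdot))\bigr)$ rather than equality, but this inequality is all that is needed.
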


\begin{proof}[Proof of Lemma~\ref{lem_lb_s2d2big_truncchi2}]
    The structure of this proof is similar to that of Lemma \ref{lem_lb_s1d1big_truncchi2}. Define $\cA = \{ 1 \vee \lceil\Ct \frac{k_1^2}{n_1} \rceil \vee \Big\lceil \frac{\Ct k_2^2/n_2}{\log\big(\frac{n_1k_2^2}{k_1^2n_2}\big)}\Big\rceil, \dots , k_1 \wedge \lfloor\frac{k_2}{\log\big(\frac{n_1k_2}{k_1^2}\big)} \rfloor \}$. With $g(k)$ defined as in (\ref{eq:g}), we have 

    \begin{align*}
        g(k) &= \frac{1}{k}\log\Bigg(1 + \frac{n_2}{k_2}\bigg(\exp\Big[\frac{c_\mu k}{k_2}\log\big(\frac{kn_1}{2ek_1^2}\big)\Big] - 1\bigg)\Bigg) \\
        &\geq \frac{1}{k}\log\Bigg(1 + c_\mu\frac{n_2}{k^2_2}k\log\Big(\frac{kn_1}{2ek_1^2}\Big)\Bigg) \\
        &=: f^{(1)}(k).
    \end{align*}

    As in the proof of Lemma \ref{lem_lb_s1d1big_truncchi2}, will now show that $f^{(1)}(k)$ is decreasing in $k$ over the set $\cA$. It suffices to consider the function $f(k) =: \frac{1}{k}\log\Big(c_\mu\frac{n_2}{k^2_2}k\log\big(\frac{kn_1}{2ek_1^2}\big)\Big)$ and apply Lemma \ref{lemma:log-plus-one}. By direct calculation, we have
    \[\frac{\D f}{\D k}(k) = -\frac{\log\big(c_\mu\frac{n_2}{k_2^2}\big)}{k^2} - \frac{\log(k)}{k^2} + \frac{1}{k^2} - \frac{\log \log \big(\frac{kn_1}{2ek_1^2}\big)}{k^2} + \frac{1}{k^2\log\big(\frac{kn_1}{2ek_1^2}\big)}.\]
    The right hand side is less than zero if 
    \[1 + \frac{1}{\log\big(\frac{kn_1}{2ek_1^2}\big)} < \log\Big(c_\mu\frac{n_2k}{k_2^2}\log \big(\frac{kn_1}{2ek_1^2}\big)\Big)\]
    For $k \in \cA$, it holds 
    \begin{align*}
        \log\Big(c_\mu\frac{n_2k}{k_2^2} \log \big(\frac{kn_1}{2ek_1^2}\big)\Big) 
        &\geq \log\Bigg(\frac{c_\mu \Ct}{\log\big(\frac{n_1k_2^2}{k_1^2n_2}\big)} \log \Big(\frac{\Ct k_2^2n_1}{2ek_1^2n_2\log\big(\frac{k_2^2n_1}{k_1^2n_2}\big)}\Big)\Bigg) \\
        &\geq \log\Big(\frac{c_\mu \Ct }{2} \Big) \\
        &\geq 2
    \end{align*}
    where the second inequality uses that $\frac{\log \log x}{\log x} < \frac12$ for $x > 1$ to conclude 
    $$\frac{\log\big(\frac{\Ct k_2^2n_1}{2en_2k_1^2}\big) - \log \log \big(\frac{k_2^2n_1}{n_2k_1^2}\big)}{\log \big(\frac{n_1k_2^2}{k_1^2n_2}\big)} \geq \frac12,$$
    and the final inequality holds as long as $\Ct$ is taken sufficiently large such that $\Ct \geq 2e^2 / c_\mu$. Furthermore, for $k \in \cA$ we have 

    \begin{align*}
        1 + \frac{1}{\log \big(\frac{kn_1}{2ek_1^2}\big)} &\leq 1 + \frac{1}{\log\big(\frac{\Ct}{2e}\big)} \\
        &< 2
    \end{align*}

    Therefore, $f(k)$ is decreasing over $\cA$, and by Lemma \ref{lemma:log-plus-one}, $f^{(1)}(k)$ is decreasing over $\cA$ as well. Using our assumed upper bound on $\mu^2$, we have 
    \begin{align*}
        \mu^2 &\leq \frac12\bigg(\frac{1}{k_1}\log\Big(1 + c_\mu \frac{n_2k_1}{k_2^2}\log(\frac{n_1}{2ek_1})\Big) + \frac{\log\big(\frac{n_1k_2}{k_1^2}\big)}{k_2}\log\Big(1 + c_\mu \frac{n_2}{k_2\log\big(\frac{n_1k_2}{k_1^2}\big)}\log\big(\frac{n_1k_2}{2ek_1^2\log(\frac{n_1k_2}{k_1^2})}\big)\Big)\bigg) \\
        &= \frac12\Big(f^{(1)}(k_1) + f^{(1)}\big(\frac{k_2}{\log(\frac{n_1k_2}{k_1^2})}\big)\Big) \\
        &\leq \max\Big(f^{(1)}(k_1), f^{(1)}\big(\frac{k_2}{\log(\frac{n_1k_2}{k_1^2})}\big)\Big) \\
        &\leq \min_{k \in \cA}f^{(1)}(k) \\
        &\leq \min_{k\in \cA}g(k).
    \end{align*}
    We conclude the proof by invoking Lemma~\ref{lemma:geom-series}. 
\end{proof}

\begin{lemma}\label{lem_lb_dense_allrates}
    Suppose that $k_1^2 \geq (2e)^{-4}n_1k_2$ and that $k_1 < c'n_1$ for some small enough constant $c'>0$.
    Then for any $\alpha > 0$, there exist constants $c_\mu > 0$ and $\Ct \geq 1$ such that the following claims hold. In what follows, we define $c = 1 - \exp\Big(-c_\mu \Ct (2e)^{-4}\log \Big(\frac{\Ct}{2e}\Big)\Big)$.

    \begin{enumerate}
        \item Suppose that $c_\mu^{-1}k_2\log\big(c\frac{n_2}{k_2}\big) \leq \Ct \frac{k_1^2}{n_1}$. Then if 
        \[\mu^2 < \Ct^{-1}\frac{n_1}{k_1^2}\log\Big(c \frac{n_2}{k_2}\Big) + \frac{c_\mu}{k_2}\log\Big(\frac{\Ct}{2e}\Big),\]
         it holds 
        \[\EE\Big[\exp(\mu^2XY)\one\big(X \geq \lceil  \Ct \frac{k_1^2}{n_1}\rceil\big)\Big] < \alpha.\]
        \item Suppose that $\Ct \frac{k_1^2}{n_1} < c_\mu^{-1}k_2\log\big(c\frac{n_2}{k_2}\big) \leq k_1$. Then if
        \[\mu^2 < \frac{c_\mu}{k_2}\log\Big(\frac{k_2n_1\log(c \frac{n_2}{k_2})}{c_\mu 2e k_1^2}\Big),\]
        it holds 
        \[\EE\Big[\exp(\mu^2XY)\one\big(X \geq \lceil  \Ct \frac{k_1^2}{n_1}\rceil\big)\Big] < \alpha.\]
        \item Suppose that $k_1 < c_\mu^{-1}k_2\log\big(c\frac{n_2}{k_2}\big)$.Then if
        \[\mu^2 < \frac{1}{k_1}\log\big(c \frac{n_2}{k_2}\big) + \frac{c_\mu}{k_2}\log\big(\frac{n_1}{2ek_1}\big),\]
        it holds 
        \[\EE\Big[\exp(\mu^2XY)\one\big(X \geq \lceil  \Ct \frac{k_1^2}{n_1}\rceil\big)\Big] < \alpha.\]
    \end{enumerate}
\end{lemma}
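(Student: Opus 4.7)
The plan is to apply Lemma~\ref{lemma:geom-series} with the index set $\cA := \{\lceil C_* k_1^2/n_1 \rceil, \ldots, k_1\}$, which is contained in $\bar{\cA}_{C_*}$ from~(\ref{eq_def_A_C}) and exactly captures the event $\{X \geq \lceil C_* k_1^2/n_1\rceil\}$ since $X \leq k_1$ almost surely. That lemma reduces the task to verifying $\mu^2 \leq \min_{k \in \cA} g(k)$, where
$$g(k) := \frac{1}{k}\log\!\left(1 + \frac{n_2}{k_2}\Big(\exp[h(k)] - 1\Big)\right) \qquad\text{and}\qquad h(k) := \frac{c_\mu k}{k_2}\log\!\left(\frac{kn_1}{2ek_1^2}\right).$$
The key first step is to replace $g$ by a simpler lower bound. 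For $k \in \cA$, the very-dense assumption $k_1^2 \geq (2e)^{-4}n_1 k_2$ combined with $k \geq C_* k_1^2/n_1$ forces $h(k) \geq c_\mu C_*(2e)^{-4}\log(C_*/(2e))$, which is strictly positive and can be made as large as desired by taking $C_*$ large. Writing $e^y - 1 = (1 - e^{-y})e^y$ then yields $\exp(h(k)) - 1 \geq c\,\exp(h(k))$ with $c$ exactly the constant appearing in the statement, and applying $\log(1+y) \geq \log(y)$ for positive $y$ gives
$$g(k) \;\geq\; f(k) \;:=\; \frac{\log(c\, n_2/k_2)}{k} + \frac{c_\mu}{k_2}\log\!\left(\frac{k n_1}{2e k_1^2}\right).$$
It therefore suffices to verify $\mu^2 \leq \min_{k \in \cA} f(k)$ in each of the three cases.

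Minimizing $f$ is a routine calculus exercise. Differentiating gives $f'(k) = -\log(cn_2/k_2)/k^2 + c_\mu/(k k_2)$, so whenever $cn_2/k_2 > 1$ (which is implicit in all non-trivial regimes of the statement) $f$ has a unique continuous minimizer $k^\star := c_\mu^{-1}k_2\log(cn_2/k_2)$ and is strictly decreasing on $(0,k^\star]$, strictly increasing on $[k^\star,\infty)$. The three cases in the lemma correspond exactly to the three possible locations of $k^\star$ relative to the interval $[C_* k_1^2/n_1, k_1]$. In Case~1, $k^\star \leq C_* k_1^2/n_1$, so $f$ is increasing on $\cA$ and $\min_\cA f \geq f(C_* k_1^2/n_1) = C_*^{-1}(n_1/k_1^2)\log(cn_2/k_2) + (c_\mu/k_2)\log(C_*/(2e))$. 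In Case~3, $k^\star > k_1$, so $f$ is decreasing on $\cA$ and $\min_\cA f = f(k_1) = k_1^{-1}\log(cn_2/k_2) + (c_\mu/k_2)\log(n_1/(2ek_1))$. In both extremal cases these expressions coincide with the stated upper bound on $\mu^2$, so $\mu^2 \leq \min_\cA f \leq \min_\cA g$ and Lemma~\ref{lemma:geom-series} yields the desired tail bound.

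Case~2 is where I expect the main (minor) obstacle. When $k^\star \in (C_* k_1^2/n_1, k_1]$, the continuous minimum $f(k^\star) \leq \min_\cA f$ is attained in the interior of the range defining $\cA$. A direct computation gives $f(k^\star) = (c_\mu/k_2)\bigl[1 + \log\bigl(k_2 n_1 \log(cn_2/k_2)/(2e c_\mu k_1^2)\bigr)\bigr]$, which exceeds the stated hypothesis on $\mu^2$ by exactly the additive slack $c_\mu/k_2$. Thus the hypothesis still gives $\mu^2 < f(k^\star) \leq \min_\cA f$ a fortiori, and Lemma~\ref{lemma:geom-series} concludes. The density assumption is used only in the reduction $g \geq f$ in the first paragraph (to keep $h(k)$ uniformly bounded away from zero on $\cA$), and the constant $c$ in the statement arises directly from the identity $e^y - 1 = (1 - e^{-y})e^y$ applied at $y = h(k)$.
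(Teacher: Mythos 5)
Your proof is correct and matches the paper's argument essentially line for line: the same reduction to Lemma~\ref{lemma:geom-series} over the index set $\cA = \{\lceil C_* k_1^2/n_1\rceil,\dots,k_1\}$, the same simplification $g(k) \geq f(k)$ via $e^{h(k)}-1 \geq c\,e^{h(k)}$ using the very-dense lower bound on $h(k)$, the same minimizer $k^\star = c_\mu^{-1}k_2\log(cn_2/k_2)$, and the same three-way case analysis by location of $k^\star$ relative to $[C_*k_1^2/n_1,k_1]$. Your remark in Case~2 that $f(k^\star)$ exceeds the stated threshold by the harmless slack $c_\mu/k_2$ is also exactly the paper's reasoning, so there is no obstacle there.
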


\begin{proof}
    Define $\cA = \{\lceil  \Ct \frac{k_1^2}{n_1}\rceil, ..., k_1 \}$ and $g$ as in (\ref{eq:g}). For $k \in \cA$, it holds 
    \begin{align*}
        \frac{c_\mu k}{k_2}\log\Big(\frac{kn_1}{2ek_1^2}\Big) &\geq \frac{c_\mu \Ct k_1^2}{k_2n_1}\log\Big(\frac{\Ct}{2e}\Big) \\
        &\geq c_\mu \Ct (2e)^{-4}\log \Big(\frac{\Ct}{2e}\Big) \\
        &> 0
    \end{align*}
    where the second inequality follows from the assumption $k_1^2 \geq (2e)^{-4}n_1k_2$ and the final inequality holds for $\Ct > 2e$. This implies that $\exp\Big(\frac{c_\mu k}{k_2}\log\Big(\frac{kn_1}{2ek_1^2}\Big)\Big)$ is bounded away from 1 for $k \in \cA$. Therefore, for $c = 1 - \exp\Big(-c_\mu \Ct (2e)^{-4}\log \Big(\frac{\Ct}{2e}\Big)\Big)$, it holds
    \begin{align*}
        g(k) &= \frac{1}{k}\log\Bigg(1 + \frac{n_2}{k_2}\bigg(\exp\Big[\frac{c_\mu k}{k_2}\log\big(\frac{kn_1}{2ek_1^2}\big)\Big] - 1\bigg)\Bigg)\\
        &\geq \frac{1}{k}\log\Bigg(1 + c\frac{n_2}{k_2}\exp\Big[\frac{c_\mu k}{k_2}\log\big(\frac{kn_1}{2ek_1^2}\big)\Big]\Bigg)\\
        &\geq \frac{1}{k}\log\Bigg(c\frac{n_2}{k_2}\exp\Big[\frac{c_\mu k}{k_2}\log\big(\frac{kn_1}{2ek_1^2}\big)\Big]\Bigg) \\
        &=: f(k).
    \end{align*}
    Therefore, it suffices to show that $\mu^2$ is at most the minimum of $f$ over $\cA$. We write $f(k)$ as 
    \[f(k) = \frac{1}{k}\log\Big(c\frac{n_2}{k_2}\Big) + \frac{c_\mu}{k_2}\log\Big(\frac{kn_1}{2ek_1^2}\Big).\]
    By direct calculation, we have
    \[\frac{\D f}{\D k}(k) = -\frac{\log\big(c\frac{n_2}{k_2}\big)}{k^2} + \frac{c_\mu}{k_2k}\]
    from which we deduce that $f$ is minimized at $k^* = c_\mu^{-1}k_2\log\big(c\frac{n_2}{k_2}\big)$ over $\R^+$, decreasing for $k < k^*$, and increasing for $k > k^*$. We now proceed by cases.

    \begin{enumerate}
        \item Suppose that $c_\mu^{-1}k_2\log\big(c\frac{n_2}{k_2}\big) \leq \Ct \frac{k_1^2}{n_1}$. Then $f$ is increasing over $\mathcal{A}$, and by our assumption on $\mu^2$, it holds
        \begin{align*}\mu^2 &< \Ct^{-1}\frac{n_1}{k_1^2}\log\Big(c \frac{n_2}{k_2}\Big) + \frac{c_\mu}{k_2}\log\Big(\frac{\Ct}{2e}\Big) \\
        &\leq \min_{k \in \cA}f(k) \\
        &\leq \min_{k \in \cA}g(k).
        \end{align*}
        \item Suppose that $\Ct \frac{k_1^2}{n_1} < c_\mu^{-1}k_2\log\big(c\frac{n_2}{k_2}\big) \leq k_1$. Then by our assumption on $\mu^2$, we have
        \begin{align*}
            \mu^2 &< \frac{c_\mu}{k_2}\log\Big(\frac{k_2n_1\log(c \frac{n_2}{k_2})}{c_\mu 2e k_1^2}\Big)\\
        &\leq \min_{k \in \cA}f(k) \\
        &\leq \min_{k \in \cA}g(k).
        \end{align*}
        \item Suppose that $k_1 < c_\mu^{-1}k_2\log\big(c\frac{n_2}{k_2}\big)$. Then $f$ is decreasing over $\mathcal{A}$ and is minimized at $k = k_1$. Therefore by our assumption on $\mu^2$, we have 
        \begin{align*}
            \mu^2 &< \frac{1}{k_1}\log\big(c \frac{n_2}{k_2}\big) + \frac{c_\mu}{k_2}\log\big(\frac{n_1}{2ek_1}\big)\\
        &= \min_{k \in \cA}f(k) \\
        &\leq \min_{k \in \cA}g(k).
        \end{align*}
    \end{enumerate}
    By Lemma \ref{lemma:geom-series}, the proof is complete.
\end{proof}

\subsubsection{Simplification of the rate}
For any $k_1,k_2, n_1, n_2 \in \mathbb N$, we recall the following quantities:
\begin{align}
    \psi(k_1,k_2,n_1,n_2) &= \frac{1}{k_1} \log\left(1+ \frac{n_2k_1}{k_2^2} \log\left(\frac{n_1}{k_1}\right)\right) \label{eq_def_psi}\\[10pt]
    \phi(k_1,k_2, n_1,n_2) &= \begin{cases}
        \frac{n_1}{k_1^2} \log\left(1+\frac{n_2}{k_2^2}\right)&  \text{ if } \frac{n_1}{k_1^2} \leq 1\\
         \infty & \text{ otherwise.}
    \end{cases}\label{eq_def_phi}\\[10pt]
    \beta(k_1,k_2,n_1,n_2) &= \frac{1}{k_1} \log\left(\frac{n_2}{k_2}\right) \mathbf{1}_{\left\{\frac{n_1k_2}{k_1^2} \log\left(\frac{n_2}{k_2}\right)>1\right\}}.\label{eq_def_nu}
\end{align}
To alleviate the notation, we will write
\begin{align*}
    \phi_{12} &= \phi(k_1,k_2, n_1,n_2)\\
    \phi_{21} &= \phi(k_2,k_1, n_2,n_1)\\
    \psi_{12} &= \psi(k_1,k_2, n_1,n_2)\\
    \psi_{21} &= \psi(k_2,k_1, n_2,n_1)\\
    \beta_{12} &= \beta(k_1,k_2, n_1,n_2)\\
    \beta_{21} &= \beta(k_2,k_1, n_2,n_1).
\end{align*}
Below, we show that the lower bound can be rewritten as
\[R := R(k_1, k_2, n_1, n_2) = \Big(\psi_{12} + \psi_{21}\Big) \land \phi_{12}  \land \phi_{21}.\]

The following lemma simplifies the rate emerging from Lemma ~\ref{lem_lb_dense_allrates}.
\begin{lemma}\label{lem_simplify_dense_allrates}
    Assume that $k_1^2 \geq \bar c n_1 k_2$ for some constant $\bar c>0$ and $k_j \leq c' n_j, \,\forall j \in \{1,2\}$ for some sufficiently small constant $c'>0$. 
    Assume also that $\frac{n_1}{k_1} \geq e\log\left(\frac{n_2}{k_2}\right)$. 
    \begin{enumerate}
        \item 
        Then the following two properties hold    
    \begin{align}
        & \Big(\psi_{12} + \psi_{21}\Big) \land \phi_{12}
        \asymp \psi_{21} \land \phi_{12}\label{eq_simplification_dense_case}\\
        & \psi_{21} \land \phi_{12}
        \asymp \Big(\psi_{21} + \beta_{12}\Big) \land \phi_{12}.\label{eq_simplification_dense_case_upper_bound}
    \end{align}
    \item
    It follows that, for any $\alpha>0$, there exists a constant $c_\mu>0$ such that, whenever $\mu^2 \leq c_\mu R$, we have
    \begin{align*}
    \EE\Big[\exp(\mu^2XY)\; \one\Big(X \geq \Ct \frac{k_1^2}{n_1}\Big)\Big] < \alpha.
    \end{align*}
    \item Moreover, it holds that $R \asymp \big(\psi_{21} + \beta_{12}\big) \land \big(\psi_{12} + \beta_{21}\big) \land \phi_{12} \land \phi_{21}$.
    \end{enumerate}
\end{lemma}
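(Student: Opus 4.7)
My plan is to prove the three claims in sequence, with Part 1 (the two rate equivalences) forming the technical heart of the argument.

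\textbf{Part 1.} The trivial direction $\psi_{21} \wedge \phi_{12} \leq (\psi_{12}+\psi_{21}) \wedge \phi_{12}$ holds since $\psi_{12} \geq 0$, and similarly for the second equivalence. For the nontrivial directions, it suffices to show
\begin{align*}
\psi_{12} \lesssim \psi_{21} \vee \phi_{12} \qquad \text{and} \qquad \beta_{12} \lesssim \psi_{21} \vee \phi_{12},
\end{align*}
which is enough since the minimum with $\phi_{12}$ prevents $\psi_{12}$ or $\beta_{12}$ from inflating the right-hand side. I will establish both inequalities by a case split on the size of $a := \frac{n_1 k_2}{k_1^2}\log\big(\frac{n_2}{k_2}\big)$, which under the hypothesis $k_1^2 \geq \bar c n_1 k_2$ satisfies $a \leq \bar c^{-1}\log\big(\frac{n_2}{k_2}\big)$. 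When $a \geq 1$, I use $\log(1+a) \asymp \log a$ to obtain $\psi_{21} \asymp \frac{1}{k_2}\log a$; the hypothesis $\frac{n_1}{k_1} \geq e\log\big(\frac{n_2}{k_2}\big)$ then lets me absorb $\psi_{12}$ and $\beta_{12}$ into a constant multiple of $\psi_{21}$ by comparing the arguments of the logarithms directly. When $a < 1$, I use $\log(1+a) \asymp a$ to write $\psi_{21} \asymp \frac{n_1}{k_1^2}\log\big(\frac{n_2}{k_2}\big)$, which I compare against $\phi_{12} \asymp \frac{n_1}{k_1^2}\log\big(1+\frac{n_2}{k_2^2}\big)$ using standard inequalities for $\log(1+x)$ depending on whether $n_2/k_2^2$ exceeds $1$.

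\textbf{Part 2.} Combining Part 1 with the definition $R = (\psi_{12}+\psi_{21}) \wedge \phi_{12} \wedge \phi_{21}$ gives $R \asymp \psi_{21} \wedge \phi_{12} \wedge \phi_{21}$. Given $\mu^2 \leq c_\mu R$, I verify the $\mu^2$-condition of Lemma~\ref{lem_lb_dense_allrates} in whichever of its three sub-cases applies, determined by the relative order of $c_\mu^{-1} k_2\log(c n_2/k_2)$, $\Ct k_1^2/n_1$, and $k_1$. In each sub-case, the bound on $\mu^2$ stated in Lemma~\ref{lem_lb_dense_allrates} is a constant multiple of $\phi_{12}$, of $\psi_{21}$, or of $\beta_{12} + \frac{1}{k_2}\log(n_1/k_1)$ (which matches $\psi_{21}$ in this regime via Part 1), so $\mu^2 \leq c_\mu R$ implies the required inequality after shrinking $c_\mu$ if necessary.

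\textbf{Part 3.} From Part 1 we have $R \asymp (\psi_{21}+\beta_{12}) \wedge \phi_{12} \wedge \phi_{21}$. Since adding a nonnegative term to a minimum can only decrease it, the bound $(\psi_{21}+\beta_{12}) \wedge (\psi_{12}+\beta_{21}) \wedge \phi_{12} \wedge \phi_{21} \leq R$ holds automatically. For the reverse, it suffices to show $\psi_{12}+\beta_{21} \gtrsim (\psi_{21}+\beta_{12}) \wedge \phi_{12} \wedge \phi_{21}$, i.e., that $\psi_{12}+\beta_{21}$ cannot form a strictly smaller minimum. I prove this by an asymmetric version of the case analysis of Part 1, exploiting $k_1^2 \geq \bar c n_1 k_2$ and $\frac{n_1}{k_1} \geq e\log(n_2/k_2)$ to show that either $\phi_{12}$ or $\phi_{21}$ is already dominated by a constant multiple of $\psi_{12}+\beta_{21}$.

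The main obstacle is the case analysis in Part 1: correctly identifying when $\log(1+x)$ behaves like $x$ versus like $\log x$, and tracking how the hypothesis $\frac{n_1}{k_1} \geq e\log(n_2/k_2)$ interacts with these regimes across the four parameters $(k_1,k_2,n_1,n_2)$, is delicate, and the constants depend on $\bar c$ through several of these comparisons. A secondary challenge lies in Part 3, where verifying that $\psi_{12}+\beta_{21}$ is never pathologically small requires a similar case split and careful use of the asymmetric hypothesis on $k_1^2$.
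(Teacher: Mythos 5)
Your reduction "it suffices to show $\psi_{12} \lesssim \psi_{21} \vee \phi_{12}$" is not sufficient for the first equivalence. Take $\psi_{21}=1$, $\phi_{12}=\psi_{12}=K$ for large $K$: then $\psi_{12}\lesssim\psi_{21}\vee\phi_{12}$ holds with constant $1$, yet $(\psi_{12}+\psi_{21})\wedge\phi_{12}=K$ while $\psi_{21}\wedge\phi_{12}=1$, so the ratio is unbounded. What you actually need is the stronger dichotomy that in every admissible configuration \emph{either} $\psi_{12}\lesssim\psi_{21}$ \emph{or} $\phi_{12}\lesssim\psi_{21}$; your case analysis seems aimed at this but the written reduction is weaker.

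Your case split on $a=\frac{n_1k_2}{k_1^2}\log(n_2/k_2)\gtrless 1$ is too coarse. When $a\geq 1$, you claim to "absorb $\psi_{12}$ and $\beta_{12}$ into a constant multiple of $\psi_{21}$", but this absorption is only argued by the paper under the \emph{additional} condition $\frac{n_2}{k_2^2}>1$. The paper's proof in that sub-case hinges on a separate contradiction argument: assuming $k_1 > n_1/\log n_1$ together with $n_2/k_2>\sqrt{n_2}$ (which is exactly $n_2>k_2^2$) and the hypothesis $\frac{n_1}{k_1}\geq e\log(n_2/k_2)$ forces $\frac{n_1k_2}{k_1^2}\log(n_2/k_2)<1$, contradicting $a\geq 1$. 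Without $n_2>k_2^2$ this chain breaks, and without $k_1\leq n_1/\log n_1$ the bound on $\psi_{12}$ you need does not go through by a direct comparison of logarithm arguments. In the complementary sub-case $a\geq 1$ and $\frac{n_2}{k_2^2}\leq 1$, the paper does \emph{not} show $\psi_{12}\lesssim\psi_{21}$ but instead shows $\phi_{12}\lesssim\psi_{21}$ (via the chain $\psi_{21}\geq\frac{\log 2}{k_2}\geq\log(2)\frac{\bar c n_1}{k_1^2}\cdot\frac{n_2}{k_2^2}\geq\bar c\log(2)\widetilde\phi_{12}$), and then both minima collapse to $\phi_{12}$. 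Your sketch does not recognize this sub-case or this alternative route.

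You also never address the degenerate branch of the definition of $\phi_{12}$: when $\frac{n_1}{k_1^2}>C$, $\phi_{12}=\infty$ and the minimum with $\phi_{12}$ provides no cushion at all, so the inequality on $\psi_{12}$ must be absolute. The paper handles this up front by showing $\psi_{21}\wedge\phi_{12}\asymp\psi_{21}\wedge\widetilde\phi_{12}$ (using $k_2\leq 1/\bar c$ when $n_1/k_1^2>1$), and then works only with the finite $\widetilde\phi_{12}=\frac{n_1}{k_1^2}\log(1+n_2/k_2^2)$. Without this reduction, "the minimum with $\phi_{12}$ prevents $\psi_{12}$ from inflating" is simply false when $\phi_{12}=\infty$. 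These gaps — the imprecise reduction, the missing $\frac{n_2}{k_2^2}$ sub-case together with the $k_1\leq n_1/\log n_1$ contradiction argument, and the $\phi_{12}=\infty$ branch — need to be filled before the argument is complete. Your Parts 2 and 3 sketches are broadly compatible with the paper's strategy (routing through Lemma~\ref{lem_lb_dense_allrates} and Lemma~\ref{lem_psi_disappears} respectively), but they inherit the same unaddressed case distinctions from Part 1.
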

\begin{proof}[Proof of Lemma ~\ref{lem_simplify_dense_allrates}]
\phantom{}

\begin{enumerate}
    \item 
For any $k_1,k_2, n_1, n_2$, we let 
\begin{align*}
    \widetilde \phi_{12} = \frac{n_1}{k_1^2} \log\left(1+\frac{n_2}{k_2^2}\right).
\end{align*}
We start by showing that 
\begin{align*}
    \psi_{21} \land \phi_{12} \asymp \psi_{21} \land \widetilde \phi_{12}.
\end{align*}
This is clear if $\frac{n_1}{k_1^2} \leq 1$ by definition of $\phi_{12}$.  
Assume now that $\frac{n_1}{k_1^2} >1$, which implies that $\phi_{12} = \infty$ and $\psi_{21} \land \phi_{12} = \psi_{21}$. 
By the assumption $k_1^2 \geq \bar c n_1 k_2$, we obtain $k_2 \leq \frac{k_1^2}{n_1\bar c} \leq \frac{1}{\bar c}$. 
Therefore, we have
\begin{align*}
    \phi_{12} = \frac{n_1}{k_1^2} \log\left(1+\frac{n_2}{k_2^2}\right) &\geq \frac{n_1}{k_1^2} \log\left(1+\bar c^2 n_2\right)& \\
    &\geq \bar c^2\frac{n_1}{k_1^2} \log\left(1+n_2\right) &\text{ by Lemma~\ref{lem_logk_constants}.(i)}\\
    & \geq \bar c^2\log\left(1+\frac{n_1}{k_1^2} k_2\log\left(\frac{n_2}{k_2}\right)\right)  & \text{ by Lemma~\ref{lem_logk_constants}.(ii)}\\
    & \geq \bar c^2\frac{1}{k_2}\log\left(1+\frac{n_1}{k_1^2} k_2\log\left(\frac{n_2}{k_2}\right)\right) &\\
    & = \bar c^2 \psi_{21}.&
\end{align*}
Therefore, it follows that
\begin{align*}
    \psi_{21} \land \phi_{12} &\asymp \psi_{21} \land \widetilde \phi_{12}\\
    \Big(\psi_{12} + \psi_{21}\Big) \land \phi_{12} &\asymp\Big(\psi_{12} + \psi_{21}\Big) \land \widetilde \phi_{12}\\
    \Big(\psi_{21} + \beta_{12}\Big) \land \phi_{12} & \asymp \Big(\psi_{21} + \beta_{12}\Big) \land \widetilde \phi_{12} 
\end{align*}
and the equations~\eqref{eq_simplification_dense_case} and~\eqref{eq_simplification_dense_case_upper_bound} become equivalent to proving
\begin{align}
\Big(\psi_{12} + \psi_{21}\Big) \land \widetilde \phi_{12}
        \asymp \psi_{21} \land \widetilde \phi_{12}.\label{eq_simplification_dense_case_tilde}\\
\psi_{21} \land \widetilde \phi_{12}
        \asymp \Big(\psi_{21} + \beta_{12}\Big) \land \widetilde \phi_{12}\label{eq_simplification_dense_case_upper_bounn_tilde}
\end{align}
In the rest of the proof, we focus on proving~\eqref{eq_simplification_dense_case_tilde} and~\eqref{eq_simplification_dense_case_upper_bounn_tilde}.
    
    Assume first that $\frac{n_1k_2}{k_1^2} \log\left(\frac{n_2}{k_2}\right) \leq 1$. 
    Using the inequality $\log(1+x) \geq x\log(2) $ for any $x \leq 1$, we have 
    \begin{align*}
        \psi_{21} = \frac{1}{k_2} \log\left(1+\frac{n_1k_2}{k_1^2} \log\left(\frac{n_2}{k_2}\right)\right) &\geq \log(2) \frac{n_1}{k_1^2} \log\left(\frac{n_2}{k_2}\right)\\
        & \geq \frac{\log(2)}{2} \frac{n_1}{k_1^2} \log\left(1+\frac{n_2}{k_2^2}\right) \quad \text{ if $c'$ is small enough}\\
        & = \frac{\log(2)}{2} \widetilde \phi_{12},
    \end{align*}
    which guarantees that both~\eqref{eq_simplification_dense_case_tilde} and~\eqref{eq_simplification_dense_case_upper_bounn_tilde} hold. 
    From now on, we will therefore assume that $\frac{n_1k_2}{k_1^2} \log\left(\frac{n_2}{k_2}\right) > 1$. 
    Now, assume that $\frac{n_2}{k_2^2} \leq 1$. 
    This implies the following inequalities   
    \begin{align*}
        \psi_{21} = \frac{1}{k_2} \log\left(1+\frac{n_1k_2}{k_1^2} \log\left(\frac{n_2}{k_2}\right)\right) &>\frac{\log(2)}{k_2} \\
        & \geq \frac{\log(2)}{k_2} \, \frac{\bar c n_1k_2}{k_1^2} \, \frac{n_2}{k_2^2} \\
        & = \bar c \log(2) \frac{n_1n_2}{k_1^2 k_2^2} \\
        & \geq \bar c \log(2) \frac{n_1}{k_1^2} \log\left(1+\frac{n_2}{k_2^2}\right)\\
        & = \bar c \log(2) \widetilde \phi_{12},
    \end{align*}
    which ensures that~\eqref{eq_simplification_dense_case_tilde} and~\eqref{eq_simplification_dense_case_upper_bounn_tilde} hold as well and yields the result. 
    Thus, we will now assume that $\frac{n_2}{k_2^2}>1$, which also implies $\frac{n_2k_1}{k_2^2} \log\left(\frac{n_1}{k_1}\right)>1$.

    Now, suppose for the sake of contradiction that $k_1 > \frac{n_1}{\log(n_1)}$. 
    Note that the assumption $\frac{n_1}{k_1} \geq e \log\left(\frac{n_2}{k_2}\right)$ implies that
    \begin{align*}
        \log(n_1) > e\log\left(\frac{n_2}{k_2}\right) > e\log(\sqrt{n_2}) = \frac{e}{2} \log(n_2),
    \end{align*}
    that is $n_1 > n_2^{e/2}.$ Therefore, 
    \begin{align*}
        \frac{k_2 n_1}{k_1^2} \log\left(\frac{n_2}{k_2}\right) &< \frac{k_2n_1}{n_1^2/\log^2(n_1)}\log\left(n_2\right) < \frac{\sqrt{n_2}}{n_1 }\cdot \log(n_2)\log^2(n_1)\\
        & < \frac{2}{e}\frac{\log^3(n_1)}{n_1^{1-1/e}} <1,
    \end{align*}
    which contradicts the assumed inequality $ \frac{k_2 n_1}{k_1^2} \log\left(\frac{n_2}{k_2}\right)>1$. 

    Therefore, it holds that $ k_1 \leq \frac{n_1}{\log(n_1)}$. 
    Since $\frac{n_1 k_2}{k_1^2} \log\left(\frac{n_2}{k_2}\right) > 1$, 
    we have
    \begin{align*}
        \psi_{12} = \frac{1}{k_1} \log\left(1+ \frac{n_2k_1}{k_2^2} \log\left(\frac{n_1}{k_1}\right)\right) &\leq \frac{1}{k_1} \log\left(2 \frac{n_2k_1}{k_2^2} \log\left(\frac{n_1}{k_1}\right)\right)\\
        & \leq \frac{\log(n_2/k_2) + 2\log(n_1)}{k_1}  \quad \text{ provided } k_1 \leq c' n_1\\
        & \leq \frac{1}{\bar ck_2} \frac{k_1}{ n_1}\left[\log\left(\frac{n_2}{k_2}\right) + 2\log(n_1)\right]\\
        & \leq \frac{1}{\bar c k_2} \left(1/e + 2\right)\\
        & \leq \frac{e^{-1} + 2}{\bar c \log(2)} \cdot \frac{1}{k_2} \log\left(1+ \frac{n_1 k_2}{k_1^2} \log\left(\frac{n_2}{k_2}\right)\right),\\
        & = \frac{e^{-1} + 2}{\bar c \log(2)} \psi_{21},
    \end{align*}
    which ensures that~\eqref{eq_simplification_dense_case_tilde} holds. 
    Moreover, we verify that equation~\eqref{eq_simplification_dense_case_upper_bounn_tilde} holds as well. This is clear if $\frac{n_1 k_2}{k_1^2} \log\left(\frac{n_2}{k_2}\right) \leq 1$ since we have $\beta_{12} = 0$. Otherwise, we have
    \begin{align*}
        \psi_{21} = \frac{1}{k_2} \log\left(1+ \frac{n_1 k_2}{k_1^2} \log\left(\frac{n_2}{k_2}\right)\right) > \log(2) \frac{1}{k_2} \geq \frac{\log(2)\bar c}{k_1} \frac{n_1}{k_1} \geq \frac{e\log(2) \bar c}{k_1} \log\left(\frac{n_2}{k_2}\right) \geq e \bar c \log(2) \beta_{12},
    \end{align*}
    which ensures~\eqref{eq_simplification_dense_case_upper_bounn_tilde} holds and concludes the proof of the first claim. 

    \item Now, let $\alpha>0$ and let $c_\mu, c>0$ be the constants defined in Lemma~\ref{lem_lb_dense_allrates}. 

    Assume first that $c_\mu^{-1}k_2\log\big(c\frac{n_2}{k_2}\big) \leq \Ct \frac{k_1^2}{n_1}$. 
    Recalling that $\frac{n_2}{k_2}\geq \frac{1}{c'}$ where $c'$ can be taken arbitrarily small, we have
    \begin{align*}
        R &\leq \phi_{12} = \frac{n_1}{k_1^2} \log\left(1+\frac{n_2}{k_2^2}\right) &\\
        & \leq \frac{n_1}{k_1^2} \log\left(2\frac{n_2}{k_2}\right) & \text{ provided $c'$ is small enough}\\
        & \leq 2\frac{n_1}{k_1^2} \log\left(c\frac{n_2}{k_2}\right) &\text{ provided $c'$ is small enough}\\
        &\leq \Ct^{-1}\frac{n_1}{k_1^2}\log\Big(c \frac{n_2}{k_2}\Big) + \frac{c_\mu}{k_2}\log\Big(\frac{\Ct}{2e}\Big),&
    \end{align*}
    which implies that if $\mu^2 \leq R$, then we have by Lemma~\ref{lem_lb_dense_allrates}.1. that 
    \[\EE\Big[\exp(\mu^2XY)\one\big(X \geq  \Ct \frac{k_1^2}{n_1} \big)\Big] < \alpha.\]
    Now, assume $\Ct \frac{k_1^2}{n_1} < c_\mu^{-1}k_2\log\big(c\frac{n_2}{k_2}\big) \leq k_1$. 
    Then we have 
    \begin{align*}
        R &\leq \psi_{21} = \frac{1}{k_2} \log\left( 1+ \frac{n_1k_2}{k_1^2} \log\left(\frac{n_2}{k_2}\right)\right)&\\
        & \leq \frac{1}{k_2} \log\left( 2\frac{n_1k_2}{k_1^2} \log\left(\frac{n_2}{k_2}\right)\right) & \text{ since } \frac{n_1k_2}{k_1^2} \geq 1\\
        &\leq \frac{1}{k_2}\log\Big(\frac{k_2n_1\log(c \frac{n_2}{k_2})}{c_\mu 2e k_1^2}\Big) &\text{ if $c'$ is small enough.}
    \end{align*}
    Therefore, whenever $\mu \leq c_\mu R$, we have by Lemma~\ref{lem_lb_dense_allrates}.2. that
    \[\EE\Big[\exp(\mu^2XY)\one\big(X \geq \lceil  \Ct \frac{k_1^2}{n_1}\rceil\big)\Big] < \alpha.\]

    Finally, assume that $k_1 < c_\mu^{-1}k_2\log\big(c\frac{n_2}{k_2}\big)$. 
    Then we have
    \begin{align*}
        \frac{1}{c_\mu}\log\left(c\frac{n_2}{k_2}\right) > \frac{k_1}{k_2} \geq \frac{1}{16e^4} \frac{n_1}{k_1} \geq \frac{1}{16e^3} \log\left(\frac{n_2}{k_2}\right),
    \end{align*}
    so that $\frac{k_2}{k_1} \log\left(\frac{n_2}{k_2}\right) \leq 16e^3$. 
    Therefore, we have
    \begin{align*}
        R &\leq \psi_{21} = \frac{1}{k_2} \log\left(1+\frac{n_1}{k_1^2}k_2\log\left(\frac{n_2}{k_2}\right)\right)\\
        & \leq \frac{1}{k_2} \log\left(1+16e^3\frac{n_1}{k_1}\right)\\
        & \leq \frac{1}{2k_2} \log\left(\frac{n_1}{2ek_1}\right) \qquad \text{ if $c'$ is small enough} \\
        & \leq \frac{1}{c_\mu}\left[\frac{1}{k_1}\log\big(c \frac{n_2}{k_2}\big) + \frac{c_\mu}{k_2}\log\big(\frac{n_1}{2ek_1}\big)\right]
    \end{align*}
    By Lemma~\ref{lem_lb_dense_allrates}.3, if $\mu^2 \leq c_\mu R \leq \frac{1}{k_1}\log\big(c \frac{n_2}{k_2}\big) + \frac{c_\mu}{k_2}\log\big(\frac{n_1}{2ek_1}\big)$, we obtain
    \[\EE\Big[\exp(\mu^2XY)\one\big(X \geq \lceil  \Ct \frac{k_1^2}{n_1}\rceil\big)\Big] < \alpha,\]
    and the proof is complete.
    \item We immediately deduce from~\eqref{eq_simplification_dense_case} that  $R \gtrsim \big(\psi_{21} + \beta_{12}\big) \land \big(\psi_{12} + \beta_{21}\big) \land \phi_{12} \land \phi_{21}$. 
    We now prove the converse bound. 
    Note that if $\frac{n_2k_1}{k_2^2} \log\left(\frac{n_1}{k_1}\right) \leq 1$, then we have $\beta_{21} = 0$ and $ \psi_{12} + \beta_{21} = \psi_{12} \lesssim \phi_{21}$ by Lemma~\ref{lem_psi_disappears} and the result follows. 
    We assume that $\frac{n_2k_1}{k_2^2} \log\left(\frac{n_1}{k_1}\right)>1$ from now on.
    
    To prove that $R \asymp \big(\psi_{21} + \beta_{12}\big) \land \big(\psi_{12} + \beta_{21}\big) \land \phi_{12} \land \phi_{21}$, it suffices to show that $\beta_{21} \gtrsim \psi_{21} + \beta_{12}$. 
    We have 
    \begin{align*}
        \psi_{21} &= \frac{1}{k_2} \log\left(1+ \frac{n_1}{k_1} \frac{k_2}{k_1} \log\left(\frac{n_2}{k_2}\right)\right)\\
        & \leq \frac{1}{k_2} \log\left(1+ \frac{n_1}{k_1} \frac{k_1}{\bar cn_1} \frac{n_1}{ek_1}\right)\\
        & \lesssim \frac{1}{k_2} \log\left(\frac{n_1}{k_1}\right)\\
        & = \beta_{21}
    \end{align*}
    which concludes the proof.
    \end{enumerate}
\end{proof}
The Lemma below shows that $R$ is a lower bound on the rate emerging from Lemma~\ref{lem_lb_s1d1big_truncchi2}.
\begin{lemma}\label{lem_simplify_s1d1big_truncchi2}
    Suppose that $k_1^2 \leq \bar{c}n_1k_2$ for some $\bar{c} \in (0,(2e)^{-4})$ and $k_1 \leq c'n_1$, $k_2 \leq c' n_2$ for some sufficiently small $c'>0$. Furthermore, suppose that $\frac{k^2_2}{n_2} \leq 2e\frac{k_1^2}{n_1}$. 
    For some small enough constant $c_\mu$, we define the quantity 
    \begin{align*}
        M = \begin{cases}
            \frac{1}{k_1} \log\Big(1 + c_\mu \frac{n_2k_1}{k_2^2}\log(\frac{n_1}{2ek_1})\Big) & \text{ if } k_1 \leq \frac{k_2}{\log\big(\frac{n_1k_2}{k_1^2}\big)}\\
            \frac{1}{k_2}\log\big(\frac{n_1k_2}{k_1^2}\big)\log\Big(1 + c_\mu \frac{n_2}{k_2\log\big(\frac{n_1k_2}{k_1^2}\big)}\log\big(\frac{n_1k_2}{2ek_1^2\log(\frac{n_1k_2}{k_1^2})}\big)\Big) & \text{ otherwise.}
        \end{cases}
    \end{align*}
    \begin{enumerate}
        \item 
    Then for some sufficiently small constant $c>0$ depending only on $\mu$, it holds that $M \geq cR$. 
    Hence, for any $\alpha>0$, there exists a constant $c'_\mu>0$ such that, whenever $\mu^2 \leq c'_\mu R$, we have
    \[\EE\Big[\exp(\mu^2XY)\one\Big( 1 \vee \Ct \frac{k_1^2}{n_1} \leq X \leq k_1\wedge\frac{k_2}{\log\big(\frac{n_1k_2}{k_1^2}\big)}\Big)\Big] < \alpha.\]
    \item Moreover, we have
    \begin{align}
        R \geq \begin{cases}
            \frac{1}{2}(\psi_{12} + \beta_{21}) \land \phi_{12} \land \phi_{21} & \text{ if } k_1 \leq \frac{k_2}{\log\big(\frac{n_1k_2}{k_1^2}\big)}\\
            \frac{1}{2}(\psi_{21} + \beta_{12}) \land \phi_{12} \land \phi_{21} & \text{ otherwise. }
        \end{cases} \label{eq_lower_bounn_M}
    \end{align}
    \item Moreover, it holds that $R \asymp \big(\psi_{21} + \beta_{12}\big) \land \big(\psi_{12} + \beta_{21}\big) \land \phi_{12} \land \phi_{21}$.
    \end{enumerate}
\end{lemma}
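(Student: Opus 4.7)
}
The plan is to reduce everything to clean forms of $\psi$ and $\beta$ and then to perform a case analysis on which of the four terms in $R$ is minimal. The fundamental simplification is the standard binomial identity $\log(e \binom{n_1}{k_1}) \asymp k_1 \log(n_1/k_1)$, valid because $k_1 \leq c' n_1$ with $c'$ small. Together with $\log(n_1/(2ek_1)) \asymp \log(n_1/k_1)$ and Lemma~\ref{lem_logk_constants} for moving absolute constants in and out of logs, this yields
\[
\psi_{12} \asymp \frac{1}{k_1}\log\!\Big(1 + \frac{n_2 k_1}{k_2^2}\log(n_1/k_1)\Big), \qquad \psi_{21} \asymp \frac{1}{k_2}\log\!\Big(1 + \frac{n_1 k_2}{k_1^2}\log(n_2/k_2)\Big),
\]
and identifies the first branch of $M$ as $\asymp \psi_{12}$ and the second branch as the natural counterpart when we plug $k = k_2/\log(n_1 k_2/k_1^2)$ into $f^{(1)}$ from the proof of Lemma~\ref{lem_lb_s1d1big_truncchi2}.

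For Part 1 ($M \geq c R$), I would split according to the two branches of $M$. In the first branch ($k_1 \leq k_2/\log(n_1 k_2/k_1^2)$), the assumption gives $k_1 \log(n_1 k_2/k_1^2) \leq k_2$, which I use to show that $\psi_{21} \lesssim \psi_{12} \lor \phi_{12} \lor \phi_{21}$: when $\psi_{21}$ is large, the assumption $k_2^2/n_2 \leq 2e k_1^2/n_1$ forces $n_1/k_1^2$ and $n_2/k_2^2$ to be moderate, so $\phi_{12} \wedge \phi_{21}$ caps the rate. In the second branch, a symmetric argument using $k_2/\log(n_1 k_2/k_1^2) \leq k_1$ shows that $M$ captures the sum $\psi_{12} + \psi_{21}$ up to constants after the $\phi$'s are accounted for. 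The expectation bound then follows by applying Lemma~\ref{lem_lb_s1d1big_truncchi2} with $\mu^2 \leq c_\mu' R \leq M$.

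For Part 2, the claim reduces to $\psi_{12} + \psi_{21} \geq \frac{1}{2}(\psi_{12} + \beta_{21})$ in the first branch (and the symmetric inequality in the second), i.e.\ $2\psi_{21} \geq \beta_{21} - \psi_{12}$. Using the case condition and $k_1^2 \leq \bar c n_1 k_2$ to bound $\log(n_1 k_2/k_1^2)$ from below by a constant, this inequality follows from direct logarithmic manipulation after noting that $\psi_{21} \geq \frac{1}{k_2}\log(\frac{n_1}{k_1}) \asymp \beta_{21}$ in the regime where $\beta_{21}$ is nonzero. For Part 3, the direction $\tilde R \leq R$ is immediate from the definition; the converse $R \lesssim \tilde R$ is handled by cases on the minimal term of $\tilde R$: the $\phi$-cases are trivial since the $\phi$'s already bound $R$, while for the $\psi + \beta$ cases the preceding comparisons between $\psi$ and $\beta$ (plus the $\phi$-fallback when they fail) close the gap.

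The main obstacle is the case analysis in Part 1, where four quantities $(\psi_{12}, \psi_{21}, \phi_{12}, \phi_{21})$ interact and one must show that the single scalar $M$ dominates the minimum of $R$ up to an absolute constant in every subcase. The delicate point is that the threshold $k_2/\log(n_1 k_2/k_1^2)$ used to define $M$ has been chosen so that $M$ matches the minimizer of $f^{(1)}$ over the integer range of interest, but translating this into the statement $M \gtrsim R$ requires balancing the logarithms of products of $n_i/k_j^2$-type quantities against each other, repeatedly using both case assumptions $k_1^2 \leq \bar c n_1 k_2$ and $k_2^2/n_2 \leq 2e k_1^2/n_1$.
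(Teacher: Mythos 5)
Your plan for Part 1 misses the crucial observation that the paper relies on, and as written contains a genuine gap. You treat the first branch of $M$ as simply ``$\asymp \psi_{12}$'' and then propose to argue that $\psi_{21} \lesssim \psi_{12} \lor \phi_{12} \lor \phi_{21}$, hand-waving via ``$\phi_{12}\wedge\phi_{21}$ caps the rate.'' But the assumption $k_2^2/n_2 \leq 2e\,k_1^2/n_1$ only says $n_1/k_1^2 \lesssim n_2/k_2^2$; it does not cap either $\phi$ from above, so your comparison does not go through without more work, and you give none. The paper avoids this entirely by exploiting the structure of $M$: since $f^{(1)}$ is decreasing on the relevant range and $M = f^{(1)}\big(k_1 \wedge \tfrac{k_2}{\log(n_1k_2/k_1^2)}\big)$, $M$ is actually the \emph{maximum} of $f^{(1)}(k_1)$ and $f^{(1)}\big(\tfrac{k_2}{\log(n_1k_2/k_1^2)}\big)$, hence $2M$ dominates their sum. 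This is exactly the ``moreover'' clause in Lemma~\ref{lem_lb_s1d1big_truncchi2}. From there one only needs to check that the first summand is $\gtrsim \psi_{12}$ (essentially your binomial-coefficient reduction) and the second is $\gtrsim \psi_{21}$ (a short computation comparing logarithms, using $n_1k_2/k_1^2 \geq 1/\bar c$ and $n_2/k_2 \geq 1/c'$). Then $M \gtrsim \psi_{12}+\psi_{21} \geq R$ immediately, with no case analysis on branches, no $\phi$-caps, and no dependence on which of the four terms in $R$ is minimal. Your route, by contrast, would require a separate argument in the regime where $\psi_{21}\gg\psi_{12}$ that you have not supplied.

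Your Parts 2 and 3 are broadly in line with the paper's intent. For Part 2 the paper proves $\beta_{21}\leq\psi_{12}$ (via Lemma~\ref{lem_logs} and the inequality $\tfrac{1}{k_1}\geq\tfrac{1}{k_2}\log(n_1/k_1)$ that follows from the branch condition), rather than your $\psi_{21}\gtrsim\beta_{21}$; both routes give $R \geq \psi_{12} \geq \tfrac12(\psi_{12}+\beta_{21})$ up to the $\phi$ caps in the first branch, and the symmetric claim in the second branch follows from $k_2 \leq 3k_1\log(n_1/k_1)$ and $\psi_{12}\geq\beta_{12}$. For Part 3 your high-level sketch matches the paper's direction, though neither sketch gives the full argument. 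I recommend you redo Part 1 by making explicit the max-structure of $M$ and the resulting bound $2M\gtrsim\psi_{12}+\psi_{21}$.
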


\begin{proof}[Proof of Lemma ~\ref{lem_simplify_s1d1big_truncchi2}]
\phantom{ }

\begin{enumerate}
\item We start by showing that $M \geq c(\psi_{21} + \psi_{12})$. 
    Using  Lemma~\ref{lem_lb_s1d1big_truncchi2} and Lemma~\ref{lem_logk_constants}.(i), we obtain
\begin{align*}
    2M &\geq\frac{1}{k_1}\log\Big(1 + c_\mu \frac{n_2k_1}{k_2^2}\log(\frac{n_1}{2ek_1})\Big) + \frac{\log\big(\frac{n_1k_2}{k_1^2}\big)}{k_2}\log\Big(1 + c_\mu \frac{n_2}{k_2\log\big(\frac{n_1k_2}{k_1^2}\big)}\log \big(\frac{n_1k_2}{2ek_1^2\log(\frac{n_1k_2}{k_1^2})}\big)\Big) \\
    &\geq\frac{c_\mu}{k_1}\log\Big(1 + \frac{n_2k_1}{k_2^2}\log(\frac{n_1}{2ek_1})\Big) + c_\mu\frac{\log\big(\frac{n_1k_2}{k_1^2}\big)}{k_2}\log\Big(1 +  \frac{n_2}{k_2\log\big(\frac{n_1k_2}{k_1^2}\big)}\log \big(\frac{n_1k_2}{2ek_1^2\log(\frac{n_1k_2}{k_1^2})}\big)\Big) \\
    &\geq c_\mu \left[\frac{1}{2}\psi_{12} + \frac{\log\big(\frac{n_1k_2}{k_1^2}\big)}{k_2}\log\Big(1 + \frac{n_2}{k_2\log\big(\frac{n_1k_2}{k_1^2}\big)}\log \big(\frac{n_1k_2}{2ek_1^2\log(\frac{n_1k_2}{k_1^2})}\big)\Big)\right].
\end{align*}
To obtain $M \geq c(\psi_{12} + \psi_{21})$, it now remains to show
\[c\psi_{21} \leq \frac{\log\big(\frac{n_1k_2}{k_1^2}\big)}{k_2}\log\Big(1 + \frac{n_2}{k_2\log\big(\frac{n_1k_2}{k_1^2}\big)}\log \big(\frac{n_1k_2}{2ek_1^2\log(\frac{n_1k_2}{k_1^2})}\big)\Big).\]

 Since $\frac{n_1k_2}{k_1^2} \geq \bar{c}^{-1} \geq 2$, it follows that $\frac{1}{2}\log(1 + \frac{n_1k_2}{k_1^2}) \leq \log(\frac{n_1k_2}{k_1^2})$ by the inequality $\sqrt{1+x} \leq x$ that holds true for any $x \geq 2$. Furthermore, the inequality $\frac{n_1k_2}{k_1^2} \geq \bar{c}^{-1} \geq 16e^4$ implies that 
 \begin{align*}
 \frac{\log\Big(\frac{n_1k_2}{2ek_1^2\log\big(\frac{n_1k_2}{k_1^2}\big)}\Big)}{\log \big(\frac{n_1k_2}{k_1^2}\big)} &= 1 - \frac{\log\Big(2e\log\big(\frac{n_1k_2}{k_1^2}\big)\Big)}{\log\big(\frac{n_1k_2}{k_1^2}\big)} \\
 &\geq \frac12.
 \end{align*}
Moreover, since $n_2/k_2 \geq 1/c' \geq 4$ provided $c' \leq 1/4$, we also have that $\log\left(\frac{n_2}{2k_2}\right) \geq \frac{1}{2} \log\left(\frac{n_2}{k_2}\right)$. Combining these observations yields 
\begin{align}
    \frac{\log\big(\frac{n_1k_2}{k_1^2}\big)}{k_2}\log\Big(1 +  \frac{n_2}{k_2\log\big(\frac{n_1k_2}{k_1^2}\big)}\log \big(\frac{n_1k_2}{2ek_1^2\log(\frac{n_1k_2}{k_1^2})}\big)\Big) &\geq \frac{\frac{1}{2}\log\big(1 + \frac{n_1k_2}{k_1^2}\big)}{k_2}\log\Big(
\frac12\frac{n_2}{k_2}\Big)\nonumber\\
& \geq \frac14\frac{\log\big(1 + \frac{n_1k_2}{k_1^2}\big)}{k_2} \log\Big(
\frac{n_2}{k_2}\Big)\nonumber\\
& \geq \frac{1}{4} \frac{1}{k_2} \log\left(1 + \frac{n_1k_2}{k_1^2}\log\Big(
\frac{n_2}{k_2}\Big)\right) \text{ by Lemma~\ref{lem_logk_constants}.(ii)}\nonumber\\
& = \frac{1}{4} \psi_{21}.\label{eq_psi12/4}
\end{align}
It remains to invoke Lemma~\ref{lem_lb_s1d1big_truncchi2} to conclude the proof of the first claim. 

    \item 
We now prove equation~\eqref{eq_lower_bounn_M}. 
Assume first that $k_1 \leq \frac{k_2}{\log\big(\frac{n_1k_2}{k_1^2}\big)}$. 
Then we obtain
\begin{align}
    \frac{n_1}{k_1} \leq \frac{\frac{n_1k_2}{k_1^2}}{\log\big(\frac{n_1k_2}{k_1^2}\big)},\quad  \text{ hence } \quad \frac{n_1k_2}{k_1^2} \geq \frac{n_1}{k_1} \log\left(\frac{n_1}{k_1}\right), \quad \text{ i.e. } \quad \frac{1}{k_1} \geq \frac{1}{k_2} \log\left(\frac{n_1}{k_1}\right)\label{eq_k2_geq_k1_log}
\end{align}
by Lemma~\ref{lem_logs}.(ii). 
Note that, since $\frac{n_2k_1}{k_2^2} \geq \frac{n_1}{k_1} \geq \frac{1}{c'} $ can be made arbitrarily large by taking $c'$ small enough, we have
\begin{align*}
    \beta_{21} &\leq \frac{1}{k_2} \log\left(\frac{n_1}{k_1}\right) \leq \frac{1}{k_1} \leq \frac{1}{k_1}\log\left(1 + \frac{n_2k_1}{k_2^2}\log\left(\frac{n_1}{k_1}\right)\right)  = \psi_{12}.
\end{align*}
Hence,
\begin{align*}
    R &= (\psi_{12} + \psi_{21}) \land \phi_{12} \land \phi_{21} \\
    &\geq \psi_{12}  \land \phi_{12} \land \phi_{21} \\
    &\geq \frac{1}{2}(\psi_{12} + \beta_{21}) \land \phi_{12} \land \phi_{21}
\end{align*}
as claimed.
Assume now that $k_1 > \frac{k_2}{\log\big(\frac{n_1k_2}{k_1^2}\big)}$. 
By assumption, we have that $\frac{n_1k_2}{k_1^2} \geq \bar{c}^{-1} \geq 2e$.
Therefore, 
\begin{align*}
    \frac{\frac{n_1 k_2}{k_1^2}}{\log\left(\frac{n_1 k_2}{k_1^2}\right)}\leq \frac{n_1}{k_1}, \quad  \text{ hence } \quad \frac{n_1 k_2}{k_1^2} \leq 2 \frac{n_1}{k_1} \log\left(\frac{2n_1}{k_1}\right), \quad \text{ i.e. } \quad k_2 \leq 3 k_1 \log\left(\frac{n_1}{k_1}\right)
\end{align*}
by Lemma~\ref{lem_logs}.(i) and for $c'$ sufficiently small.
This yields
\begin{align*}
    \psi_{12} = \frac{1}{k_1} \log\Big(1 +  \frac{n_2k_1}{k_2^2}\log\left(\frac{n_1}{k_1}\right)\Big)
    & \geq \frac{1}{k_1} \log\left(1+\frac{n_2}{k_2}\right)\geq \frac{1}{k_1} \log\left(\frac{n_2}{k_2}\right) \geq \beta_{12}.
\end{align*}
Therefore, we obtain
\begin{align*}
    R &= (\psi_{12} + \psi_{21}) \land \phi_{12} \land \phi_{21}\\
    &\geq \frac{1}{2}(\psi_{21} + \beta_{12}) \land \phi_{12} \land \phi_{21}.
\end{align*}
This concludes the proof of equation~\eqref{eq_lower_bounn_M}.
\item Assume that $k_1 \leq \frac{k_2}{\log\big(\frac{n_1k_2}{k_1^2}\big)}$, which implies $k_2 \geq k_1 \log\left(\frac{n_1}{k_1}\right)$ by~\eqref{eq_k2_geq_k1_log}. We aim to prove that $\psi_{21} + \beta_{12} \gtrsim \psi_{12} + \beta_{21}$. We have
\begin{align*}
    \psi_{12} &= \frac{1}{k_1} \log\left(1+ \frac{n_2k_1}{k_2^2} \log\left(\frac{n_1}{k_1}\right)\right)\\
    & \leq \frac{1}{k_1} \log\left(1+\frac{n_2}{k_2}\right)\\
    & \lesssim \beta_{12}.
\end{align*}
Moreover, 
\begin{align*}
    \psi_{21} = \frac{1}{k_2} \log\left(1+ \frac{n_1k_2}{k_1^2} \log\left(\frac{n_2}{k_2}\right)\right)
\end{align*}
    \end{enumerate}
\end{proof}

The lemma below shows that $R$ is a lower bound on the rate emerging from Lemma~\ref{lem_lb_s2d2big_maxlin}.

\begin{lemma}\label{lem_simplify_s2d2big_maxlin}
    Suppose that $k_1^2 \leq \bar{c}n_1k_2$ for some $\bar{c} \in (0, (2e)^{-4})$ and $k_1 < \frac12 n_1$. 
    Furthermore, suppose that $\frac{k^2_2}{n_2} \geq 2e\frac{k_1^2}{n_1}$ and that $\lceil \Ct \frac{k_1^2}{n_1}\rceil \leq \Big\lfloor \frac{\Ct k_2^2/n_2}{\log\big(\frac{n_1k_2^2}{k_1^2n_2}\big)}\Big\rfloor\wedge k_1$. Then for any $\alpha > 0$, there exists a constant $c > 0$ such that if $\mu^2 \leq cR$, 
    \[\EE\left[\exp\big(\mu^2XY\big)\one\bigg(\Ct \frac{k_1^2}{n_1}  \leq X \leq  \frac{\Ct k_2^2/n_2}{\log\big(\frac{n_1k_2^2}{k_1^2n_2}\big)}\wedge k_1\bigg)\right] < \alpha.\]
\end{lemma}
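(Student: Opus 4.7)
The strategy is to derive this from Lemma~\ref{lem_lb_s2d2big_maxlin}, which already controls the expectation provided $\mu^2 \le c'_\mu \frac{n_2}{k_2^2}\log\bigl(\frac{\Ct}{2e}\vee \frac{n_1}{2ek_1^2}\bigr)$, for constants $c'_\mu>0$ and $\Ct\ge 1$ produced by that lemma. It therefore suffices to establish the deterministic inequality
\[
R \;\lesssim\; \frac{n_2}{k_2^2}\log\Big(\frac{\Ct}{2e}\vee \frac{n_1}{2ek_1^2}\Big),
\]
after which the constant $c>0$ can be chosen small enough that $cR$ is dominated by the right-hand side, and the lemma follows.

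Since $R \le \phi_{21}$ by definition, I plan to bound $\phi_{21}$ by the right-hand side. First I would verify that $\phi_{21}$ is finite by controlling $\frac{n_2}{k_2^2}$. From the hypothesis $1 \le \lfloor \frac{\Ct k_2^2/n_2}{\log(n_1k_2^2/(k_1^2 n_2))}\rfloor$, together with $\log(n_1k_2^2/(k_1^2 n_2)) \ge 1$ (immediate from the standing assumption $\frac{k_2^2}{n_2}\ge 2e\frac{k_1^2}{n_1}$, which yields $\frac{n_1k_2^2}{k_1^2 n_2}\ge 2e$), I deduce $\frac{n_2}{k_2^2} \le \Ct$. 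Requiring the constant $\bar C$ in the definition~\eqref{eq_def_phi} of $\phi$ to satisfy $\bar C \ge \Ct$ is harmless, as enlarging $\bar C$ only makes $\phi$ finite more often, hence only strengthens the lower bound $R$. This yields $\phi_{21} = \frac{n_2}{k_2^2}\log(1+\frac{n_1}{k_1^2})$.

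To finish, I would compare $\log(1+\frac{n_1}{k_1^2})$ with $\log(\frac{\Ct}{2e}\vee \frac{n_1}{2ek_1^2})$ in two cases. If $\frac{n_1}{k_1^2}\le 4e$, both quantities are bounded by absolute constants (the right-hand side is bounded below by $\log(\Ct/(2e))>0$ upon assuming $\Ct>2e$, which we may do by enlarging $\Ct$ if needed). If $\frac{n_1}{k_1^2}>4e$, then $\log(1+\frac{n_1}{k_1^2})\asymp \log(\frac{n_1}{k_1^2})\asymp \log(\frac{n_1}{2ek_1^2})$, since $\log(\frac{n_1}{2ek_1^2})\ge \log 2>0$ allows one to absorb the additive $\log(2e)$ into a multiplicative constant. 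In both cases $\log(1+\frac{n_1}{k_1^2})\lesssim \log(\frac{\Ct}{2e}\vee \frac{n_1}{2ek_1^2})$, and hence $\phi_{21}$ obeys the required bound. The main delicate point is the uniform choice of $\bar C$ in~\eqref{eq_def_phi}: it must simultaneously dominate the $\Ct$ arising from each of the companion simplification lemmas, but by the same monotonicity argument this is safe, and it produces a single constant $c>0$ consistent with the rest of the lower bound proof.
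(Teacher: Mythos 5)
Your proposal is correct and follows the same route as the paper's proof: reduce to Lemma~\ref{lem_lb_s2d2big_maxlin}, then show the deterministic inequality $R \lesssim \frac{n_2}{k_2^2}\log\big(\frac{\Ct}{2e}\vee \frac{n_1}{2ek_1^2}\big)$ by bounding $R \le \phi_{21}$ and noting that the floor-ceiling hypothesis forces $\frac{n_2}{k_2^2}\lesssim \Ct$, so that $\phi_{21}$ is finite.

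Your final step is actually handled with more care than the paper's. At the corresponding point the paper writes $\log\big(1+\frac{n_1}{k_1^2}\big) \le \log(4e)$, citing ``$\frac{n_1}{k_1^2}\leq 2e \frac{n_2}{k_2^2}$'' as the justification; however, the standing hypothesis $\frac{k_2^2}{n_2}\ge 2e\frac{k_1^2}{n_1}$ gives the \emph{opposite} inequality $\frac{n_1}{k_1^2}\ge 2e\frac{n_2}{k_2^2}$, and indeed $\frac{n_1}{k_1^2}$ can be arbitrarily large under the assumptions of the lemma (it is only bounded \emph{below} by a constant times $\frac{n_2}{k_2^2}$). Your two-case argument --- treating $\frac{n_1}{k_1^2}\le 4e$ and $\frac{n_1}{k_1^2}>4e$ separately, and in the second case using $\log(1+\frac{n_1}{k_1^2})\asymp\log(\frac{n_1}{2ek_1^2})\le \log(\frac{\Ct}{2e}\vee\frac{n_1}{2ek_1^2})$ --- repairs this gap cleanly. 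Two minor phrasing quibbles: enlarging $\bar C$ in~\eqref{eq_def_phi} makes $R$ \emph{smaller} (so ``strengthens the lower bound $R$'' should read ``weakens the quantity $R$, making the needed inequality $R\lesssim\ldots$ easier''), and ``enlarging $\Ct$ if needed'' is legitimate but deserves the remark that Lemma~\ref{lemma:geom-series} allows $\Ct$ to be taken as large as desired, so Lemma~\ref{lem_lb_s2d2big_maxlin}'s conclusion survives the enlargement.
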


\begin{proof}[Proof of Lemma ~\ref{lem_simplify_s2d2big_maxlin}]
    First, observe that the assumption $\lceil \Ct \frac{k_1^2}{n_1}\rceil \leq \big\lfloor \frac{\Ct k_2^2/n_2}{\log\big(\frac{n_1k_2^2}{k_1^2n_2}\big)}\big\rfloor\wedge k_1$ implies 
    $1 \leq \frac{\Ct k_2^2/n_2}{\log\big(\frac{n_1k_2^2}{k_1^2n_2}\big)}$. Rearranging terms, this implies 
    $$\frac{n_2}{k_2^2} \leq \Ct\log^{-1} \big(\frac{n_1k_2^2}{n_2k_1^2}\big) \leq \frac{C_*}{\log(2e)},$$
     and therefore $\phi_{21} = \frac{n_2}{k_2^2}\log\big(1 + \frac{n_1}{k_1^2}\big)$ as long as we take the $C$ in the definition of $R$ to be at least $\frac{C_*}{\log(2e)}$. Now, assume that $\mu^2 \leq \widetilde c_\mu R$ for some $\widetilde c_\mu>0$. Then, we have
        \begin{align*}
            \frac{1}{\widetilde c_\mu}\mu^2 &\leq R \\
            &\leq \phi_{21} \\
            &= \frac{n_2}{k_2^2}\log\big(1 + \frac{n_1}{k_1^2}\big) \\
            &\leq \frac{n_2}{k_2^2}\log\big(4e\big) \quad \text{(this follows from $\frac{n_1}{k_1^2}\leq 2e \frac{n_2}{k_2^2}$)} \\
            &\leq \frac{n_2}{k_2^2}\log\big(4e\big)\log\big(\frac{\Ct}{2e}\vee \frac{n_1}{2ek^2_1}\big).
        \end{align*}
        Letting $c'_\mu$ denote the constant from Lemma~\ref{lem_lb_s2d2big_maxlin}, we can now choose $\widetilde c_\mu = \frac{c_\mu'}{ \log(4e)} $ sufficiently small so that, whenever $\mu^2 \leq \widetilde c_\mu R$, we have $\mu^2 \leq c_\mu' \frac{n_2}{k_2^2}\log\big(\frac{\Ct}{2e}\vee \frac{n_1}{2ek^2_1}\big)$.         
        The conclusion follows by Lemma ~\ref{lem_lb_s2d2big_maxlin}, and the proof is complete. 
\end{proof}

The lemma below ensure that $R$ is a lower bound on the rate obtained in Lemma~\ref{lem_lb_s2d2big_truncchi2}.
\begin{lemma}\label{lem_simplify_s2d2big_truncchi2}
Suppose that $k_1^2 \leq \bar{c}n_1k_2$ for some $\bar{c} \in (0, (2e)^{-4})$ and $k_1 < c' n_1$, $k_2 \leq c' n_2$ for some sufficiently small constant $c'$. 
    Furthermore, suppose that $\frac{k^2_2}{n_2} \geq 2e\frac{k_1^2}{n_1}$. 
    \begin{enumerate}
        \item Then for any $\alpha > 0$, there exist constants $c_\mu > 0$ and $C_* \geq 1$ such that if $1 \vee \big\lceil \Ct \frac{k_1^2}{n_1}\big\rceil \vee \Big\lceil \frac{\Ct k_2^2/n_2}{\log\big(\frac{n_1k_2^2}{k_1^2n_2}\big)}\Big\rceil \leq k_1\wedge\big\lfloor\frac{k_2}{\log\big(\frac{n_1k_2}{k_1^2}\big)}\big\rfloor$and $\mu^2 \leq c_\mu R$, 
    \[\EE\Big[\exp(\mu^2XY)\one\Big( \Ct \frac{k_1^2}{n_1} \vee \frac{\Ct k_2^2/n_2}{\log\big(\frac{n_1k_2^2}{k_1^2n_2}\big)} \leq X \leq k_1\wedge\frac{k_2}{\log\big(\frac{n_1k_2}{k_1^2}\big)}\Big)\Big] < \alpha.\]
    \item Moreover, we have
    \begin{align*}
        R \geq \begin{cases}
            \big(\psi_{21} + \beta_{12}\big)\land \phi_{12} \land \phi_{21} & \text{ if } k_2 \leq k_1 \log\left(\frac{n_1}{k_1}\right)\\
            \big(\psi_{12} + \beta_{21}\big)\land \phi_{12} \land \phi_{21} & \text{ otherwise. } 
        \end{cases}
    \end{align*}
    Consequently, we have $R \geq \big(\psi_{21} + \beta_{12}\big) \land \big(\psi_{12} + \beta_{21}\big) \land \phi_{12} \land \phi_{21}$.
    \end{enumerate}
    
\end{lemma}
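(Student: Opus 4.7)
The plan is to mirror the three-step structure used to prove Lemma~\ref{lem_simplify_s1d1big_truncchi2}, with Lemma~\ref{lem_lb_s2d2big_truncchi2} playing the role of the underlying second-moment bound. Part 1 reduces to showing that the rate appearing in the hypothesis of Lemma~\ref{lem_lb_s2d2big_truncchi2} is, up to a multiplicative constant, at least $R$; once this is established, the conclusion follows by choosing $c_\mu$ below the appropriate threshold and invoking Lemma~\ref{lem_lb_s2d2big_truncchi2} directly. Part 2 is a purely algebraic statement about $R$, proved by case analysis on whether $k_2 \leq k_1\log(n_1/k_1)$, and the consequent follows by taking the minimum of the two case-dependent bounds.

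For the comparison with $R$ in part~1, I would decompose the rate from Lemma~\ref{lem_lb_s2d2big_truncchi2} as a sum of two terms. The first, $\frac{1}{k_1}\log\bigl(1+c_\mu\frac{n_2k_1}{k_2^2}\log(n_1/(2ek_1))\bigr)$, is at least a constant multiple of $\psi_{12}$, using $\log(1+c x) \geq c\log(1+x)$ for $c\in(0,1)$ and swapping $n_1/(2ek_1)$ for $n_1/k_1$ at the cost of an absolute constant (as done via Lemma~\ref{lem_logk_constants}(i) in the sparse regime). For the second term, I would invoke essentially verbatim the chain of inequalities from~\eqref{eq_psi12/4}, which uses $n_1k_2/k_1^2 \geq \bar c^{-1} \geq 16e^4$ and $n_2/k_2 \geq 1/c' \geq 4$ to conclude that the term dominates $\psi_{21}/4$. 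Summing yields a rate $\gtrsim \psi_{12}+\psi_{21} \geq R$, finishing part~1.

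For part~2, I would split on the regime of $k_2$ relative to $k_1\log(n_1/k_1)$. If $k_2 \leq k_1\log(n_1/k_1)$, then dividing gives $\frac{n_2 k_1}{k_2^2}\log(n_1/k_1) \geq \frac{n_2}{k_2}$, so
\[\psi_{12} \geq \tfrac{1}{k_1}\log\bigl(1+\tfrac{n_2}{k_2}\bigr) \gtrsim \tfrac{1}{k_1}\log(n_2/k_2) \geq \beta_{12},\]
using that $n_2/k_2 \geq 1/c'$ is large. Hence $R \geq \psi_{12}+\psi_{21} \gtrsim \beta_{12}+\psi_{21}$, which together with the $\phi_{12}\wedge\phi_{21}$ terms in $R$ gives the first case. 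If $k_2 > k_1\log(n_1/k_1)$, then $\frac{n_1 k_2}{k_1^2}\log(n_2/k_2) \geq \frac{n_1}{k_1}\log(n_1/k_1)$, so $\psi_{21} \geq \frac{1}{k_2}\log(n_1/k_1) \geq \beta_{21}$ (after absorbing the $\log\log$ correction using that $n_1/k_1$ is large). Thus $R \gtrsim \beta_{21}+\psi_{12}$, which together with $\phi_{12}\wedge\phi_{21}$ handles the second case. The combined consequent follows by taking the pointwise minimum over both cases, since each case-dependent bound is itself an upper bound on the minimum.

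The main obstacle is purely technical: ensuring that all logarithmic arguments are bounded away from $1$ so the standard equivalences $\log(1+x)\asymp\log x$ (for $x$ large) and $\log(1+x)\asymp x$ (for $x$ small) hold with explicit constants. The assumptions $k_j \leq c'n_j$ for sufficiently small $c'$, together with $\bar c < (2e)^{-4}$, will guarantee this, but verification is required at each step. I expect the hardest single estimate to be the second one in part~1, where one must simultaneously extract a factor $\log(n_1k_2/k_1^2)$ outside and retain a nontrivial argument inside the innermost logarithm; I would avoid repeating the computation by directly citing the chain culminating in~\eqref{eq_psi12/4}, which was established under the same assumptions on $\bar c$ and $c'$ that are in force here.
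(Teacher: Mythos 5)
Your proposal matches the paper's proof in both structure and execution: Part 1 invokes the chain culminating in \eqref{eq_psi12/4} (together with $\log(1+c_\mu x)\geq c_\mu\log(1+x)$ and the harmless $n_1/(2ek_1)\rightsquigarrow n_1/k_1$ swap for the first term) to show the Lemma~\ref{lem_lb_s2d2big_truncchi2} threshold dominates $\psi_{12}+\psi_{21}\geq R$, and Part 2 splits on $k_2\lessgtr k_1\log(n_1/k_1)$ to show $\psi_{12}\geq\beta_{12}$ or $\psi_{21}\geq\beta_{21}$ respectively, exactly as the paper does. One small note: in your Part~2 chain you wrote a $\gtrsim$ between $\frac{1}{k_1}\log(1+n_2/k_2)$ and $\frac{1}{k_1}\log(n_2/k_2)$, but since $\log(1+x)\geq\log(x)$ holds exactly, the inequality is a genuine $\geq$ and you get the sharp $R\geq\cdots$ (not merely $R\gtrsim\cdots$) that the lemma statement requires.
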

\begin{proof}[Proof of Lemma ~\ref{lem_simplify_s2d2big_truncchi2}]
\phantom{}

\begin{enumerate}
    \item 
    By the assumption $\frac{n_1k_2}{k_1^2} \geq \bar{c}^{-1} \geq 16e^4$, 
    we can repeat the steps leading to equation~\eqref{eq_psi12/4} to obtain 
    \begin{align*}
         \frac{\log\big(\frac{n_1k_2}{k_1^2}\big)}{k_2}\log\Big(1 + c_\mu \frac{n_2}{k_2\log\big(\frac{n_1k_2}{k_1^2}\big)}\log \big(\frac{n_1k_2}{2ek_1^2\log(\frac{n_1k_2}{k_1^2})}\big)\Big)&\geq \frac{c_\mu}{4}\frac{\log\Big(1 + \frac{n_1k_2}{k_1^2}\log\big(\frac{n_2}{k_2}\big)\Big)}{k_2} \\
    &= \frac{c_\mu}{4} \psi_{21}.
    \end{align*}
    It immediately follows that, for some small enough constant $c$, we have 
    \begin{align*}
        \mu^2 &\leq cR \\
        &\leq c(\psi_{12} + \psi_{21}) \\
        &\leq \frac{1}{2}\frac{1}{k_1}\log\Big(1 + c_\mu \frac{n_2k_1}{k_2^2}\log(\frac{n_1}{2ek_1})\Big) + \frac12 \frac{\log\big(\frac{n_1k_2}{k_1^2}\big)}{k_2}\log\Big(1 + c_\mu \frac{n_2}{k_2\log\big(\frac{n_1k_2}{k_1^2}\big)}\log \big(\frac{n_1k_2}{2ek_1^2\log(\frac{n_1k_2}{k_1^2})}\big)\Big).
    \end{align*}
    The conclusion follows from Lemma ~\ref{lem_lb_s2d2big_truncchi2}.
    \item Assume now that $k_2 \leq k_1 \log\left(\frac{n_1}{k_1}\right)$. 
    Then we have
    \begin{align*}
        \psi_{12} &= \frac{1}{k_1} \log\left(1+\frac{n_2k_1}{k_2^2} \log\left(\frac{n_1}{k_1}\right)\right)\\
        & \geq \frac{1}{k_1} \log\left(1+\frac{n_2}{k_2} \right)\\
        & \geq \beta_{12}, 
    \end{align*}
    which yields the desired result. 
    Assume now that $k_2 > k_1 \log\left(\frac{n_1}{k_1}\right)$. 
    Then
    \begin{align*}
        \psi_{21} &= \frac{1}{k_2} \log\left(1+\frac{n_1k_2}{k_1^2} \log\left(\frac{n_2}{k_2}\right)\right)\\
        & > \frac{1}{k_2} \log\left(\frac{n_1}{k_1}\log\left(\frac{n_1}{k_1}\right)\log\left(\frac{n_2}{k_2}\right)\right)\\
        & \geq \frac{1}{k_2}\log\left(\frac{n_1}{k_1}\right)\\
        & \geq \beta_{21}.
    \end{align*}
    This concludes the proof.
    \end{enumerate}
\end{proof}

The lemma below ensure that $R$ is a lower bound on the rate obtained in Lemma~\ref{lem_max_test_lower_bound}.
\begin{lemma}\label{lem_simplification_rate_max_test}
    Suppose that $k_1^2 \leq \bar{c}n_1k_2$ for some $\bar{c} \in (0,(2e)^{-4})$ and $k_1 \leq c'n_1$ and $k_2 \leq c' n_2$ for some small enough constant $c'>0$. Furthermore, suppose that $k_2 < k_1\log\big((n_1k_2)/k_1^2\big)$ and $\frac{n_1}{k_1} \geq e \log\left(\frac{n_2}{k_2}\right)$. 
    \begin{enumerate}
        \item Then for any $\alpha_4 > 0$, there exist constants $c_\mu > 0$ and $C_* \geq 1$ such that if $\mu^2 \leq c_\mu R,$ then 
    \[\EE\Big[\exp(\mu^2XY)\one\Big(X \geq \frac{k_2}{\log\big(\frac{n_1k_2}{k_1^2}\big)} \lor  \Ct \frac{k_1^2}{n_1}\Big\rceil\Big)\Big] < \alpha_4.\]
    \item Moreover, it holds that 
    \begin{align*}
        R \geq \begin{cases}
            \frac{1}{4}\big((\psi_{21}+\beta_{12}) + (\psi_{12} + \beta_{21})\big) \land \phi_{12} \land \phi_{21}& \text{ if } k_1 \leq k_2\log\left(\frac{n_2}{k_2}\right)\\
            \frac{1}{2}\big(\psi_{21} + \beta_{12}\big) \land \phi_{12} \land \phi_{21} & \text{ otherwise.}
        \end{cases}
    \end{align*}
    Consequently, we have $R \geq \big(\psi_{21} + \beta_{12}\big) \land \big(\psi_{12} + \beta_{21}\big) \land \phi_{12} \land \phi_{21}$.
    \end{enumerate}
\end{lemma}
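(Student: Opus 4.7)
The plan is to invoke Lemma~\ref{lem_max_test_lower_bound} for Part 1 and to perform case-based algebraic manipulations for Part 2, in the same spirit as Lemmas~\ref{lem_simplify_s1d1big_truncchi2}--\ref{lem_simplify_s2d2big_truncchi2}. Throughout I exploit the two ambient bounds $\frac{n_1 k_2}{k_1^2}\geq (2e)^4$ (from $k_1^2\leq \bar c n_1 k_2$) and $\frac{n_2}{k_2}\geq 1/c'$ (arbitrarily large for $c'$ small), which let me convert $\log(1+\cdot)$ to $\log(\cdot)$ up to absolute factors and split $\log(ab)=\log a+\log b$ freely.

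For Part 2, the key auxiliary claim is that under the standing hypothesis $k_2 < k_1\log\!\bigl(\tfrac{n_1 k_2}{k_1^2}\bigr)$, one has $k_2 \leq 2k_1\log(n_1/k_1)$. This is trivial for $k_2\leq k_1$; for $k_2>k_1$, setting $x=k_2/k_1>1$, the hypothesis reads $x-\log x<\log(n_1/k_1)$, and $x-\log x\geq x/2$ for $x\geq 2$ then yields the claim. A direct substitution into the definitions of $\psi_{12}$ and $\beta_{12}$, combined with $n_2/k_2\geq 4$, gives $\psi_{12}\geq \tfrac{1}{2}\beta_{12}$. In the subcase $k_1\leq k_2\log(n_2/k_2)$ I additionally show $\psi_{21}\geq \beta_{21}$ via $\tfrac{n_1 k_2}{k_1^2}\log(\tfrac{n_2}{k_2})\geq n_1/k_1$. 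These two bounds yield $\beta_{12}+\beta_{21}\leq 2(\psi_{12}+\psi_{21})$, whence $\psi_{12}+\psi_{21}\geq \tfrac{1}{3}(\psi_{12}+\psi_{21}+\beta_{12}+\beta_{21})$; by monotonicity of the minimum, $R\geq\tfrac{1}{3}(\psi_{12}+\psi_{21}+\beta_{12}+\beta_{21})\wedge\phi_{12}\wedge\phi_{21}$, stronger than the claimed $\tfrac{1}{4}$. In the complementary subcase only $\psi_{12}\geq \tfrac{1}{2}\beta_{12}$ is needed to obtain $R\geq \tfrac{1}{2}(\psi_{21}+\beta_{12})\wedge\phi_{12}\wedge\phi_{21}$.

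For Part 1, I split along the same boundary $k_1\leq k_2\log(n_2/k_2)$ versus $k_1>k_2\log(n_2/k_2)$. In the first subcase the extra hypothesis $n_2/k_2\geq 2(1-e^{-c_\mu})^{-1}$ of Lemma~\ref{lem_max_test_lower_bound} is automatic for $c'$ small, so I apply its second conclusion; its sufficient condition $\mu^2 \leq \tfrac{c_\mu}{k_2}\log\!\bigl(\tfrac{n_1}{2ek_1}\bigr)+\tfrac{1}{k_1}\log\!\bigl((1-e^{-c_\mu})\tfrac{n_2}{k_2}\bigr)$ is comparable to $c_\mu\beta_{21}+\beta_{12}$ for $n_2/k_2$ large. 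I then verify $R\lesssim c_\mu\beta_{21}+\beta_{12}$ by bounding $\psi_{12}$ and $\psi_{21}$ via $\log(1+ab)\leq \log(1+a)+\log(1+b)$, the auxiliary $k_2\leq 2k_1\log(n_1/k_1)$ from Part 2, and $n_1/k_1\geq e\log(n_2/k_2)$ to absorb residual $\log\log$ corrections. In the second subcase only the first conclusion of Lemma~\ref{lem_max_test_lower_bound} applies; its sufficient condition $\mu^2\leq c_\mu\tfrac{1}{k_2}\log\!\bigl(\tfrac{k_2 n_1}{2ek_1^2}\log(\tfrac{n_2}{k_2})\bigr)$ is comparable to $c_\mu\psi_{21}$. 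Here I verify $R\lesssim\psi_{21}$ via $\psi_{12}\lesssim\psi_{21}$, which follows from the case hypothesis $k_1/k_2>\log(n_2/k_2)$ together with the fact that the larger prefactor $1/k_2$ in $\psi_{21}$ dominates the prefactor $1/k_1$ in $\psi_{12}$ even after accounting for the different logarithmic arguments.

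The main obstacle is the careful tracking of multiple nested logarithms in Part 1, which forces the distinct subcase analyses: bounding $\psi_{12}$ by $\beta_{12}$ uses the auxiliary $k_2\leq 2k_1\log(n_1/k_1)$ jointly with $n_1/k_1\geq e\log(n_2/k_2)$ to eliminate $\log\log(n_1/k_1)$ factors, whereas bounding $\psi_{12}$ by $\psi_{21}$ in the other subcase requires an entirely different manipulation adapted to the regime $k_1\gg k_2$. The asymmetric coefficients in the second conclusion of Lemma~\ref{lem_max_test_lower_bound} (where $c_\mu$ multiplies only the $\beta_{21}$-like term) dictate precisely where the case split must occur, and the final $c_\mu$ in our claim must be an absolute constant small enough to absorb all the peeling constants accumulated in these steps.
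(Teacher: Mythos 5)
Your Part 2 tracks the paper closely: the same reduction to $\psi_{12}\gtrsim\beta_{12}$ (via the auxiliary $k_2\lesssim k_1\log(n_1/k_1)$, derived from the standing hypothesis), the same split at $k_1\lessgtr k_2\log(n_2/k_2)$, and in the first subcase the additional bound $\psi_{21}\geq\beta_{21}$. Your constants differ slightly from the paper's but the logic is identical. Part 1's first subcase ($k_1\leq k_2\log(n_2/k_2)$) is also in the same spirit as the paper: establish $R\lesssim\beta_{12}+\beta_{21}$, then absorb constants to meet the sufficient condition in the second conclusion of Lemma~\ref{lem_max_test_lower_bound}. So far so good.

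The gap is in the second subcase of Part 1 ($k_1>k_2\log(n_2/k_2)$). You assert that $\psi_{12}\lesssim\psi_{21}$ ``follows from the case hypothesis $k_1/k_2>\log(n_2/k_2)$ together with the fact that the larger prefactor $1/k_2$ in $\psi_{21}$ dominates the prefactor $1/k_1$ in $\psi_{12}$ even after accounting for the different logarithmic arguments.'' This is the conclusion, not an argument. The logarithmic arguments of $\psi_{12}$ and $\psi_{21}$ are \emph{not} comparable in any simple sense: $\psi_{12}$ contains $\frac{n_2 k_1}{k_2^2}$ inside the logarithm, which can be astronomically large when $n_2/k_2$ is large, while $\psi_{21}$ contains $\frac{n_1 k_2}{k_1^2}\log(n_2/k_2)$, which in the boundary regime $k_1^2\approx\bar c\,n_1 k_2$ and $k_2/k_1\approx 1/\log(n_2/k_2)$ can be of order a constant. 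In that regime one must show roughly $\frac{1}{k_1}\bigl[\log(n_1/k_1)+\log(n_2/k_2)\bigr]\lesssim\frac{1}{k_2}$, which does hold but only after invoking the standing assumption $\frac{n_1}{k_1}\geq e\log\bigl(\frac{n_2}{k_2}\bigr)$ and the relation $k_2\approx k_1^2/(\bar c n_1)$ in a nontrivial way. The paper handles this by establishing the scalar inequality $\frac{1}{k_1}\log\bigl(\frac{n_2 k_1}{k_2^2}\log(\tfrac{n_1}{k_1})\bigr)\leq\frac{1}{k_2}\log\bigl(\frac{n_1 k_2}{k_1^2}\log(\tfrac{n_2}{k_2})\bigr)$ via the auxiliary function $f(x)=\log(a_1 x\log(a_2))-x\log\bigl(\tfrac{a_2\log(a_1)}{x}\bigr)$ with $x=k_2/k_1$, $a_j=n_j/k_j$, showing $f$ is concave over $\bigl[\frac{1}{\bar c a_1},\frac{1}{\log(a_2)}\bigr]$ and verifying nonnegativity at both endpoints --- and the endpoint checks genuinely consume both $a_1\geq e\log(a_2)$ and the smallness of $c'$. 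Your plan has no substitute for this step; a ``prefactor dominance'' heuristic will not close it, because without that inequality you cannot rule out $\psi_{12}\gg\psi_{21}$. You should either reproduce the concavity argument or find a rigorous algebraic surrogate; simply citing the prefactor discrepancy is not enough.
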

\begin{proof}[Proof of Lemma~\ref{lem_simplification_rate_max_test}]
\phantom{ }

\begin{enumerate}
    \item 

Note that the relation $k_2 < k_1\log\big((n_1k_2)/k_1^2\big)$ implies that 
\begin{align*}
    \frac{n_1k_2}{k_1^2} &< \frac{n_1}{k_1} \log\left(\frac{n_1k_2}{k_1^2}\right) \leq \frac{2n_1}{k_1} \log\left(\frac{2n_1}{k_1}\right) \quad \text{ by Lemma~\ref{lem_logs}.(i)}
\end{align*}
which yields 
\begin{align}
    k_2 \leq 4 k_1 \log(n_1/k_1). \label{eq_s2<s1_log(d1/s1)}
\end{align}
Now, we obtain 
\begin{align*}
    \frac{n_2k_1}{k_2^2} \log\left(\frac{n_1}{k_1}\right) = \frac{n_2}{k_2} \cdot \frac{k_1}{k_2} \log\left(\frac{n_1}{k_1}\right) \geq \frac{1}{4c'} \geq 1 
\end{align*}
provided $c' \leq 1/4$, and 
\begin{align*}
    \frac{n_1k_2}{k_1^2} \log\left(\frac{n_2}{k_2}\right) \geq \frac{1}{\bar c} \log\left(\frac{1}{c'}\right) \geq 1 
\end{align*}
provided $c'$ and $\bar c$ are small enough, which ensures that $\nu_{12 } = \frac{1}{k_1} \log\left(\frac{n_2}{k_2}\right)$ and $\beta_{21} = \frac{1}{k_2} \log\left(\frac{n_1}{k_1}\right)$. 

Assume first that  $k_1 < k_2\log(n_2/k_2)$. 
Then, we have 
\begin{align*}
    R &\leq \psi_{12} + \psi_{21}\\
    & = \frac{1}{k_1} \log\left(1+ \frac{n_2k_1}{k_2^2} \log\left(\frac{n_1}{k_1}\right)\right) + \frac{1}{k_2} \log\left(1+ \frac{n_1k_2}{k_1^2} \log\left(\frac{n_2}{k_2}\right)\right)\\
    & \leq \frac{1}{k_1} \log\left(2\frac{n_2}{k_2} \frac{k_1}{k_2} \log\left(\frac{n_1}{k_1}\right)\right) + \frac{1}{k_2} \log\left(1+ \frac{n_1}{k_1} 4\log\left(\frac{n_1}{k_1}\right)\log\left(\frac{n_2}{k_2}\right)\right)\\
    & \leq \frac{1}{k_1} \log\left(  \frac{n_2}{k_2} \right) + \frac{1}{k_1} \log\left(\frac{2k_1}{k_2} \log\left(\frac{n_1}{k_1}\right)\right) + \frac{1}{k_2} \log\left(\frac{8}{e} \left(\frac{n_1}{k_1} \right)^2\log\left(\frac{n_1}{k_1}\right)\right)\\
    & \leq \frac{1}{k_1} \log\left(  \frac{n_2}{k_2} \right) + \frac{1}{k_1} \log\left(\frac{2k_1}{k_2} \log\left(\frac{n_1}{k_1}\right)\right) + \frac{4}{k_2} \log\left(\frac{n_1}{k_1} \right)\\
    & \leq \frac{1}{k_1} \log\left(  \frac{n_2}{k_2} \right) + \frac{6}{k_2} \log\left(\frac{n_1}{k_1} \right)
    \\ &\leq 6 \beta_{12} + 6 \beta_{21}.
\end{align*}
In the second to last inequality, we used the fact that $\log\left(\frac{2k_1}{k_2} \log\left(\frac{n_1}{k_1}\right)\right) \leq \frac{2k_1}{k_2} \log\left(\frac{n_1}{k_1} \right)$ due to the inequality $\log(x) \leq x$ that holds true for any $x>0$. 
It follows from  Lemma~\ref{lem_max_test_lower_bound} that if $\mu^2 \leq c_\mu R$ for some sufficiently small constant $c_\mu>0$, then  
    $$\EE\Big[\exp(\mu^2XY)\one\Big(X \geq \frac{k_2}{\log\big(\frac{n_1k_2}{k_1^2}\big)}\lor  \Ct \frac{k_1^2}{n_1}\Big)\Big] < \alpha.$$

    Assume now that $k_1 > k_2\log\left(\frac{n_2}{k_2}\right)$. 
    Again, we have $R \leq \psi_{12} + \psi_{21}$. 
    We aim to show that
    \begin{align}
        \psi_{12} + \psi_{21} \leq \frac{4}{k_2}\log\big(\frac{k_2n_1\log(n_2/k_2)}{2ek_1^2}\big),\label{eq_target_simplification_max_rate}
    \end{align}
    which will conclude the proof since it ensures that, if $\mu^2 \leq c_\mu R$ for some small enough $c_\mu>0$, then 
    $$\EE\Big[\exp(\mu^2XY)\one\Big(X \geq \frac{k_2}{\log\big(\frac{n_1k_2}{k_1^2}\big)}\lor  \Ct \frac{k_1^2}{n_1}\Big)\Big] < \alpha.$$ 
    by Lemma~\ref{lem_max_test_lower_bound}.
    We observe that the assumption $k_1^2 \leq \bar c n_1 k_2$ ensures that $\frac{n_1k_2}{k_1^2}\log(n_2/k_2) > 2$, which yields
    \begin{align*}
        \psi_{21} &= \frac{1}{k_2} \log\left(1+ \frac{n_1k_2}{k_1^2} \log\left(\frac{n_2}{k_2}\right)\right) \\
        & \leq \frac{2}{k_2} \log\left(\frac{n_1k_2}{k_1^2} \log\left(\frac{n_2}{k_2}\right)\right),
    \end{align*}   
    and further implies that 
    \begin{align}
        \frac{1}{k_2} \log\left(\frac{n_1k_2}{k_1^2} \log\left(\frac{n_2}{k_2}\right)\right) \leq \psi_{21}  \leq \frac{2}{k_2} \log\left(\frac{n_1k_2}{k_1^2} \log\left(\frac{n_2}{k_2}\right)\right).\label{eq_equiv_psi21}
    \end{align}
    Now, by the relations $k_2 \leq 4 k_1\log(n_1/k_1)$ and $k_j \leq c' n_j$ for some small enough constant $c'>0$, we also have $\frac{n_2k_1}{k_2^2} \log\left(\frac{n_1}{k_1}\right)>2$, which yields
    \begin{align*}
        \psi_{12} = \frac{1}{k_1} \log\left(1+ \frac{n_2k_1}{k_2^2} \log\left(\frac{n_1}{k_1}\right)\right) \leq \frac{2}{k_1} \log\left( \frac{n_2k_1}{k_2^2} \log\left(\frac{n_1}{k_1}\right)\right),
    \end{align*}
    so that
    \begin{align}
        \frac{1}{k_1} \log\left( \frac{n_2k_1}{k_2^2} \log\left(\frac{n_1}{k_1}\right)\right) \leq \psi_{12} \leq \frac{2}{k_1} \log\left( \frac{n_2k_1}{k_2^2} \log\left(\frac{n_1}{k_1}\right)\right) \label{eq_equiv_psi12}
    \end{align}
    To obtain the desired inequality~\eqref{eq_target_simplification_max_rate}, it therefore suffices to show that 
    \begin{align}
        \frac{1}{k_1} \log\left( \frac{n_2k_1}{k_2^2} \log\left(\frac{n_1}{k_1}\right)\right) \leq \frac{1}{k_2} \log\left( \frac{n_1k_2}{k_1^2} \log\left(\frac{n_2}{k_2}\right)\right)\label{eq_new_target}
    \end{align}
    under the assumptions $\frac{k_1}{\bar c n_1} \leq \frac{k_2}{k_1} \leq \frac{1}{\log(n_2/k_2)}$ and $\frac{n_j}{k_j} \geq \frac{1}{c'}$ for $j\in\{1,2\}$, as well as $\frac{n_1}{k_1} \geq e \log\left(\frac{n_2}{k_2}\right)$. 
    
    To prove this, we let $x = \frac{k_2}{k_1}$, and $a_j = \frac{n_j}{k_j}$ for $j=1,2$ and introduce the function 
\begin{align*}
    f(x) = \log\left(a_1 x \log(a_2)\right) - x \log\left(\frac{a_2\log(a_1)}{x}\right).
\end{align*}
We note that the desired inequality~\eqref{eq_new_target} is equivalent to showing that $f(x) \geq 0$ for any $x \in \left[\frac{1}{\bar c a_1}, \frac{1}{\log(a_2)}\right]$ where $a_1 \geq e \log(a_2)$ and $a_1, a_2 \geq \frac{1}{c'}$.
We have, for any $x \in \left[\frac{1}{\bar c a_1}, \frac{1}{\log(a_2)}\right]$
\begin{align*}
    f'(x) &= \frac{1}{x} - \log(a_2 \log(a_1)) + \log(x) + 1 \\
    & = \log(x) + \frac{1}{x} - \log\left(\frac{a_2 \log(a_1)}{e}\right),\\
    f''(x) &= \frac{1}{x} - \frac{1}{x^2} \leq 0,
\end{align*}
since $x \leq \frac{1}{\log(a_2)} \leq 1$. 
Therefore, $f$ is concave, and to prove that $f \geq 0$ over $\left[\frac{1}{\bar c a_1}, \frac{1}{\log(a_2)}\right]$, it suffice to show that $f\left(\frac{1}{\bar c a_1}\right) \geq 0$ and $f \left(\frac{1}{\log(a_2)}\right)\geq 0$. 
We have
\begin{align*}
    f\left(\frac{1}{\bar c a_1}\right) &= \log\left(\frac{\log(a_2)}{\bar c}\right) - \frac{\log(\bar c a_1)}{\bar c a_1}  - \frac{\log(a_2)}{\bar c a_1} - \frac{\log\log(a_1)}{\bar c a_1}\\
    &\geq \log\left(\frac{\log(a_2)}{\bar c}\right)   - \frac{\log(a_2)}{\bar c a_1} - 1\\
    & \geq \frac{1}{2}\log\left(\frac{\log(a_2)}{\bar c}\right) - 1 \geq 0.
\end{align*}
In the last step, we used the fact that, since $a_1 \geq e \log(a_2)$, we also have $\bar c a_1 \geq \frac{1}{2} \frac{\log(a_2)}{\log \log(a_2/\bar c)}$
provided $a_2$ is large enough, which can be enforced by choosing $c'>0$ small enough. 
Similarly, we have
\begin{align*}
    f\left(\frac{1}{\log(a_2)}\right) &= \log(a_1) - \frac{1}{\log(a_2)} \log\Big(a_2\log(a_2)\log(a_1)\Big)\\
    & = \log(a_1) - 1 - \frac{\log \log(a_2)}{\log(a_2)} - \frac{\log(a_1)}{\log(a_2)}\\
    & \geq \log(a_1) - 2 - \frac{\log(a_1)}{\log(a_2)}\\
    &
    \geq 0,
\end{align*}
provided $a_1$ and $a_2$ are larger than suitably large constants, which can be enforced by choosing $c'$ small enough. 
This concludes the proof of the first claim.
\item Assume first that $k_1 < k_2 \log \left(\frac{n_2}{k_2}\right)$. 
Then we have 
    $$\begin{aligned}
    \psi_{12} + \psi_{21}
    & = \frac{1}{k_1} \log\left(1+ \frac{n_2k_1}{k_2^2} \log\left(\frac{n_1}{k_1}\right)\right) + \frac{1}{k_2} \log\left(1+ \frac{n_1k_2}{k_1^2} \log\left(\frac{n_2}{k_2}\right)\right)\\
    & \geq \frac{1}{k_1} \log\left( 1+ \frac{n_2}{4k_2} \right) + \frac{1}{k_2} \log\left(  \frac{n_1k_2}{k_1^2} \log\left(\frac{n_2}{k_2}\right)\right)\\
    & > \frac{1}{4k_1} \log\left( 1+ \frac{n_2}{k_2} \right) + \frac{1}{k_2} \log\left(\frac{n_1}{k_1}\right)\\
    & \geq \frac{1}{4} \big(\beta_{12} + \beta_{21}\big),
\end{aligned}
$$
which yields $R \geq \frac{1}{8} \big((\psi_{21} + \beta_{12}) + (\psi_{12}+\beta_{21})\big) \land \phi_{12} \land \phi_{21}$, as desired. 

Assume now that $k_1 \geq k_2 \log \left(\frac{n_2}{k_2}\right)$. 
Combining~\eqref{eq_equiv_psi21}, \eqref{eq_equiv_psi12} and~\eqref{eq_new_target} yields $\psi_{12} + \psi_{21} \asymp \psi_{21}$. 
We now show that $\psi_{21} \asymp \psi_{21} + \beta_{12}$. 
By assumption, we have
\begin{align*}
    \frac{n_1k_2}{k_1^2} \log\left(\frac{n_2}{k_2}\right)\geq \frac{1}{\bar c} >1,
\end{align*}
which implies that $\beta_{12} = \frac{1}{k_1} \log\left(\frac{n_2}{k_2}\right)$. 
Therefore, we obtain
\begin{align*}
        \beta_{12} & =\frac{1}{k_1} \log\left( \frac{n_2}{k_2} \right) & \\
        & \leq \frac{1}{k_1} \log\left( \frac{n_2}{k_2} \right) + \frac{1}{k_1} \log\left(\frac{4k_1\log\left(\frac{n_1}{k_1}\right)}{k_2}\right) & \text{by equation~\eqref{eq_s2<s1_log(d1/s1)}}\\
        &\leq \frac{2}{k_1} \log\left( \frac{n_2k_1}{k_2^2} \log\left(\frac{n_1}{k_1}\right)\right) & \text{ using $\log(4x) \leq 2 \log(x), ~\forall x\geq 4$}\\
        &\leq \frac{2}{k_2} \log\left( \frac{n_1k_2}{k_1^2} \log\left(\frac{n_2}{k_2}\right)\right) & \text{by equation~\eqref{eq_new_target}}\\
        & \leq 2 \psi_{21} & \text{ by equation~\eqref{eq_equiv_psi21}.}
    \end{align*}
    Therefore, we have $\psi_{12} + \psi_{21} \geq \psi_{21} + \beta_{12}/2$, hence $R \geq \frac{1}{2}\big(\psi_{21} + \beta_{12} \big) \land \phi_{12} \land \phi_{21}$ and the proof is complete.

\end{enumerate}
\end{proof}

\begin{lemma}\label{lem_psi_disappears}
        Assume that, for some constant $c>0$, we have $k_2 \leq c n_2$. 
        If $\frac{n_1k_2}{k_1^2} \log\left(\frac{n_2}{k_2}\right)\leq 1$, then we have $\psi_{21} \leq \phi_{12}$. 
    \end{lemma}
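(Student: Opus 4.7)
My plan is to prove $\psi_{21} \leq \phi_{12}$ by a direct chain of two elementary inequalities, followed by a refinement exploiting the hypothesis $A := \frac{n_1 k_2}{k_1^2}\log(n_2/k_2) \leq 1$. If $\phi_{12} = \infty$ the claim is trivial, so I would assume $\frac{n_1}{k_1^2} \leq 1$ and work with the explicit form $\phi_{12} = \frac{n_1}{k_1^2}\log(1 + n_2/k_2^2)$.

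First I would apply $\log(1+x) \leq x$ for $x \geq 0$ to $\psi_{21} = \frac{1}{k_2}\log(1+A)$, which gives
\[
\psi_{21} \leq \frac{A}{k_2} = \frac{n_1}{k_1^2}\log\frac{n_2}{k_2}.
\]
Next I would relate $\log(n_2/k_2)$ to $\log(1 + n_2/k_2^2)$ using Bernoulli's inequality $(1+y)^{k_2} \geq 1 + k_2 y$ applied with $y = n_2/k_2^2 \geq 0$. Taking logarithms, and using $n_2/k_2 \geq 1/c \geq 1$ (which follows from $k_2 \leq cn_2$ with $c \leq 1$), this yields $k_2 \log(1 + n_2/k_2^2) \geq \log(1 + n_2/k_2) \geq \log(n_2/k_2)$. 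Combining the two bounds:
\[
\psi_{21} \leq \frac{n_1}{k_1^2}\log\frac{n_2}{k_2} \leq \frac{n_1 k_2}{k_1^2}\log\!\left(1 + \frac{n_2}{k_2^2}\right) = k_2 \, \phi_{12}.
\]

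The main obstacle is removing the spurious $k_2$ factor to land exactly on $\psi_{21} \leq \phi_{12}$. To close this gap, I would use $A \leq 1$ in conjunction with $\log(n_2/k_2) \geq \log(1/c)$ (positive once $c < 1$) to deduce $\frac{n_1 k_2}{k_1^2} \leq 1/\log(1/c)$, so that the factor $k_2$ multiplying $\phi_{12}$ can be absorbed into a constant depending only on $c$. The most delicate regime is $k_2 \asymp \sqrt{n_2}$, where $\log(n_2/k_2)$ and $\log(1 + n_2/k_2^2)$ have comparable orders; I would handle it by a case split separating $n_2 \geq k_2^2$ (in which $\log(1 + n_2/k_2^2) \geq \log 2$ provides a direct lower bound on $\phi_{12}$) from its complement (where $\log(1+x) \geq x/2$ for $x \leq 1$ combined with $A \leq 1$ yields the matching estimate).
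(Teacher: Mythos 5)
There is a genuine gap in your proposed argument, and it stems from the fact that the inequality as printed in the statement is the wrong way around. Your chain correctly produces $\psi_{21} \leq \frac{n_1}{k_1^2}\log(n_2/k_2) \leq k_2\,\phi_{12}$, but the step that is supposed to ``absorb the $k_2$'' does not work: the hypothesis $A = \frac{n_1 k_2}{k_1^2}\log(n_2/k_2) \leq 1$ constrains the \emph{product} $\frac{n_1 k_2}{k_1^2}$, not $k_2$ itself, so it gives no control on the spurious factor $k_2$. Nor will a case split save it. In the regime $k_2 \asymp \sqrt{n_2}$ one has $\phi_{12} = \frac{n_1}{k_1^2}\log(1 + n_2/k_2^2) \asymp \frac{n_1}{k_1^2}$, while $\psi_{21} \asymp \frac{n_1}{k_1^2}\log(n_2/k_2) \asymp \frac{n_1}{k_1^2}\log n_2$, so $\psi_{21}/\phi_{12}$ is unbounded; even worse, at $k_2 \asymp c n_2$ the ratio is $\asymp c^2 n_2 \log(1/c)$. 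These examples are compatible with the hypothesis (both $\psi_{21}$ and $\phi_{12}$ scale as $\frac{n_1}{k_1^2}$, which can be made as small as the hypothesis requires). Hence $\psi_{21} \leq \phi_{12}$ is false as stated.

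Compare with the paper's own proof: it computes $\psi_{21} = \frac{1}{k_2}\log(1 + A) \asymp \frac{A}{k_2} = \frac{n_1}{k_1^2}\log(n_2/k_2)$ (using $\log(1+x) \asymp x$ for $x \leq 1$, which is exactly where the hypothesis enters) and then concludes $\geq \phi_{12}$, not $\leq$. This is the correct direction: $\log(n_2/k_2) \gtrsim \log(1 + n_2/k_2^2)$ whenever $k_2 \leq c n_2$ with $c$ small. The inequality in the lemma statement is a typo, and the direction that the proof and the later usage (in Lemma \ref{lem_simplify_dense_allrates}, step 3, where one needs $\psi_{12} + \beta_{21} = \psi_{12} \gtrsim R$ and $R \leq \phi_{21}$) actually require is $\psi_{21} \gtrsim \phi_{12}$. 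If you set out to prove that direction instead, your first step $\psi_{21} \asymp \frac{n_1}{k_1^2}\log(n_2/k_2)$ (the two-sided version of your $\log(1+x)\leq x$ bound, which holds since $A \leq 1$) combined with the elementary comparison $\log(n_2/k_2) \gtrsim \log(1+n_2/k_2^2)$ closes the argument in two lines, matching the paper.
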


    \begin{proof}[Proof of Lemma~\ref{lem_psi_disappears}]
        Note that, when $\frac{n_1k_2}{k_1^2} \log\left(\frac{n_2}{k_2}\right)\leq 1$, we have $\frac{n_1}{k_1^2}\leq 1$, so that $\phi_{12} = \frac{n_1}{k_1^2}\log\left(1+\frac{n_2}{k_2^2}\right)$. We obtain
    \begin{align*}
        \psi_{21} &= \frac{1}{k_2} \log\left(1+\frac{n_1k_2}{k_1^2} \log\left(\frac{n_2}{k_2}\right)\right)\\
        & \asymp \frac{n_1}{k_1^2} \log\left(\frac{n_2}{k_2}\right)\\
        & \geq \phi_{12},
    \end{align*}
    which completes the proof.
    \end{proof}

    \begin{lemma}\label{lem_simplified_rate_for_UB}
        It holds that $R \gtrsim \big(\psi_{12} + \beta_{21}\big) \land \big(\psi_{21} + \beta_{12}\big) \land \phi_{12} \land \phi_{21}$.
    \end{lemma}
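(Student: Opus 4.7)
I would prove the stronger, non-asymptotic statement $R \geq \tilde R$, where $\tilde R := (\psi_{12} + \beta_{21}) \land (\psi_{21} + \beta_{12}) \land \phi_{12} \land \phi_{21}$. Since both $R$ and $\tilde R$ include $\phi_{12} \land \phi_{21}$ in their respective minima, this reduces to establishing
\[
(\psi_{12} + \beta_{21}) \land (\psi_{21} + \beta_{12}) \leq \psi_{12} + \psi_{21}.
\]
The central observation is that the inequalities $\beta_{12} > \psi_{12}$ and $\beta_{21} > \psi_{21}$ cannot hold simultaneously. Granted this, either $\beta_{12} \leq \psi_{12}$, giving $\psi_{21} + \beta_{12} \leq \psi_{12} + \psi_{21}$, or $\beta_{21} \leq \psi_{21}$, giving $\psi_{12} + \beta_{21} \leq \psi_{12} + \psi_{21}$; in either case the left-hand minimum is bounded by $\psi_{12} + \psi_{21}$, and the lemma follows.

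To establish the central observation, I would argue by contradiction, assuming both strict inequalities. Both $\beta$ terms are then nonzero, so their indicators are active and $\beta_{12} = v/k_1$, $\beta_{21} = u/k_2$, where $u := \log(n_1/k_1)$ and $v := \log(n_2/k_2)$. A short algebraic manipulation of $\beta_{12} > \psi_{12}$, using $n_2 = k_2 e^v$ to substitute and then rearranging, yields the equivalent form $\log\!\big(e\binom{n_1}{k_1}\big) < k_2 (1 - e^{-v})$. Combined with the standard bound $\binom{n_1}{k_1} \geq (n_1/k_1)^{k_1}$, this implies the cleaner inequality $k_1 u < k_2(1 - e^{-v})$. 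The symmetric computation starting from $\beta_{21} > \psi_{21}$ gives $k_2 v < k_1(1 - e^{-u})$.

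The contradiction then comes out of a short case analysis using the elementary bound $1 - e^{-x} \leq \min(x, 1)$ for $x \geq 0$. When $u, v \leq 1$, the two inequalities collapse to $k_1 u < k_2 v$ and $k_2 v < k_1 u$, which is absurd; when $u, v > 1$, they give $k_1 u < k_2$ and $k_2 v < k_1$, so $uv < 1$, contradicting $u, v > 1$; in the mixed regime $u \leq 1 < v$ (and symmetrically for $v \leq 1 < u$), the bound reduces them to $k_1 u < k_2$ and $k_2 v < k_1 u$, forcing $v < 1$ and again contradicting $v > 1$. The main obstacle is the algebraic unfolding in the second paragraph -- specifically, passing from the definition of $\psi$ involving $\log(e\binom{n_1}{k_1})$ to the clean bound $k_1 u < k_2(1-e^{-v})$ through the Stirling-type inequality $\binom{n_1}{k_1} \geq (n_1/k_1)^{k_1}$; beyond this, the argument is elementary and bypasses the regime-specific simplification lemmas of Section~\ref{sec_pf_lb_general} altogether.
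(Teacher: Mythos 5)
Your proof is correct, and it is genuinely different from — and considerably more self-contained than — the argument in the paper. Where the paper establishes this lemma by a case split on $k_1^2 \gtrless \bar c n_1 k_2$ and $\frac{k_2^2}{n_2} \gtrless 2e\frac{k_1^2}{n_1}$ (after first disposing of the regime $k_1 \gtrsim n_1$ or $k_2 \gtrsim n_2$ and invoking Lemma~\ref{lem_2.6} to reduce to $\frac{n_1}{k_1} \geq e\log\frac{n_2}{k_2}$), and then cites the regime-specific simplification lemmas (Lemmas~\ref{lem_simplify_dense_allrates}, \ref{lem_simplify_s1d1big_truncchi2}, \ref{lem_simplify_s2d2big_truncchi2}, \ref{lem_simplification_rate_max_test}) which were really built for the lower bound, you give a direct two-line reduction ($\phi$-terms are shared, so it suffices to show $(\psi_{12}+\beta_{21}) \land (\psi_{21}+\beta_{12}) \leq \psi_{12}+\psi_{21}$) followed by an algebraic impossibility argument. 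Moreover you prove the stronger \emph{exact} inequality $R \geq \tilde R$, not merely $R \gtrsim \tilde R$; that is worth stating explicitly. The algebra checks out: with $u = \log(n_1/k_1)$, $v = \log(n_2/k_2)$, the strict inequality $\beta_{12} > \psi_{12}$ (indicator forced active since $\psi_{12}\geq 0$) rearranges to $k_2(1-e^{-v}) > \log\!\big(e\binom{n_1}{k_1}\big) \geq k_1u$ via $\binom{n_1}{k_1}\geq(n_1/k_1)^{k_1}$, and symmetrically $\beta_{21} > \psi_{21}$ gives $k_1(1-e^{-u}) > k_2v$. One small simplification you are leaving on the table: the elementary bound $1-e^{-x} \leq x$ already holds for \emph{all} $x \geq 0$ (not only $x\leq 1$), so the full three-way case split on $(u,v)$ is unnecessary — the two displayed inequalities immediately give $k_1 u < k_2 v < k_1 u$, a contradiction, with no cases at all. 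This does not affect correctness, only length. Your approach also bypasses the WLOG assumption $\frac{n_1}{k_1}\geq e\log\frac{n_2}{k_2}$ and the density-regime hypotheses that the simplification lemmas carry, so it is genuinely cleaner; the only ingredient it relies on that the paper's proof does not is the Stirling-type bound $\binom{n}{k}\geq(n/k)^k$, which is elementary.
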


    \begin{proof}[Proof of Lemma~\ref{lem_simplified_rate_for_UB}]
    Assume first that, for some constant $c>0$, we have $k_1 \geq c n_1$ or $k_2 \geq c n_2$, and by symmetry, assume we have $k_1 \geq c n_1$. 
    Then 
    \begin{align*}
        \psi_{12} = \frac{1}{k_1} \log\left(1+ \frac{n_2}{k_2^2} \log\left(e {n_1 \choose k_1}\right)\right) \geq \frac{1}{k_1} \log\left(1+ \frac{n_2}{k_2^2} \right) \asymp \phi_{12},
    \end{align*}
    which yields that
    \begin{align*}
        R &= (\psi_{12} + \psi_{21}) \land \phi_{12} \land \phi_{21}\\
        & \asymp \phi_{12} \land \phi_{21} \\
        &\geq \big(\psi_{12} + \beta_{21}\big) \land \big(\psi_{21} + \beta_{12}\big) \land \phi_{12} \land \phi_{21}.
    \end{align*}
    From now on, assume that $k_1 \leq c n_1$ and $k_2 \leq c n_2$, and assume without loss of generality that $\frac{n_1}{k_1} \geq e\log\left(\frac{n_2}{k_2}\right)$, by lemma~\ref{lem_2.6}. 

    If $k_1^2 \geq \bar c n_1 k_2$, then the result follows by Lemma~\ref{lem_simplify_dense_allrates}. Now, assume that $k_1^2 < \bar c n_1 k_2$. If $\frac{k^2_2}{n_2} \leq 2e\frac{k_1^2}{n_1}$, then the result follows by Lemma~\ref{lem_simplify_s1d1big_truncchi2}. 
    Else, we have $k_1^2 \geq \bar c n_1 k_2$ and the result follows by Lemmas~\ref{lem_simplify_s2d2big_truncchi2} and~\ref{lem_simplification_rate_max_test}.
        
    \end{proof}
\section{Proofs for upper bound}

\subsection{Analysis of total degree test}
Recall that the total degree test is defined as $\Delta_\lin^h = \one(t_\lin > h)$ for a choice of threshold $h > 0$, where
\begin{equation}
        t_{\lin} = \frac{1}{\sqrt{n_1n_2p_0(1-p_0)}}\sum_{\substack{i = 1}}^{n_1}\sum_{j = 1}^{n_2}(A_{ij} - p_0)
\end{equation}
\begin{lemma}\label{lem_linear_test}
    Let $\alpha \in (0,1)$ be given and define $h_\alpha = \sqrt{4 \log(2/\alpha)}$. Suppose that  $n_1n_2p_0 \geq \frac{4}{27}h_\alpha^2$ and $p_0 \in (0,\frac14]$. Then there exists a constant $C_\delta > 0$ such that if
    \[\delta^2 \geq C_\delta p_0(1-p_0)\frac{n_1n_2}{k_1^2k_2^2},\]
    then the linear test with threshold $h = h_\alpha$ satisfies
    \[\cR(\Delta_\lin^h, \delta) \leq \alpha.\]
\end{lemma}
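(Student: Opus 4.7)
The plan is to bound the Type I and Type II errors via Bernstein's inequality applied to the centered Bernoulli sums in the numerator of $t_{\lin}$, exploiting the hypothesis $n_1 n_2 p_0 \geq \tfrac{4}{27} h_\alpha^2$ to stay in the sub-Gaussian moderate-deviation regime rather than the Poissonian tail regime.

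For the Type I bound: under $\bbP_0$, the quantity $\sum_{i,j}(A_{ij}-p_0)$ is a sum of $n_1 n_2$ independent centered Bernoullis, each bounded by $1$ in absolute value with variance $p_0(1-p_0)$. Bernstein's inequality yields
\begin{align*}
\bbP_0(t_{\lin} > h_\alpha) \leq \exp\left(-\frac{h_\alpha^{2}/2}{1 + h_\alpha/\bigl(3\sqrt{n_1 n_2 p_0(1-p_0)}\bigr)}\right).
\end{align*}
The assumption $p_0 \leq 1/4$ gives $1 - p_0 \geq 3/4$, so that $n_1 n_2 p_0 \geq \tfrac{4}{27}h_\alpha^2$ implies $\sqrt{n_1 n_2 p_0(1-p_0)} \geq h_\alpha/3$, keeping the denominator bounded by $2$. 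The tail bound then collapses to $\exp(-h_\alpha^2/4) = \alpha/2$ by the choice $h_\alpha = \sqrt{4\log(2/\alpha)}$.

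For the Type II bound: fix $\bP \in \Theta(K_1, K_2, \delta)$ and decompose $t_{\lin} = \EE_\bP[t_{\lin}] + Z$ with $Z$ centered. Using $P_{ij} - p_0 \geq \delta \, \one\{(i,j) \in K_1 \times K_2\}$, the mean satisfies $\EE_\bP[t_{\lin}] \geq k_1 k_2 \delta / \sqrt{n_1 n_2 p_0(1-p_0)}$. Applying Bernstein once more to $-\sum_{i,j}(A_{ij}-P_{ij})$ controls $\bbP_\bP(Z < -h_\alpha) \leq \alpha/2$, provided the variance $\sum_{i,j} P_{ij}(1-P_{ij})$ is bounded by a constant multiple of $n_1 n_2 p_0(1-p_0)$. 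Choosing $C_\delta$ as a sufficiently large multiple of $\log(2/\alpha)$ then forces $\EE_\bP[t_{\lin}] \geq 2 h_\alpha$, whence $\bbP_\bP(t_{\lin} \leq h_\alpha) \leq \bbP_\bP(Z \leq -h_\alpha) \leq \alpha/2$; summing the two errors yields $\cR(\Delta_{\lin}^{h_\alpha}, \delta) \leq \alpha$.

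The only subtle point is the variance bound under the alternative, since the constraint $P_{ij} \geq p_0 + \delta$ on $K_1 \times K_2$ permits $P_{ij}$ close to $1$, which could a priori inflate $\sum P_{ij}(1-P_{ij})$ far beyond $n_1 n_2 p_0(1-p_0)$. I would dispatch very large $\delta$ separately: if $\delta$ exceeds an absolute constant then, combined with $p_0 \leq 1/4$, the signal $\EE_\bP[t_{\lin}]$ already dwarfs any plausible threshold and the test succeeds trivially; otherwise, the bound $P_{ij}(1-P_{ij}) \lesssim p_0(1-p_0)$ holds uniformly on $K_1\times K_2$, making the Bernstein argument above directly applicable with universal constants.
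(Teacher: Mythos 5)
Your Type~I bound is identical to the paper's: both apply Bernstein under $\bbP_0$ and use $p_0 \leq 1/4$ together with $n_1n_2p_0 \geq \tfrac{4}{27}h_\alpha^2$ to force $\sigma_\lin \geq h_\alpha/3$, collapsing the Bernstein denominator. For the Type~II error, however, the paper uses a Chebyshev argument with an explicit variance decomposition, whereas you propose Bernstein again; the routes diverge precisely at the step you yourself flag as delicate, and it is there that the argument has a genuine gap.

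Your proposed variance control does not hold. Taking the worst case $P_{ij} = p_0 + \delta$ on $K_1 \times K_2$ (justified by stochastic monotonicity of $t_\lin$ in each $P_{ij}$, a step you use implicitly), the claim $P_{ij}(1-P_{ij}) \lesssim p_0(1-p_0)$ fails whenever $\delta \gg p_0$: with $p_0 = (n_1 n_2)^{-1}$ (which is allowed under the lemma's hypothesis) and $\delta$ any fixed small constant one gets $P_{ij}(1-P_{ij}) \asymp \delta \gg p_0(1-p_0)$. The failure is governed by the ratio $\delta/p_0$, not by $\delta$ alone, so a dichotomy on "$\delta$ exceeds an absolute constant" cannot rescue it. Moreover the "large $\delta$ is trivial" branch is also unsubstantiated: $\EE_\bP[t_\lin] = k_1k_2\delta/\sigma_\lin$ need not dwarf the threshold merely because $\delta$ is an absolute constant, e.g.\ when $k_1 k_2$ is small relative to $\sqrt{n_1 n_2 p_0}$. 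The correct resolution, which the paper implements, is to write $\sum_{i,j}P_{ij}(1-P_{ij}) \leq k_1k_2 p_0 + k_1 k_2 \delta + n_1n_2 p_0(1-p_0)$: the first and third pieces are $\lesssim \sigma_\lin^2$, while the genuinely potentially large middle piece equals $\sigma_\lin\,\EE_\bP[t_\lin]$ and is dominated by the \emph{squared} mean because the density hypothesis $n_1n_2 p_0 \gtrsim 1$ forces $k_1k_2\delta \gtrsim \sqrt{C_\delta}$. In the paper's Chebyshev computation this appears as the term $1/(k_1k_2\delta)$, made small by taking $C_\delta$ large; if you insist on Bernstein, the same $k_1k_2\delta$ term must instead be absorbed into the linear part of the Bernstein denominator and again killed via the same lower bound on $k_1k_2\delta$. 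Either way, the fix lives in this decomposition and the $n_1 n_2 p_0 \gtrsim 1$ hypothesis, not in a pointwise comparison of $P_{ij}(1-P_{ij})$ with $p_0(1-p_0)$.
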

\begin{proof}[Proof of Lemma ~\ref{lem_linear_test}]
    We begin by an analysis of the Type 1 error of $\Delta_\lin^h$. Under the null hypothesis, it holds that $\EE[t_\lin] = 0$ and $\var(t_\lin) = 1$. Let $\sigma^2 = p_0(1-p_0)n_1n_2$. Note that under our assumptions $n_1n_2p_0 \geq \frac{4}{27}h_\alpha^2$ and $p_0 \leq \frac14$, it holds
    \begin{align*}
        \sigma &= \sqrt{p_0(1-p_0)n_1n_2} \\
        &\geq \sqrt{\frac34p_0n_1n_2} \\
        &\geq \sqrt{\frac34\frac{4}{27}h^2} \\
        &= \frac13 h
    \end{align*}
    
    By direct calculation, we have
    \begin{align*}
        \bbP_0(\Delta_\lin^h = 1) &= \bbP_0(t_\lin > h) \\
        &= \bbP_0\big(\sum_{i = 1}^{n_1}\sum_{j = 1}^{n_2}(A_{ij} - p_0) > \sigma h\big) \\
        &\leq \exp\Big(-\frac{\frac12h^2\sigma^2}{\sigma^2 + \frac13h\sigma}\Big) \quad \text{(by Bernstein's inequality)} \\
        &\leq \exp\Big(-\frac{\frac12 \sigma^2 h^2}{2\sigma^2}\Big) \quad \text{(since $\sigma \geq \frac13 h$)} \\
        &= \exp\Big(-\frac14 h^2\Big)\\
        &\leq \exp\big(- \log(2/\alpha)\big) \quad \text{(since $h = \sqrt{4\log(2/\alpha)}$)} \\
        &= \frac{\alpha}{2}.
    \end{align*}
    Now we turn our attention to the Type 2 error. Under the alternative hypothesis, there exist subsets of indices $K_1 \subset [n_1]$ and $K_1 \subset [n_2]$ of sizes $|K_1| = k_1$ and $|K_2| = k_2$ such that $\EE[A_{ij}] = p_0 + \delta$ for $(i,j) \in K_1 \times K_2$ and $\EE[A_{ij}] = p_0$ otherwise. Since the $\{A_{ij}\}_{i \in[n_1] ,j\in [n_2]}$ are independent Bernoulli random variables, we can directly compute
    \begin{align*}
        \EE_\bP[t_\lin] &= \frac{1}{\sigma}k_1k_2 \delta, \\
        \var_\bP(t_\lin) &= \frac{1}{\sigma^2}\big(k_1k_2 p_1(1 - p_1) + (n_1 - k_1)(n_2 - k_2)p_0(1 - p_0)\big),
    \end{align*}
    where we let $p_1 = p_0 + \delta$. Now under the assumption that $\delta^2 \geq C_\delta p_0(1-p_0)\frac{n_1n_2}{k_1^2k_2^2}$, it holds
    \begin{align*}
        \EE_\bP[t_\lin] &= \frac{1}{\sigma}k_1k_2 \delta \\
        &= \frac{k_1k_2}{\sqrt{n_1n_2p_0(1-p_0)}}\delta \\
        &\geq C_\delta.
    \end{align*}
    Therefore, for $C_\delta$ taken large enough, we can ensure that $\EE_\bP[t_\lin] \geq 2h = 2\sqrt{4 \log(2/\alpha)}$. Therefore, we may perform the following calculation:
    \begin{align*}
        \bbP_\bP(\Delta_\lin^h = 0) &= \bbP_\bP(t_\lin \leq h) \\
        &= \bbP_\bP(t_\lin - \EE_\bP t_\lin \leq h - \EE_\bP t_\lin) \\
        &= \bbP_\bP\big((t_\lin - \EE_\bP t_\lin)^2 \geq (h - \EE_\bP t_\lin)^2\big) \\
        &\leq \bbP_\bP\big((t_\lin - \EE_\bP t_\lin)^2 \geq \frac14(\EE_\bP t_\lin)^2\big) \\
        &\leq 4\frac{\var_\bP(t_\lin)}{(\EE_\bP t_\lin)^2},
    \end{align*}
    where the final inequality uses Markov's. From here, we apply our closed form expressions of $\var_\bP(t_\lin)$ and $\EE_\bP [t_\lin]$ derived above to compute:
    \begin{align*}
        \frac{\var_\bP(t_\lin)}{(\EE_\bP t_\lin)^2} &= \frac{k_1k_2 p_1(1 - p_1) + (n_1 - k_1)(n_2 - k_2)p_0(1 - p_0)}{k_1^2k_2^2\delta^2} \\
        &\leq \frac{k_1k_2(p_0 + \delta) + \sigma^2}{k_1^2k_2^2\delta^2} \\
        &= \text{I} + \text{II} + \text{III} ,  
\end{align*}
where we define $\text{I} = \frac{p_0}{k_1k_2\delta^2}$, $\text{II} = \frac{1}{k_1k_2\delta}$, and $\text{III} = \frac{\sigma^2}{k^2_1k^2_2 \delta^2}$. We control each of these terms separately. First, we have
\begin{align*}
    \text{I} &= \frac{p_0}{k_1k_2\delta^2} \\
    &\leq \frac{k_1k_2 p_0}{n_1n_2p_0(1-p_0)C_\delta} \quad \text{(since $\delta^2 \geq C_\delta\frac{n_1n_2p_0(1 - p_0)}{k^2_1k^2_2}$ )} \\
    &\leq \frac{4}{3C_\delta} \quad \text{(since $k_i \leq n_i$ for $i \in \{1,2\}$ and $1 - p_0 \geq \frac34$)} \\
    &\leq \frac{\alpha}{24},
\end{align*}
where the final inequality holds for $C_\delta$ taken sufficiently large. Now we consider $\text{II}$. Recall that $n_1 n_2 p_0 \geq \frac{4}{27} h^2 = \frac{8}{27}\log(2/\alpha)$. Therefore, $p_0 \geq \frac{C}{n_1n_2}$ where $C > 0$ is a constant that depends on $\alpha$. We thus have
\begin{align*}
    \delta &\geq \sqrt{C_\delta p_0 (1 - p_0)\frac{n_1n_2}{k_1^2k_2^2}} \\
    &\geq \sqrt{C_\delta \frac 34}\sqrt{p_0\frac{n_1n_2}{k_1^2k_2^2}} \quad \text{(since $1 - p_0 \geq \frac34$)} \\
    &\geq \frac{C'}{k_1k_2},
\end{align*}
where $C' = \sqrt{\frac 34 C_\delta C}$. Using this lower bound on $\delta$, we have
\begin{align*}
    \text{II} &= \frac{1}{k_1k_2\delta} \\
    &\leq \frac{1}{C'} \\
    &\leq \frac{\alpha}{24}.
\end{align*}
where the final inequality holds for $C_\delta$ taken sufficiently large. Finally, to control $\text{III}$ we have
\begin{align*}
    \text{III} &= \frac{\sigma^2}{k^2_1k^2_2 \delta^2} \\
    &\leq \frac{n_1n_2p_0(1- p_0)k_1^2k_2^2}{C_\delta n_1n_2p_0(1- p_0)k_1^2k_2^2} \quad \text{(since $\delta^2 \geq C_\delta\frac{n_1n_2p_0(1 - p_0)}{k^2_1k^2_2}$)}\\
    &= \frac{1}{C_\delta} \\
    &\leq \frac{\alpha}{24}
\end{align*}
where the final inequality holds for $C_\delta$ taken large enough. Combining these bounds on $\text{I}, \text{II},$ and $\text{III}$ gives us
\begin{align*}
    \bbP_\bP(\Delta_\lin^h = 0) &\leq4\frac{\var_\bP(t_\lin)}{(\EE_\bP t_\lin)^2} \\
    &\leq 4(\text{I} + \text{II} + \text{III}) \\
    &\leq 4(3 \frac{\alpha}{24}) \\
    &= \frac{\alpha}{2}.
\end{align*}
Combining this inequality with our bound on the Type I error, we have
\[\cR(\Delta_\lin^h, \delta) \leq \alpha,\]
and the proof is complete.
\end{proof}

\subsection{Analysis of the truncated degree test}

The following lemma controls the risk of $\Delta^{h}_{\chisqlin, 1}$. The analysis of $\Delta^{h}_{\chisqlin, 2}$ is analogous.
\begin{lemma}\label{lem_truncated_degree_test}
    Let $\alpha \in (0,1)$ and define $h_\alpha = C^*\left(\sqrt{n_2\exp\left(-c' \log(1 + \frac{n_2}{k_2^2})\right)\log(2/\alpha)} + \log(2/\alpha)\right)$ for constants $c', C^* > 0$ to be determined later. Suppose that $n_2 \geq c k_2^2$ for a constant $c > 0$. Then there exist constants $C_\delta, C_\alpha > 0$ and $C \geq \frac83$ such that if $\frac14 \geq p_0 \geq \frac{C_\alpha}{n_1} \log\left(1+ \frac{n_2}{k_2^2}\right)$ and
    \[\delta^2 \geq C_\delta p_0(1-p_0)\frac{n_1}{k_1^2}\log\left(1+\frac{n_2}{k_2^2}\right),\]
    then the truncated degree test with threshold $h = h_\alpha$ and $\tau = \sqrt{C\log\left(1 + \frac{n_2}{k_2^2}\right)}$ satisfies
    \[\cR(\Delta^{h_\alpha}_{\chisqlin, 1}, \delta) \leq \alpha.\]
\end{lemma}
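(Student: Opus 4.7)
My plan is to handle the two sides of the risk by a Bernstein argument for the Type~I error and a Chebyshev argument for the Type~II error, exploiting a clean probabilistic interpretation of the statistic $W_j$ in \eqref{eq_def_trunc_deg}. A direct computation shows that the Bennett function appearing in $W_j$ is exactly the Cram\'er rate function of a centered Bernoulli, so that
\[W_j \;=\; 2 n_1 \, \mathrm{KL}\!\left(\mathrm{Ber}(\hat p_j) \,\|\, \mathrm{Ber}(p_0)\right), \qquad \hat p_j := \frac{1}{n_1}\sum_{i=1}^{n_1} A_{ij}.\]
The Cram\'er--Chernoff inequality then yields the sub-exponential tail $\mathbb P_0(W_j \geq w) \leq 2 e^{-w/2}$. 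The density hypothesis $p_0 \geq (C_\alpha/n_1)\log(1+n_2/k_2^2)$ combined with the choice $\tau \asymp \sqrt{\log(1+n_2/k_2^2)}$ places us in the moderate-deviation regime for $\bar A_j$, where Gaussian-like estimates hold; in particular $\mathbb P_0(\bar A_j \geq \tau) \asymp e^{-\tau^2/2}$ and $\nu_\tau^{n_1} \asymp \tau^2$.

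\textbf{Type~I error.} Under $\mathbb P_0$, the summands $Y_j := (W_j - \nu_\tau^{n_1})\,\mathbf 1(\bar A_j \geq \tau)$ are i.i.d.\ and centered by construction of $\nu_\tau^{n_1}$. Integration by parts against the sub-exponential tail of $W_j$ yields
\[\mathbb{E}_0\!\left[W_j^2 \, \mathbf 1(\bar A_j \geq \tau)\right] \,\lesssim\, \tau^4 \, e^{-\tau^2/2},\]
so $\mathrm{Var}_0(t_{\chisqlin,1}) \lesssim n_2 \tau^4 (1+n_2/k_2^2)^{-C/2}$. After absorbing the polylog factor $\tau^4$ into the exponent by replacing $C/2$ with a slightly smaller $c'$, Bernstein's inequality produces exactly the threshold of the form $\sqrt{n_2 \exp(-c'\log(1+n_2/k_2^2))\log(2/\alpha)} + \log(2/\alpha)$ appearing in $h_\alpha$. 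Taking $C$ (hence $c'$) large enough controls $\mathbb P_0(\Delta_{\chisqlin,1}^h = 1)$ by $\alpha/2$.

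\textbf{Type~II error.} Fix $\bP \in \Theta(K_1,K_2,\delta)$ and split $t_{\chisqlin,1} = S_{K_2} + S_{K_2^c}$. The summands of $S_{K_2^c}$ are distributed exactly as under $\mathbb P_0$ and inherit the Type~I variance bound. For $j \in K_2$, one has $\mathbb E_\bP[\bar A_j] \geq k_1 \delta / \bar\sigma$, and the assumption $\delta^2 \geq C_\delta p_0(1-p_0)(n_1/k_1^2)\log(1+n_2/k_2^2)$ gives $\mathbb E_\bP[\bar A_j] \geq \sqrt{C_\delta}\,\tau \geq 2\tau$ for $C_\delta$ large. Bernstein's inequality then guarantees $\bar A_j \geq \tau$ with probability at least $1 - e^{-c \tau^2}$, and on this event $W_j \asymp (\mathbb E_\bP \bar A_j)^2$, so
\[\mathbb E_\bP\!\left[(W_j - \nu_\tau^{n_1})\mathbf 1(\bar A_j \geq \tau)\right] \;\gtrsim\; \frac{k_1^2 \delta^2}{n_1 p_0 (1-p_0)} \;\gtrsim\; C_\delta \log\!\big(1 + \tfrac{n_2}{k_2^2}\big).\]
Summing over the $k_2$ community columns yields $\mathbb E_\bP[t_{\chisqlin,1}] \gtrsim C_\delta k_2 \log(1+n_2/k_2^2)$, which exceeds $2 h_\alpha$ once $C_\delta$ is sufficiently large: the hypothesis $n_2 \geq c k_2^2$ is precisely what is needed to dominate the variance-type contribution $\sqrt{n_2 e^{-c'\log(1+n_2/k_2^2)}\log(2/\alpha)}$ by $k_2\log(1+n_2/k_2^2)$. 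A matching variance bound on $S_{K_2}$, obtained by the same tail calculations, combined with Chebyshev's inequality, closes the Type~II estimate.

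\textbf{Main obstacle.} The delicate step throughout is the moment control of $W_j$ on the rare truncation event $\{\bar A_j \geq \tau\}$. Unlike in the Gaussian sequence model, where $W_j$ would reduce to $\bar A_j^2$ and standard $\chi^2$ moments suffice, here $W_j$ interpolates between a quasi-$\chi^2$ behaviour in the moderate-deviation regime $\bar A_j \lesssim \bar\sigma$ and a sub-Poissonian behaviour $W_j \asymp \bar\sigma \bar A_j \log(\bar A_j/\bar\sigma)$ in the large-deviation regime; the Bennett function was built into \eqref{eq_def_trunc_deg} precisely to keep this interpolation tractable and to tame the sub-Poissonian tails. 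Carrying polylogarithmic corrections through both the variance bound and the tail bound, so that a strictly smaller $c' < C/2$ can still absorb them without losing the minimax rate, is the main bookkeeping burden of the proof.
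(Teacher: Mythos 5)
Your proposal follows the same architecture as the paper: a Bernstein/MGF bound on the truncated statistic for the Type~I error, and a Chebyshev argument (mean lower bound plus variance upper bound) for the Type~II error. The paper packages these three ingredients as Lemma~\ref{leq_truncchisq_concentration}, Lemma~\ref{lem_truncchisq_mean}, and Lemma~\ref{lem_variance_trunc_chi2} and then runs exactly the threshold and Chebyshev computations you sketch, after verifying the hypotheses $\tau \in [C',c\sigma]$, $k_1(p_0+\delta) \geq C'$, $\theta \geq C'\tau$ from the density assumption.

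One thing you do that is genuinely cleaner: identifying $W_j = 2n_1\,\mathrm{KL}(\mathrm{Ber}(\hat p_j)\|\mathrm{Ber}(p_0))$ so that the Cram\'er--Chernoff bound gives $\mathbb P_0(W_j\geq w)\leq 2e^{-w/2}$ with no work. The paper reaches equivalent tail control via a Stirling expansion of the binomial mass inside the MGF integral (Lemma~\ref{lem_Stirling}), which is heavier. That said, the hard part of Lemma~\ref{leq_truncchisq_concentration} is not the raw tail of $W_j$ but the MGF of $X=(W_j-\nu_\tau)\mathbf 1(\bar A_j\geq\tau)$: the three-way split $\{X<0\}$, $\{0\le X\le1/\lambda\}$, $\{X\ge1/\lambda\}$, the Slud anti-concentration used to control the centering term $\nu_\tau$ against the truncation probability, and the conditional-moment control from Lemma~\ref{lemma17_LiuGaoSamworth}. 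Your phrase ``integration by parts against the sub-exponential tail'' points in the right direction but elides precisely this multi-regime bookkeeping, which is where most of the technical effort is spent; the same remark applies to ``a matching variance bound on $S_{K_2}$, obtained by the same tail calculations,'' since the alternative variance (Lemma~\ref{lem_variance_trunc_chi2}) requires a conditional-variance decomposition rather than the null-side calculation.

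One factual imprecision on the Type~II side: the step ``on this event $W_j\asymp(\mathbb E_\bP\bar A_j)^2$'' and the subsequent claim $\mathbb E_\bP[(W_j-\nu_\tau)\mathbf 1(\bar A_j\geq\tau)]\gtrsim\theta^2$ (with $\theta=k_1\delta/\bar\sigma$) is false whenever $\theta\gg\bar\sigma$. In general the Bennett form gives $W_j\asymp\bar\sigma\theta\log(1+\theta/\bar\sigma)$, which is strictly smaller than $\theta^2$. The paper handles this by showing $k_1\bar\delta/\bar\sigma^2\lesssim 1$ (with $\bar\delta$ the threshold value, not $\delta$ itself) and then using monotonicity of the logarithm to write $\bar\sigma\theta\log(1+\theta/\bar\sigma)\geq\bar c\,\theta\,k_1\bar\delta\geq\bar c(k_1\bar\delta)^2/\bar\sigma$, which still yields $k_2\log(1+n_2/k_2^2)$ after summing. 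So your final numerical conclusion is correct, but the ``$\asymp$'' as written does not hold in the large-deviation regime and needs this short detour through $\bar\delta$.
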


\begin{proof}[Proof of Lemma~\ref{lem_truncated_degree_test}]
    First, we control the Type 1 error of $\Delta^{h_\alpha}_{\chisqlin, 1}$. Let $\sigma = \sqrt{n_1p_0(1-p_0)}$. By Lemma \ref{leq_truncchisq_concentration}, there exist constants $C', c, c', C^* > 0$ such that if $\tau \in [C', c\sigma]$, then
    \begin{align*}
    \bbP_0(\Delta^{h_\alpha}_{\chisqlin, 1} > h_\alpha) \leq \frac{\alpha}{2}.
    \end{align*}
    We impose that $c' \geq 1$ by taking $C$ in the definition of $\tau$ sufficiently large. Therefore we just need to verify that $\tau \in [C', c\sigma]$. Since $n_2 \geq ck_2^2$, it holds
    \begin{align*}
        \tau &= \sqrt{C\log(1 + \frac{n_2}{k_2^2})} \\
        &> \sqrt{C\log(1 + c)} \\
        &\geq C',
    \end{align*}
    where the final inequality holds for $C$ taken large enough. On the other hand, since $\frac14 \geq p_0 \geq \frac{C_\alpha}{n_1}\log\left(1 + \frac{n_2}{k_2^2}\right)$, we have
    \begin{align*}
        c\sigma &= c\sqrt{n_1p_0(1-p_0)} \\
        &\geq c\sqrt{C_\alpha \frac34 \log\left(1 + \frac{n_2}{k_2^2}\right)} \\
        &\geq \sqrt{C\log\left(1 + \frac{n_2}{k_2^2}\right)} \quad \text{(for $C_\alpha$ large enough)}\\
        &= \tau.
    \end{align*}
    Therefore $\tau \in [C', c\sigma]$, and the desired bound on the Type 1 error holds.

    Now we aim to control the Type 2 error. Under the alternative hypothesis, there exist sets $K_1 \in \mathcal P_{k_1}(n_1)$ and $K_2 \in \mathcal P_{k_2}(n_2)$ such that $\EE[\A] = \bP$ with entries $P_{ij} \geq p_0 + \delta$ if $(i,j) \in K_1 \times K_2$ and $P_{ij} = p_0$ otherwise. Let $\theta = \frac{k_1 \delta}{\sigma}$. Then by Lemma \ref{lem_truncchisq_mean}, there exist constants $\tilde{c}, c, C' > 0$ such that if $\tau \in [C', c\sigma]$, $k_1 (p_0 + \delta) \geq C'$, and $\theta \geq C'\tau$, then
    \begin{equation}\label{eq_trunc-mean-lb}
    \EE\left[\sum_{j = 1}^{n_2}\big(W_{j} - \nu^{k_1}_{\tau}\big)\one(\bar{A}_j > \tau)\right] \geq \tilde{c}k_2\theta \sigma \log\left(1 + \frac{\theta}{\sigma}\right).
    \end{equation}  
    As when we controlled the Type 1 error, we enforce $\tau \in [C', c\sigma]$ by taking $C$ in the definition of $\tau$ and $C_\alpha$ sufficiently large, in that order. To verify $k_1(p_0 + \delta) \geq C'$, we calculate
    \begin{align*}
    k_1(p_0 + \delta) &\geq k_1\delta \\
        &\geq \sqrt{n_1 p_0(1-p_0)C_\delta \log\left(1 + \frac{n_2}{k_2^2}\right)} \\
        &\geq \sqrt{C C_\delta \log\left(1 + \frac{n_2}{k_2^2}\right)} \quad \text{(since $\sigma \gtrsim 1$, as shown above)}\\
        &\geq \sqrt{C C_\delta \log\left(1 + c\right)} \\
        &\geq C',
    \end{align*}
    where the finally inequality holds for $C_\delta$ taken sufficiently large. Finally, to verify $\theta \geq C'\tau$, we have
    \begin{align*}
        \theta &= \frac{k_1\delta}{\sqrt{n_1p_0(1 - p_0)}} \\
        &\geq \sqrt{C_\delta\log\Big(1 + \frac{n_2}{k_2^2}\Big)} \\
        &\geq C'\sqrt{C\log\Big(1 + \frac{n_2}{k_2^2}\Big)} \quad \text{(for $C_\delta$ taken large enough)} \\
        &= C'\tau.
    \end{align*}
Therefore, (\ref{eq_trunc-mean-lb}) holds by Lemma \ref{lem_truncchisq_mean}. Additionally, by Lemma \ref{lem_variance_trunc_chi2}, there exist constants $C_1, c, C' > 0$ such that if $\tau \in [C', c\sigma]$,$k_1 (p_0 + \delta) \geq C'$,  and $\theta \geq 2\tau$, then
\begin{equation}\label{eq_trunc-var-ub}
    \var\left(\sum_{j = 1}^{n_2}\big(W_{j} - \nu^{k_1}_{\tau}\big)\one(\bar{A}_{j} > \tau)\right) \leq C_1 k_2 (\sigma^2 + \theta \sigma) \log^2\left(1 + \frac{\theta}{\sigma}\right) + C_1 (n_2 - k_2)\left(\sigma \tau \log(1 + \frac{\tau}{\sigma})\right)^2\exp\big(-\frac38 \tau^2\big).
\end{equation}
As before, we enforce $\tau \in [C', c\sigma]$,$k_1 (p_0 + \delta) \geq C'$,  and $\theta \geq 2\tau$ by taking $C$, $C_\alpha$, and $C_\delta$ sufficiently large in that order. To control the Type 2 error with (\ref{eq_trunc-mean-lb}) and (\ref{eq_trunc-var-ub}) and Chebyshev's inequality, we need to verify that the right-hand side of (\ref{eq_trunc-mean-lb}) is at least $2h_\alpha$. In what follows, we define
\[\bar{\delta}^2 := C_\delta p_0(1-p_0)\frac{n_1}{k^2_1}\log\Big(1 + \frac{n_2}{k_2^2}\Big),\]
so that $\delta \geq \bar{\delta}$. First, we have
\begin{align*}
    \log\Big(1 + \frac{\theta}{\sigma}\Big) &= \log\Big(1 + \frac{k_1\delta}{\sigma^2}\Big) \\
    &\geq \log\Big(1 + \frac{k_1\bar{\delta}}{\sigma^2}\Big).
\end{align*}
We now aim to show that there exists a constant $\bar{C} > 0$ such that $\frac{k_1\bar{\delta}}{\sigma^2} \leq \bar{C}$. By the definitions of $\bar{\delta}$ and $\sigma^2$, it suffices to show that 
\[\log\Big(1 + \frac{n_2}{k_2^2}\Big) \leq \bar{C}\sigma^2.\]
Recall that $\tau^2 = C\log\big(1 + \frac{n_2}{k_2^2}\big)$. Since we have already shown that $\tau^2 \lesssim \sigma^2$, it holds that there exists a constant $\bar{C} > 0$ such that $\log\Big(1 + \frac{n_2}{k_2^2}\Big) \leq \bar{C}\sigma^2$ as desired. Therefore, there exists a constant $\bar{c} > 0$ such that $\log\big(1 + \frac{\theta}{\sigma}\big) \geq \log\big(1 + \frac{k_1\bar{\delta}}{\sigma}\big) \geq \bar{c}\frac{k_1\bar{\delta}}{\sigma}$. Thus we have
\begin{align*}
    \tilde{c}k_2\theta \sigma \log\left(1 + \frac{\theta}{\sigma}\right) &\geq  ck_2\theta k_1 \bar{\delta} \quad \text{(for $c = \tilde{c}\bar{c}$)} \\
    &\geq c k_2 \frac{(k_1 \bar{\delta})^2}{\sigma} \\
    &= c C_\delta k_2 \sigma \log\Big(1 + \frac{n_2}{k_2^2}\Big) \\
    &\geq C'_\delta k_2 \log\Big(1 + \frac{n_2}{k_2^2}\Big),
\end{align*}
where the final inequality uses $\sigma \gtrsim 1$, and $C'_\delta > 0$ is a positive multiple of $C_\delta$. Since we have enforced $c' \geq 1$ in the definition of $h_\alpha$, it holds
\begin{align*}
h_\alpha &= C^*\left(\sqrt{n_2\exp\left(-c' \log(1 + \frac{n_2}{k_2^2})\right)\log\left(\frac{2}{\alpha}\right)}\right) \\
&\leq C^*\left(k_2\sqrt{\frac{\frac{n_2}{k_2^2}\log(2/\alpha)}{1 + \frac{n_2}{k_2^2}}}\right) \\
&\leq C^* \left(k_2\sqrt{\log(2/\alpha)} \right) \\
&\leq \frac{C'_\delta}{2} \left(k_2 \log\Big(1 + \frac{n_2}{k_2^2}\Big)\right),
\end{align*}
where the final inequality holds for $C_\delta$ taken sufficiently large, and uses that $\frac{n_2}{k_2^2} \geq c > 0$. Therefore, we have shown that for $C_\delta$ taken sufficiently large, it holds
\begin{align*}
    \EE\left[\sum_{j = 1}^{n_2}\big(W_{j} - \nu^{k_1}_{\tau}\big)\one(\bar{A}_j > \tau)\right] &\geq \tilde{c}k_2\theta \sigma \log\left(1 + \frac{\theta}{\sigma}\right) \\
    &\geq 2h_\alpha.
\end{align*}
Letting $U_ := \sum_{j = 1}^{n_2}\big(W_{j} - \nu^{k_1}_{\tau}\big)\one(\bar{A}_{j} > \tau)$, we therefore can conclude
\begin{align*}
\bbP_{\bP}\big(\Delta^{h_\alpha}_{\chisqlin, 1} = 0\big) &= \bbP_{\bP}\big(U \leq h_\alpha\big)\\
    &= \bbP_{\bP}\big(U - \EE[U]\leq h_\alpha - \EE[U]\big) \\
    &=\bbP_{\bP}\Big(\big(U - \EE[U]\big)^2\geq \big(h_\alpha - \EE[U]\big)^2\Big) \quad \text{(since $\EE[U] \geq 2h_\alpha$)} \\
    &\leq \bbP_{\bP}\Big(\big(U - \EE[U]\big)^2\geq \big(\EE[U]\big)^2\Big) \\
    &\leq \frac{\var\big(U\big)}{\big(\EE[U]\big)^2} \quad \text{(by Markov's inequality)} \\
    &\leq \frac{C_1 k_2 (\sigma^2 + \theta \sigma) \log^2\left(1 + \frac{\theta}{\sigma}\right) + C_1 (n_2 - k_2)\left(\sigma \tau \log(1 + \frac{\tau}{\sigma})\right)^2\exp\big(-\frac38 \tau^2\big)}{\left(\tilde{c}k_2\theta \sigma \log\left(1 + \frac{\theta}{\sigma}\right)\right)^2} \\
    &= \text{I} + \text{II},
\end{align*}
where we define
\[\text{I} = \frac{C_1 k_2 (\sigma^2 + \theta \sigma) \log^2\left(1 + \frac{\theta}{\sigma}\right)}{\left(\tilde{c}k_2\theta \sigma \log\left(1 + \frac{\theta}{\sigma}\right)\right)^2},\]
and 
\[\text{II} = \frac{C_1 (n_2 - k_2)\left(\sigma \tau \log(1 + \frac{\tau}{\sigma})\right)^2\exp\big(-\frac38 \tau^2\big)}{\left(\tilde{c}k_2\theta \sigma \log\left(1 + \frac{\theta}{\sigma}\right)\right)^2}.\]
We control these terms separately. First we have
\begin{align*}
    \text{I} &= \frac{C_1 k_2 (\sigma^2 + \theta \sigma) \log^2\left(1 + \frac{\theta}{\sigma}\right)}{\left(\tilde{c}k_2\theta \sigma \log\left(1 + \frac{\theta}{\sigma}\right)\right)^2} \\
    &= \frac{C_1(\sigma^2 + \theta \sigma)}{\tilde{c}^2k_2\theta^2 \sigma^2 } \\
    &= \frac{C_1}{\tilde{c}^2k_2\theta^2 }  + \frac{C_1}{\tilde{c}^2k_2\theta \sigma }\\
    &\leq \frac{C_1}{C'_\delta \tilde{c}^2 k_2\sqrt{\log\Big(1 + \frac{n_2}{k_2^2}\Big)}} \\
    &+ \frac{C_1}{C'_\delta \tilde{c}^2 k_2 \log\Big(1 + \frac{n_2}{k_2^2}\Big)}\\
    &\leq \frac{C_1}{C'_\delta \tilde{c}^2 k_2 \sqrt{\log(1 + c)}} + \frac{C_1}{C'_\delta \tilde{c}^2 k_2 \log(1 + c)} \\
    &\leq \frac{\alpha}{4},
\end{align*}
where the final inequality holds for $C_\delta$ taken large enough. Recall that $C \geq \frac{8}{3}$ in the definition of $\tau$. Then it holds
\begin{align*}
    \text{II} &= \frac{C_1 (n_2 - k_2)\left(\sigma \tau \log(1 + \frac{\tau}{\sigma})\right)^2\exp\big(-\frac38 \tau^2\big)}{\left(\tilde{c}k_2\theta \sigma \log\left(1 + \frac{\theta}{\sigma}\right)\right)^2} \\
    &\leq \frac{C_1n_2}{\tilde{c}^2k_2^2\Big(1 + \frac{n_2}{k_2^2}\Big)} \cdot \frac{\left(\tau \log\big(1 + \frac{\tau}{\sigma}\big)\right)^2}{\left(\theta \log\big(1 + \frac{\theta}{\sigma}\big)\right)^2} \\
    &\leq \frac{C_1}{\tilde{c}^2} \cdot \frac{\left(C\theta\log(1 + c)\right)^2}{\left(\theta \log\big(1 + \frac{\theta}{\sigma}\big)\right)^2} \quad \text{(since $\tau \leq c \sigma $ and $\tau \leq C \theta$)} \\
    &\leq \frac{C_1}{\tilde{c}^2} \cdot \frac{ \left(C\log(1 + c)\right)^2}{C'_\delta\left(k_2\log\Big(1 + \frac{n_2}{k_2^2}\Big)\right)} \\
    &\leq \frac{C_1}{\tilde{c}^2} \cdot \frac{ \left(C\log(1 + c)\right)^2}{C'_\delta\left(k_2\log\Big(1 + c\Big)\right)} \\
    &\leq \frac{\alpha}{4},
\end{align*}
where again the final inequality holds for $C_\delta$ taken sufficiently large. Therefore, we have shown
\[\bbP_{\bP}\big(\Delta^{h_\alpha}_{\chisqlin, 1} = 0\big) \leq \text{I} + \text{II} \leq \frac{\alpha}{2}.\]
Combining this with our bound on the Type 1 error, it holds
\[\cR(\Delta^{h_\alpha}_{\chisqlin, 1}, \delta) \leq \alpha,\]
and the proof is complete.
\end{proof}

\subsection{Analysis of the max truncated degree test}
The following lemma controls the risk of $\Delta^h_{\chisqmax, 1}$. The analysis of $\Delta^h_{\chisqmax, 2}$ is analogous.

\begin{lemma}\label{lem_max_truncated_degree_test}
    Let $\alpha \in (0,1)$ and define 
    $$h_\alpha = C^*\left(\sqrt{n_2\exp\left(-c' \log\left(1 + \frac{n_2}{k_2^2}\log{n_1 \choose k_1}\right)\right)\log\left(\frac{2}{\alpha}{n_1 \choose k_1}\right)} + \log\left(\frac{2}{\alpha}{n_1 \choose k_1}\right)\right)$$ 
    for constants $c', C^* > 0$ to be determined later. Suppose that $\frac{n_2}{k_2^2}\log {n_1 \choose k_1} > c$ for a constant $c \geq 0$. Then there exist constants $C_\delta, C_\alpha > 0$ and $C \geq \frac{8}{3}$ such that if $\frac14 \geq p_0 \geq \frac{C_\alpha}{k_1k_2} \log\left(e{n_1 \choose k_1}{n_2 \choose k_2}\right)$ and
    \[\delta^2 \geq C_\delta p_0(1-p_0)\Bigg(\frac{1}{k_1}\log\Big(1 + \frac{n_2}{k_2^2}\log {n_1 \choose k_1}\Big) + \frac{1}{k_2k_1}\log {n_1 \choose k_1}\Bigg),\]
    then the truncated degree test with threshold $h = h_\alpha$ and $\tau = \sqrt{C\log\left(1 + \frac{n_2}{k_2^2}\log{n_1 \choose k_1}\right)}$ satisfies
    \[\cR(\Delta^{h_\alpha}_{\chisqlin, 1}, \delta) \leq \alpha.\]
\end{lemma}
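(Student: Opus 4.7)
The proof would follow the same overall structure as that of Lemma~\ref{lem_truncated_degree_test}, with two structural modifications. First, the inner sums are taken over the restricted index set $J_1$ of size $k_1$ rather than over $[n_1]$, so the relevant standard deviation becomes $\sigma = \sqrt{k_1 p_0(1-p_0)}$ in every application of the truncated-$\chi^2$ machinery. Second, the outer maximum over $J_1 \in \mathcal{P}_{k_1}(n_1)$ introduces a combinatorial factor ${n_1 \choose k_1}$ that must be absorbed into both the threshold $h_\alpha$ and the truncation level $\tau^2 \asymp \log(1 + (n_2/k_2^2)\log{n_1 \choose k_1})$.

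To bound the Type I error, I would fix $J_1$, observe that under $\bbP_0$ the quantity $\sum_j (W_{J_1,j} - \nu_\tau^{k_1})\one(t_{J_1,j} > \tau)$ has the same distribution as the truncated-degree statistic analyzed in Lemma~\ref{leq_truncchisq_concentration} with $n_1$ replaced by $k_1$. Applying that lemma at confidence level $\alpha/(2{n_1\choose k_1})$ and then a union bound over $J_1$ yields $\bbP_0(\Delta^{h_\alpha}_{\chisqmax,1}=1)\le \alpha/2$. The eligibility conditions $\tau \in [C', c\sigma]$ and $c'\ge 1$ must then be verified: the lower bound follows from $\frac{n_2}{k_2^2}\log{n_1\choose k_1}\ge c$ together with a large enough $C$, while the upper bound uses Assumption~\ref{asmp:dense}, namely $p_0 \geq C_\alpha (k_1 k_2)^{-1}\log(e{n_1\choose k_1}{n_2\choose k_2})$, which guarantees $k_1 p_0(1-p_0) \gtrsim \log(1+(n_2/k_2^2)\log{n_1\choose k_1})$ up to constants.

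For the Type II error, I would drop the maximum down to the single candidate $J_1 = K_1$, where $K_1$ indexes the planted rows. Along this candidate, Lemmas~\ref{lem_truncchisq_mean} and~\ref{lem_variance_trunc_chi2} (applied with $n_1$ replaced by $k_1$ and $\theta := k_1 \delta / \sigma$) produce a mean lower bound of order $k_2 \theta \sigma \log(1+\theta/\sigma)$ and a variance upper bound of order $k_2 (\sigma^2+\theta\sigma)\log^2(1+\theta/\sigma) + n_2 (\sigma\tau\log(1+\tau/\sigma))^2 e^{-3\tau^2/8}$. The two summands in the signal assumption match the two pieces of $h_\alpha$: the first, $\frac{1}{k_1}\log(1+\frac{n_2}{k_2^2}\log{n_1\choose k_1})$, dominates the Gaussian-like piece $\sqrt{n_2 \exp(-c'\log(\cdot))\log((2/\alpha){n_1\choose k_1})}$, while the second, $\frac{1}{k_1 k_2}\log{n_1\choose k_1}$, absorbs the sub-Poissonian piece $\log((2/\alpha){n_1\choose k_1})$. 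A Chebyshev-type argument identical in form to the one used in the proof of Lemma~\ref{lem_truncated_degree_test} then yields Type II probability at most $\alpha/2$.

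The main obstacle is ensuring that the constants align so that the truncation level $\tau^2 \asymp C\log(1+(n_2/k_2^2)\log{n_1\choose k_1})$ is simultaneously (i)~large enough that the exponential factor $e^{-3\tau^2/8}$ kills the ${n_1\choose k_1}$ accumulated by the union bound on the ``noise'' coordinates $j \notin K_2$ and across all $J_1$, and (ii)~still at most a constant multiple of $\sigma = \sqrt{k_1 p_0(1-p_0)}$, which is exactly the role of the strengthened density assumption. Choosing the constants in the order $C \to C_\alpha \to C_\delta$, as in the proof of Lemma~\ref{lem_truncated_degree_test}, should close the argument.
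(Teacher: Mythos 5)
Your proposal is correct and follows essentially the same route as the paper's proof: a union bound over $J_1 \in \mathcal{P}_{k_1}(n_1)$ combined with Lemma~\ref{leq_truncchisq_concentration} at the inflated log-level $\log\left(\frac{2}{\alpha}\binom{n_1}{k_1}\right)$ for Type~I, and a restriction to $J_1 = K_1$ together with Lemmas~\ref{lem_truncchisq_mean} and~\ref{lem_variance_trunc_chi2} plus a Chebyshev argument for Type~II, with the constants chosen in the order $C \to C_\alpha \to C_\delta$. You also correctly read the conclusion $\cR(\Delta^{h_\alpha}_{\chisqlin,1},\delta)\le\alpha$ as referring to $\Delta^{h_\alpha}_{\chisqmax,1}$ (a typo in the statement) and correctly identified which piece of the density assumption guarantees $\tau \lesssim \sigma$ and which piece of the signal assumption dominates each of the two terms in $h_\alpha$.
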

\begin{proof}[Proof of Lemma \ref{lem_max_truncated_degree_test}]
    Let $\sigma = \sqrt{k_1p_0(1-p_0)}$. We begin by controlling the Type 1 error.
    \begin{align*}
        \bbP_0 (\Delta^{h_\alpha}_{\chisqmax, 1} = 1) &= \bbP_0(t_{\chisqmax,1} > h_\alpha) \\
        &= \bbP_0\left(\max \left\{\sum_{j = 1}^{n_2}\big(W_{J_1,j} - \nu^{k_1}_{\tau}\big)\one(t_{J_1, j} > \tau)\,\Big|\, J_1 \in \mathcal{P}_{k_1}(n_1)\right\} > h_\alpha \right) \\
        &\leq {n_1 \choose k_1} \bbP_0\left(\sum_{j = 1}^{n_2}\big(W_{J_1,j} - \nu^{k_1}_{\tau}\big)\one(t_{J_1, j} > \tau) > h_\alpha\right) \\
        &\leq {n_1 \choose k_1}\exp\left(-\log\left(\frac{2}{\alpha}{n_1 \choose k_1}\right)\right) \\
        &= \frac{\alpha}{2},
    \end{align*}
    where the second inequality follows from Lemma \ref{leq_truncchisq_concentration} as long as $h_\alpha$ is defined with the constants $C^*$ and $c'$ obtained from Lemma \ref{leq_truncchisq_concentration} and $\tau \in [C', c\sigma]$ for some constants $C', c > 0$. We impose that the constant $c'$ is made to be greater than $1$ by taking the constant $C$ in the definition of $\tau$ sufficiently large. Therefore, we just need to verify that $\tau \in [C', c\sigma]$ to finalize our control of the Type 1 error. First, we have
    \begin{align*}
        \tau &= \sqrt{C\log\left(1 + \frac{n_2}{k_2^2}\log{n_1 \choose k_1}\right)} \\
        &> \sqrt{C\log\left(1 + c\right)} \\
        &\geq C',
    \end{align*}
    where the final inequality holds for $C$ taken sufficiently large. Additionally, using $\frac14 \geq p_0 \geq \frac{C_\alpha}{k_1k_2} \log\left(e{n_1 \choose k_1}{n_2 \choose k_2}\right)$ we have
    \begin{align*}
        c\sigma &= c\sqrt{k_1 p_0 (1 - p_0)} \\
        &\geq c\sqrt{\frac34 C_\alpha\log\left(e\frac{n_2}{k_2}{n_1 \choose k_1}\right)} \\
        &\geq \sqrt{C\log\left(1 + \frac{n_2}{k_2^2}\log{n_1 \choose k_1}\right)} \quad \text{(for $C_\alpha$ large enough)}\\
        &= \tau.
    \end{align*}
    Therefore  $\tau \in [C', c\sigma]$, and the bound on the Type 1 error holds. 

    Now we turn our attention to the Type 2 error. Under the alternative hypothesis, there exist sets $K_1 \in \mathcal P_{k_1}(n_1)$ and $K_2 \in \mathcal P_{k_2}(n_2)$ such that $\EE[\A] = \bP$ with entries $P_{ij} \geq p_0 + \delta$ if $(i,j) \in K_1 \times K_2$ and $P_{ij} = p_0$ otherwise. Let $\theta = \frac{k_1 \delta}{\sigma}$. Then by Lemma \ref{lem_truncchisq_mean}, there exist constants $\tilde{c}, c, C' > 0$ such that if $\tau \in [C', c\sigma]$, $k_1 (p_0 + \delta) \geq C'$, and $\theta \geq C'\tau$, then
    \begin{equation}\label{eq_max-trunc-mean-lb}
    \EE\left[\sum_{j = 1}^{n_2}\big(W_{K_1,j} - \nu^{k_1}_{\tau}\big)\one(t_{K_1, j} > \tau)\right] \geq \tilde{c}k_2\theta \sigma \log\left(1 + \frac{\theta}{\sigma}\right).
    \end{equation}
    As when we controlled the Type 1 error, we enforce $\tau \in [C', c\sigma]$ by taking $C$ in the definition of $\tau$ and $C_\alpha$ sufficiently large, in that order. To verify $k_1(p_0 + \delta) \geq C'$, we calculate
    \begin{align*}
        k_1(p_0 + \delta) &\geq k_1\delta \\
        &\geq \sqrt{k_1 p_0(1-p_0)C_\delta \log\left(1 + \frac{n_2}{k_2^2}\log {n_1 \choose k_1}\right)} \\
        &\geq \sqrt{\frac34 C_\alpha\log\left(e\frac{n_2}{k_2}{n_1 \choose k_1}\right) C_\delta \log\left(1 + \frac{n_2}{k_2^2}\log {n_1 \choose k_1}\right)} \\
        &\geq \sqrt{\frac34 C_\alpha C_\delta \log\left(1 + c\right)} \\
        &\geq C',
    \end{align*}
    where the finally inequality holds for $C_\delta$ taken sufficiently large. Finally, to verify $\theta \geq C'\tau$, we have
    \begin{align*}
        \theta &= \frac{k_1\delta}{\sqrt{k_1p_0(1 - p_0)}} \\
        &\geq \sqrt{C_\delta \left(\log\Big(1 + \frac{n_2}{k_2^2}\log {n_1 \choose k_1}\Big) + \frac{1}{k_2}\log {n_1 \choose k_1}\right)} \\
        &\geq C'\sqrt{C\log\Big(1 + \frac{n_2}{k_2^2}\log {n_1 \choose k_1}\Big)} \quad \text{(for $C_\delta$ taken large enough)} \\
        &= C'\tau.
    \end{align*}
Therefore, (\ref{eq_max-trunc-mean-lb}) holds due to Lemma \ref{lem_truncchisq_mean}. Additionally, by Lemma \ref{lem_variance_trunc_chi2}, there exist constants $C_1, c, C' > 0$ such that if $\tau \in [C', c\sigma]$, $k_1 (p_0 + \delta) \geq C'$,  and $\theta \geq 2\tau$, then
\begin{equation}\label{eq_max-trunc-var-ub}
    \var\left(\sum_{j = 1}^{n_2}\big(W_{K_1,j} - \nu^{k_1}_{\tau}\big)\one(t_{K_1, j} > \tau)\right) \leq C_1 k_2 (\sigma^2 + \theta \sigma) \log^2\left(1 + \frac{\theta}{\sigma}\right) + C_1 (n_2 - k_2)\left(\sigma \tau \log(1 + \frac{\tau}{\sigma})\right)^2\exp\big(-\frac38 \tau^2\big).
\end{equation}
As before, we enforce $\tau \in [C', c\sigma]$,$k_1 (p_0 + \delta) \geq C'$,  and $\theta \geq 2\tau$ by taking $C$, $C_\alpha$, and $C_\delta$ sufficiently large in that order. We aim to control the Type 2 error by combining (\ref{eq_max-trunc-mean-lb}) and (\ref{eq_max-trunc-var-ub}) in an application of Chebyshev's inequality. Before we can do this, we need to verify that the right-hand side of (\ref{eq_max-trunc-mean-lb}) is at least $2h_\alpha$. In what follows, we define
\[\bar{\delta}^2 := C_\delta p_0(1-p_0)\Bigg(\frac{1}{k_1}\log\Big(1 + \frac{n_2}{k_2^2}\log {n_1 \choose k_1}\Big) + \frac{1}{k_2k_1}\log {n_1 \choose k_1}\Bigg),\]
so that $\delta \geq \bar{\delta}$ by assumption. First, we have
\begin{align*}
    \log\Big(1 + \frac{\theta}{\sigma}\Big) &= \log\Big(1 + \frac{k_1\delta}{\sigma^2}\Big) \\
    &\geq \log\Big(1 + \frac{k_1\bar{\delta}}{\sigma^2}\Big).
\end{align*}
We now aim to show that there exists a constant $\bar{C} > 0$ such that $\frac{k_1\bar{\delta}}{\sigma^2} \leq \bar{C}$. By the definitions of $\bar{\delta}$ and $\sigma^2$, it suffices to show that 
\[\log\Big(1 + \frac{n_2}{k_2^2}\log {n_1 \choose k_1}\Big) + \frac{1}{k_2}\log {n_1 \choose k_1} \leq \bar{C}\sigma^2.\]
Recall that $\tau^2 = C\log\big(1 + \frac{n_2}{k_2^2}\log {n_1 \choose k_1}\big)$. 
Since we have already shown that $\tau \leq c \sigma$, it is clear that $\log\big(1 + \frac{n_2}{k_2^2}\log {n_1 \choose k_1}\big) \leq C'\sigma^2$ for some $C' > 0$. Furthermore, by the assumption $\frac14 \geq p_0 \geq \frac{C_\alpha}{k_1k_2} \log\left(e{n_1 \choose k_1}{n_2 \choose k_2}\right)$, it holds
\begin{align*}
    \sigma^2 &= k_1 p_0(1 - p_0) \\
    &\geq \frac34 \frac{C_\alpha}{k_2} \log\left(e{n_1 \choose k_1}{n_2 \choose k_2}\right) \\
    &\geq \frac34 \frac{C_\alpha}{k_2} \log {n_1 \choose k_1},
\end{align*}
and hence $k_2^{-1}\log {n_1 \choose k_1} \leq C'' \sigma^2$ for some $C'' > 0$. Taking $\bar{C} = (C' + C'')/C_\delta$, we have that $\frac{k_1\bar{\delta}}{\sigma^2} \leq \bar{C}$. Therefore, there exists a constant $\bar{c} > 0$ such that $\log\big(1 + \frac{\theta}{\sigma}\big) \geq \log\big(1 + \frac{k_1\bar{\delta}}{\sigma}\big) \geq \bar{c}\frac{k_1\bar{\delta}}{\sigma}$. With this result in hand, we have
\begin{align*}
    \tilde{c}k_2\theta \sigma \log\left(1 + \frac{\theta}{\sigma}\right) &\geq  ck_2\theta k_1 \bar{\delta} \quad \text{(for $c = \tilde{c}\bar{c}$)} \\
    &\geq c k_2 \frac{(k_1 \bar{\delta})^2}{\sigma} \\
    &= c C_\delta k_2 \sigma \left(\log\Big(1 + \frac{n_2}{k_2^2}\log {n_1 \choose k_1}\Big) + \frac{1}{k_2}\log {n_1 \choose k_1}\right) \\
    &\geq C'_\delta \left(k_2 \log\Big(1 + \frac{n_2}{k_2^2}\log {n_1 \choose k_1}\Big) + \log {n_1 \choose k_1}\right),
\end{align*}
where the final inequality uses $\sigma \gtrsim 1$, and $C'_\delta > 0$ is a positive multiple of $C_\delta$. Note that, since we have enforced $c' \geq 1$ in the definition of $h_\alpha$, it holds
\begin{align*}
h_\alpha &= C^*\left(\sqrt{n_2\exp\left(-c' \log(1 + \frac{n_2}{k_2^2}\log{n_1 \choose k_1})\right)\log\left(\frac{2}{\alpha}{n_1 \choose k_1}\right)} + \log\left(\frac{2}{\alpha}{n_1 \choose k_1}\right)\right) \\
&\leq C^*\left(k_2\sqrt{\frac{\frac{n_2}{k_2^2}\log {n_1 \choose k_1}}{1 + \frac{n_2}{k_2^2}\log {n_1 \choose k_1}}} + \log\left(\frac{2}{\alpha}{n_1 \choose k_1}\right)\right) \\
&\leq C^* \left(k_2 + \log\left(\frac{2}{\alpha}{n_1 \choose k_1}\right) \right) \\
&\leq \frac{C'_\delta}{2} \left(k_2 \log\Big(1 + \frac{n_2}{k_2^2}\log {n_1 \choose k_1}\Big) + \log {n_1 \choose k_1}\right),
\end{align*}
where the final inequality holds for $C_\delta$ taken sufficiently large, and uses that $\frac{n_2}{k_2^2}\log {n_1 \choose k_1} \geq c > 0$. To summarize, we have shown that for $C_\delta$ taken large enough, it holds
\begin{align*}
\EE\left[\sum_{j = 1}^{n_2}\big(W_{K_1,j} - \nu^{k_1}_{\tau}\big)\one(t_{K_1, j} > \tau)\right] &\geq \tilde{c}k_2\theta \sigma \log\left(1 + \frac{\theta}{\sigma}\right) \\
&\geq 2h_\alpha.
\end{align*}
Letting $U_{K_1} := \sum_{j = 1}^{n_2}\big(W_{K_1,j} - \nu^{k_1}_{\tau}\big)\one(t_{K_1, j} > \tau)$, we therefore can conclude
\begin{align*}
    \bbP_{\bP}\big(t_{\chisqmax} \leq h_\alpha\big) &\leq \bbP_{\bP}\big(U_{K_1} \leq h_\alpha\big)\\
    &= \bbP_{\bP}\big(U_{K_1} - \EE[U_{K_1}]\leq h_\alpha - \EE[U_{K_1}]\big) \\
    &=\bbP_{\bP}\Big(\big(U_{K_1} - \EE[U_{K_1}]\big)^2\geq \big(h_\alpha - \EE[U_{K_1}]\big)^2\Big) \quad \text{(since $\EE[U_{K_1}] \geq 2h_\alpha$)} \\
    &\leq \bbP_{\bP}\Big(\big(U_{K_1} - \EE[U_{K_1}]\big)^2\geq \big(\EE[U_{K_1}]\big)^2\Big) \\
    &\leq \frac{\var\big(U_{K_1}\big)}{\big(\EE[U_{K_1}]\big)^2} \quad \text{(by Markov's inequality)} \\
    &\leq \frac{C_1 k_2 (\sigma^2 + \theta \sigma) \log^2\left(1 + \frac{\theta}{\sigma}\right) + C_1 (n_2 - k_2)\left(\sigma \tau \log(1 + \frac{\tau}{\sigma})\right)^2\exp\big(-\frac38 \tau^2\big)}{\left(\tilde{c}k_2\theta \sigma \log\left(1 + \frac{\theta}{\sigma}\right)\right)^2} \\
    &= \text{I} + \text{II},
\end{align*}
where we define
\[\text{I} = \frac{C_1 k_2 (\sigma^2 + \theta \sigma) \log^2\left(1 + \frac{\theta}{\sigma}\right)}{\left(\tilde{c}k_2\theta \sigma \log\left(1 + \frac{\theta}{\sigma}\right)\right)^2},\]
and 
\[\text{II} = \frac{C_1 (n_2 - k_2)\left(\sigma \tau \log(1 + \frac{\tau}{\sigma})\right)^2\exp\big(-\frac38 \tau^2\big)}{\left(\tilde{c}k_2\theta \sigma \log\left(1 + \frac{\theta}{\sigma}\right)\right)^2}.\]
We control each of these terms separately. First we have
\begin{align*}
    \text{I} &= \frac{C_1 k_2 (\sigma^2 + \theta \sigma) \log^2\left(1 + \frac{\theta}{\sigma}\right)}{\left(\tilde{c}k_2\theta \sigma \log\left(1 + \frac{\theta}{\sigma}\right)\right)^2} \\
    &= \frac{C_1(\sigma^2 + \theta \sigma)}{\tilde{c}^2k_2\theta^2 \sigma^2 } \\
    &= \frac{C_1}{\tilde{c}^2k_2\theta^2 }  + \frac{C_1}{\tilde{c}^2k_2\theta \sigma }\\
    &\leq \frac{C_1}{C'_\delta \tilde{c}^2 k_2\sqrt{\log\Big(1 + \frac{n_2}{k_2^2}\log {n_1 \choose k_1}\Big) + \log {n_1 \choose k_1}}} \\
    &+ \frac{C_1}{C'_\delta \tilde{c}^2 k_2 \left(\log\Big(1 + \frac{n_2}{k_2^2}\log {n_1 \choose k_1}\Big) + \log {n_1 \choose k_1}\right)}\\
    &\leq \frac{C_1}{C'_\delta \tilde{c}^2 k_2 \sqrt{\log(1 + c)}} + \frac{C_1}{C'_\delta \tilde{c}^2 k_2 \log(1 + c)} \\
    &\leq \frac{\alpha}{4},
\end{align*}
where the final inequality holds for $C_\delta$ taken large enough. Now we control $\text{II}$.  Recall that $C \geq \frac{8}{3}$ in the definition of $\tau$. Then it holds
\begin{align*}
    \text{II} &= \frac{C_1 (n_2 - k_2)\left(\sigma \tau \log(1 + \frac{\tau}{\sigma})\right)^2\exp\big(-\frac38 \tau^2\big)}{\left(\tilde{c}k_2\theta \sigma \log\left(1 + \frac{\theta}{\sigma}\right)\right)^2} \\
    &\leq \frac{C_1n_2}{\tilde{c}^2k_2^2\Big(1 + \frac{n_2}{k_2^2}\log {n_1 \choose k_1}\Big)} \cdot \frac{\left(\tau \log\big(1 + \frac{\tau}{\sigma}\big)\right)^2}{\left(\theta \log\big(1 + \frac{\theta}{\sigma}\big)\right)^2} \\
    &\leq \frac{C_1}{\tilde{c}^2} \cdot \frac{\left(C\theta\log(1 + c)\right)^2}{\left(\theta \log\big(1 + \frac{\theta}{\sigma}\big)\right)^2} \quad \text{(since $\tau \leq c \sigma $ and $\tau \leq C \theta$)} \\
    &\leq \frac{C_1}{\tilde{c}^2} \cdot \frac{ \left(C\log(1 + c)\right)^2}{C'_\delta\left(k_2\log\Big(1 + \frac{n_2}{k_2^2}\log {n_1 \choose k_1}\Big) + \log {n_1 \choose k_1}\right)} \\
    &\leq \frac{C_1}{\tilde{c}^2} \cdot \frac{ \left(C\log(1 + c)\right)^2}{C'_\delta\left(k_2\log\Big(1 + c\Big)\right)} \\
    &\leq \frac{\alpha}{4},
\end{align*}
where again the final inequality holds for $C_\delta$ taken sufficiently large. Therefore, we have shown
\[\bbP_{\bP}\big(t_{\chisqmax, 1} \leq h_\alpha\big) \leq \text{I} + \text{II} \leq \frac{\alpha}{2}.\]
Combining this with our bound on the Type 1 error, it holds
\[\cR(\Delta^{h_\alpha}_{\chisqmax, 1}, \delta) \leq \alpha,\]
and the proof is complete.
\end{proof}

\subsection{Proof of Theorem 2}

Here, we provide the proof of Theorem 2, which gives an upper bound on the minimax rate of separation $\delta^*$.
We let 
\begin{align*}
    \tilde R = \big(\psi_{12} + \beta_{21}\big) \land \big(\psi_{21} + \beta_{12}\big) \land \phi_{12} \land \phi_{21}.
\end{align*}

We define our optimal test as
\begin{align*}
    \Delta^* = \begin{cases}
        \Delta^{h_3}_{\chisqmax, 1}& \text{ if } \tilde R = \psi_{12} + \beta_{21} \\
        \Delta^{h_3}_{\chisqmax, 2}& \text{ if } \tilde R = \psi_{21} + \beta_{12} \\
        \Delta_a^{h_1,h_2} & \text{ if } \tilde R = \phi_{12}\\
        \Delta_b^{h_1',h_2'} & \text{ if } \tilde R = \phi_{21},
    \end{cases}
\end{align*}
where
\begin{align*}
    &\Delta_a^{h_1,h_2} = \begin{cases}
        \Delta^{h_1}_{\chisqlin, 1} & \text{if  $\frac{n_2}{k_2^2} \geq c_1$,} \\ \Delta^{h_2}_{\lin} & \text{otherwise}
    \end{cases} \qquad \text{ and } 
    \qquad \Delta_b^{h_1',h_2'} = \begin{cases}
        \Delta^{h_1'}_{\chisqlin, 2} & \text{if  $\frac{n_1}{k_1^2} \geq c_1$,} \\ \Delta^{h_2'}_{\lin} & \text{otherwise.}
    \end{cases} 
\end{align*}
and, for some small enough $\alpha>0$, we let 
\begin{align*}
    h_3 &= C^*\left(\sqrt{n_2\exp\left(-c' \log\left(1 + \frac{n_2}{k_2^2}\log{n_1 \choose k_1}\right)\right)\log\left(\frac{2}{\alpha}{n_1 \choose k_1}\right)} + \log\left(\frac{2}{\alpha}{n_1 \choose k_1}\right)\right)\\
    h_4 &= C^*\left(\sqrt{n_1\exp\left(-c' \log\left(1 + \frac{n_1}{k_1^2}\log{n_2 \choose k_2}\right)\right)\log\left(\frac{2}{\alpha}{n_2 \choose k_2}\right)} + \log\left(\frac{2}{\alpha}{n_2 \choose k_2}\right)\right)\\
    h_1 &= C^*\left(\sqrt{n_2\exp\left(-c' \log(1 + \frac{n_2}{k_2^2})\right)\log(2/\alpha)} + \log(2/\alpha)\right)\\
    h_1' &= C^*\left(\sqrt{n_1\exp\left(-c' \log(1 + \frac{n_1}{k_1^2})\right)\log(2/\alpha)} + \log(2/\alpha)\right)\\
    h_2 &= h_2' = \sqrt{4 \log(2/\alpha)}.
\end{align*}
We recall the assumption
\begin{align*}
        p_0 \geq \begin{cases}
            \frac{C_\eta}{k_1k_2} \log\left(e{n_1 \choose k_1}{n_2 \choose k_2}\right) & \text{ if } \tilde R = \big(\psi_{12} + \beta_{21}\big) \land \big(\psi_{21} + \beta_{12}\big)\\[5pt]
            \frac{C_\eta}{n_1} \log\left(1+ \frac{n_2}{k_2^2}\right) & \text{ if } \tilde R = \phi_{12} \text{ and } n_2>k_2^2\\[5pt]
            \frac{C_\eta}{n_2} \log\left(1+ \frac{n_1}{k_1^2}\right) & \text{ if } \tilde R = \phi_{21} \text{ and } n_1>k_1^2\\[5pt]
            \frac{C_\eta}{n_1n_2} & \text{ otherwise.}
        \end{cases} 
    \end{align*}

By Lemma~\ref{lem_simplified_rate_for_UB}, we always have $R \gtrsim \tilde R$. 
Assume first that  $\tilde R = \big(\psi_{12} + \beta_{21}\big) \land \big(\psi_{21} + \beta_{12}\big)$, and, by symmetry, assume that we have  $\tilde R = \psi_{12} + \beta_{21}$. 
In this case, we have $\Delta^* = \Delta^{h_3}_{\chisqmax, 1}$, and the result follows by Lemma~\ref{lem_max_truncated_degree_test}. We can proceed similarly in the case where $\tilde R = \psi_{21} + \beta_{12}$. 

Assume now that $\tilde R = \phi_{12}$ and $n_2 > k_2^2$. Then the result follows by Lemma~\ref{lem_truncated_degree_test}. Similarly, if  
$\tilde R = \phi_{21}$ and $n_1 > k_1^2$, the result follows by Lemma~\ref{lem_truncated_degree_test}.
Finally, if none of the conditions above are satisfied, then we have $\Delta^* = \Delta^{h_2}_{\lin}$ and the result follows by Lemma~\ref{lem_linear_test}. 
The proof is complete.

\section{Additional results}

\subsection{Technical results for the analysis of the truncated chi-square tests}

\begin{lemma}\label{lemma17_LiuGaoSamworth}
Let $n \in \mathbb N$ and $p \in (0,1/4)$. 
Let $X \sim \operatorname{Bin}(n,p)$ and, for any $a>0$,  define 
\begin{align*}
    &Z \,=\, (X-np)/\sigma, \qquad \text{ where } \sigma = \sqrt{np(1-p)}\\
    &v(x)  = \sigma x \log\left(1+ \frac{x}{\sigma}\right), \quad \forall x > 0.
\end{align*}
For any $\alpha\geq 1$, there exist two constants $C_\alpha, \bar C>0$ such that, for any $a \in\big[C_\alpha, \sigma\big]$, we have $\mathbb E\left[v(Z)^{\alpha} \big|Z\geq a\right] \leq \bar C v(a)^{\alpha}$. 
\end{lemma}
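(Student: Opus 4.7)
The plan is to prove the claim via the layer-cake identity combined with sharp two-sided tail bounds for the binomial distribution. Since $v$ is strictly increasing on $[0,\infty)$, layer-cake together with the change of variable $s = v(u)$ yields
\[
\mathbb{E}\bigl[v(Z)^\alpha \mathbf 1_{Z \geq a}\bigr] \,=\, v(a)^\alpha\, \mathbb{P}(Z \geq a) \,+\, \int_a^\infty \alpha\, v(u)^{\alpha - 1}\, v'(u)\, \mathbb{P}(Z \geq u)\, du.
\]
After dividing by $\mathbb{P}(Z \geq a)$, it suffices to show the integral is $\lesssim v(a)^\alpha\, \mathbb{P}(Z \geq a)$.

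For the upper tail I would use the exact Chernoff bound $\mathbb{P}(Z \geq u) \leq \exp\!\bigl(-n \operatorname{KL}(p + u\sigma/n \Vert p)\bigr)$. A Taylor expansion around $u=0$ shows that the rate function equals $(1+o(1))\,u^2/2$ on the moderate-deviation range $u \in [0,\sigma]$, and convexity of $\operatorname{KL}$ gives the linear lower bound $n\operatorname{KL}(p+u\sigma/n \Vert p) \geq (u/\sigma)\, n\operatorname{KL}(2p\Vert p)$ for $u \geq \sigma$. For the lower tail, Slud's inequality (enabled precisely by $p \leq 1/4$) gives $\mathbb{P}(Z \geq a) \geq \bar\Phi(a) \gtrsim e^{-a^2/2}/(1+a)$. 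A key structural observation is that $v$ is comparable to the square on the moderate-deviation range: $a^2/2 \leq v(a) \leq a^2$ and $v'(u) \asymp u$ for $u \in [0,\sigma]$, which follow from $x/2 \leq \log(1+x) \leq x$ on $[0,1]$.

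The integral is then split as $\int_a^\sigma + \int_\sigma^\infty$. On $[a,\sigma]$, substituting the above estimates on $v$, $v'$, and the Chernoff-based tail bound $\mathbb{P}(Z \geq u) \leq e^{-u^2/2}$ reduces the integrand to Gaussian form, and the substitution $t = u^2 - a^2$ together with iterated integration by parts yields $\int_a^\sigma u^{2\alpha-1} e^{-(u^2-a^2)/2}\, du \lesssim a^{2\alpha-2}$; dividing through by $\mathbb{P}(Z \geq a)^{-1} e^{-a^2/2}/a$ gives a contribution $\lesssim a^{2\alpha-1} \lesssim v(a)^\alpha$ for $a \geq 1$. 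On $[\sigma,\infty)$, the Chernoff bound combined with the change of variable $w = v(u)$ reduces to an incomplete gamma estimate $\lesssim v(\sigma)^{\alpha-1} e^{-cv(\sigma)}$ with $c > 1/2$, which, divided by $\mathbb{P}(Z \geq a) \gtrsim e^{-a^2/2}/a$ and using $\sigma \geq a$, decays exponentially in $a$ and is absorbed into a constant for $a \geq C_\alpha$ sufficiently large.

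The main obstacle is matching the exponential constants in the upper and lower tail bounds. A direct calculation shows that Bennett alone gives $\sigma^2 h_B(u/\sigma)/v(u) \in [1/2,1]$ with infimum $1/2$ attained as $u \to 0$, so the Bennett exponent is only $v(u)/2$, which is too weak to dominate the factor $e^{a^2/2}$ introduced by inverting the Slud lower bound (since $v(a) \leq a^2$ on the regime considered). This forces the use of the sharper Chernoff rate $n\operatorname{KL}(p+u\sigma/n \Vert p) = (1+o(1))u^2/2$, which matches the Slud exponent exactly; controlling the $o(1)$ corrections uniformly for $u \in [a,\sigma]$ (via $p \leq 1/4$ and convexity of $\operatorname{KL}$) is the main technical step, especially in the transition regime $a \asymp \sigma$ where both Gaussian and sub-Poissonian features of the binomial compete.
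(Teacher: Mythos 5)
Your layer-cake strategy is genuinely different from the paper's, which instead works directly with the binomial mass function $\mathbb{P}(X=k)$ via Stirling's formula and decomposes the range of $X$ into three pieces (a bulk near $k_{\min}=\lceil np+a\sigma\rceil$, an intermediate piece, and the two endpoints $\{n-1,n\}$). Unfortunately, the proposal has a genuine gap at its core tail estimate, and the self-diagnosed ``main technical step'' cannot actually be carried out as described.

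The issue is the claimed bound $\mathbb{P}(Z\geq u)\leq e^{-u^2/2}$ on $[a,\sigma]$. For the binomial, the Chernoff bound gives $\mathbb{P}(Z\geq u)\leq \exp(-n\,\mathrm{KL}((np+u\sigma)/n\,\|\,p))$, and this \emph{is} the two-term Bennett functional $n(1-p)h_B(-\tfrac{u\sigma}{n(1-p)})+np\,h_B(\tfrac{u\sigma}{np})$ that the paper works with — so the dichotomy you draw between ``Bennett'' and ``Chernoff'' does not exist here. More to the point, $\frac{\partial^3}{\partial q^3}\mathrm{KL}(q\|p)\big|_{q=p}=\tfrac{1}{(1-p)^2}-\tfrac{1}{p^2}<0$ for $p<1/2$, so $n\,\mathrm{KL}((np+u\sigma)/n\,\|\,p)<u^2/2$ for all $u>0$, with a deficit of order $u^3/\sigma$. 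That deficit is not $o(u^2)$ uniformly on $[a,\sigma]$; when $u\asymp\sigma$ it is of the same order as $u^2$ (for $p\le 1/4$ one only gets $n\,\mathrm{KL}\geq \tfrac{3}{8}u^2$, say). So the ``$(1+o(1))u^2/2$'' assertion fails near the upper edge of the range, and there is no exponential constant $c\geq 1/2$ available. Plugging the best available $c<1/2$ into $\int_a^\sigma u^{2\alpha-1}\mathbb{P}(Z\geq u)\,du$ and dividing by Slud's $\mathbb{P}(Z\geq a)\gtrsim a^{-1}e^{-a^2/2}$ produces, already from the contribution near $u=a$, a factor of order $a^{2\alpha}e^{(1/2-c)a^2}$, which is unbounded as $a,\sigma\to\infty$.

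The missing ingredient is a split at $(1+c^*)a$ rather than at $\sigma$, which is exactly what the paper does via its Term I. On $[a,(1+c^*)a]$ one must \emph{not} invoke an exponential tail bound: use instead the trivial monotonicity $\mathbb{P}(Z\geq u)\leq\mathbb{P}(Z\geq a)$ and $v((1+c^*)a)^\alpha\lesssim v(a)^\alpha$, which gives $\int_a^{(1+c^*)a}v(u)^{\alpha-1}v'(u)\,du\cdot\mathbb{P}(Z\geq a)\lesssim v(a)^\alpha\mathbb{P}(Z\geq a)$. On $[(1+c^*)a,\infty)$ the weaker exponential rate suffices because $c\,(1+c^*)^2a^2\geq a^2/2$ once $c^*$ is a large enough constant; this is precisely what the paper's change of variables ($y=\sqrt{2/5}(a\sigma+x)/\sigma$ with lower limit pushed to $\sqrt{2/5}(1+c^*)a\geq a$) accomplishes. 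Your split at $u=\sigma$ does not remedy anything, because the problematic contribution sits at $u\approx a$, entirely inside $[a,\sigma]$. Finally, note that the paper's Stirling estimate on $\mathbb{P}(X=k)$ also supplies a polynomial prefactor $\sqrt{n/((n-k)k)}\asymp\sigma^{-1}$ that the tail-bound route lacks; this is a secondary help, but on its own it would not rescue the argument without the bulk/tail split.
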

\begin{proof}[Proof of Lemma~\ref{lemma17_LiuGaoSamworth}]

We first note that the event $\{Z \geq a\}$ is equivalent to $\{X \geq k_{\min}\}$ where $k_{\min} = \lceil np + a\sigma\rceil$. Let $c^*>0$ be a constant whose value will be adjusted later. Let also $\alpha > 0$. We have
\begin{align*}
\mathbb E\left[v(Z)^\alpha \one{\big( Z \geq a\big)}\right] &= \mathbb E\left[v(Z)^\alpha \one{\big( X \geq k_{\min}\big)}\right]\\[5pt]
& = \mathbb E\left[v(Z)^\alpha \one{\big( k_{\min} \leq X \leq k_{\min} + c^* a \sigma\big)}\right] \\[5pt]
& \quad + \mathbb E\left[v(Z)^\alpha \one{\big( k_{\min} + c^*a\sigma < X \leq n-2\big)}\right] \\[5pt]
& \quad + \mathbb E\left[v(Z)^\alpha \one{\big(X  \geq n-1\big)}\right]\\[5pt]
& =: \text{I} + \text{II} + \text{III}.\numberthis \label{eq_split_moment_alpha}
\end{align*}
We control each term separately. \\

\textit{Term III.} We use the inequality $\log(ab) \leq 2a \log(b)$ when $a >1$ and $b \geq 2$. Indeed, 
\begin{align*}
\log(b ) \geq \frac{1}{2}\log\left(b+1 \right) \geq \frac{1}{2} \log\left(\frac{ab}{a} + (1-\frac{1}{a})\cdot 1\right) \geq \frac{1}{2a} \log\left(ab\right)
\end{align*}
by concavity of the logarithm.  
Now, noting that $n(1-p) \geq p$, we have
\begin{align*}
\text{III} ~& =\mathbb E\left[v\!\left(\frac{X-np}{\sigma}\right)^\alpha \one{\big(X  \geq n-1 \big)}\right] \\
&= v\!\left(\frac{n(1-p)-1}{\sigma}\right)^\alpha n p^{n-1}(1-p) + v \!\left(\frac{n(1-p)}{\sigma}\right)^\alpha p^n\\
& \leq  2v\!\left(\frac{n(1-p)}{\sigma}\right)^\alpha n p^{n-1}(1-p) \\
& = 2 \left(\frac{\sigma}{p}\right)^{\alpha+2}\log^\alpha\left(1+\frac{1}{p}\right) p^{n}\\
& = 2 \left(\sqrt{\frac{n(1-p)}{p}}\right)^{\alpha+2}  p^{n-\alpha}\\
& =
\leq 2 \exp\left(\frac{\alpha + 2}{2}\left(\log(n(1-p)) + \log\Big(\frac{1}{p}\Big)\right) - (n-\alpha)\log\Big(\frac{1}{p}\Big)\right).
\end{align*}
Now, recall that we have $\sqrt{n/4} \geq \sigma \geq a \geq C_\alpha$. 
Therefore, we have $\frac{\alpha + 2}{2}\log(n(1-p))\leq \frac{n-\alpha}{4} \log\left(1/p\right)$ and $\frac{\alpha + 2}{2}\log(1/p) \leq \frac{n-\alpha}{4} \log\left(1/p\right)$ provided $C_\alpha$ is sufficiently large. Therefore,
\begin{align*}
    \text{III} \leq 2 \exp\left(-n/2\right) \leq 2 e^{-\frac{a^2}{2}}\numberthis \leq 4a \mathbb P\left(Z \geq a\right).\label{eq_term_III}
\end{align*}
In the last line, we used Theorem 2.1 from~\cite{slud1977distribution}, which states that whenever $p \leq \frac{1}{4}$, it holds that $\mathbb P(Z \geq a) \geq 1 - \Phi(a) \geq \frac{1}{2a} e^{-\frac{a^2}{2}}$ for $a \geq 2$ where $\Phi(a)$ denotes the cumulative distribution function of the standard normal distribution.

\textit{Term I.} Note that, if $k \in [k_{\min}, k_{\min} + c^*a\sigma]$, then 
\begin{align*}
v\left(\frac{k-np}{\sigma}\right)^\alpha &\leq v\left(\frac{k_{\min} + c^* a\sigma-np}{\sigma}\right)^\alpha \\
&= v\left(\frac{\lceil np + a\sigma\rceil + c^* a\sigma-np}{\sigma}\right)^\alpha\\
& \leq v\left(\frac{(1+c^*) a\sigma+1}{\sigma}\right)^\alpha\\
& \leq v\Big((1+\frac{3}{2}c^*) a\Big)^\alpha\\
& \leq (1+2c^*)^\alpha \, v(a)^\alpha
\end{align*}
provided the constant $C_\alpha>0$ is chosen to be large enough. 
Therefore, we obtain
\begin{align*}
\text{I} &= \mathbb E\left[v\!\left(\frac{X-np}{\sigma}\right)^\alpha \one{\big( k_{\min} \leq X \leq k_{\min} + c^* a \sigma\big)}\right]\\
& \leq (1+2c^*)^\alpha v(a)^\alpha \mathbb P \left(Z \geq a\right).\numberthis \label{eq_term_I}
\end{align*}

\textit{Term II.}
We let $j_0 = \lfloor c^* a \sigma\rfloor + 1$ and $k' = k_{\min} + j_0$, so that the event $X > k_{\min} + c^*a\sigma$ is equivalent to $X \geq k'$.
Therefore, we obtain
$$
\begin{aligned}
\text{II} &= \sum_{k = k'}^{n-2} v\left(\frac{k-np}{\sigma}\right)^\alpha \frac{n!}{k!(n-k)!} p^k (1-p)^{n-k}\\
&\leq \sum_{k = k'}^{n-2} v\left(\frac{k-np}{\sigma}\right)^\alpha \frac{C_0}{c_0^2\sqrt{2\pi}} \sqrt{\frac{n}{(n-k)k}} \left(\frac{n(1-p)}{n-k}\right)^{n-k} \left(\frac{np}{k}\right)^k\qquad \text{ by Lemma~\ref{lem_Stirling}}\\
& \leq \frac{C_0}{c_0^2\sqrt{2\pi}} \sum_{k = k'}^{n-2} v\left(\frac{k-np}{\sigma}\right)^\alpha \sqrt{\frac{n}{(n-k)k}} \left(1 - \frac{k-np}{n(1-p)}\right)^{-n+k} \left(1+ \frac{k-np}{np}\right)^{-k}\\
&\leq \frac{C_0}{c_0^2\sqrt{2\pi}} \sum_{j=j_0}^{n-k_{\min}-2} v\left(\frac{a\sigma + j + 1}{\sigma}\right)^\alpha \sqrt{\frac{n}{(n-k_{\min}-j)(k_{\min} + j)}} \\
& \qquad \left(1 - \frac{k_{\min} + j-np}{n(1-p)}\right)^{-n+k_{\min} + j} \left(1+ \frac{k_{\min} + j-np}{np}\right)^{-k_{\min} - j} \\
& =: \tilde C \sum_{j=j_0}^{n-k_{\min}-2} v\left(\frac{a\sigma + j}{\sigma}\right)^\alpha \sqrt{\frac{n}{(n-k_{\min}-j)(k_{\min} + j)}} \left(1 - \frac{a \sigma + j + 1}{n(1-p)}\right)^{-n+k_{\min} + j} \left(1+ \frac{a\sigma + j}{np}\right)^{-k_{\min} - j},
\end{aligned}
$$
where $\tilde C = 2\frac{C_0}{c_0^2\sqrt{2\pi}} $ and where we used the fact that $v\left(\frac{a\sigma + j + 1}{\sigma}\right)^\alpha  \leq 2 v\left(\frac{a\sigma + j}{\sigma}\right)^\alpha $
provided that $\sigma \geq a \geq C_\alpha$ for some large enough constant $C_\alpha$. 
Moreover, we note that
\begin{align*}
& \quad \left(1 - \frac{a \sigma + j + 1}{n(1-p)}\right)^{-n+k_{\min} + j} \left(1+ \frac{a\sigma + j}{np}\right)^{-k_{\min} - j} \\
& = \exp\left(-(n-k_{\min} - j) \log\left(1 - \frac{a \sigma + j + 1}{n(1-p)}\right) - (k_{\min} + j) \log\left(1 + \frac{a\sigma + j}{np}\right)\right)\\
& \leq \exp\left(-(n-np - a\sigma - j) \log\left(1 - \frac{a \sigma + j + 1}{n(1-p)}\right) - (np + a\sigma + j) \log\left(1 + \frac{a\sigma + j}{np}\right)\right)\\
& = \exp\left(-n(1-p)\left(1 - \frac{a\sigma + j}{n(1-p)}\right) \log\left(1 - \frac{a \sigma + j + 1}{n(1-p)}\right) - np\left(1 + \frac{a\sigma + j}{np}\right) \log\left(1 + \frac{a\sigma + j}{np}\right)\right).
\end{align*}
We now argue that the latter quantity is at most a constant times the quantity below:
\begin{align*}
\exp\left(-n(1-p)\left(1 - \frac{a\sigma + j}{n(1-p)}\right) \log\left(1 - \frac{a \sigma + j}{n(1-p)}\right) - np\left(1 + \frac{a\sigma + j}{np}\right) \log\left(1 + \frac{a\sigma + j}{np}\right)\right).
\end{align*}

Indeed, the ratio of the two is 
\begin{align*}
\exp\left(-n(1-p)\left(1 - \frac{a\sigma + j}{n(1-p)}\right) \log\left(1- \frac{\frac{1}{n(1-p)} }{1 - \frac{a \sigma + j}{n(1-p)}}\right)\right),
\end{align*}
and since $j \leq n - k_{\min} - 2 \leq n(1-p) - a\sigma - 2$, 
it follows that
\begin{align*}
\frac{\frac{1}{n(1-p)} }{1 - \frac{a \sigma + j}{n(1-p)}} \leq \frac{\frac{1}{n(1-p)} }{\frac{2}{n(1-p)}} = \frac{1}{2}.
\end{align*}
Using the inequality $\log(1-x) \geq -2x$ for $x \in (0,1/2)$, the above ratio is controlled as
\begin{align*}
&\exp\left(-n(1-p)\left(1 - \frac{a\sigma + j}{n(1-p)}\right) \log\left(1- \frac{\frac{1}{n(1-p)} }{1 - \frac{a \sigma + j}{n(1-p)}}\right)\right) \leq \exp(2).
\end{align*}
Combining these calculations, we obtain
\begin{align*}
\text{II} &\leq \tilde C \sum_{j=j_0}^{n-k_{\min} - 2} v\left(\frac{a\sigma + j}{\sigma}\right)^\alpha \sqrt{\frac{n}{(n-k_{\min}-j)(k_{\min} + j)}} \\
& \hspace{2cm}\exp\left(-n(1-p)\left(1 - \frac{a\sigma + j}{n(1-p)}\right) \log\left(1 - \frac{a \sigma + j}{n(1-p)}\right) - np\left(1 + \frac{a\sigma + j}{np}\right) \log\left(1 + \frac{a\sigma + j}{np}\right)\right)\\
& = \tilde C\sum_{j=j_0}^{n-k_{\min} - 2} v\left(\frac{a\sigma + j}{\sigma}\right)^\alpha \sqrt{\frac{n}{(n-k_{\min}-j)(k_{\min} + j)}} \exp\left(-n(1-p) h\left(-\frac{a\sigma + j}{n(1-p)}\right) - np \, h\left(\frac{a\sigma + j}{np}\right)\right)
\end{align*}
where we have defined the Bennett function
\begin{align*}
h(x) = (1+x) \log\left(1+x\right) - x, \qquad \forall x > -1.
\end{align*}
Now, we define the sets of indices $J_1 = \{j_0, \dots, \lfloor 2 \sigma^2\rfloor\}$ and $J_2 = \{\lfloor 2 \sigma^2 \rfloor + 1, \dots, n - k_{\min} - 2\rfloor\}$.
We have 
\begin{align*}
    \text{II} & \leq \tilde C\sum_{j\in J_1} v\left(\frac{a\sigma + j}{\sigma}\right)^\alpha \sqrt{\frac{n}{(n-k_{\min}-j)(k_{\min} + j)}} \exp\left(-n(1-p) h\left(-\frac{a\sigma + j}{n(1-p)}\right) - np \, h\left(\frac{a\sigma + j}{np}\right)\right)\\
    & \quad + \tilde C\sum_{j\in J_2} v\left(\frac{a\sigma + j}{\sigma}\right)^\alpha \sqrt{\frac{n}{(n-k_{\min}-j)(k_{\min} + j)}} \exp\left(-n(1-p) h\left(-\frac{a\sigma + j}{n(1-p)}\right) - np \, h\left(\frac{a\sigma + j}{np}\right)\right)\\
    & =: \text{I\hspace{.2mm}I}_{J_1} + \text{I\hspace{.2mm}I}_{J_2}\numberthis \label{eq_split_term_II}
\end{align*}
We now control these two terms separately. 
Suppose first that $j \in J_1$. Then we have 
\begin{align*}
\frac{a\sigma+j}{np} \land \frac{a\sigma+j}{n(1-p)} \leq \frac{a\sigma + 2\sigma^2}{\sigma^2} \leq 3,
\end{align*}
and, since $j \leq \sigma^2 \leq n/4$, we also have
\begin{align*}
    \sqrt{\frac{n}{(n-k_{\min}-j)(k_{\min} + j)}} &\leq \sqrt{\frac{n}{(n-np - a\sigma - 1-n/4)(k_{\min} + j)}}\\
    & \leq \sqrt{\frac{n}{(n/4-1)(k_{\min} + j)}}\\
    & \leq 3 \sqrt{\frac{1}{k_{\min}+j}}.
\end{align*}
In the last line, we used the fact that, since $\sqrt{n/4} \geq \sigma \geq a \geq C_\alpha$, we have $n\geq 4C_\alpha^2$, so that $\frac{n}{n/4-1} \leq 9$ for $C_\alpha$ large enough.
Moreover, we have $h(x) \geq x^2/5$ for any $x \in (-1,3]$. Therefore,
\begin{align*}
\frac{1}{3\tilde C}\text{I\hspace{.2mm}I}_{J_1} &\leq\sum_{j\in J_1}v\left(\frac{a\sigma + j}{\sigma}\right)^\alpha \sqrt{\frac{1}{k_{\min} + j}} \exp\left(-n(1-p) h\left(-\frac{a\sigma + j}{n(1-p)}\right) - np \, h\left(\frac{a\sigma + j}{np}\right)\right)\\
& \leq \sum_{j\in J_1}v\left(\frac{a\sigma + j}{\sigma}\right)^\alpha \sqrt{\frac{1}{k_{\min}}} \exp\left(-\frac15\frac{(a\sigma + j)^2}{n(1-p)} - \frac15\frac{(a\sigma + j)^2}{np}\right)\\
& = \sum_{j\in J_1} v\left(\frac{a\sigma + j}{\sigma}\right)^\alpha \sqrt{\frac{1}{k_{\min}}} \exp\left(-\frac15\frac{(a\sigma + j)^2}{\sigma^2}\right).
\end{align*}
Note that the function $x \mapsto x^\alpha e^{-\frac{x^2}{5}}$ is decreasing over $[\sqrt{5\alpha/2}, \infty)$. Therefore, choosing $C_\alpha$ large enough again, and letting $\tilde C$ denote a constant whose value may change in each appearance, we obtain 
\begin{align*}
\frac{1}{3 \tilde C}\text{I\hspace{.2mm}I}_{J_1}&  \sum_{j\in J_1} v\left(\frac{a\sigma + j}{\sigma}\right)^\alpha \sqrt{\frac{1}{k_{\min}}} \exp\left(-\frac15\frac{(a\sigma + j)^2}{\sigma^2}\right) \\
&\leq \frac{1}{\sqrt{k_{\min}}} \int_{j_0}^\infty v\left(\frac{a\sigma + x}{\sigma}\right)^\alpha e^{-\frac{(a\sigma + x)^2}{5 \sigma^2}} dx  \qquad \text{ where } j_0 - 1 = \lfloor c^* a \sigma \rfloor\\
& \leq \frac{\sigma}{\sqrt{k_{\min}}} \int_{\sqrt{\frac{2}{5}}\left(a + \frac{1}{\sigma}\lfloor c^*a \sigma\rfloor\right)}^\infty \left(\frac{5}{2}\right)^{1/2}v \left(\sqrt{\frac{5}{2}}y\right)^\alpha e^{-\frac{y^2}{2}} dy\\
& \leq \tilde C\frac{\sigma}{\sqrt{k_{\min}}} \int_{a}^\infty v \left(y\right)^\alpha e^{-\frac{y^2}{2}} dy \qquad  \text{ by taking $c^*$ large enough}\\
& \leq \tilde C \frac{\sigma}{\sqrt{k_{\min}}} a^{\alpha-1} \log(1+a)^\alpha \exp\left(-\frac{a^2}{2}\right) \qquad \text{ by Lemma~\ref{lem:gaussian_tail_momentk_ellt}}
\end{align*}
so that
\begin{align}
    \text{I\hspace{.2mm}I}_{J_1} \leq C_{J_1}v(a)^\alpha \mathbb P\left(Z \geq a\right),\label{eq_term_II_J1}
\end{align}
for some constant $C_{J_1}>0$, where we used the fact that $\sigma \leq \sqrt{k_{\min}}$ and Theorem 2.1 from~\cite{slud1977distribution}.

Now, we turn to the term $\text{I\hspace{.2mm}I}_{J_2}$. 
Note that, when $j \in J_2$, 
we have
\begin{align*}
    \frac{a\sigma + j}{np} \geq \frac{a\sigma + 2np(1-p)}{np} \geq 2(1-p) \geq \frac{3}{2}.
\end{align*}
Moreover, since $j \geq \sigma^2$, we also have
\begin{align*}
    \sqrt{\frac{n}{n-k_{\min} - j}} e^{-j/4} &= \left(1 - \frac{k_{\min} + j}{ n}\right)^{-1/2} e^{-j/4} = \left(1-x\right)^{-1/2} e^{(-nx + k_{\min})/4}
\end{align*}
where $x = (k_{\min}+j)/n \in [ 2p, 1 - \frac{2}{n}]$. 
We now prove that the function $g: x \mapsto \left(1-x\right)^{-1/2} e^{-nx + k_{\min}}$ is bounded over the interval $[2p,1-\frac{2}{n}]$ independently of $n$. 
Indeed, the derivative of $\log g$ satisfies
\begin{align*}
    (\log g)'(x) = \frac{1}{2(1-x)} - n/4,
\end{align*}
so that $g$ is decreasing over $[2 p, 1-\frac{2}{n}]$. 
It now suffices to evaluate $g(2p)$
\begin{align*}
    g(2p) &= \left(1 - 2p \right)^{-1/2} \exp\left(-2n p + k_{\min}\right) \leq \left(1/2 \right)^{-1/2} \exp\left(-2n p + np + a\sigma + 1\right)\\
    & \leq \left(1/2 \right)^{-1/2} \exp\left(-\sigma^2/2 \right) \leq \left(1/2 \right)^{-1/2} \exp\left(-C_\alpha^2/2 \right).
\end{align*}
Therefore, there exists a constant $\bar c$ such that, for any $n$ and any $j \in J_2$, we have
\begin{align*}
    \sqrt{\frac{n}{n-k_{\min} - j}} e^{-j} \leq \bar c.
\end{align*}
By the inequality $h(x) 
 \geq \frac{x}{2}$ 
that holds for any $x \geq 3/2$, we have
\begin{align*}
    \frac{1}{\tilde C}\text{I\hspace{.2mm}I}_{J_2} &= \sum_{j\in J_2}v\left(a +\frac{j}{\sigma}\right)^\alpha \sqrt{\frac{n}{(k_{\min} + j)(n-k_{\min} - j)}} \exp\left(-n(1-p) h\left(-\frac{a\sigma + j}{n(1-p)}\right) - np \, h\left(\frac{a\sigma + j}{np}\right)\right)\\
    & \leq \sum_{j\in J_2}v\left(a+\frac{j}{\sigma}\right)^\alpha \frac{1}{ \sqrt{j}} \sqrt{\frac{n}{n-k_{\min} - j}}\exp\left(- np \, h\left(\frac{a\sigma + j}{np}\right)\right)\\
    & \leq  \sum_{j\in J_2} v\left(\frac{2j}{\sigma}\right)^\alpha \frac{1}{\sqrt{j}} \sqrt{\frac{n}{n-k_{\min} - j}} \exp\left(-  \frac{a\sigma + j}{2} \right) \\
    & \leq \tilde C \bar c\sum_{j\in J_2}v\left(\frac{j}{\sigma}\right)^\alpha \frac{1}{\sqrt{j}} \exp\left(- \frac{j}{4}\right)\\
    & \leq \frac{\tilde C}{\sigma^\alpha}\int_{\lfloor 2 \sigma^2\rfloor}^\infty x^{\alpha - 1/2} \log(1+x)^\alpha \exp\left(-x/4\right) dx\\
    & = \frac{ \tilde C}{\sigma^\alpha} \int_{\lfloor 2 \sigma^2\rfloor/4} \left(4y\right)^{\alpha-1/2}  \log(1+4y)^\alpha\exp\left(-y\right) 4 dy\\
    & = \frac{ \tilde C}{\sigma^\alpha} \int_{\lfloor 2 \sigma^2\rfloor/4} y^{\alpha-1/2}  \log(1+y)^\alpha\exp\left(-y\right) 4 dy\\
    & \leq \frac{\tilde C}{\sigma^\alpha} \left(\frac{\lfloor 2 \sigma^2\rfloor}{4}\right)^{\alpha - 1/2} \log\left(1+\frac{\lfloor2\sigma^2\rfloor}{4}\right)^\alpha \exp\left( - \frac{\lfloor 2 \sigma^2\rfloor}{4}\right) \qquad \text{ by Lemma~\ref{lem_moment_exponential}}\\
    &\leq \frac{\tilde C}{\sigma^\alpha} \left(\frac{ 2 \sigma^2}{4}\right)^{\alpha - 1/2} \log\left(1+\frac{2\sigma^2}{4}\right)^\alpha \exp\left( - \frac{2 \sigma^2}{4} + \frac{1}{2}\right)\\
    & \leq C_{J_2}\frac{v(\sigma)^\alpha}{\sigma} \exp\left(-\frac{\sigma^2}{2}\right) \qquad \text{ for some constant } C_{J_2}\\
    & \leq C_{J_2} \frac{v(a)^\alpha}{a} \exp\left(-\frac{a^2}{2}\right) \\
    & \leq C_{J_2} v(a)^\alpha \mathbb P\left(Z \geq a\right)\numberthis\label{eq_term_II_J2}
\end{align*}
where the last inequality follows from Theorem 2.1 from~\cite{slud1977distribution}.

To conclude the proof of the lemma, it remains to combine equations~\eqref{eq_split_moment_alpha}, \eqref{eq_term_III}, \eqref{eq_term_I}, \eqref{eq_split_term_II}, \eqref{eq_term_II_J1} and \eqref{eq_term_II_J2} to obtain
\begin{align*}
    \mathbb E\left[Z^\alpha | Z \geq a\right] &= \frac{\mathbb E\left[Z^\alpha \,\one{\big( Z \geq a\big)}\right]}{\mathbb P(Z \geq a)}\\
    & = \frac{\text{I}+\text{II}+\text{III}}{\mathbb P(Z\geq A)}\\
    & \leq \frac{\text{I}+\text{II}_{J_1} + \text{II}_{J_2} +\text{III}}{\mathbb P(Z\geq A)}\\
    & \leq \frac{(1+2c^*)^\alpha v(a)^\alpha \mathbb P \left(Z \geq v(a)\right)+ C_{J_1}v(a)^\alpha \mathbb P\left(Z \geq a\right) + C_{J_2}v(a)^\alpha \mathbb P\left(Z \geq a\right)+4 v(a) \mathbb P(Z \geq a)}{\mathbb P(Z \geq a)}\\
    & \leq \bar C v(a)^\alpha
\end{align*}
for some constant $\bar C$ and
where we used $v(a) \leq a^\alpha$ since $a,\alpha\geq 1$. 
This completes the proof.

\end{proof}

We recall that the Bennett function is defined as
\begin{align}
    &h_B(x) = (1+x) \log(1+x) - x,\qquad \forall x>-1.\label{eq_def_h_bennett}\\
    &h_B(-1) = 1.
\end{align}
Now, we let $Y \sim \operatorname{Bin}(n,p_0)$ for some $n \in \mathbb N$ and define 
\begin{align*}
    w(x) &= n(1-p_0) \, h_B\!\left(-\frac{x-np_0}{n(1-p_0)}\right) + np_0 \, h_B\!\left(\frac{x-np_0}{np_0}\right), \forall x \in [0, n]\numberthis \label{eq_def_W_appendix}\\
    Z &= (Y - np_0)/\sigma, \qquad \text{ where } \sigma = \sqrt{np_0(1-p_0)}\\[5pt]
    W &= w(Y)\\
    \nu_a &=  \mathbb 
    E\left[W| Z \geq a\right]\numberthis \label{eq_def_nu_a}\\
\gamma_a &= \mathbb E\left[W^2 | Z \geq a\right].\numberthis\label{eq_def_gamma}
\end{align*}
We deduce the following corollary.

\begin{corollary}\label{cor_lemma17_LiuGaoSamworth}
    Recall the definitions of the functions $\nu$ and $\gamma$ from~\eqref{eq_def_nu_a} and~\eqref{eq_def_gamma}. There exist sufficiently large constants $C, \bar C$ such that, for any $a \in [C, \sigma]$, it holds that $\nu_a \leq \bar C \sigma a \log\left(1+\frac{a}{\sigma}\right) \leq \bar C a^2$ and $\gamma_a \leq \bar C \sigma a^2 \log^2\left(1+\frac{a}{\sigma}\right) \leq \bar C a^4$.
\end{corollary}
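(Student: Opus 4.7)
The plan is to prove Corollary~\ref{cor_lemma17_LiuGaoSamworth} by reducing it to Lemma~\ref{lemma17_LiuGaoSamworth} through a deterministic pointwise comparison between $W = w(Y)$ and $v(Z)$. Specifically, the first step is to establish an inequality of the form $W \leq C'\,v(Z)$ almost surely on the event $\{Z \geq 0\}$, for some constant $C' > 0$ depending only on the global hypothesis $p_0 \leq 1/4$. Once this is in hand, Lemma~\ref{lemma17_LiuGaoSamworth} applied with $\alpha = 1$ and $\alpha = 2$ immediately yields
\[
\nu_a \;=\; \mathbb{E}[W \mid Z \geq a] \;\leq\; C'\,\mathbb{E}[v(Z) \mid Z \geq a] \;\leq\; C' \bar C\, v(a),
\qquad
\gamma_a \;\leq\; (C')^2\, \bar C\, v(a)^{2},
\]
and after substituting $v(a) = \sigma a \log(1+a/\sigma)$ and using $\log(1+x) \leq x$, one recovers the stated bounds together with their consequences $\nu_a \leq \bar C a^2$ and $\gamma_a \leq \bar C a^4$.

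The core of the plan is therefore the pointwise inequality $W \leq C'\, v(Z)$, which I would prove by splitting into two regimes. Set $u := (Y-np_0)/(np_0)$ and $v := (Y-np_0)/(n(1-p_0))$, both of which are nonnegative on $\{Z \geq 0\}$, and note the identities $np_0 u = n(1-p_0) v = Z\sigma$. In the moderate-deviation regime $0 \leq Z \leq \sigma$, we have $u \leq 1-p_0 \leq 1$ and $v \leq p_0 \leq 1$, so a Taylor expansion of the Bennett function (using $h_B''(x) = 1/(1+x)$) gives $h_B(u) \lesssim u^2$ and $h_B(-v) \lesssim v^2$; the identity $np_0 u^2 + n(1-p_0) v^2 = Z^2$ then yields $W \lesssim Z^2 \asymp v(Z)$, since $\log(1+Z/\sigma) \asymp Z/\sigma$ when $Z \leq \sigma$. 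In the large-deviation regime $Z > \sigma$, the universal inequality $h_B(u) \leq u \log(1+u)$ for $u > -1$ (equivalent to $\log(1+u) \leq u$) controls the positive Bennett term by $np_0 \cdot u \log(1+u) = Z\sigma \log(1 + Z\sigma/(np_0)) \lesssim v(Z)$, where the last step uses $Z\sigma/(np_0) \leq Z/\sigma$ (because $np_0 \geq \sigma^2$ when $p_0 \leq 1/2$). The negative Bennett term satisfies the trivial bound $n(1-p_0)h_B(-v) \leq n(1-p_0)\,h_B(-1) = n(1-p_0) = \sigma^2/p_0$, which is in turn $\lesssim v(Z)$ in this regime since $v(Z) \geq v(\sigma) \gtrsim \sigma^2$.

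The main obstacle I anticipate is the boundary case where $v$ approaches $1$, i.e.\ $Z$ close to its maximum value $n(1-p_0)/\sigma = \sigma/p_0$, where the negative Bennett term reaches its peak. This is resolved by observing that $v(Z)$ grows with $Z$ like $\sigma Z \log(Z/\sigma)$, so for $Z \gtrsim \sigma/p_0$ one has $v(Z) \gtrsim (\sigma^2/p_0)\,\log(1/p_0)$, which dominates the worst-case value $\sigma^2/p_0$ of the negative term. With this verification, all cases are covered and the pointwise bound holds with a constant $C'$ depending only on the upper bound $p_0 \leq 1/4$. The remaining step---invoking Lemma~\ref{lemma17_LiuGaoSamworth}---is then purely mechanical.
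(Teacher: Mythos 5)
Your plan is correct and is essentially the paper's intended deduction: establish the pointwise comparison $W = w(Y) \lesssim v(Z)$ on $\{Z \geq a\}$ and then apply Lemma~\ref{lemma17_LiuGaoSamworth} with $\alpha \in \{1,2\}$. The paper already records that pointwise comparison (in two-sided $\asymp$ form) as Corollary~\ref{cor_w_growth_w}, which follows from Lemma~\ref{lem_w_quadratic}, so the deduction the authors have in mind is exactly the mechanical step you describe at the end; your two-regime re-derivation of $w(Y) \lesssim v(Z)$ (moderate deviations $Z \leq \sigma$ via $h_B(u)\leq u^2/2$, $h_B(-v)\lesssim v^2$ for $v\leq p_0\leq\tfrac14$, plus large deviations with a further split near $v=1$) is sound and parallels the content of Lemma~\ref{lem_w_quadratic}, just organized differently.

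One thing worth flagging: your argument yields $\gamma_a \leq \bar C\, v(a)^2 = \bar C\, \sigma^2 a^2 \log^2(1+a/\sigma)$, whereas the corollary as printed asserts the stronger bound $\gamma_a \leq \bar C\, \sigma\, a^2 \log^2(1+a/\sigma)$, a factor of $\sigma$ smaller. The printed statement appears to be a typo: the downstream uses of this corollary (e.g., the variance bound in the proof of Lemma~\ref{lem_variance_trunc_chi2}) invoke it in the form $\gamma_a \leq \bar C\big(\sigma a \log(1+a/\sigma)\big)^2$, matching exactly what you derive, and the chained inequality $\sigma^2 a^2\log^2(1+a/\sigma) \leq a^4$ still holds by $\log(1+x)\leq x$. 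So you should not be troubled that your derivation does not reproduce the extra factor of $\sigma$---it is the printed statement, not your argument, that is off.
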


\begin{lemma}\label{leq_truncchisq_concentration} Let $m,n \in \mathbb N$ and $p_0 \in (0,1/4)$. 
Let $Y_1,\dots, Y_m \overset{iid}{\sim} \operatorname{Bin}(n,p_0)$ and for any $j \in \{1,\dots,m\}$ define 
$Z_j = (Y_j-np_0)/\sigma, \text{ where } \sigma = \sqrt{np_0(1-p_0)}.$ 
For any $j \in \{1,\dots,m\}$, define $W_j = w(Y_j)$ where $w$ is defined in~\eqref{eq_def_W_appendix} and recall the definition of $\nu_a$ from~\eqref{eq_def_nu_a} for $a > 0$. 
Then there exist universal constants $C,C^{*}, c, c'>0$ such that for any $a\in [C,c\sigma]$ and $x>0$, we have
$$
\mathbb{P}\left(\sum_{j=1}^{m}\left(W_j-\nu_{a}\right) \one_{\left\{Z_{j} \geq a\right\}} \geq C^{*}\left(\sqrt{m e^{-c'a^2} x}+x\right)\right) \leq e^{-x}.
$$
\end{lemma}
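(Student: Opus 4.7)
Set $X_j := (W_j - \nu_a)\one_{\{Z_j \geq a\}}$, so that the $X_j$ are i.i.d.\ and centered by definition of $\nu_a$. I will establish a Bernstein-type moment generating function bound of the form $\log \EE[e^{\lambda X_j}] \leq \lambda^2 V / (2(1 - b\lambda))$ for $\lambda \in (0, 1/b)$, with $V \lesssim e^{-c'a^2}$ and $b$ a universal constant, and then conclude via the standard Chernoff argument: optimizing $\lambda$ in $\bbP(\sum_j X_j \geq t) \leq \exp(-\lambda t + m \log \EE[e^{\lambda X_j}])$ yields a tail of the form $\exp(-t^2/(2(mV + bt)))$, and inverting this against $e^{-x}$ produces exactly $t \asymp \sqrt{m e^{-c'a^2}\, x} + x$.

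The variance input is a direct consequence of Corollary~\ref{cor_lemma17_LiuGaoSamworth}: since $\gamma_a \lesssim \sigma^2 a^2 \log^2(1 + a/\sigma) \leq \bar C a^4$ when $a \leq \sigma$, and Bennett's inequality applied to $Y_j - np_0$ (using $h_B(x) \geq x^2/5$ on $[0,3]$, already exploited in the proof of Lemma~\ref{lemma17_LiuGaoSamworth}) gives $\bbP(Z_j \geq a) \leq e^{-a^2/5}$ whenever $a \leq 3\sigma$, we obtain $\var(X_j) \leq \EE[W_j^2 \one_{\{Z_j \geq a\}}] \leq \bar C a^4 e^{-a^2/5}$. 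Fixing any $c' < 1/5$ and taking the threshold $C$ in the statement large enough absorbs the polynomial factor $a^4$ into the exponential, yielding $V := \var(X_j) \leq e^{-c'a^2}$.

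The main obstacle is showing that the sub-exponential parameter $b$ in the Bernstein MGF bound is of constant order, rather than of order $v(a) \asymp a^2$ as a naive application of the higher moment bound $\EE|X_j|^k \lesssim k!\, C_0^k v(a)^k \bbP(Z_j \geq a)$ from Lemma~\ref{lemma17_LiuGaoSamworth} would suggest. The key point is that conditionally on $\{Z_j \geq a\}$, the variable $W_j$ does not spread uniformly over $[0,v(a)]$ but concentrates tightly around $\nu_a \approx v(a)$ at constant scale. Concretely, in the regime $a \leq c\sigma$ the function $z \mapsto w(np_0 + \sigma z)$ is locally quadratic, so the Taylor expansion $W_j - v(a) \approx v'(a)(Z_j - a) \asymp a(Z_j - a)$ is valid, and the conditional distribution of $Z_j - a$ given $\{Z_j \geq a\}$ has sub-exponential scale $1/a$ by the Mills-ratio-type estimate $\bbP(Z_j \geq a + s \mid Z_j \geq a) \lesssim e^{-as}$ that follows from Bennett's inequality. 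Multiplying these scales gives a sub-exponential scale of $O(1)$ for $W_j - v(a)$ on the conditional event. For the truly large deviations $Z_j \gg \sigma$, where the quadratic approximation breaks down, I would split $\EE[e^{\lambda X_j}] = 1 + \bbP(Z_j \geq a)\,\EE[e^{\lambda(W_j - \nu_a)} - 1 \mid Z_j \geq a]$ into a moderate- and a large-deviation contribution, handling the latter with the binomial Chernoff bound, which provides super-exponential decay easily absorbed into the Bernstein estimate. Plugging $V = e^{-c'a^2}$ and $b = O(1)$ into the Chernoff argument completes the proof.
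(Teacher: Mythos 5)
Your overall strategy coincides with the paper's: center the MGF of $X_j := (W_j - \nu_a)\one_{\{Z_j \geq a\}}$, extract a variance factor $\asymp e^{-c'a^2}$ from Corollary~\ref{cor_lemma17_LiuGaoSamworth} and a binomial tail bound, establish an $O(1)$ sub-exponential scale, and close with a Chernoff optimization. You also correctly identify the pivotal observation — that the Bernstein parameter $b$ is $O(1)$ rather than of order $v(a)\asymp a^2$, because conditionally on $\{Z_j\geq a\}$ the statistic $W_j$ concentrates around $\nu_a$ at constant scale — and this is exactly the cancellation the paper exploits. So this is not a genuinely different route; it is the same route with the two hardest steps left as heuristics.

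The two gaps that need to be filled are the following. First, the Mills-ratio estimate $\bbP(Z_j \geq a + s \mid Z_j \geq a) \lesssim e^{-as}$ requires a matching \emph{lower} bound $\bbP(Z_j \geq a) \gtrsim a^{-1}e^{-a^2/2}$ on the binomial tail, since the ratio is what carries the argument; Bennett's inequality gives only an upper bound. The paper obtains the lower bound from Slud's anti-concentration inequality, which is precisely where the hypothesis $p_0 \leq 1/4$ is used, and you never invoke it. Second, the claim that the large-deviation regime is ``easily absorbed'' via the binomial Chernoff bound substantially understates the work. The difficulty is that $W_j = w(Y_j)$ grows at exactly the rate at which $\bbP(Y_j = k)$ decays: by the Stirling bound (Lemma~\ref{lem_Stirling}), $\bbP(Y_j = k) \asymp ((n-k)k/n)^{-1/2}e^{-w(k)/2}$, so the contribution $\EE[e^{\lambda(W_j - \nu_a)}\one_{\{X_j \geq 1/\lambda\}}]$ becomes a sum over $k$ of terms $\sqrt{n/((n-k)k)}\,e^{(\lambda - 1/2)w(k)}$, and controlling it for all $\lambda$ up to a universal constant requires the convexity and monotonicity of $w$ (Lemma~\ref{lem_W_cvx_increasing}), a change of variables $y = w(x)$, and a geometric-series argument together with separate treatment of the edge terms $k = k_0$ and $k = n$. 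This is the bulk of the paper's proof. Citing the Chernoff inequality $\bbP(Z_j \geq z) \leq e^{-w(np_0+\sigma z)/2}$ bounds a probability, not the MGF restricted to the tail event; the additional $e^{\lambda w(k)}$ weight is precisely what makes the regime delicate, and your Taylor expansion does not extend there because $z\mapsto w(np_0 + \sigma z)$ transitions from quadratic to $z\log z$ growth. Filling this in would essentially reproduce the paper's computation.
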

\begin{proof}[Proof of Lemma~\ref{leq_truncchisq_concentration}]
In this proof, we will write $p$ rather than $p_0$ to alleviate the notation.  
Let $Y \sim \operatorname{Bin}(n,p)$ and $Z = (Y-np)/\sigma$, and consider $X=\left(W-\nu_{a}\right) \one_{\{Z \geq a\}}$ where
\begin{align*}
    W = n(1-p)\,h_B\!\left(-\frac{Y-np}{n(1-p)}\right) + np \,h_B\!\left(\frac{Y-np}{np}\right).
\end{align*}
We first derive a bound for the moment-generating function $\mathbb{E}\left(e^{\lambda X}\right)$. Since $\mathbb{E}(X)=0$, we have
$$
\mathbb{E}\left(e^{\lambda X}\right)=1+\mathbb{E}\left(e^{\lambda X}-1-\lambda X\right).
$$

By the deterministic bound
$$
e^{x}-1-x \leq \begin{cases}(-x) \wedge x^{2}, & \text { if } x<0 \\ x^{2}, & \text { if } 0 \leq x \leq 1 \\ e^{x}, & \text { if } x \geq 1\end{cases}
$$
we have
$$
\mathbb{E}\left(e^{\lambda X}-1-\lambda X\right) \leq \lambda^{2} \mathbb{E}\left(X^{2} \one_{\{X<0\}}\right)+\lambda^{2} \mathbb{E}\left(X^{2} \one{\{0 \leq X \leq 1 / \lambda\}}\right)+\mathbb{E}\left(e^{\lambda X} \one_{\{X\geq1 / \lambda\}}\right)
$$
and we will bound the three terms separately. By Corollary~\ref{cor_lemma17_LiuGaoSamworth}, the first term is bounded as
\begin{align*}
    \mathbb{E}\left(\big(|X|\land X^{2}\big) \one_{\{X<0\}}\right) & \leq \mathbb{P}(X<0)\Big[\big|\nu_{a}-w(np+a\sigma)\big|\land \big(\nu_{a}-w(np+a\sigma)\big)^{2}  \Big]\\
    & \leq \mathbb{P}(X<0)\Big[\big|\nu_{a}-\frac{a^2}{8}\big|\land \big(\nu_{a}-\frac{a^2}{8}\big)^{2}  \Big] \qquad \text{ by Lemma~\ref{lem_w_quadratic}.2}\\[5pt]
    & \leq \mathbb{P}(Z \geq a)\left(\nu_{a}-\frac{a^{2}}{8}\right)^{2} \\[5pt]
    &\leq (\bar C-1/8)^2 a^4 \mathbb P\left(Y-np \geq a \sigma\right) \qquad \text{ by Corollary~\ref{cor_lemma17_LiuGaoSamworth}}\\
    & \leq (\bar C-1/8)^2 a^4\exp \left(-\frac{a^2/2}{1 + \frac{a/\sigma}{3}}\right)\qquad \text{ by Lemma~\ref{lem_rescalen_Bernstein}}\\
    & = (\bar C-1/8)^2 a^4\exp \left(-\frac{3a^2}{8}\right)\\
    & \leq \exp\left(- \frac{a^2}{4}\right)
\end{align*}
provided the universal constant $C$ such that $a \geq C$ is chosen large enough depending on $\bar C$. We now bound the second term. Recalling the definition of the function $\gamma$ from~\eqref{eq_def_gamma}, we have
$$
\begin{aligned}
    \mathbb{E}\left(X^{2} \one_{\{0<X \leq 1 / \lambda\}}\right)&\leq \mathbb{E}\left(X^{2} \one_{\{X>0\}}\right) = \mathbb E\left[\left(W-\nu_a\right)^2 \one_{\{Z >\sqrt{\nu_a}\}}\right]\\[5pt]
    & \leq 2 \mathbb E\left[W^2 \one_{\{Z >\sqrt{\nu_a}\}}\right] + 2 \nu_a^2 \mathbb P\left(Z>\sqrt{\nu_a}\right)\\[10pt]
    & = 2 \left(\gamma_{\sqrt{\nu_a}} + \nu_a^2\right) \mathbb P\left(Z>\sqrt{\nu_a}\right) \\
    & \leq 2\left(\bar C \nu_a^2 + \nu_a^2\right) \exp\left(- \frac{\nu_a/2}{1+\frac{\sqrt{\nu_a}}{3\sigma}}\right) \qquad \text{ by Lemmas~\ref{lemma17_LiuGaoSamworth} and~\ref{lem_rescalen_Bernstein} }\\
    & \leq 2 \left(\bar C^3 + \bar C^2\right)a^4 \exp\left(-\frac{\nu_a/2}{1+ \sqrt{\bar C}/3}\right) \quad \text{ by Lemma~\ref{lemma17_LiuGaoSamworth} and } a \leq \sigma\\
    & \leq \exp\left(-\frac{\nu_a}{2\bar C}\right)\\
    & \leq \exp\left(-\frac{a^2}{2\bar C}\right)
\end{aligned}
$$
provided the universal constant $C$ such that $a \geq C$ is chosen large enough depending on $\bar C$.

Finally we control the third term. 
We note that the event $X\geq\frac{1}{\lambda}$ is equivalent to $W \geq \nu_a + \frac{1}{\lambda}$. 
We let $k_0 \in \mathbb N$ be the unique integer larger than or equal to $np$ such that the event $\big\{W \geq \nu_a + \frac{1}{\lambda}$ and $Z \geq 0 \big\}$ is equivalent to $\{Y \geq k_0\}$. 
This integer exists by Lemma~\ref{lem_W_cvx_increasing} since $w$ is increasing over $[np,n]$ by Lemma~\ref{lem_W_cvx_increasing}, $w(np + a\sigma) = 0$, $\lim_{x \to n} w(x) = 2 n \log(1/p) > \nu_a + \frac{1}{\lambda}$ for $\lambda \leq \frac{1}{2}$ and $\nu_a \leq \bar C a^2 \leq \bar C c^2 \sigma^2 \leq \bar C c n < 2 n \log(1/p)$ for $c$ chosen sufficiently small depending on $\bar C$. 
Moreover, we also have $k_0 \geq np + a\sigma$ since 
$w(np+a\sigma) \leq a^2 < \nu_a + \frac{1}{\lambda}$.
Now, we have
\begin{align*}
\mathbb E\left[e^{\lambda X} \one{\big(X\geq1/\lambda\big)}\right] &= \mathbb E\left[\exp\left(\lambda(W - \nu_a)\right)\one{\big(W \geq \nu_a + \frac{1}{\lambda}\big)}\right]\\
& = e^{-\lambda \nu_a}  \sum_{k = k_0}^n e^{\lambda w(k)}\mathbb P(Y = k)\\
& \leq e^{-\lambda \nu_a}   e^{\lambda w(n)}\mathbb P(Y = n) + e^{-\lambda \nu_a}   e^{\lambda w(k_0)}\mathbb P(Y = k_0) \\
& \qquad + \frac{C_0}{\sqrt{2\pi}c_0^2} e^{-\lambda \nu_a} \sum_{k = k_0}^{n-1} \sqrt{\frac{n}{(n-k)k}} \\
& \qquad \times \exp\left((2\lambda - 1) \left\{n(1-p) h_B\left(-\frac{k-np}{n(1-p)}\right) + np \, h_B\left(\frac{k-np}{np}\right)\right\}\right) \\
&\hspace{3cm} \text{ (by Lemma~\ref{lem_Stirling})}\\
& = e^{-\lambda \nu_a}   e^{\lambda w(n)}\mathbb P(Y = n) +  e^{-\lambda \nu_a}   e^{\lambda w(k_0)}\mathbb P(Y = k_0) \\
& \qquad + \frac{C_0}{\sqrt{2\pi}c_0^2} e^{-\lambda \nu_a} \sum_{I_1 \cup I_2} \sqrt{\frac{n}{(n-k)k}} e^{\left(\lambda - \frac{1}{2}\right)w(k)}\numberthis \label{eq_thirn_term_moment_generating_function}
\end{align*}
where we have defined $I_1 = \{k_0+1, \dots, \lfloor C^* \sigma^2\rfloor \}$ and $I_2 = \{\lfloor C^* \sigma^2\rfloor + 1, \dots, n-1\}$ for some constant $C^*$ that will be adjusted later. 
We first compute the sum over the set of indices $I_1$. 
Below, we use the notation $C'$ to denote a constant whose value may change in each appearance. 
Using the change of variables 
\begin{align*}
    & y = w(x)\\
    & dx = \frac{dy}{w' \circ w^{-1}(y)} = \left(\log\left(\frac{w^{-1}(y)}{np} \frac{n(1-p)}{n-w^{-1}(y)}\right)\right)^{-1} dy
\end{align*}
we have  
\begin{align*}
    & \quad \sum_{I_1} \sqrt{\frac{n}{(n-k)k}} e^{\left(\lambda - \frac{1}{2}\right)w(k)}\\
    & \leq C'\int_{k_0}^{\lfloor C^*\sigma\rfloor +1} \sqrt{\frac{n}{(n-x) x}} e^{(\lambda - \frac{1}{2}) w(x)} dx\\
    & \leq \frac{C'}{\sigma}\int_{k_0}^{\lfloor C^*\sigma\rfloor +1}  e^{(\lambda - \frac{1}{2}) w(x)} dx\\
    & = \frac{C'}{\sigma} \int_{w(k_0)}^{w(\lfloor C^*\sigma^2\rfloor + 1)} e^{(\lambda - \frac{1}{2}) y} \left(\log\left(\frac{w^{-1}(y)}{np} \frac{n(1-p)}{n-w^{-1}(y)}\right)\right)^{-1} dy\\  
    & = \frac{C'}{\sigma} \int_{w(k_0)}^{w(\lfloor C^*\sigma^2\rfloor + 1)} e^{(\lambda - \frac{1}{2}) y} \left(\log\left(\frac{w^{-1}(y)}{np} \frac{1}{1 - \frac{w^{-1}(y) - np}{n(1-p)}}\right)\right)^{-1} dy\\
    & \leq \frac{C'}{\sigma} \int_{w(k_0)}^{w(\lfloor C^*\sigma^2\rfloor + 1)} e^{(\lambda - \frac{1}{2}) y} \left(\log\left(\frac{w^{-1}(y)}{np} \left(1+\frac{ w^{-1}(y) - np}{n(1-p)}\right)\right)\right)^{-1} dy.
\end{align*}
Moreover, we have 
$w(x) \leq \frac{(x-np)^2}{2\sigma^2}$ for any $x \in [np,n]$. 
Therefore, $w^{-1}(y) \geq 
 np + \sigma \sqrt{2y}$. Therefore, we can further control the display above as
 \begin{align*}
     & \quad \sum_{I_1} \sqrt{\frac{n}{(n-k)k}} e^{\left(\lambda - \frac{1}{2}\right)w(k)}\\
     & \leq \frac{C'}{\sigma} \int_{w(k_0)}^{w(\lfloor C^*\sigma^2\rfloor + 1)} e^{(\lambda - \frac{1}{2}) y} \left(\log\left(\left(1+ \frac{\sigma\sqrt{2y}}{np}\right) \left(1+\frac{\sigma\sqrt{2y}}{n(1-p)}\right)\right)\right)^{-1} dy\\
     & \leq \frac{C'}{\sigma} \int_{w(k_0)}^{w(\lfloor C^*\sigma^2\rfloor + 1)} e^{(\lambda - \frac{1}{2}) y} \left(\log\left(1+ \frac{\sqrt{2y}}{\sigma}\right)\right)^{-1} dy.
 \end{align*}
Moreover, we have $w(x) \leq \frac{(x-np)^2}{\sigma^2} $ for any $x >0$ by Lemma~\ref{lem_w_quadratic} so that $w(\lfloor C^*\sigma^2\rfloor + 1) \leq C' \sigma^2$, which yields that, for some constant $c'$ that depends on $C^*$, we have
\begin{align*}
    \log\left(1+ \frac{\sqrt{2y}}{\sigma}\right) \geq c'\frac{\sqrt{y}}{\sigma}, \qquad \forall y \in \left[{w(k_0)},{w(\lfloor C^*\sigma^2\rfloor + 1)} \right].
\end{align*}
We obtain
\begin{align*}
    \sum_{I_1} \sqrt{\frac{n}{(n-k)k}} e^{\left(\lambda - \frac{1}{2}\right)w(k)}
    & \leq \frac{C'}{\sigma} \int_{w(k_0)}^{w(\lfloor C^*\sigma^2\rfloor + 1)} e^{(\lambda - \frac{1}{2}) y} \frac{\sigma}{\sqrt{y}} dy\\[5pt]
    & \leq C' \int_{(\frac{1}{2} - \lambda)w(k_0)}^\infty e^{-z} \frac{1}{\sqrt{z}} \frac{1}{\sqrt{1/2 - \lambda}} dz\\[5pt]
    & \leq C' \int_{(\frac{1}{2} - \lambda)(\nu_a + \frac{1}{\lambda})}^\infty e^{-z} \frac{1}{\sqrt{z}} \frac{1}{\sqrt{1/2 - \lambda}} dz\\[5pt]
    & \leq \frac{C' \exp\left(- (\frac{1}{2} - \lambda)(\nu_a + \frac{1}{\lambda})\right)}{\sqrt{(\frac{1}{2} - \lambda)(\nu_a + \frac{1}{\lambda})}},
\end{align*}
which yields
\begin{align}
    \frac{C_0}{\sqrt{2\pi}c_0^2} e^{-\lambda \nu_a} \sum_{I_1} \sqrt{\frac{n}{(n-k)k}} e^{\left(\lambda - \frac{1}{2}\right)w(k)} \leq \frac{C_0}{\sqrt{2\pi}c_0^2} e^{-\lambda \nu_a} \frac{C' \exp\left(- (\frac{1}{2} - \lambda)(\nu_a + \frac{1}{\lambda})\right)}{\sqrt{(\frac{1}{2} - \lambda)(\nu_a + \frac{1}{\lambda})}}\label{eq_thirn_term_I1}
\end{align}

We now show that, for any $k \in I_2 \setminus \{n-1\}$, we have
\begin{align*}
    \sqrt{\frac{n}{(n-k-1)(k+1)}} e^{\left(\lambda - \frac{1}{2}\right)w(k+1)} \leq \sqrt{2} e^{-2} \sqrt{\frac{n}{(n-k)k}} e^{\left(\lambda - \frac{1}{2}\right)w(k)}.
\end{align*}
Indeed, we have
\begin{align*}
    \sqrt{\frac{n}{(n-k-1)(k+1)}}  \leq \sqrt{2}\sqrt{\frac{n}{(n-k)k}}.  
\end{align*}
Moreover, since the function $w$ is increasing and convex over $[np,n)$ by Lemma~\ref{lem_W_cvx_increasing}, we have 
\begin{align*}
    w(k+1) - w(k)\geq w'(k) &\geq  2 \log\left(\frac{k}{np} \frac{n(1-p)}{n-k}\right)\\
    & \geq 2 \log\left(\frac{k_0}{np} \frac{n(1-p)}{n-k_0}\right) \qquad \text{ since $w'$ is increasing over }[np,n]\\
    & \geq \log\left(\frac{np + \lfloor C^*\sigma^2 \rfloor}{np} \frac{n(1-p)}{n-np - \lfloor C^*\sigma^2 \rfloor}\right)\\
    & = \log\left(\left(1 + \frac{\lfloor C^*\sigma^2 \rfloor}{np} \right)\frac{1}{1 - \frac{\lfloor C^*\sigma^2 \rfloor}{n (1-p)}}\right)\\
    & \geq \log\left(\left(1 + \frac{\lfloor C^*\sigma^2 \rfloor}{np} \right)\left(1 + \frac{\lfloor C^*\sigma^2 \rfloor}{n (1-p)}\right)\right)\\
    & \geq \log \left(1+\frac{\lfloor C^*\sigma^2 \rfloor}{\sigma^2}\right)\\
    & \geq 8
\end{align*}
for $C^*$ larger than an absolute constant. 
For $\lambda \leq 1/4$, we therefore obtain
\begin{align*}
    \sqrt{\frac{n}{(n-k-1)(k+1)}} e^{\left(\lambda - \frac{1}{2}\right)w(k+1)} \leq \sqrt{2} e^{-2} \sqrt{\frac{n}{(n-k)k}} e^{\left(\lambda - \frac{1}{2}\right)w(k)}.
\end{align*}
Therefore, the sum over $I_2$ can be controlled as by a geometric series, and we obtain 
\begin{align*}
    \frac{C_0}{\sqrt{2\pi}c_0^2} e^{-\lambda \nu_a} \sum_{k \in I_2} \sqrt{\frac{n}{(n-k)k}} e^{\left(\lambda - \frac{1}{2}\right)w(k)} & \leq C' e^{-\lambda \nu_a} \sqrt{\frac{n}{(n-\lfloor C^* \sigma^2 \rfloor)\lfloor C^* \sigma^2 \rfloor}} e^{\left(\lambda - \frac{1}{2}\right)w(\lfloor C^* \sigma^2 \rfloor)}\\
    & \leq C'' e^{-\lambda \nu_a} \frac{1}{\sigma} e^{\left(\lambda - \frac{1}{2}\right)\left(\nu_a + 1/\lambda\right)}\numberthis\label{eq_thirn_term_I2}
\end{align*}
provided $C^*$ is large enough. 

To conclude, it remains to control the terms $e^{-\lambda \nu_a}   e^{\lambda w(n)}\mathbb P(Y = n) $ and $ e^{-\lambda \nu_a}   e^{\lambda w(k_0)}\mathbb P(Y = k_0)$. 
For $k \in \{k_0,n\}$, we have
\begin{align*}
    & \quad e^{-\lambda \nu_a}   e^{\lambda w(k)}\mathbb P(Y = k) \\
    & \leq e^{-\lambda \nu_a}   e^{\lambda w(k)}\frac{C_0}{c_0^2\sqrt{2\pi}} \sqrt{\frac{n}{(n-k)k}} \exp\left(-n(1-p) \,h_B\!\left(-\frac{k-np}{n(1-p)}\right) - np \, \, h_B\!\left(\frac{k-np}{np}\right)\right) \text{ by Lemma~\ref{lem_Stirling}}\\
    & \leq \frac{C_0}{c_0^2\sqrt{2\pi}} e^{-\lambda \nu_a}\sqrt{\frac{2}{k}} e^{(\lambda - \frac{1}{2}) w(k)}\\
    & \leq \frac{C_0}{c_0^2\sqrt{2\pi}} e^{-\lambda \nu_a}\sqrt{\frac{2}{\sigma^2}} e^{(\lambda - \frac{1}{2}) (\nu_a + \frac{1}{\lambda})}\numberthis \label{eq_thirn_term_remainder}
\end{align*}
by definition of $k_0$. 
Combining equations~\eqref{eq_thirn_term_moment_generating_function}, \eqref{eq_thirn_term_I1}, \eqref{eq_thirn_term_I2} and~\eqref{eq_thirn_term_remainder}, we obtain
\begin{align*}
    \mathbb E\left[e^{\lambda X} \one{\big(X\geq1/\lambda\big)}\right] &\leq C e^{-\lambda \nu_a + (\lambda - \frac{1}{2})(\nu_a + \frac{1}{\lambda})}\\
    & \leq C e^{-\frac{\nu_a}{2} - \frac{1}{2\lambda}} \\
    & \leq C \lambda^2 e^{- \frac{\nu_a}{2}}.
\end{align*}

Assembling these calculations, we have for $\lambda \in [0,M]$
\[\mathbb{E} e^{\lambda X} \leq 1+C' \lambda^{2} e^{- ca^{2}} \leq \exp \left(C' \lambda^{2} e^{-ca^{2}}\right)\]
where $C' > 0$ is a constant that depends on $\bar{C}$. Then, for any $t>0$, we have
$$
\begin{aligned}
\mathbb{P}\left(\sum_{j=1}^{m}\left(W_{j}-\nu_{a}\right) \one_{\left\{Z_{j} \geq a\right\}}>t\right) & \leq \inf _{\lambda \leq M} e^{-\lambda t}\left(\mathbb{E} e^{\lambda X}\right)^{m} \leq \inf _{\lambda \leq M} \exp \left(-\lambda t + C' m \lambda^{2} e^{-ca^{2}}\right) \\
& =\exp \left\{-\left(\frac{Mt^{2} e^{ca^{2}}}{C' m} \wedge \frac{t}{2M}\right)\right\}
\end{aligned}
$$

Setting $t=C^*\left(\sqrt{m e^{-ca^{2}} x}+x\right)$ for $C^*$ sufficiently large, we obtain the desired result.
\end{proof}

\begin{lemma}\label{lem_truncchisq_mean}
Let $n,k \in \mathbb N$ such that $k \leq n/2$, $p_0 \in (0,1/4)$ and $p_1 \in [0,1]$. 
Let $Y = A+B$ where $A \sim \operatorname{Bin}(k,p_1)$ and $B \sim \operatorname{Bin}(n-k,p_0)$ are independent and define 
$Z = (Y-np_0)/\sigma, \text{ where } \sigma = \sqrt{np_0(1-p_0)}.$ 
Let $w$ denote the function defined in~\eqref{eq_def_W_appendix} and let $W = w(Y)$.
Let $\theta = \frac{k(p_1 - p_0)}{\sigma}$ and recall the definition of $\nu_a$ from~\eqref{eq_def_nu_a} for $a > 0$. 
Then there exist universal constants $\tilde c, c,C>0$ such that, if $kp_1 \geq C$, then for any $a \in [C,c\sigma]$,
$$
\mathbb{E}\left\{\left(W-\nu_{a}\right) \one{\{Z \geq a\}}\right\} \begin{cases}=0, & \text { if } \theta = 0   \\[5pt]
\geq \tilde c \, \sigma \theta \log\left(1+ \frac{\theta}{\sigma}\right), & \text { if } \theta \geq C a.
\end{cases}
$$

\end{lemma}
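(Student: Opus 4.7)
The case $\theta = 0$ is immediate: when $p_1 = p_0$, $Y \sim \operatorname{Bin}(n,p_0)$ and by the very definition of $\nu_a$ we have $\mathbb{E}[W \mathbf{1}\{Z\geq a\}] = \nu_a\, \mathbb{P}(Z\geq a)$, so the expression vanishes.

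For the main case $\theta \geq Ca$, the plan is to decompose
\[
\mathbb{E}\!\left[(W-\nu_a)\mathbf{1}\{Z\geq a\}\right] = \mathbb{E}\!\left[W\mathbf{1}\{Z\geq a\}\right] - \nu_a\, \mathbb{P}(Z\geq a),
\]
and bound the two terms separately. For the subtracted term, Corollary~\ref{cor_lemma17_LiuGaoSamworth} gives $\nu_a \leq \bar C \sigma a \log(1+a/\sigma)$. Since the map $x \mapsto \sigma x \log(1+x/\sigma)$ is increasing on $[0,\infty)$ and $a \leq \theta/C$, this is at most $(\bar C/C)\,\sigma \theta \log(1+\theta/\sigma)$, which for $C$ large enough is at most half of the target $\tilde c\, \sigma\theta\log(1+\theta/\sigma)$.

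To lower bound the main term, I will restrict attention to the event $E = \{Z \geq \theta/2\}$. The independence of $A$ and $B$ yields $\mathbb{E}[Z] = \theta$ and the variance bound
\[
\mathrm{Var}(Z) = \frac{kp_1(1-p_1) + (n-k)p_0(1-p_0)}{\sigma^2} \leq 2 + \frac{\theta}{\sigma},
\]
using $k \leq n/2$ and $p_0 \leq 1/4$. Chebyshev's inequality then gives $\mathbb{P}(E^c) \leq 4(2+\theta/\sigma)/\theta^2 \leq 1/2$, since $\theta \geq Ca \geq C^2$ and $\sigma \geq a/c \geq C/c$ are both made large by taking $C$ large. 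On $E$, $Y \geq np_0 + \sigma\theta/2$, and since $w$ is increasing on $[np_0,n]$ by Lemma~\ref{lem_W_cvx_increasing}, monotonicity gives
\[
\mathbb{E}\!\left[W\mathbf{1}\{Z\geq a\}\right] \geq \mathbb{E}\!\left[W\mathbf{1}_E\right] \geq \tfrac{1}{2}\, w\!\left(np_0 + \tfrac{\sigma\theta}{2}\right).
\]

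The final step, and the main obstacle, is the lower bound $w(np_0 + \sigma\theta/2) \gtrsim \sigma\theta \log(1+\theta/\sigma)$. Dropping the nonnegative $h_B(-\cdot)$ term in the definition of $w$,
\[
w(np_0+\tfrac{\sigma\theta}{2}) \geq np_0\, h_B\!\left(\tfrac{\theta}{2(1-p_0)\sigma}\right),
\]
and I split the analysis by the size of $\theta/\sigma$. When $\theta/\sigma$ is bounded by a constant, the argument of $h_B$ is $O(1)$ and the estimate $h_B(x) \geq x^2/5$ for $x \in [0,3]$, already used in the proof of Lemma~\ref{leq_truncchisq_concentration}, gives a lower bound of order $\theta^2 \asymp \sigma\theta\log(1+\theta/\sigma)$. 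When $\theta/\sigma$ is large, the argument of $h_B$ exceeds $1$ and the elementary estimate $h_B(x) \gtrsim x\log(1+x)$ on $[1,\infty)$ yields a lower bound of order $\sigma\theta\log(\theta/\sigma) \asymp \sigma\theta\log(1+\theta/\sigma)$. These two Bennett-function estimates can be isolated as short auxiliary lemmas; the main care required is that the constants propagate uniformly in $n,k,p_0,p_1$ throughout the prescribed regime and are compatible with the choice of $C$ made when bounding $\nu_a \mathbb{P}(Z\geq a)$.
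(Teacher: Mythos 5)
Your proof is correct, and it takes a genuinely different route from the paper in two of the three main steps. The decomposition $\mathbb{E}[W\mathbf{1}\{Z\geq a\}] - \nu_a\mathbb{P}(Z\geq a)$ and the use of Corollary~\ref{cor_lemma17_LiuGaoSamworth} to kill the subtracted term are the same (the paper uses the cruder bound $\nu_a\leq\bar C a^2$ and then compares $a^2$ to $\sigma\theta\log(1+\theta/\sigma)$ via $\theta\geq Ca$, while you use the finer $\nu_a\lesssim\sigma a\log(1+a/\sigma)$ together with monotonicity of $x\mapsto\sigma x\log(1+x/\sigma)$; both work). The two places where you diverge: (i) for the probability of the truncation event, the paper lower-bounds $\mathbb{P}(Y\geq\mathbb{E}Y)\geq 1/16$ using the fact that the median of a binomial is within $\log 2$ of its mean plus a Stirling bound on the atom just below the mean, whereas you truncate at the more forgiving level $Z\geq\theta/2$ and use Chebyshev with the explicit variance bound $\operatorname{Var}(Z)\leq 2+\theta/\sigma$ — this requires $\theta$ and $\sigma$ to be large, which is available since $\theta\geq Ca\geq C^2$ and $\sigma\geq a/c\geq C/c$, and it is arguably the simpler route; (ii) for the lower bound on $w(np_0+\sigma\theta/2)$, you re-derive the growth estimate from the Bennett function by cases on $\theta/\sigma$, whereas the paper simply invokes Corollary~\ref{cor_w_growth_w}, which asserts exactly $w(np_0+\sigma z)\asymp\sigma z\log(1+z/\sigma)$ — so your final step is essentially a re-proof of a corollary already available. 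One small algebraic slip to fix: with $\sigma^2=np_0(1-p_0)$, the argument of $h_B$ coming from the $np_0\,h_B(\cdot)$ term at $x=np_0+\sigma\theta/2$ is $\frac{\sigma\theta/2}{np_0}=\frac{(1-p_0)\theta}{2\sigma}$, not $\frac{\theta}{2(1-p_0)\sigma}$; since $1-p_0\in[3/4,1]$ this is within a constant of $\theta/(2\sigma)$ and does not affect the argument, but the formula as written is inverted.
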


\begin{proof}[Proof of Lemma~\ref{lem_truncchisq_mean}]
    The fact that $\mathbb{E}\left\{\left(W-\nu_{a}\right) \one{\{Z \geq a\}}\right\}=0$ when $\theta=0$ follows by definition of $\nu_{a}$. 
    
    Suppose now that $\theta \geq C a$. 
    Recalling that the function $w$ is increasing over $[np_0,n]$ by Lemma~\ref{lem_W_cvx_increasing}, we have
    \begin{align*}
        &\quad \mathbb E \left[(W- \nu_a) \one{\big(Z \geq a\big)}\right] \\
        & \geq \mathbb E \left[W \one{\big(Z \geq \theta\big)}\right] - \nu_a\\
        & = E \left[w(Y) \one{\big(Y \geq kp_1 + (n-k)p_0\big)}\right] - \nu_a\\
        & \geq w(kp_1 + (n-k)p_0) \mathbb P\left(Y \geq kp_1 + (n-k)p_0\right) - \bar C a^2 \qquad \text{ by Corollary~\ref{cor_lemma17_LiuGaoSamworth}}\\
        & = c'\mathbb P\left(Y \geq kp_1 + (n-k)p_0\right)\sigma \theta \log\left(1+ \frac{\theta}{\sigma}\right) - \bar C a^2 \qquad \text{ for some $c'>0$ by Corollary~\ref{cor_w_growth_w}.}
    \end{align*}
    Moreover, the median $m_{n,p}$ of a binomial distribution with parameters $n$ and $p$ satisfies $|m_{n,p} - np|< \log(2)$ (see~\cite{HAMZA199521}, Theorem 2). 
    Therefore, we have
    \begin{align*}
        \mathbb P\left(Y \geq np_1\right) & \geq \mathbb P\left(A \geq kp_1\right)\mathbb P\left(B \geq (n-k)p_0\right) 
    \end{align*}
    We show that $P\left(A \geq kp_1\right) \geq \frac{1}{4}$ provided $kp_1 \geq C$ for some large enough constant $C>0$. 
    We have
    \begin{align*}
        P\left(A \geq kp_1\right)&\geq \mathbb P\left(A \geq m_{k,p_1}\right) - \mathbb P\left(A = \lfloor kp_1\rfloor - 1\right)\\
        & \geq \frac{1}{2} - \mathbb P\left(A = \lfloor kp_1\rfloor - 1\right) .
    \end{align*}
    Moreover, letting $j = \lfloor kp_1\rfloor - 1$, we have
    \begin{align*}
        \mathbb P\left(A = j\right) &= {k \choose j} p_1^j (1-p_1)^{k-j} \\
        & \leq C_0 \sqrt{\frac{k}{(k-j)j}} \left(\frac{k(1-p_1)}{k-j}\right)^{k-j} \left(\frac{kp_1}{j}\right)^j \qquad \text{ by Lemma~\ref{lem_Stirling}}\\
        &\leq C_0 \frac{2}{\sqrt{kp_1}}\left(1+\frac{kp_1-j}{j}\right)^j \\
        & \leq C_0 \frac{2}{\sqrt{kp_1}}\exp\left(kp_1-j\right)\\
        & \leq 2C_0 e^2 \frac{1}{\sqrt{kp_1}}\\
        & \leq \frac{1}{4}
    \end{align*}
    provided $kp_1 \geq C$, as claimed. 
    Similarly, we can show that $P\left(B \geq (n-k)p_0\right) \geq \frac{1}{4}$ provided $(n-k)p_0 \geq C$ for some large enough constant $C>0$.  
    Therefore, we have proved $\mathbb P(Y \geq np_1) \geq \frac{1}{16}$, which yields
    \begin{align*}
        \mathbb E \left[(W- \nu_a) \one{\big(Z \geq a\big)}\right] \geq \frac{c'}{16}\sigma \theta \log\left(1+ \frac{\theta}{\sigma}\right) - \bar C a^2.
    \end{align*}
    Moreover, since $\theta \geq Ca$, we have
    \begin{align*}
        \sigma \theta \log\left(1+ \frac{\theta}{\sigma}\right)&\geq\sigma Ca \log\left(1+ \frac{Ca}{\sigma}\right) \\
        & \geq \sigma Ca \log(2) \cdot \frac{Ca}{\sigma} \qquad \text{ provided $c \leq 1/C$}\\
        & \geq C^2 \log(2) a^2.
    \end{align*}
    We may now choose $C$ large enough that $\bar C a^2 \leq \frac{c'}{32}\sigma \theta \log\left(1+ \frac{\theta}{\sigma}\right)$,
    which yields
    \begin{align*}
        \mathbb E \left[(W- \nu_a) \one{\big(Z \geq a\big)}\right] \geq \frac{c'}{32}\sigma \theta \log\left(1+ \frac{\theta}{\sigma}\right) 
    \end{align*}
    and concludes the proof.

\end{proof}

\begin{lemma}\label{lem_variance_trunc_chi2}
Let $n,k \in \mathbb N$ such that $k \leq n/2$, $p_0 \in (0,1/4)$ and $p_1 \in [0,1]$. 
Let $Y = A+B$ where $A \sim \operatorname{Bin}(k,p_1)$ and $B \sim \operatorname{Bin}(n-k,p_0)$ are independent and define 
$Z = (Y-np_0)/\sigma, \text{ where } \sigma = \sqrt{np_0(1-p_0)}.$ 
Let $w$ denote the function defined in~\eqref{eq_def_W_appendix} and let $W = w(Y)$.
Let $\theta = \frac{k(p_1 - p_0)}{\sigma}$ and recall the definition of $\nu_a$ from~\eqref{eq_def_nu_a} for $a > 0$. 
Then there exist universal constants $C_1, c,C>0$ such that, if $kp_1 \geq C$, then for any $a \in [C,c\sigma]$,
$$
\operatorname{Var}\left\{\left(W-\nu_{a}\right) \one{\{Z \geq a\}}\right\} \leq \begin{cases}C_{1} \left( \sigma a \log\left(1+ \frac{a}{\sigma}\right)\right)^2 \exp\left(-\frac{3}{8}a^2\right) & \text { if } \theta=0 \\[5pt]
C_{1} (\sigma^2 + \sigma \theta )\log^2\left(1+\frac{\theta}{\sigma}\right) & \text { if }\theta \geq 2 a.\end{cases}
$$
\end{lemma}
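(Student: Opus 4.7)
The plan is to bound the variance by the second moment, $\operatorname{Var}\{(W-\nu_a)\one{\{Z\geq a\}}\} \leq \mathbb{E}[(W-\nu_a)^2\one{\{Z\geq a\}}]$, and then handle the two regimes separately, since the null centering $\nu_a$ plays a very different role in each.

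For the case $\theta = 0$, $Y$ is drawn under the null, and by definition $\nu_a = \mathbb{E}[W \mid Z\geq a]$. Therefore $\mathbb{E}[(W-\nu_a)^2\one{\{Z\geq a\}}] = \mathbb{P}(Z\geq a)\cdot\operatorname{Var}(W\mid Z\geq a) \leq \mathbb{P}(Z\geq a)\,\gamma_a$. I would apply Corollary~\ref{cor_lemma17_LiuGaoSamworth} with $\alpha=2$ to obtain $\gamma_a \leq \bar C\bigl(\sigma a \log(1+a/\sigma)\bigr)^{2}$, and then invoke Bernstein's inequality (Lemma~\ref{lem_rescalen_Bernstein}) to get $\mathbb{P}(Z\geq a) \leq \exp\bigl(-a^2/(2(1+a/(3\sigma)))\bigr) \leq \exp(-3a^2/8)$, valid once the constant $c$ in $a \leq c\sigma$ is chosen so that $1+a/(3\sigma)\leq 4/3$. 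The product of these two estimates is exactly the claimed bound.

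The case $\theta \geq 2a$ is more delicate because $\nu_a$ is no longer the natural centering. I would use the identity
\begin{align*}
\operatorname{Var}\bigl((W-\nu_a)\one{\{Z\geq a\}}\bigr) = \mathbb{E}[(W-m)^2\one{\{Z\geq a\}}] + (m-\nu_a)^2\,\mathbb{P}(Z\geq a)\mathbb{P}(Z<a),
\end{align*}
where $m := \mathbb{E}[W\one{\{Z\geq a\}}]/\mathbb{P}(Z\geq a)$ is the conditional mean under the alternative. The first summand is dominated by $\operatorname{Var}(W)$, which I would control via a delta-method calculation: since $w'(y) = \log\bigl(\tfrac{y}{np_0}\cdot\tfrac{n(1-p_0)}{n-y}\bigr) \lesssim \log(1+\theta/\sigma)$ on the bulk of the alternative distribution, and $\operatorname{Var}(Y) = kp_1(1-p_1) + (n-k)p_0(1-p_0) \lesssim \sigma^2 + \sigma\theta$, one gets $\operatorname{Var}(W) \lesssim (\sigma^2+\sigma\theta)\log^2(1+\theta/\sigma)$. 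For the second summand, Corollary~\ref{cor_lemma17_LiuGaoSamworth} together with monotonicity gives $\nu_a \leq \bar C\sigma\theta\log(1+\theta/\sigma)$, and a similar Corollary-type bound controls $m$; thus $(m-\nu_a)^2 \lesssim \sigma^2\theta^2\log^2(1+\theta/\sigma)$. Finally, since $\theta\geq 2a$ places $\{Z<a\}$ in the lower tail of $Y$ under the alternative, Bernstein's inequality yields $\mathbb{P}(Z<a) \leq \exp\bigl(-c\theta^2\sigma^2/(\sigma^2+\sigma\theta)\bigr)$, and this exponential decay overwhelms the polynomial $\sigma^2\theta^2$ prefactor, leaving a contribution bounded by $(\sigma^2+\sigma\theta)\log^2(1+\theta/\sigma)$.

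The main obstacle is the delta-method step: $w$ is not globally Lipschitz on $[0,n]$, since $w'$ blows up near the endpoints. To make the bound on $\operatorname{Var}(W)$ rigorous I would split the support of $Y$ into a bulk region around $\mathbb{E}[Y] = np_0 + \sigma\theta$, where $w'$ is uniformly controlled by $\log(1+\theta/\sigma)$, and a tail region, where a fourth-moment bound on $W$ combined with Cauchy--Schwarz and Bernstein's inequality yields a negligible contribution. This parallels the $I$, $\text{II}_{J_1}$, $\text{II}_{J_2}$, $\text{III}$ decomposition used in the proof of Lemma~\ref{lemma17_LiuGaoSamworth} and in the MGF analysis of Lemma~\ref{leq_truncchisq_concentration}; reusing that machinery should ultimately yield the stated constant $C_1$.
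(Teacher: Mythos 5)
Your proposal follows the same overall structure as the paper: bounding variance by second moment in the null case (then Corollary~\ref{cor_lemma17_LiuGaoSamworth} plus Bernstein), and a law-of-total-variance decomposition with a Bernstein tail on $\mathbb{P}(Z<a)$ in the alternative case. Your identity in terms of $m = \mathbb{E}[W\mid Z\geq a]$ is algebraically identical to the paper's conditioning decomposition. The one place you diverge is in bounding $\operatorname{Var}(W)$: you propose a bulk/tail split of the support with a fourth-moment argument in the tail, while the paper instead uses an \emph{exact} algebraic identity, writing $w(x) = nD_{KL}\bigl(\tfrac{\mathbb{E}Y}{n}\,\|\,p_0\bigr) + (x-\mathbb{E}Y)\log\bigl(\tfrac{\mathbb{E}Y\,(n(1-p_0))}{np_0\,(n-\mathbb{E}Y)}\bigr) + w_{p_1}(x)$, where $w_{p_1}$ is a residual Bennett function centered at $\mathbb{E}Y$; the constant term drops out of the variance, the linear term gives $\operatorname{Var}(Y)\log^2(1+\theta/\sigma)$ directly, and $w_{p_1}(Y)$ is handled by a single application of Lemma~\ref{lem_w_quadratic}. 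This avoids the bulk/tail splitting entirely and is the slicker route, though your approach should also close. One point you understate: bounding $m - \nu_a \lesssim \sigma\theta\log(1+\theta/\sigma)$ requires more than ``a similar Corollary-type bound,'' because $m$ is computed under the alternative law. The paper handles this by writing $Z = \theta + X$ with $X$ the standardized alternative deviation, using monotonicity of $w$ to relax the conditioning event from $\{X\geq a-\theta\}$ to the upper subset $\{X\geq a\}$, and then invoking Corollary~\ref{cor_w_growth_w} together with Lemma~\ref{lemma17_LiuGaoSamworth} applied to $X$; this step is the genuinely delicate one and would need to be fleshed out in your write-up.
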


\begin{proof}[Proof of Lemma~\ref{lem_variance_trunc_chi2}]
    
Let $a \geq C$. In this proof, we use the notation $\tilde C$ to denote a constant whose value may change in each appearance. 
When $\theta=0$, we have 
$$
\begin{aligned}
\operatorname{Var}\left\{\left(W-\nu_{a}\right) \one{\{Z \geq a\}}\right\} & \leq \mathbb{E}\left\{\left(W-\nu_{a}\right)^{2} \one{\{Z \geq a\}}\right\}\\
& = \mathbb E\left[W^2\one{\{Z \geq a\}}\right] - 2 \nu_a \mathbb E\left[W \one{\{Z \geq a\}}\right] + \nu_a^2 \mathbb P\left(Z\geq a\right)\\
& = \left(\gamma_a - \nu_a^2\right)\mathbb P\left(Z\geq a\right)\\
& \leq \bar C \left( \sigma a \log\left(1+ \frac{a}{\sigma}\right)\right)^2 \exp\left(-\frac{\frac{1}{2} a^2}{1+ \frac{a/\sigma}{3}}\right) \qquad \text{ by Corollary~\ref{cor_lemma17_LiuGaoSamworth} and Lemma~\ref{lem_rescalen_Bernstein}}\\
& \leq \bar C \left( \sigma a \log\left(1+ \frac{a}{\sigma}\right)\right)^2 \exp\left(-\frac{3}{8}a^2\right).
\end{aligned}
$$
For $\theta \neq 0$, we have
\begin{align*}
& \quad \operatorname{Var}\left\{\left(W-\nu_{a}\right) \one{\{Z \geq a\}}\right\} \\[5pt]
& =\mathbb{E}\left[\operatorname{Var}\left\{\left(W-\nu_{a}\right) \one{\{Z \geq a\}} \mid \one{\{Z \geq a\}}\right\}\right] \\[5pt]
& \quad+\operatorname{Var}\left[\mathbb{E}\left\{\left(W-\nu_{a}\right) \one{\{Z \geq a\}} \mid \one{\{Z \geq a\}}\right\}\right] \\[5pt]
& =\mathbb{P}(Z \geq a) \operatorname{Var}\left(W| Z \geq a\right)  \\[5pt]
& \quad+\mathbb{P}(Z<a) \mathbb{P}(Z \geq a)\left\{\mathbb{E}\left(W-\nu_{a}|Z \geq a\right)\right\}^{2}.\numberthis\label{eq_decomp_variance_H1}
\end{align*}

Moreover, we have
\begin{align*}
\mathbb{P}(Z \geq a) \operatorname{Var}\left(W| Z \geq a\right) & \leq \mathbb{E}\left\{\operatorname{Var}\left(W \mid \one{\{Z \geq a\}}\right)\right\} \\[9pt]
& \leq \operatorname{Var}\left(W\right) = \mathbb E [W^2] - \mathbb E^2[W].\numberthis\label{eq_first_part_variance_H1}
\end{align*}
Now, we recall that $\mathbb EY = kp_1 + (n-k)p_0$. 
We observe that, for any $x \in [0,n]$
\begin{align*}
    w(x) &= (n-x) \log\left(\frac{n-x}{n(1-p_0)}\right) + x \log\left(\frac{x}{np_0}\right)\\
    & = (n-x) \log\left(\frac{n-\mathbb E Y}{n(1-p_0)}\right) + x \log\left(\frac{\mathbb E Y}{np_0}\right) + (n-x) \log\left(\frac{n-x}{n - \mathbb E Y}\right) + x \log\left(\frac{x}{\mathbb EY}\right)\\
    & =: nD_{KL}\left(\frac{\mathbb EY}{n} \, \Big|\Big|\,p_0\right) + (x-\mathbb EY) \log\left(\frac{\mathbb EY}{np_0}\frac{n(1-p_0)}{n-\mathbb EY}\right) + w_{p_1}(x),
\end{align*}
where $D_{KL}(p||q) = (1-p) \log\left(\frac{1-p}{1-q}\right) + p \log\left(\frac{p}{q}\right)$ for any $p,q \in (0,1)$.
Therefore, we obtain
\begin{align*}
    &\mathbb E(W) = nD_{KL}\left(\frac{\mathbb EY}{n} \, \Big|\Big|\,p_0\right) + \mathbb E(w_{p_1}(Y))\\
    \implies & \mathbb E^2(W) \geq \left(nD_{KL}\left(\frac{\mathbb EY}{n} \, \Big|\Big|\,p_0\right)\right)^2 + 2nD_{KL}\left(\frac{\mathbb EY}{n} \, \Big|\Big|\,p_0\right)\mathbb E(w_{p_1}(Y)).
\end{align*}
Moreover, using the inequality $ab \leq a^2+b^2$ that holds true for any $a,b\in \R$, we obtain
\begin{align*}
    \mathbb E [W^2] &= \left(nD_{KL}\left(\frac{\mathbb EY}{n} \, \Big|\Big|\,p_0\right)\right)^2 + \mathbb V(Y) \log^2\left(\frac{\mathbb EY}{np_0}\frac{n(1-p_0)}{n-\mathbb EY}\right) + \mathbb E (w_{p_1}(Y)^2) + 2 nD_{KL}\left(\frac{\mathbb EY}{n} \, \Big|\Big|\,p_0\right)\mathbb E(w_{p_1}(Y)) \\
    & + \mathbb E \left[(Y-\mathbb EY) \log\left(\frac{\mathbb EY}{np_0}\frac{n(1-p_0)}{n-\mathbb EY}\right)  w_{p_1}(Y)\right]\\
    &\leq \left(nD_{KL}\left(\frac{\mathbb EY}{n} \, \Big|\Big|\,p_0\right)\right)^2 + 2\mathbb V(Y) \log^2\left(\frac{\mathbb EY}{np_0}\frac{n(1-p_0)}{n-\mathbb EY}\right) + 2 \mathbb E (w_{p_1}(Y)^2) + 2 nD_{KL}\left(\frac{\mathbb EY}{n} \, \Big|\Big|\,p_0\right)\mathbb E(w_{p_1}(Y)).
\end{align*}
Recalling $k \leq n/2$, we have $\frac{k(p_1-p_0)}{n} \leq 1/2$. We obtain
\begin{align*}
    \mathbb V(W) &\leq 2\mathbb V(Y) \log^2\left(\frac{\mathbb EY}{np_0}\frac{n(1-p_0)}{n-\mathbb EY}\right) + 2 \mathbb E (w_{p_1}(Y)^2) \\
    & \leq 2\mathbb V(Y) \log^2\left(\frac{\mathbb EY}{np_0}\frac{n(1-p_0)}{n-\mathbb EY}\right) + 2 \mathbb E \left(4\left(\frac{Y-\mathbb EY}{\sigma(Y)}\right)^4\right) \qquad \text{ by Lemma~\ref{lem_w_quadratic}.1}\\
    & \leq 2\left(kp_1 + (n-k) p_0\right) \log^2 \left(\left(1 + \frac{k(p_1 - p_0)}{np_0}\right)\frac{n(1-p_0)}{n(1-p_0) - k(p_1-p_0))}\right) + C\\
    & \leq C np_0\left(1+\frac{k(p_1-p_0)}{np_0}\right) \log\left(1+\frac{k(p_1-p_0)}{np_0(1-p_0)}\right) + C \qquad \text{since } \frac{k(p_1-p_0)}{n(1-p_0)} \leq 1/2\\
    & \leq C\left(\sigma^2 + \theta \sigma\right) \log^2\left(1+\frac{\theta}{\sigma}\right).
\end{align*}

Finally, writing $Z=\theta+X$ where $X = \frac{Y-kp_1 - (n-k)p_0}{\sigma}$, we have by Lemma~\ref{lemma17_LiuGaoSamworth}
\begin{align*}
& \quad \mathbb{E}\left(W-\nu_{a}| Z \geq a\right) \\
& = \mathbb E \Big[w(kp_1 + (n-k)p_0 + \sigma X) \, \big| \, \theta + X \geq a\Big]\\
& \leq \mathbb E \Big[w(kp_1 + (n-k)p_0 + \sigma X) \, \big| \, X \geq a \Big]\\
& \leq \tilde C \, \mathbb E \Big[(k(p_1-p_0) + \sigma X) \log\left(1+ \frac{k(p_1-p_0) + \sigma X}{\sigma^2}\right) \, \big| \, X \geq a \Big] \quad \text{ by Corollary~\ref{cor_w_growth_w}}\\
& \leq \tilde C \left(\sigma \theta \log\left(1+ \frac{\theta}{\sigma}\right) + \mathbb E \left[\sigma X \log\left(1+ \frac{X}{\sigma}\right) \, \big| \, X \geq a \right]\right)\\
& \leq \tilde C \left(\sigma \theta \log\left(1+ \frac{\theta}{\sigma}\right) + \sigma a \log\left(1+ \frac{a}{\sigma}\right)\right)\\
& \leq \tilde C  \sigma  \theta\log\left(1+ \frac{\theta}{\sigma}\right). \numberthis\label{eq_seconn_part_variance_H1}
\end{align*}

If $|\theta| \geq 2 a$, notice that, by Bernstein's inequality, we have
\begin{align*}
    \mathbb{P}(Z<a) &= \mathbb P \left(X < -(\theta-a)\right)\leq \exp\left(-\frac{\frac{1}{2} (a-\theta)^2}{1 + \frac{(\theta - a)/\sigma}{3}}\right)\\
    & \leq \exp\left(-\frac{\frac{1}{2} \big(1-{\bar C}^{-1}\big)^2\theta^2}{1 + \frac{(1-{\bar C}^{-1})\theta/\sigma}{3}}\right)\\
    &\leq \tilde C / \theta^{2}\\
    & \leq \frac{\tilde C}{\sigma \theta \log\left(1+\frac{\theta}{\sigma}\right)}\numberthis\label{eq_Bernstein_variance_H1}
\end{align*}
if $\theta \geq C$ for some large enough $C>0$. 
The result follows by combining~\eqref{eq_decomp_variance_H1}, \eqref{eq_first_part_variance_H1}, \eqref{eq_seconn_part_variance_H1} and~\eqref{eq_Bernstein_variance_H1}.

\end{proof}

\begin{lemma}\label{lem:gaussian_tail_momentk_ellt}
Let $Y \sim \mathcal{N}(0,1)$ and let $ \alpha, \beta \geq 1 $ and $x>0$ such that $x^2 \geq 2(\alpha-1) $. Then we have 
\begin{align*}
    \mathbb E \big[ Y^\alpha \one_{Y\geq x}\big] &\leq 2 x^{\alpha -1} e^{-x^2/2} & \text{ provided $x^2 \geq 2(\alpha-1) $}\\
    \mathbb E \big[ Y^\alpha \log(1+Y)^\beta\one_{Y\geq x}\big] &\leq 2 x^{\alpha -1} \log(1+x)^\beta e^{-x^2/2}& \text{ provided $x^2 \geq 2(\alpha+\beta-1) $.}
\end{align*}

\end{lemma}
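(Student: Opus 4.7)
The strategy for both bounds is the same: integration by parts on the density-level integrals representing the expectations, using carefully chosen primitives of the form $y^{\alpha-1}\log(1+y)^\beta e^{-y^2/2}$, followed by an absorption argument exploiting the hypothesis $x^2 \geq 2(\alpha-1)$ (resp.\ $x^2 \geq 2(\alpha+\beta-1)$) to dominate the lower-order correction terms on $[x,\infty)$.

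For the first bound, I would differentiate $F(y) = y^{\alpha-1} e^{-y^2/2}$ to obtain $-F'(y) = y^\alpha e^{-y^2/2} - (\alpha-1) y^{\alpha-2} e^{-y^2/2}$, and integrate from $x$ to $\infty$:
\[
\int_x^\infty y^\alpha e^{-y^2/2}\, dy \;=\; x^{\alpha-1} e^{-x^2/2} + (\alpha-1)\int_x^\infty y^{\alpha-2} e^{-y^2/2}\, dy.
\]
For $y \geq x$, the assumption $x^2 \geq 2(\alpha-1)$ gives $(\alpha-1)y^{\alpha-2} = \tfrac{\alpha-1}{y^2}\, y^\alpha \leq \tfrac{1}{2}\, y^\alpha$, so the remainder is absorbed into the left-hand side, yielding $\int_x^\infty y^\alpha e^{-y^2/2}\, dy \leq 2 x^{\alpha-1} e^{-x^2/2}$. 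Multiplying by the prefactor $1/\sqrt{2\pi} \leq 1$ then gives the claim.

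For the second bound, I would apply the same device to $F(y) = y^{\alpha-1}\log(1+y)^\beta e^{-y^2/2}$, whose derivative reads
\[
-F'(y) \;=\; y^\alpha \log(1+y)^\beta e^{-y^2/2} - (\alpha-1) y^{\alpha-2}\log(1+y)^\beta e^{-y^2/2} - \frac{\beta\, y^{\alpha-1} \log(1+y)^{\beta-1}}{1+y}\, e^{-y^2/2}.
\]
After integrating from $x$ to $\infty$, the two correction integrands, divided by $y^\alpha \log(1+y)^\beta e^{-y^2/2}$, have ratios $\tfrac{\alpha-1}{y^2}$ and $\tfrac{\beta}{y(1+y)\log(1+y)}$ respectively. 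Under $x^2 \geq 2(\alpha+\beta-1)$ (which ensures $x \geq \sqrt 2$ since $\alpha,\beta \geq 1$), the elementary pointwise bound $y(1+y)\log(1+y) \geq y^2\log(1+\sqrt 2)$ valid for $y \geq \sqrt 2$ gives, for all $y \geq x$,
\[
\frac{\alpha-1}{y^2} + \frac{\beta}{y(1+y)\log(1+y)} \;\leq\; \frac{(\alpha-1) + \beta/\log(1+\sqrt 2)}{2(\alpha+\beta-1)} \;=:\; c.
\]
A short case analysis (the worst case being $\alpha=1$ with $\beta$ large) yields $c \leq 1/(2\log(1+\sqrt 2)) \approx 0.567 < 1$, so the corrections are absorbed into the left-hand side, producing $\int_x^\infty y^\alpha \log(1+y)^\beta e^{-y^2/2}\, dy \leq F(x)/(1-c)$. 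Combining with the Gaussian normalization $1/\sqrt{2\pi}$ yields the constant $1/[\sqrt{2\pi}(1-c)] < 2$, matching the claim.

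The only subtle point — and the reason the hypothesis strengthens from $2(\alpha-1)$ to $2(\alpha+\beta-1)$ — is the extra correction coming from differentiating $\log(1+y)^\beta$: it decays like $1/(y(1+y)\log(1+y))$, which is slightly slower than $1/y^2$, and absorbing it requires both the stronger hypothesis on $x^2$ and the numerical slack afforded by the $1/\sqrt{2\pi}$ factor. All other steps are routine calculus.
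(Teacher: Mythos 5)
Your proof of the first bound is essentially the paper's: same integration by parts, same absorption step using $y^2 \geq 2(\alpha-1)$. For the second bound, your integration by parts is identical, but your absorption argument diverges. The paper observes the elementary inequality $\tfrac{y}{1+y} \leq \log(1+y)$ for all $y>0$, which lets it bound the second correction term $\beta\,\tfrac{y^{\alpha-1}}{1+y}\log(1+y)^{\beta-1} \leq \beta\, y^{\alpha-2}\log(1+y)^\beta$, combining both corrections into a single term with coefficient $\alpha + \beta - 1$. This yields the clean absorption factor $\tfrac{1}{2}$ and hence the constant $2$ directly on the integral $\int_x^\infty y^\alpha \log(1+y)^\beta e^{-y^2/2}\,dy$ — note that the paper in fact proves this stronger statement on the raw integral (without the Gaussian normalization), and this is the form it invokes downstream in the proof of Lemma~\ref{lemma17_LiuGaoSamworth}. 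Your argument replaces that step with the cruder pointwise bound $y(1+y)\log(1+y) \geq y^2\log(1+\sqrt 2)$, which gives a larger absorption factor $c \approx 0.567 > 1/2$, so the constant on the integral comes out as $1/(1-c) \approx 2.31 > 2$; you then salvage the claimed constant $2$ by invoking the $1/\sqrt{2\pi}$ prefactor from the Gaussian density. This is correct for the lemma as literally stated (which does involve $\mathbb E$), but it proves a slightly weaker conclusion than the paper (the integral bound fails at $2.31$), and it relies on a numerical coincidence ($\sqrt{2\pi}$ being large enough) rather than the uniform structural inequality $\tfrac{y}{1+y}\leq \log(1+y)$. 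One minor remark: your worst-case analysis is fine, but at $\alpha = 1$ the value $c = 1/(2\log(1+\sqrt 2))$ is independent of $\beta$, so the phrase "with $\beta$ large" is vacuous — the maximum of $c$ is attained at $\alpha = 1$ for every $\beta \geq 1$.
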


\begin{proof}[Proof of Lemma~\ref{lem:gaussian_tail_momentk_ellt}]
By integration by parts, we have 
\begin{align*}
    \int_x^\infty y^\alpha e^{-y^2/2} dy &= x^{\alpha-1} e^{-x^2/2} + (\alpha\!-\!1)\! \int_x^{\infty} \! y^{\alpha-2} e^{-y^2/2} dy\\
    &\leq x^{\alpha-1} e^{-x^2/2} + \frac{1}{2} \int_x^\infty y^\alpha e^{-y^2/2} dy ~~ \text{using that $y^2 \geq 2(\alpha-1)$ over $[x,+\infty)$},\\
    \text{so that } \quad 
        \mathbb E \big[ Y^\alpha \one_{Y\geq x}\big] &= \int_x^\infty y^\alpha e^{-y^2/2} dy \leq 2x^{\alpha-1} e^{-x^2/2}.
\end{align*}
Similarly,
\begin{align*}
    \int_x^\infty y^\alpha\log(1+y)^\beta e^{-y^2/2} dy &= x^{\alpha-1}\log(1+y)^\beta e^{-x^2/2} \\
    & \qquad + \! \int_x^{\infty} \! \left((\alpha\!-\!1)y^{\alpha-2} \log(1+y)^\beta + \beta \frac{y^{\alpha-1} }{1+y}\log(1+y)^{\beta-1} \right) e^{-y^2/2} dy\\
    & \leq x^{\alpha-1}\log(1+y)^\beta e^{-x^2/2} \\
    & \qquad + \! \int_x^{\infty} \! (\alpha + \beta - 1)y^{\alpha-2} \log(1+y)^\beta e^{-y^2/2} dy\\
    &\leq x^{\alpha-1}\log(1+y)^\beta e^{-x^2/2} \\
    & \qquad + \frac{1}{2} \int_x^\infty y^\alpha \log(1+y)^\beta e^{-y^2/2} dy ~~ \text{using that $y^2 \geq 2(\alpha+\beta-1)$ over $[x,+\infty)$},\\
    \text{so that } \quad 
        \mathbb E \big[ v(Y)^\alpha \one_{Y\geq x}\big] &= \int_x^\infty y^\alpha \log(1+y)^\beta e^{-y^2/2} dy \leq 2x^{\alpha-1}\log(1+y)^\beta e^{-x^2/2}.
\end{align*}
\end{proof}

\begin{lemma}\label{lem_moment_exponential}
    For any $\alpha>0$ and $x \geq 2\alpha$, it holds that
    \begin{align*}
        \int_{x}^\infty y^\alpha e^{-y} dy \leq 2 x^\alpha e^{-x}.
    \end{align*}
\end{lemma}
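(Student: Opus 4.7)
The plan is to mirror the integration-by-parts argument used in Lemma \ref{lem:gaussian_tail_momentk_ellt}, which already handles an analogous tail integral against a Gaussian weight. The key observation is that for $y \geq x \geq 2\alpha$ we have $\alpha y^{\alpha-1} \leq \alpha \cdot y^\alpha / x \leq \tfrac{1}{2} y^\alpha$, so one derivative of $y^\alpha$ can be absorbed as a constant fraction of the original integrand, enabling a self-bounding argument.

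Concretely, I would integrate by parts with $u = y^\alpha$ and $dv = e^{-y}\,dy$ to write
\[
\int_{x}^\infty y^\alpha e^{-y}\,dy = x^\alpha e^{-x} + \alpha \int_{x}^\infty y^{\alpha-1} e^{-y}\,dy.
\]
Then using $\alpha y^{\alpha-1} \leq \tfrac{1}{2} y^\alpha$ for $y \geq x \geq 2\alpha$, the remaining integral is bounded by $\tfrac{1}{2} \int_x^\infty y^\alpha e^{-y}\,dy$. Rearranging gives $\int_x^\infty y^\alpha e^{-y}\,dy \leq 2 x^\alpha e^{-x}$, as desired.

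There is no real obstacle here; the only subtlety is ensuring the tail integral is finite so that the self-bounding rearrangement is valid. This is immediate from the exponential decay of the integrand, which is trivial to verify (e.g.\ by noting $y^\alpha e^{-y} \leq y^\alpha e^{-y/2} \cdot e^{-y/2}$ and that the first factor is bounded on $[x,\infty)$ up to a polynomial growth absorbed by $e^{-y/2}$). The whole proof should fit in three or four lines.
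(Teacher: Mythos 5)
Your proof is correct and matches the paper's argument: both integrate by parts once, use $y \geq 2\alpha$ to bound the remainder by half the original integral, and rearrange. No meaningful difference in approach.
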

\begin{proof}[Proof of Lemma~\ref{lem_moment_exponential}]
    By integration by parts, we have
    \begin{align*}
        \int_{x}^\infty y^\alpha e^{-y} dy \leq 2 x^\alpha e^{-x} &= x^\alpha e^{-x} + \int_{x}^\infty \alpha x^{\alpha-1} e^{-x} dx\\
        & \leq x^\alpha e^{-x} + \frac{1}{2}\int_{x}^\infty  x^{\alpha} e^{-x} dx \qquad \text{ since } y \geq 2\alpha \text{ over } [x,\infty),
    \end{align*}
    from which it follows that
    \begin{align*}
        \int_{x}^\infty y^\alpha e^{-y} dy \leq 2 x^\alpha e^{-x},
    \end{align*}
    as claimed.
\end{proof}

\begin{lemma}\label{lem_rescalen_Bernstein}
    Let $n\in \mathbb N$ and $p \in (0,1/2)$. 
    Let $X \sim \operatorname{Bin}(n,p)$ and $\sigma = \sqrt{np(1-p)}$.
    For any $t > 0$, we have
    \begin{align*}
        \mathbb P\left(\frac{X - np}{\sigma} \geq t\right) \leq \exp\left(- \frac{t^2/2}{1+\frac{t}{3\sigma}}\right)\\
        \mathbb P\left(\frac{X - np}{\sigma} \leq -t\right) \leq \exp\left(- \frac{t^2/2}{1+\frac{t}{3\sigma}}\right)
    \end{align*}
\end{lemma}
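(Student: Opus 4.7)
\textbf{Proof plan for Lemma~\ref{lem_rescalen_Bernstein}.} The statement is a standard Bernstein-type concentration inequality after a rescaling, so the plan is to reduce it directly to the classical Bernstein bound for bounded independent random variables. Writing $X = \sum_{i=1}^n Y_i$ with $Y_i \iid \operatorname{Bernoulli}(p)$, the centered variables $\xi_i = Y_i - p$ are independent, mean zero, satisfy the almost-sure bound $|\xi_i| \leq 1$ (since $p \in (0,1/2)$ implies $\max(p, 1-p) \leq 1$), and have total variance $\sum_{i=1}^n \mathbb{E}[\xi_i^2] = n p (1-p) = \sigma^2$.

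The first step is to apply the classical Bernstein inequality to $\sum_{i=1}^n \xi_i$: for every $s > 0$,
\begin{align*}
    \mathbb{P}\!\left(\sum_{i=1}^n \xi_i \geq s\right) \leq \exp\!\left(-\frac{s^2/2}{\sigma^2 + s/3}\right).
\end{align*}
The second step is simply the change of variables $s = t\sigma$, which immediately gives
\begin{align*}
    \mathbb{P}\!\left(\frac{X - np}{\sigma} \geq t\right) \leq \exp\!\left(-\frac{t^2\sigma^2/2}{\sigma^2 + t\sigma/3}\right) = \exp\!\left(-\frac{t^2/2}{1 + t/(3\sigma)}\right),
\end{align*}
which is the claimed upper tail bound.

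For the lower tail, the third step is to apply Bernstein to $-\xi_i = p - Y_i$, which also satisfies $|-\xi_i| \leq 1$ almost surely and has the same variance $\sigma^2$. The same calculation with the same substitution $s = t\sigma$ yields the symmetric inequality. There is no real obstacle here; the only thing to watch is that the version of Bernstein invoked is the one with the $s/3$ correction in the denominator (Bennett's refinement), which is standard for bounded variables with variance proxy $\sigma^2$ and uniform bound $1$. The whole argument is essentially a direct citation of Bernstein's inequality; no new ideas beyond bookkeeping are required.
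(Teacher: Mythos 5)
Your proof is correct and takes the same approach as the paper, which simply cites Bernstein's inequality for binomial distributions; you merely spell out the reduction to the classical bounded-variable form and the rescaling $s = t\sigma$ explicitly.
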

\begin{proof}
    The result follows by a direct application of the Bernstein inequality for binomial distributions.
\end{proof}

\begin{lemma}\label{lem_Stirling}
    There exist two absolute constants $C_0, c_0$ such that, for any $n,k \in \mathbb N$ such that $k < n$ and $p \in (0,1)$, we have
    \begin{align*}
        {n \choose k} p^k (1-p)^{n-k} &\leq \frac{C_0}{c_0^2\sqrt{2\pi}} \sqrt{\frac{n}{(n-k)k}} \left(\frac{n(1-p)}{n-k}\right)^{n-k} \left(\frac{np}{k}\right)^k\\
        & = \frac{C_0}{c_0^2\sqrt{2\pi}} \sqrt{\frac{n}{(n-k)k}} \exp\left(-n(1-p) h_B\left(-\frac{k-np}{n(1-p)}\right) - np \, h_B\left(\frac{k-np}{np}\right)\right). 
    \end{align*}
\end{lemma}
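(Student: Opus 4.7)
The plan is to apply the standard two-sided Stirling approximation to each factorial, combine the three estimates, and then verify the Bennett-function identity via direct algebra.

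First, I would invoke the two-sided Stirling bound: there exist absolute constants $c_0, C_0 > 0$ such that for every $m \in \mathbb{N}$,
\[
c_0 \sqrt{2\pi m}\left(\frac{m}{e}\right)^m \leq m! \leq C_0 \sqrt{2\pi m}\left(\frac{m}{e}\right)^m.
\]
Writing $\binom{n}{k} = n!/(k!(n-k)!)$ and bounding the numerator above by the Stirling upper estimate and each factor in the denominator below by the Stirling lower estimate, the factors of $e^n/e^k e^{n-k} = 1$ cancel, leaving
\[
\binom{n}{k} \leq \frac{C_0}{c_0^{\,2}\sqrt{2\pi}} \sqrt{\frac{n}{k(n-k)}} \cdot \frac{n^n}{k^k (n-k)^{n-k}}.
\]
Multiplying through by $p^k(1-p)^{n-k}$ and regrouping $n^n p^k(1-p)^{n-k} = (np)^k (n(1-p))^{n-k}$ immediately yields the first inequality stated in the lemma.

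Next, I would verify the claimed equality
\[
\left(\frac{n(1-p)}{n-k}\right)^{n-k}\left(\frac{np}{k}\right)^k = \exp\!\left(-n(1-p) h_B\!\left(-\tfrac{k-np}{n(1-p)}\right) - np\, h_B\!\left(\tfrac{k-np}{np}\right)\right).
\]
Taking logarithms on both sides, it suffices to check
\[
(n-k)\log\!\tfrac{n(1-p)}{n-k} + k\log\!\tfrac{np}{k} = -n(1-p) h_B\!\left(-\tfrac{k-np}{n(1-p)}\right) - np\, h_B\!\left(\tfrac{k-np}{np}\right).
\]
Set $\delta = k - np$, so that $n-k = n(1-p) - \delta$. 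Expanding the right-hand side using $h_B(x) = (1+x)\log(1+x) - x$:
\[
-n(1-p)\, h_B\!\left(-\tfrac{\delta}{n(1-p)}\right) = -(n-k)\log\!\tfrac{n-k}{n(1-p)} - \delta = (n-k)\log\!\tfrac{n(1-p)}{n-k} - \delta,
\]
\[
-np\, h_B\!\left(\tfrac{\delta}{np}\right) = -k\log\!\tfrac{k}{np} + \delta = k\log\!\tfrac{np}{k} + \delta.
\]
Adding these two lines, the $\pm\delta$ terms cancel and the identity follows.

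There is no real obstacle here: the first step is just the standard Stirling sandwich, and the second is a bookkeeping exercise in rewriting $h_B$. The only place to be a bit careful is the case $k = 0$ or $p \in \{0,1\}$, where the factor $(np/k)^k$ or $(n(1-p)/(n-k))^{n-k}$ must be interpreted using the convention $0^0 = 1$ and $0\log 0 = 0$; under this standard convention both sides remain finite and the equality still holds.
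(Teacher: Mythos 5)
Your proof is correct and follows essentially the same route as the paper: the paper states the two-sided Stirling sandwich $c_0\sqrt{2\pi m}(m/e)^m \leq m! \leq C_0\sqrt{2\pi m}(m/e)^m$ and then simply writes that the result follows, whereas you spell out the cancellation and the Bennett-function bookkeeping, both of which check out. Your cautionary remark about $k=0$ or $p\in\{0,1\}$ is harmless but unnecessary, since the hypotheses $k<n$ with $k,n\in\mathbb{N}$ and $p\in(0,1)$ already exclude those cases.
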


\begin{proof}
    We recall that, by the Stirling formula, we have
\begin{align*}
m! \sim \sqrt{2\pi m} \left(\frac{m}{e}\right)^m, \qquad \text{ as }m \to \infty.
\end{align*}
Since the function $m\in \mathbb N \mapsto \frac{m!}{\sqrt{2\pi m} \left(\frac{m}{e}\right)^m}$ is positive and converges to $1$, there exist two absolute constants $c_0, C_0>0$ such that
\begin{align}
c_0\sqrt{2\pi m} \left(\frac{m}{e}\right)^m \leq m! \leq C_0 \sqrt{2\pi m} \left(\frac{m}{e}\right)^m, \qquad \forall m \in \mathbb N. \label{eq_Stirling}
\end{align}
The result follows.
\end{proof}

\begin{lemma}\label{lem_W_cvx_increasing}
    The function $w$ defined in~\eqref{eq_def_W_appendix} is convex and increasing over $[np_0, n)$.
\end{lemma}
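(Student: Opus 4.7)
The plan is to reduce the claim to an elementary calculus computation by first simplifying the expression for $w$. Using the identities $n(1-p_0)\bigl(1 - \tfrac{x-np_0}{n(1-p_0)}\bigr) = n-x$ and $np_0\bigl(1 + \tfrac{x-np_0}{np_0}\bigr) = x$, and observing that the linear terms $-x + np_0$ coming from the two copies of $h_B$ cancel, I would rewrite
\begin{align*}
    w(x) = (n-x)\log\!\left(\frac{n-x}{n(1-p_0)}\right) + x\log\!\left(\frac{x}{np_0}\right),
\end{align*}
which is the same expression already used in the proof of Lemma~\ref{lemma17_LiuGaoSamworth}. This is well-defined and smooth on $(0,n)$.

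Then I would differentiate term by term. The first derivative is
\begin{align*}
    w'(x) = \log\!\left(\frac{x}{np_0}\right) - \log\!\left(\frac{n-x}{n(1-p_0)}\right) = \log\!\left(\frac{x(1-p_0)}{(n-x)p_0}\right),
\end{align*}
and the second derivative is $w''(x) = \tfrac{1}{x} + \tfrac{1}{n-x}$, which is strictly positive for every $x \in (0,n)$. Hence $w$ is convex on $(0,n)$, and in particular on $[np_0,n)$.

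Finally, monotonicity follows immediately: $w''>0$ on $(0,n)$ so $w'$ is strictly increasing there, and $w'(np_0) = \log(1) = 0$, so $w'(x)\geq 0$ for every $x \in [np_0,n)$. Thus $w$ is increasing on $[np_0,n)$, completing the proof. There is no real obstacle here; the only mildly delicate point is the algebraic simplification of $w$ in terms of $n-x$ and $x$, after which convexity and monotonicity fall out of the explicit formulas for $w'$ and $w''$.
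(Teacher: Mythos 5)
Your proof is correct and follows essentially the same route as the paper's: compute $w'$ and $w''$ directly, observe $w'' > 0$ on $(0,n)$ for convexity, and observe $w'(np_0)=0$ together with $w'' > 0$ for monotonicity on $[np_0,n)$. The only cosmetic difference is that you first simplify $w(x)$ to $(n-x)\log\bigl(\tfrac{n-x}{n(1-p_0)}\bigr)+x\log\bigl(\tfrac{x}{np_0}\bigr)$ before differentiating, which is a slightly cleaner way to get the same $w'$ and $w''$ (and incidentally your $w'$ matches the appendix definition of $w$, whereas the paper's stated $w'$ carries a spurious factor of $2$ left over from the main-text definition of $W_j$; this has no effect on signs or on the conclusion).
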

\begin{proof}
    The derivative of $w$ is given by
    \begin{align*}
        w'(x) = 2 \log\left(\frac{x}{np_0} \frac{n(1-p_0)}{n-x}\right), \quad \forall x \in [np_0,n]
    \end{align*}
    which is positive if $x > np_0$. 
    The second derivative of $w$ is given by
    \begin{align*}
        \frac{2}{n-x} + \frac{2}{x}
    \end{align*}
    which is positive for any $x \in [np_0,n)$. 
\end{proof}

\subsection{Proofs of propositions in the main text}

We begin by restating and proving Proposition 1. 
\begin{proposition}
    Let $c > 0$. There exists a constant $C > 0$ such that if
    \[\frac{C}{k_1k_2} \log\left(e{n_1 \choose k_1}{n_2 \choose k_2}\right) \leq p_0 \leq \frac{1}{4},\]
    then it holds
    \(\bbP_0\left(\text{$\bG$ has an empty $k_1 \times k_2$ bipartite subgraph}\right) \leq c.\)
    
    Here, we say that a subgraph $\bG_1$ of $\bG$ is empty if all the vertices in $\bG_1$ has degree equal to 0.
\end{proposition}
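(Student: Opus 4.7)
The plan is to carry out a union bound over all choices of bipartite subgraphs of size $k_1 \times k_2$, using the independence of the edges under $\bbP_0$. First I would fix a pair of subsets $K_1 \in \mathcal{P}_{k_1}(n_1)$ and $K_2 \in \mathcal{P}_{k_2}(n_2)$, and observe that the event that the bipartite subgraph induced by $K_1 \times K_2$ is empty is precisely the event $\bigcap_{(i,j) \in K_1 \times K_2} \{A_{ij} = 0\}$. Under $\bbP_0$, the entries $A_{ij}$ are i.i.d.\ Bernoulli$(p_0)$, so this event has probability $(1-p_0)^{k_1 k_2}$.

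Second, I would apply a union bound over $\binom{n_1}{k_1}\binom{n_2}{k_2}$ choices of $(K_1, K_2)$ and use the elementary inequality $1-p_0 \leq e^{-p_0}$ to obtain
\[
\bbP_0\bigl(\bG \text{ has an empty } k_1 \times k_2 \text{ bipartite subgraph}\bigr) \leq \binom{n_1}{k_1}\binom{n_2}{k_2}\, e^{-p_0 k_1 k_2}.
\]
Third, I would substitute the lower bound $p_0 k_1 k_2 \geq C \log\!\left(e\binom{n_1}{k_1}\binom{n_2}{k_2}\right)$ into the exponent, giving
\[
\bbP_0\bigl(\bG \text{ has an empty } k_1 \times k_2 \text{ bipartite subgraph}\bigr) \leq \binom{n_1}{k_1}\binom{n_2}{k_2}\left(e\binom{n_1}{k_1}\binom{n_2}{k_2}\right)^{-C} = e^{-1}\left(e\binom{n_1}{k_1}\binom{n_2}{k_2}\right)^{1-C}.
\]
Since $e\binom{n_1}{k_1}\binom{n_2}{k_2} \geq e$, choosing $C$ sufficiently large (depending on $c$) makes the right-hand side at most $c$, completing the proof.

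The argument is essentially a one-line union bound, so there is no real obstacle; the only minor point to verify is that the notion of ``empty subgraph'' (all vertices have degree $0$ in the induced subgraph) coincides exactly with the event that all edges of $K_1 \times K_2$ are absent, which is immediate in the bipartite setting. The role of the assumption $p_0 \leq 1/4$ plays no part here and is only needed elsewhere for Slud's inequality.
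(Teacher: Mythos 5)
Your proof is correct and takes essentially the same approach as the paper: a union bound over the $\binom{n_1}{k_1}\binom{n_2}{k_2}$ candidate bipartite subgraphs, the bound $(1-p_0)^{k_1k_2} \le e^{-p_0 k_1 k_2}$, substitution of the assumed lower bound on $p_0$, and the observation that the resulting quantity is a negative power of $e\binom{n_1}{k_1}\binom{n_2}{k_2} \ge e$. Your closing remark that the upper bound $p_0 \le 1/4$ plays no role in this proposition is also accurate.
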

\begin{proof}
\begin{align*}
    &\bbP_0\left(\text{$\bG$ has an empty $k_1 \times k_2$ bipartite subgraph}\right) \\ 
    &= \mathbb P_0\left(\exists (K_1,K_2) \in \mathcal{P}_{k_1}(n_1) \times \mathcal{P}_{k_2}(n_2), \forall (i,j) \in K_1 \times K_2: A_{ij} = 0\right) \\
    &\leq {n_1 \choose k_1} {n_2 \choose k_2}\bbP\left(\text{Bin}(k_1k_2, p_0) = 0\right) \\
    &=  {n_1 \choose k_1} {n_2 \choose k_2}(1 - p_0)^{k_1k_2} \\
    &\leq {n_1 \choose k_1} {n_2 \choose k_2} e^{-k_1k_2 p_0} \\
    &\leq {n_1 \choose k_1} {n_2 \choose k_2}\exp \left(-C\log\left(e{n_1 \choose k_1}{n_2 \choose k_2}\right)\right) \\
    &\leq \exp \left(-(C - 1)\log\left(e{n_1 \choose k_1}{n_2 \choose k_2}\right)\right) \quad \text{(for $C > 1$)} \\
    &\leq \exp\left(-(C - 1)\right) \\
    &\leq c,
\end{align*}
where the final inequality holds for $C$ taken sufficiently large.
\end{proof}
Now we state and prove Proposition 2. 
\begin{proposition}
    Let $c > 0$. There exists a constant $C > 0$ such that if
    \[\frac{C}{n_2k_1} \log\left(e{n_1 \choose k_1}\right) \leq p_0 \leq \frac{1}{4},\]
    then it holds
    \(\bbP_0\Big(\exists I \in \mathcal{P}_{k_1}(n_1): \sum_{i \in I} \sum_{j=1}^{n_2} A_{ij} = 0\Big) \leq c.\)
\end{proposition}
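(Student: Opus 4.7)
The plan is to mirror the proof of Proposition 1, replacing the role of the full bipartite block $K_1 \times K_2$ of size $k_1 k_2$ with the ``half-slab'' $I \times [n_2]$ of size $n_2 k_1$, and then to combine a straightforward union bound with the elementary estimate $(1-p_0)^m \leq e^{-m p_0}$.

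First I would fix $I \in \mathcal{P}_{k_1}(n_1)$ and observe that, under $\bbP_0$, the sum $S_I := \sum_{i \in I}\sum_{j=1}^{n_2} A_{ij}$ is distributed as $\mathrm{Bin}(n_2 k_1, p_0)$, since the $A_{ij}$ for $(i,j) \in I \times [n_2]$ are mutually independent Bernoulli$(p_0)$. Hence
\begin{equation*}
    \bbP_0(S_I = 0) = (1 - p_0)^{n_2 k_1} \leq \exp(-n_2 k_1 p_0).
\end{equation*}
Using the assumed lower bound $p_0 \geq \frac{C}{n_2 k_1} \log\bigl(e\binom{n_1}{k_1}\bigr)$, this yields
\begin{equation*}
    \bbP_0(S_I = 0) \leq \exp\!\left(-C \log\!\left(e\binom{n_1}{k_1}\right)\right) = \left(e\binom{n_1}{k_1}\right)^{-C}.
\end{equation*}

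Next I would apply a union bound over the $\binom{n_1}{k_1}$ choices of $I$:
\begin{equation*}
    \bbP_0\!\left(\exists I \in \mathcal{P}_{k_1}(n_1): S_I = 0\right) \leq \binom{n_1}{k_1} \left(e\binom{n_1}{k_1}\right)^{-C} \leq e^{-C}\binom{n_1}{k_1}^{1-C}.
\end{equation*}
For $C \geq 1$ this is at most $e^{-C}$, which can be made smaller than any prescribed $c > 0$ by choosing $C$ sufficiently large (depending only on $c$). This completes the proof.

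There is no substantive obstacle here: the argument is a textbook first-moment / union-bound calculation, and the only subtlety is to note that the slab $I \times [n_2]$ consists of mutually independent edges, so the count $S_I$ is exactly binomial and the bound $(1-p_0)^{n_2 k_1} \leq e^{-n_2 k_1 p_0}$ applies directly. The value of $C$ need only be taken large enough that $e^{-C} \leq c$.
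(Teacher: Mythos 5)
Your proposal is correct and follows essentially the same argument as the paper: a union bound over $\binom{n_1}{k_1}$ choices of $I$, the identification $S_I \sim \mathrm{Bin}(n_2 k_1, p_0)$, the bound $(1-p_0)^{n_2 k_1} \leq e^{-n_2 k_1 p_0}$, and substitution of the assumed lower bound on $p_0$. The only cosmetic difference is how the final constant manipulation is written; the substance is identical.
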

\begin{proof}
    \begin{align*}
    &\bbP_0\Big(\exists I \in \mathcal{P}_{k_1}(n_1): \sum_{i \in I} \sum_{j=1}^{n_2} A_{ij} = 0\Big) \\ 
    &\leq {n_1 \choose k_1} \bbP\left(\text{Bin}(k_1n_2, p_0) = 0\right) \\
    &=  {n_1 \choose k_1}(1 - p_0)^{k_1n_2} \\
    &\leq {n_1 \choose k_1} e^{-k_1n_2 p_0} \\
    &\leq {n_1 \choose k_1} \exp \left(-C\log\left(e{n_1 \choose k_1}\right)\right) \\
    &\leq \exp \left(-(C - 1)\log\left(e{n_1 \choose k_1}\right)\right) \quad \text{(for $C > 1$)} \\
    &\leq \exp\left(-(C - 1)\right) \\
    &\leq c,
\end{align*}
where the final inequality holds for $C$ taken sufficiently large.
\end{proof}
Finally, we state and prove Proposition 3.
\begin{proposition}
Suppose that $k_1^2 \geq \bar{c}n_1 k_2$ for a constant $\bar{c} > 0$ and $k_j \leq c_j n_j$ for $j \in \{1, 2\}$ where $c_1,c_2 > 0$ are sufficiently small constants. Additionally, suppose that there exists a constant $\alpha > 0$ such that $n_2 \geq k_2^{2 + \alpha}$ and that $\frac{n_1}{k_1} \geq e \log(\frac{n_2}{k_2})$. Then it holds
\begin{equation}
    \frac{(\delta^*)
^2}{p_0(1-p_0)} \asymp \frac{1}{k_2}\log\left(1 + \frac{n_1k_2}{k_1^2}\log(n_2)\right).
\end{equation}
In particular, this reveals a phase transition at $\frac{n_1k_2}{k_1^2}\log(n_2)\asymp 1$.
\end{proposition}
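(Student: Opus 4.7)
The plan is to deduce Proposition 3 from the characterization $(\delta^*)^2 \asymp p_0(1-p_0)\, R$ given by Theorems \ref{thm_lb} and \ref{thm_ub}, where $R = (\psi_{12}+\psi_{21}) \wedge \phi_{12} \wedge \phi_{21}$. Consequently, the entire task is to show that, under the stated hypotheses,
\[
R \asymp \frac{1}{k_2}\log\!\left(1+ \frac{n_1 k_2}{k_1^2}\log n_2\right).
\]

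First I would invoke Lemma \ref{lem_simplify_dense_allrates}. Its hypotheses are: $k_1^2 \geq \bar c n_1 k_2$ (given), $k_j \leq c' n_j$ for small $c'$ (implied by $k_j \leq c_j n_j$ with $c_j$ small), and $n_1/k_1 \geq e\log(n_2/k_2)$ (given). The conclusion is $R \asymp \psi_{21} \wedge \phi_{12} \wedge \phi_{21}$, so the remaining work is to evaluate $\psi_{21}$ explicitly and to check that it is dominated by $\phi_{12}$ and $\phi_{21}$ (up to absolute constants).

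Next, I would compute $\psi_{21}$ explicitly. Since $k_2 \leq c_2 n_2$, the standard estimate $\log\binom{n_2}{k_2} \asymp k_2 \log(n_2/k_2)$ applies, and the hypothesis $n_2 \geq k_2^{2+\alpha}$ gives $\log(n_2/k_2) \geq \tfrac{1+\alpha}{2+\alpha}\log n_2$, so $\log(n_2/k_2) \asymp \log n_2$. Plugging into the definition, one obtains
\[
\psi_{21} \asymp \frac{1}{k_2}\log\!\left(1 + \frac{n_1 k_2}{k_1^2}\log n_2\right),
\]
matching the claimed rate.

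Finally I would verify the comparisons $\psi_{21} \lesssim \phi_{12}$ and $\psi_{21} \lesssim \phi_{21}$. Using $\log(1+ky) \leq ky$, I bound $\psi_{21} \leq \tfrac{n_1}{k_1^2}\log n_2$. For $\phi_{12}$: either $\phi_{12} = \infty$ and there is nothing to prove, or $\phi_{12} = \tfrac{n_1}{k_1^2}\log(1+n_2/k_2^2)$, in which case the inequality $n_2/k_2^2 \geq k_2^\alpha$ together with $n_2 \geq k_2^{2+\alpha}$ yields $\log(1+n_2/k_2^2) \asymp \log n_2$ via $\log n_2 = \log(n_2/k_2^2) + 2\log k_2 \lesssim (1 + 2/\alpha)\log(n_2/k_2^2)$. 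For $\phi_{21}$: the bound $n_2/k_2^2 \geq k_2^\alpha$ forces $\phi_{21} = \infty$ whenever $k_2$ exceeds a universal constant; in the residual bounded-$k_2$ regime, one has $\log(1+n_1/k_1^2) \asymp n_1/k_1^2$ (as $n_1/k_1^2 \leq 1/(\bar c k_2) = O(1)$), so $\phi_{21} \asymp \tfrac{n_1 n_2}{k_1^2 k_2^2}$, which dominates $\tfrac{n_1}{k_1^2}\log n_2$ since $n_2/k_2^2 \gtrsim \log n_2$ when $k_2$ is bounded.

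The main obstacle will be tracking the implicit constants in the log-comparisons, especially when $\alpha$ is very small and in the boundary regime near $\phi_{21}$'s definition; but this amounts to elementary bookkeeping once Lemma \ref{lem_simplify_dense_allrates} has performed the heavy simplification. No new probabilistic machinery is required.
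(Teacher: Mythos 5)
Your argument is correct and follows essentially the same route as the paper: reduce to $R$ via Theorems~\ref{thm_lb} and~\ref{thm_ub}, invoke Lemma~\ref{lem_simplify_dense_allrates} to simplify, evaluate $\psi_{21}$ using $\log\binom{n_2}{k_2}\asymp k_2\log(n_2/k_2)\asymp k_2\log n_2$, and then verify dominance by $\phi_{12}$. One genuine improvement over the paper's write-up is your explicit treatment of $\phi_{21}$: the paper merely asserts ``$n_2\geq k_2^{2+\alpha}$ implies $\phi_{21}=\infty$,'' which fails when $k_2$ is bounded (since $n_2/k_2^2\geq k_2^\alpha$ need not exceed the threshold constant $\bar C$ in~\eqref{eq_def_phi}); you correctly split into the regime $k_2 \gtrsim 1$ (where $\phi_{21}=\infty$) and the bounded-$k_2$ regime (where $\phi_{21}\asymp n_1n_2/(k_1^2k_2^2)$ dominates $\psi_{21}$ because $n_2/k_2^2\gtrsim\log n_2$), closing a small gap in the paper's own proof.
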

\begin{proof}
    Notice that the assumption $\frac{n_1}{k_1} \geq e \log(\frac{n_2}{k_2})$ can be made without loss of generality by Lemma \ref{lem_2.6}. Theorems 1 and 2 together imply
    \[\frac{(\delta^*)
^2}{p_0(1-p_0)} \asymp R.\]
Therefore, we need to show
\[R \asymp \frac{1}{k_2}\log\left(1 + \frac{n_1k_2}{k_1^2}\log(n_2)\right).\]
Note that the assumption $n_2 \geq k_2^{2 + \alpha}$ implies that $\phi_{21} = \infty$ if $c_2$ is large enough. 
Then by Lemma \ref{lem_simplify_dense_allrates}, it holds
\[R \asymp \psi_{21} \land \phi_{12}.\]
Furthermore, $n_2 \geq k_2^{2 + \alpha}$ implies that $\log(1 + \frac{n_2}{k_2^2}) \asymp \log(n_2)$. Using the inequality $\log(1 + x) \leq x$ which holds for any $x \geq 0$, we have
\begin{align*}
    \psi_{21} &= \frac{1}{k_2}\log\left(1 + \frac{n_1}{k_1^2}\log\left(e{n_2 \choose k_2}\right)\right)\\
    &\asymp \frac{1}{k_2}\log\left(1 + \frac{n_1k_2}{k_1^2}\log(n_2 / k_2)\right) \\&\asymp \frac{1}{k_2} \log\left(1 + \frac{n_1k_2}{k_1^2}\log(n_2)\right) \\
    &\leq \frac{1}{k_2} \frac{n_1k_2}{k_1^2}\log(n_2) \\
    &\asymp \frac{n_1}{k_1^2}\log\left(1 + \frac{n_2}{k_2^2}\right)\\
    & = \phi_{12}.
\end{align*}
This implies $R \asymp \psi_{21} \land \phi_{12} = \psi_{21} \asymp \log\left(1 + \frac{n_1k_2}{k_1^2}\log(n_2)\right)$. The proof is complete.
\end{proof}
\subsection{Miscellaneous analytic lemmas}

\begin{lemma}\label{lem_2.6}
    For any two real numbers $x,y>1$, at least one of the two inequalities $x \geq e \log(y)$ or $y \geq e\log(x)$ holds.
\end{lemma}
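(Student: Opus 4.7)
The plan is to proceed by contradiction: suppose that neither inequality holds, so that simultaneously $x < e\log(y)$ and $y < e\log(x)$. Since $x, y > 1$, the quantities $\log x$ and $\log y$ are positive, and from $x > 1 > 0$ together with $x < e\log(y)$ we get $\log(y) > 1/e$, so in particular $y > 1$ makes $\log\log(x)$ well-defined after substituting. Chaining the two inequalities, I would first take logarithms in the second one to write $\log(y) < 1 + \log\log(x)$, and then substitute into the first to obtain
\begin{equation*}
x \;<\; e\log(y) \;<\; e\bigl(1 + \log\log(x)\bigr) \;=\; e + e\log\log(x).
\end{equation*}
Thus the assumption reduces to the single inequality $f(x) < e$, where I set $f(x) = x - e\log\log(x)$ for $x > 1$.

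The core step is then a one-variable calculus analysis of $f$. Differentiating gives $f'(x) = 1 - \frac{e}{x\log(x)}$, which vanishes precisely when $x\log(x) = e$. Since $x \mapsto x\log(x)$ is increasing on $(1, \infty)$ and equals $e$ at $x = e$, the derivative $f'$ is negative on $(1, e)$ and positive on $(e, \infty)$, so $f$ attains its global minimum on $(1, \infty)$ at $x = e$. Evaluating, $f(e) = e - e\log(1) = e$, so $f(x) \geq e$ for every $x > 1$. This contradicts the strict inequality $f(x) < e$ derived above, completing the proof.

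No step here is a real obstacle; the only mildly delicate point is checking that $\log\log(x)$ is genuinely well-defined when we substitute, which is handled by noting that both $x, y > 1$ force strictly positive logarithms. The rest is an elementary optimization of a smooth function of one variable.
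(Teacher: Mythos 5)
Your proof is correct and takes essentially the same approach as the paper: both chain the two assumed inequalities into the single self-referential bound $x < e\log(e\log(x))$ and then refute it by a one-variable calculus analysis of $f(x) = x - e\log(e\log(x))$ (which equals your $g(x) - e$ after expanding $\log(e\log(x)) = 1 + \log\log(x)$), locating the minimum at $x = e$ where $f(e) = 0$.
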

\begin{proof}[Proof of Lemma ~\ref{lem_2.6}]
    We first prove a preliminary result: For any $x > 1$, it holds that $x \geq e\log(e \log(x))$. 
    To see this, define the function $f: (1,\infty) \to \R;~ x \mapsto x - e\log(e \log(x))$. We have for any $x>1$
    \begin{align*}
        f'(x) = 1 - \frac{e}{x\log(x)}.
    \end{align*}
    The only value $x^*$ for which $f(x^*)=0$ is $x^* = e$, which implies that $f$ is decreasing over $(1,e)$ and increasing over $(e,\infty)$. Hence, $f$ is minimized at $e$ and its corresponding minimum value is
    \begin{align*}
        f(e) = e - e\log(e\log(e)) = 0.
    \end{align*}

    This fact being established, assume now for the sake of contradiction that there exist two real numbers $x,y$ such that $x<e \log(y)$ and $y< e\log(x)$. 
    Then we obtain $x< e\log(e\log(x))$ which is a contradiction. This concludes the proof.
\end{proof}

\begin{lemma}\label{lemma:probXk}

Let $X \sim \text{Bin}(n, p)$ with $p \leq \frac12$. Then for any $k \in \{1, ..., n\}$, we have 

\[\Pr(X = k) \leq \Big(\frac{2enp}{k}\Big)^k\exp\big(-np\big)\]
    
\end{lemma}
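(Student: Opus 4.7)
The plan is to expand $\Pr(X=k)$ using the usual binomial formula and then apply two elementary bounds: the Stirling-type estimate on binomial coefficients and the exponential upper bound on $(1-p)^{n-k}$.

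Concretely, I would start from
\[
\Pr(X=k) \;=\; \binom{n}{k} p^k (1-p)^{n-k},
\]
and use the standard inequality $\binom{n}{k} \leq \left(\frac{en}{k}\right)^k$ together with $1-p \leq e^{-p}$, which gives $(1-p)^{n-k} \leq e^{-p(n-k)} = e^{-np} e^{pk}$. Combining the two yields
\[
\Pr(X=k) \;\leq\; \left(\frac{enp}{k}\right)^k e^{-np}\, e^{pk}.
\]

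The only remaining step is to absorb the factor $e^{pk}$ into the constant $2^k$. Since $p \leq 1/2$ by assumption, we have $e^{pk} \leq e^{k/2} \leq 2^k$ (indeed $e^{1/2} \approx 1.649 < 2$), so
\[
\left(\frac{enp}{k}\right)^k e^{-np}\, e^{pk} \;\leq\; \left(\frac{2enp}{k}\right)^k e^{-np},
\]
which is exactly the stated bound. There is no real obstacle here; the proof is a two-line computation once the two textbook inequalities are invoked, and the assumption $p \leq 1/2$ is used only to convert $e^{pk}$ into $2^k$.
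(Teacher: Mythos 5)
Your proof is correct and follows essentially the same approach as the paper: both bound $\binom{n}{k}$ by $(en/k)^k$, and both extract the factor $2^k$ from the hypothesis $p \le 1/2$. The only cosmetic difference is that the paper writes $(1-p)^{n-k} = (1-p)^n(1-p)^{-k}$ and uses $1-p \ge 1/2$ to bound $(1-p)^{-k} \le 2^k$, whereas you apply $1-p \le e^{-p}$ to the whole exponent and then bound the leftover $e^{pk} \le 2^k$; the two routes are equivalent.
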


\begin{proof}
    Using the bound ${n \choose k } \leq \big(\frac{ne}{k}\big)^k$ (Appendix A in \cite{roch2024modern}), we have 

    \begin{align*}
        \Pr(X = k) &= {n \choose k}p^k\big(1 - p\big)^{n - k} \\
        &\leq \Big(\frac{npe}{k}\Big)^k\big(1 - p\big)^{n - k} \\
        &=  \Big(\frac{npe}{k}\Big)^k\big(1 - p\big)^{n}\big(1 - p\big)^{-k} \\ 
        &\leq \Big(\frac{npe}{k}\Big)^k\big(1 - p\big)^{n}\big(1/2\big)^{-k} \\
        &= \Big(\frac{2enp}{k}\Big)^k\Big(1 - p\Big)^{n} \\
        &\leq \Big(\frac{2enp}{k}\Big)^k\exp\big(-np\big)
    \end{align*}
where the final inequality uses $(1 - x)^b \leq \exp(-xb)$ for any $x, b \geq 0$.
\end{proof}

\begin{lemma}\label{lemma:log-plus-one}
    Let $f : S \rightarrow (0,\infty)$ be differentiable, where $S \subseteq (0,\infty)$ is an interval. For $x \in S$, define $g(x) = \frac1x\log\big(f(x)\big)$ and $g^{(1)}(x) = \frac1x \log\big(1 + f(x)\big)$. If $g$ is decreasing on $S$, then $g^{(1)}$ is decreasing on $S$ as well.
\end{lemma}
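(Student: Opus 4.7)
The plan is to reduce the statement to an inequality on $f$ and its derivative, and then exploit the monotonicity of the map $t \mapsto t\log t$ on $[1,\infty)$. Concretely, since $g(x) = \frac{1}{x}\log f(x)$ is decreasing on $S$, one has
\[
g'(x) \;=\; -\frac{\log f(x)}{x^{2}} + \frac{f'(x)}{x\, f(x)} \;\leq\; 0 \qquad \forall x \in S,
\]
which, multiplying through by $x^{2}$ and rearranging, is equivalent to
\[
x f'(x) \;\leq\; f(x)\,\log f(x) \qquad \forall x \in S. \tag{$\ast$}
\]
Similarly, showing that $g^{(1)}$ is decreasing is equivalent to establishing
\[
x f'(x) \;\leq\; \bigl(1+f(x)\bigr)\,\log\!\bigl(1+f(x)\bigr) \qquad \forall x \in S. \tag{$\ast\ast$}
\]
So the whole task is to deduce $(\ast\ast)$ from $(\ast)$, and this is what I would do by splitting into two easy cases.

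First I would handle the case $f(x) \geq 1$. Here both $f(x)$ and $1+f(x)$ lie in $[1,\infty) \subset [1/e,\infty)$, where the function $h(t) = t\log t$ is nonnegative and nondecreasing. Since $f(x) \leq 1 + f(x)$, this yields $f(x)\log f(x) \leq (1+f(x))\log(1+f(x))$, and combining with $(\ast)$ gives $(\ast\ast)$ immediately. Second, in the case $0 < f(x) < 1$, the right-hand side of $(\ast)$ is negative, so $(\ast)$ forces $x f'(x) < 0$; on the other hand the right-hand side of $(\ast\ast)$ is strictly positive because $1 + f(x) > 1$, so $(\ast\ast)$ holds trivially.

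Combining the two cases gives $(\ast\ast)$ for every $x \in S$, which is exactly the statement that $(g^{(1)})'(x) \leq 0$ on $S$. Hence $g^{(1)}$ is decreasing on $S$, as claimed. There is no serious obstacle here: the key observation, which drives the whole argument, is that the auxiliary function $h(t)=t\log t$ is nondecreasing on $[1,\infty)$, so that adding $1$ to the argument inside the logarithm can only enlarge the right-hand side of $(\ast)$; the case $f(x) < 1$ is disposed of by a sign check.
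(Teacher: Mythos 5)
Your proof is correct and follows essentially the same route as the paper's: differentiate both $g$ and $g^{(1)}$, use the hypothesis to control the $f'$-term, and then compare the logarithmic terms. You repackage the comparison as monotonicity of $t\mapsto t\log t$ on $[1,\infty)$ together with a sign check when $f(x)<1$, whereas the paper multiplies and divides by $f(x)$ and invokes $f(x)\leq 1+f(x)$ and $\log f(x)\leq\log(1+f(x))$ directly, but the underlying argument is the same.
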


\begin{proof}
    Let $f'(x) = \frac{\D f}{\D x}(x)$ for any $x \in S$. By direct calculation, we have 

    \[\frac{\D g}{ \D x}(x) = \frac{f'(x)}{x f(x)} - \frac{\log \big(f(x)\big)}{x^2}.\]

    If $g$ is decreasing on $S$, then for each $x \in S$ it holds

    \[\frac{f'(x)}{x f(x)} < \frac{\log \big(f(x)\big)}{x^2}.\]

    Now calculating the derivative of $g^{(1)}$ at any $x \in S$, we have 

    \begin{align*}
        \frac{\D g^{(1)}}{\D x}(x) &= \frac{f'(x)}{x \big(1 + f(x)\big)} - \frac{\log \big(1 + f(x)\big)}{x^2} \\
        &= \frac{f'(x)}{x \big(1 + f(x)\big)}\frac{f(x)}{f(x)} - \frac{\log \big(1 + f(x)\big)}{x^2} \\
        &< \frac{\log \big(f(x)\big)}{x^2}\frac{f(x)}{\big(1 + f(x)\big)}- \frac{\log \big(1 + f(x)\big)}{x^2} \\
        &\leq 0
    \end{align*}

    where the final inequality uses $f(x) \leq 1 + f(x)$ and $\log\big(f(x)\big) \leq \log\big(1 + f(x)\big)$. This completes the proof.
\end{proof}

\begin{lemma}\label{lem_logs}
    Let $x,y > e$.
    \begin{enumerate}
        \item[(i)] If $x/\log(x) \leq y/2$, then we have $x \leq y \log(y)$.
        \item[(ii)] If $x/\log(x) \geq y$, then we have $x \geq y \log(y)$.
    \end{enumerate}
\end{lemma}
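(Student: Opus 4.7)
The whole lemma rests on the monotonicity of the function $f(t) = t/\log(t)$ on the domain $(e,\infty)$. A direct computation gives $f'(t) = (\log(t)-1)/\log^2(t)$, which is strictly positive for $t > e$, so $f$ is strictly increasing there. The plan is to reduce both claims to this monotonicity together with the simple observation that $y\log(y) \in (e,\infty)$ whenever $y > e$.

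For part (i), I would argue by applying $f$ to $y\log(y)$. Since $y > e$, we have $\log(y) \geq 1$, and then $\log\log(y) \leq \log(y)$ (using $\log(s) \leq s$ with $s = \log(y) \geq 1$); hence
\begin{align*}
    f\bigl(y\log(y)\bigr) \;=\; \frac{y\log(y)}{\log(y) + \log\log(y)} \;\geq\; \frac{y\log(y)}{2\log(y)} \;=\; \frac{y}{2}.
\end{align*}
Combined with the hypothesis $f(x) = x/\log(x) \leq y/2$, this gives $f(x) \leq f(y\log(y))$. Since $x, y\log(y) \in (e,\infty)$ and $f$ is strictly increasing there, we conclude $x \leq y\log(y)$.

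For part (ii), the argument is even shorter. Since $x > e$ we have $\log(x) > 1$, hence $x/\log(x) < x$; combined with the hypothesis $y \leq x/\log(x)$, this gives $y < x$, so $\log(y) \leq \log(x)$. Multiplying the hypothesis through by $\log(x)$ yields $x \geq y\log(x) \geq y\log(y)$, as required.

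The only subtle step is the estimate $f(y\log(y)) \geq y/2$, which is where the factor of $2$ in the hypothesis of (i) enters; part (ii) is essentially trivial and needs no such detour. There is no real obstacle here — this is a short analytic lemma meant to be invoked as a black box elsewhere (e.g., in Lemmas~\ref{lem_simplify_s1d1big_truncchi2} and related rate-simplification arguments), so I would keep the writeup to roughly the paragraph above.
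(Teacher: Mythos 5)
Your proof is correct. For part (i) you take essentially the same route as the paper: both arguments reduce to the monotonicity of $f(t) = t/\log(t)$ on $(e,\infty)$ together with the estimate $f(y\log(y)) = y/(1 + \tfrac{\log\log y}{\log y}) \geq y/2$; the paper phrases it as a contrapositive whereas you run the inequalities forward, but the content is identical. For part (ii) you take a genuinely different and more elementary route. The paper again argues by contrapositive via the same $f(y\log y)$ computation, this time using the cruder bound $f(y\log y) < y$. You instead bypass the monotonicity of $f$ entirely: from $\log(x) > 1$ you get $x/\log(x) < x$, hence $y < x$ and $\log(y) \leq \log(x)$, and then clearing the denominator in the hypothesis gives $x \geq y\log(x) \geq y\log(y)$. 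This is shorter, needs no derivative computation, and makes the asymmetry between the two parts transparent — part (i) genuinely needs the monotonicity argument to control the error term $\log\log(y)/\log(y)$, while part (ii) does not. Both approaches are sound; yours is slightly cleaner for (ii).
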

\begin{proof}
\begin{enumerate}
    \item[(i)] We prove the statement by contrapositive. Suppose that $x > y \log (y)$. Then since $t \mapsto \frac{t}{\log(t)}$ is increasing over $(e, \infty)$, we have
    \begin{align*}
        \frac{x}{\log(x)} \geq \frac{y \log(y)}{\log(y \log (y))} = \frac{y}{ 1 +\frac{\log \log(y)}{\log(y)}} > \frac{y}{2}.
    \end{align*}
    \item[(ii)] Again, we prove the statement by contrapositive. 
    Suppose that $x /\log(x) < y.$ Then since $t \mapsto \frac{t}{\log(t)}$ is increasing over $(e, \infty)$, we have
     \begin{align*}
        \frac{x}{\log(x)} < \frac{y \log(y)}{\log(y \log (y))} = \frac{y}{ 1 +\frac{\log \log(y)}{\log(y)}} < y.
    \end{align*}
    This completes the proof.
\end{enumerate}

\end{proof}

\begin{lemma}\label{lem_logk_constants}
    The following two properties hold.
    \begin{enumerate}
        \item[(i)] For any $x \geq 0$ and $c \in [0,1]$, we have $\log(1+cx) \geq c\log(1+x)$.
        \item[(ii)] For any $x \geq 0$ and $C\geq 1$, we have $C\log(1+x) \geq \log(1+Cx)$.
    \end{enumerate}
\end{lemma}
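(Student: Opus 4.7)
The plan is to prove each part by exploiting concavity/convexity of $\log$ together with a simple one-parameter variation argument, so both claims reduce to either Jensen's inequality or Bernoulli's inequality.

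For part (i), I would observe that $c \in [0,1]$ lets us write $1+cx = c(1+x) + (1-c) \cdot 1$ as a convex combination. By concavity of $\log$, this gives
\begin{equation*}
\log(1+cx) \;=\; \log\bigl(c(1+x)+(1-c)\cdot 1\bigr) \;\geq\; c\log(1+x) + (1-c)\log(1) \;=\; c\log(1+x),
\end{equation*}
which is exactly the desired inequality. A backup route, in case one prefers not to invoke concavity directly, is to define $f(c) = \log(1+cx) - c\log(1+x)$ on $[0,1]$, note that $f(0)=f(1)=0$ and $f''(c) = -x^2/(1+cx)^2 \leq 0$, so concavity plus the boundary values force $f \geq 0$ on $[0,1]$.

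For part (ii), I would first substitute $t = Cx \geq 0$ to reduce the claim to $C\log(1 + t/C) \geq \log(1+t)$, which after exponentiation becomes $(1 + t/C)^C \geq 1+t$. This is precisely Bernoulli's inequality applied to exponent $C \geq 1$ and base $1 + t/C \geq 1$, hence it holds. Equivalently, one can set $g(C) = C\log(1+x) - \log(1+Cx)$, note $g(1)=0$, and compute $g'(C) = \log(1+x) - x/(1+Cx)$. For $C \geq 1$ this is bounded below by $\log(1+x) - x/(1+x)$, which is nonnegative because its derivative in $x$ equals $x/(1+x)^2 \geq 0$ and it vanishes at $x=0$.

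Neither part should present real difficulty: the only mild subtlety is noting in part (i) that $1$ and $1+x$ can be interpolated with weights $1-c$ and $c$, and in part (ii) that after the substitution $t=Cx$ the inequality is a direct instance of Bernoulli. If anything, the only thing to be careful about is writing the base case cleanly at $x=0$, where both sides equal zero, so no boundary effect can spoil the argument.
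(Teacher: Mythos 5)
Your proof of part (i) is exactly the paper's argument: write $1+cx$ as the convex combination $(1-c)\cdot 1 + c(1+x)$ and invoke concavity of $\log$. For part (ii), the paper simply applies (i) with $y = Cx$ and $c = 1/C$ (a one-line reduction), whereas you substitute $t = Cx$, exponentiate, and invoke Bernoulli's inequality $(1+t/C)^C \geq 1 + t$; these are equivalent in substance since Bernoulli's inequality is itself a consequence of the concavity bound, but the paper's reduction is marginally cleaner in that it reuses (i) rather than re-deriving it. Both your primary arguments and your backup derivative-based arguments are correct.
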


\begin{proof}[Proof of Lemma~\ref{lem_logk_constants}]
\begin{enumerate}
    \item[(i)] By concavity of the logarithm, we have $\log(1+cx) = \log\left((1-c)\cdot 1 + c(1+x)\right) \geq (1-c) \log(1) + c\log(1+x) = c \log(1+x)$ for any $c \in [0,1]$. 
    \item[(ii)] We have seen above that $\log(1+cy) \geq c \log(1+y)$ for any $y \geq 0$ and $c \in [0,1]$. 
Applying this with $y = Cx$ and $c = 1/C$ yields the result. 
\end{enumerate}

\end{proof}

\begin{lemma}\label{lem_w_quadratic}
    Let $n\in \mathbb N$ and $p_0 \in (0,1)$ and $w$ denote the function defined in~\eqref{eq_def_W_appendix}. For any $x \in [0,n]$, the following properties hold.
    \begin{enumerate}
        \item We have $w(x) \leq \frac{(x-np_0)^2}{ \sigma^2}$ where $\sigma^2 = np_0(1-p_0)$
        \item There exists a universal constant $c_0>0$ such that, for any $x \in [np_0, np_0 + c_0\sigma^2]$, we have $w(x) \geq \frac{(x-np_0)^2}{8 \sigma^2}$.
        \item For any $x \in [c_0 \sigma^2, n]$, we have $w(x) \asymp \sigma \frac{x-np_0}{\sigma} \log\left(1+ \frac{x-np_0}{\sigma^2}\right).$
    \end{enumerate}
\end{lemma}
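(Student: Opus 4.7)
Set $\Delta := x - np_0$, and recall the identity $\tfrac{1}{np_0} + \tfrac{1}{n(1-p_0)} = \tfrac{1}{\sigma^2}$. All three parts will reduce to pointwise estimates on the Bennett function $h_B(u) = (1+u)\log(1+u) - u$ applied to the two summands $n(1-p_0)\, h_B(-\Delta/(n(1-p_0)))$ and $np_0\, h_B(\Delta/(np_0))$ making up $w(x)$.

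For Part 1 I will establish the global bound $h_B(u) \leq u^2$ on $[-1, \infty)$. For $u \geq 0$ this is immediate from the stronger inequality $h_B(u) \leq u^2/2$, obtained from $\log(1+u) \leq u$. For $u \in [-1, 0]$, I will consider $g(u) := u^2 - h_B(u)$, verify $g(0) = g(-1) = 0$, compute $g''(u) = 2 - 1/(1+u)$, and deduce that $g$ is convex on $(-1/2, 0]$ with minimum at $0$ (so $g \geq 0$ there) and concave on $[-1, -1/2]$ (so $g$ lies above its nonnegative chord). Plugging $h_B(u) \leq u^2$ into both summands and applying the identity for $\sigma^{-2}$ then gives $w(x) \leq \Delta^2/\sigma^2$.

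Part 2 uses the local lower bound $h_B(u) \geq u^2/4$ on $[-1,1]$, which follows from the integral representation $h_B(u) = \int_0^u (u-s)/(1+s)\,ds$ (Taylor, since $h_B''(s) = 1/(1+s)$) together with $1/(1+s) \geq 1/2$ on $[-1, 1]$. For $c_0 \leq 1$, both arguments of $h_B$ lie in $[-1,1]$ whenever $\Delta \in [0, c_0\sigma^2]$, so $w(x) \geq \tfrac14 \Delta^2/\sigma^2 \geq \Delta^2/(8\sigma^2)$. For Part 3, interpreting the stated range so that $\Delta \geq c_0\sigma^2$, the key inputs will be the two-sided estimates $h_B(u) \asymp u\log(1+u)$ on $[0,\infty)$ and $h_B(u) \asymp u^2$ on $[-1, 0]$. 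The upper bound $h_B(u) \leq u\log(1+u)$ follows from the identity $h_B(u) = u\log(1+u) + \log(1+u) - u$, and the matching lower bound $h_B(u) \geq \tfrac12 u\log(1+u)$ reduces via algebra to the elementary inequality $\log(1+u) \geq 2u/(2+u)$ on $[0,\infty)$. Together these yield $w(x) \asymp \Delta\log(1 + \Delta/(np_0)) + \Delta^2/(n(1-p_0))$. The first summand already produces the lower bound $w(x) \gtrsim \Delta\log(1+\Delta/\sigma^2)$, since $\log(1 + \Delta/(np_0)) \asymp \log(1 + \Delta/\sigma^2)$ on $\Delta \gtrsim \sigma^2$. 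For the matching upper bound, $x \leq n$ forces $\Delta \leq n(1-p_0)$, hence $\Delta^2/(n(1-p_0)) \leq \Delta$; meanwhile $\Delta \geq c_0\sigma^2$ gives $\log(1+\Delta/\sigma^2) \geq \log(1+c_0) > 0$, so $\Delta \lesssim \Delta\log(1+\Delta/\sigma^2)$, controlling the second summand as well.

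The main obstacle is Part 3: reconciling the asymmetric contributions of the positive and negative arguments of $h_B$ with the single logarithmic rate $\Delta\log(1+\Delta/\sigma^2)$. Carefully bookkeeping the comparison between $np_0$, $n(1-p_0)$ and $\sigma^2$ across the full range may require splitting into subcases depending on the size of $\Delta/(np_0)$.
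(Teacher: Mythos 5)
Your overall plan — set $\Delta = x - np_0$ and reduce each part to pointwise estimates on $h_B$ applied to $-\Delta/(n(1-p_0))$ and $\Delta/(np_0)$, using $\tfrac{1}{np_0}+\tfrac{1}{n(1-p_0)}=\tfrac1{\sigma^2}$ — is the same route the paper takes. A few comparisons on the pieces:

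\textbf{Part 1.} The paper gets $h_B(u) \leq u^2$ uniformly on $[-1,\infty)$ in one line from $\log(1+u) \leq u$: since $1+u\geq 0$, multiply to get $(1+u)\log(1+u)\leq(1+u)u$, hence $h_B(u)\leq u^2$, covering both arguments at once. Your split into $u\geq 0$ (with the stronger $u^2/2$) and a convexity/concavity analysis of $g(u)=u^2-h_B(u)$ on $[-1,0]$ is correct but does extra work that a single application of $\log(1+u)\leq u$ avoids.

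\textbf{Part 2.} Your integral-representation argument, $h_B(u)=\int_0^u \tfrac{u-s}{1+s}\,ds$ with $1/(1+s)\geq 1/2$ on $[-1,1]$, gives $h_B(u)\geq u^2/4$ cleanly and yields the slightly better constant $\Delta^2/(4\sigma^2)$. The paper instead uses the Taylor remainder bound $\log(1+t)\geq t - \tfrac34 t^2$ and does a messier computation arriving at $\Delta^2/(8\sigma^2)$. Same idea, yours is tidier.

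\textbf{Part 3.} Here there is a genuine gap, which you partly flag. Writing $a = \Delta/(n(1-p_0))\in[0,1]$ and $b=\Delta/(np_0)$, your decomposition $w(x)\asymp \Delta a + \Delta\log(1+b)$ is correct (using $h_B(-a)\asymp a^2$ on $[-1,0]$ and $h_B(b)\asymp b\log(1+b)$). But the claimed equivalence $\log(1+b)\asymp\log(1+\Delta/\sigma^2)$ \emph{fails} when $p_0$ is close to $1$: since $b = (1-p_0)\Delta/\sigma^2$, one can have $\Delta/\sigma^2\geq c_0$ fixed while $b\to 0$, so $\log(1+b)\to 0$ but $\log(1+\Delta/\sigma^2)$ stays bounded below. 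Thus the first summand alone does not give the lower bound in that regime. The repair is to work directly with $a+b = \Delta/\sigma^2$ and prove $a + \log(1+b)\asymp\log(1+a+b)$ whenever $a\in[0,1]$, $b\geq 0$, and $a+b\geq c_0$: the lower direction is $\log(1+a+b)\leq\log(1+a)+\log(1+b)\leq a+\log(1+b)$, and the upper direction is $a\leq 1\lesssim\log(1+c_0)\leq\log(1+a+b)$ together with $\log(1+b)\leq\log(1+a+b)$. This makes the claim hold for all $p_0\in(0,1)$. For what it's worth, the paper's own proof of Part 3 also dodges this by implicitly assuming $p_0\leq 1/4$ (its step $\Delta/(np_0)\geq 3c_0/4$ forces $1-p_0\geq 3/4$), so you should feel free to either invoke that standing assumption from the rest of the paper or give the $a+\log(1+b)\asymp\log(1+a+b)$ argument which costs nothing and is more general.
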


\begin{proof}[Proof of Lemma~\ref{lem_w_quadratic}]
\phantom{ }

\begin{enumerate}
    \item 
Using the inequality $\log(1+x) \leq x$ that holds for any $x \in [-1,\infty)$ with the convention $\log(0) = -\infty$, we have, for any $x \in [0,n]$
    \begin{align*}
        w(x) &= n(1-p_0) \, h_B\!\left(-\frac{x-np_0}{n(1-p_0)}\right) + np_0 \, h_B\!\left(\frac{x-np_0}{np_0}\right)\\[5pt]
        & = n(1-p_0) \, \left\{\left(1 - \frac{x-np_0}{n(1-p_0)}\right) \log\left(1-\frac{x-np_0}{n(1-p_0)}\right) + \frac{x-np_0}{n(1-p_0)}\right\} \\[5pt]
        & \quad + np_0 \, \left\{\left(1 + \frac{x-np_0}{np_0}\right) \log\left(1+\frac{x-np_0}{np_0}\right) - \frac{x-np_0}{np_0}\right\}\\[5pt]
        & \leq n(1-p_0) \, \left\{\left(1 - \frac{x-np_0}{n(1-p_0)}\right) \left(-\frac{x-np_0}{n(1-p_0)}\right) + \frac{x-np_0}{n(1-p_0)}\right\} \\[5pt]
        & \quad + np_0 \, \left\{\left(1 + \frac{x-np_0}{np_0}\right) \left(\frac{x-np_0}{np_0}\right) - \frac{x-np_0}{np_0}\right\}\\[5pt]
        & = \frac{(x-np_0)^2}{n(1-p_0)} + \frac{2(x-np_0)^2}{np_0}\\[10pt]
        & = \frac{(x-np_0)^2}{\sigma^2}.
    \end{align*}

    \item For any $x \in [np_0, np_0 + c\sigma^2]$, we have $\frac{x-np_0}{np_0} \lor \frac{x-np_0}{n(1-p_0)}\in [0, c]$.  Choosing $c$ small enough that $\log(1+t) \geq t - \frac{3t^2}{4}$ for any $t \in [-c, c]$, we obtain that, 
    \begin{align*}
        w(x) &= n(1-p_0) \, h_B\!\left(-\frac{x-np_0}{n(1-p_0)}\right) + np_0 \, h_B\!\left(\frac{x-np_0}{np_0}\right)\\[5pt]
        & = n(1-p_0) \, \left\{\left(1 - \frac{x-np_0}{n(1-p_0)}\right) \log\left(1-\frac{x-np_0}{n(1-p_0)}\right) + \frac{x-np_0}{n(1-p_0)}\right\} \\[5pt]
        & \quad + np_0 \, \left\{\left(1 + \frac{x-np_0}{np_0}\right) \log\left(1+\frac{x-np_0}{np_0}\right) - \frac{x-np_0}{np_0}\right\}\\[5pt]
        & \geq n(1-p_0) \, \left\{\left(1 - \frac{x-np_0}{n(1-p_0)}\right) \left(-\frac{x-np_0}{n(1-p_0)} -\frac{3}{4}\frac{(x-np_0)^2}{(n(1-p_0))^2}\right) + \frac{x-np_0}{n(1-p)}\right\} \\[5pt]
        & \quad + np_0 \, \left\{\left(1 + \frac{x-np_0}{np_0}\right) \left(\frac{x-np_0}{np_0} - \frac{3}{4}\frac{(x-np_0)^2}{(n(1-p_0))^2}\right) - \frac{x-np_0}{np_0} \right\}\\[5pt]
        & \geq n(1-p_0) \frac{1}{4}\frac{(x-np_0)^2}{(n(1-p_0))^2} + np_0\left[\frac{1}{4} \frac{(x-np_0)^2}{(np_0)^2} - \frac{3}{4}\frac{(x-np_0)^3}{(np_0)^3}\right]\\[5pt]
        & \geq \frac{1}{8} \frac{(x-np_0)^2}{\sigma^2}
    \end{align*}
    provided $c$ is small enough.

    \item Assume now that $x \in [np_0 + c_0\sigma^2, n]$. 
    We have 
    \begin{align*}
        n(1-p_0) h_B\left(-\frac{x-np_0}{n(1-p_0)}\right) &= n(1-p_0) \, \left\{\left(1 - \frac{x-np_0}{n(1-p_0)}\right) \log\left(1-\frac{x-np_0}{n(1-p_0)}\right) + \frac{x-np_0}{n(1-p_0)}\right\}\\
        & \in \big[0, x-np_0\big].
    \end{align*}
    Moreover, using the relation $h_B(y) \asymp y\log(1+y)$ that holds for any $y > 0$, we have
    \begin{align*}
        np_0 \, h_B \! \left(\frac{x-np_0}{np_0}\right) \asymp (x-np_0) \log\left(\frac{x}{np_0}\right) \geq \log\left(1+\frac{3c_0}{4}\right)(x-np_0).
    \end{align*}
    Therefore, we obtain
    \begin{align*}
        w(x) &\asymp np_0 \, h_B \! \left(\frac{x-np_0}{np_0}\right) \asymp (x-np_0) \log\left(\frac{x}{np_0}\right)\\
        & \asymp \sigma \frac{x-np_0}{\sigma} \log\left(1+ \frac{x-np_0}{\sigma^2}\right).
    \end{align*}
    \end{enumerate}
\end{proof}

The following corollary follows by combining the properties from Lemma~\ref{lem_w_quadratic}.

\begin{corollary}\label{cor_w_growth_w}
    For any $x \in [np_0,n]$, it holds that $w(x) \asymp \sigma z \log(1+z/\sigma)$ where $z = \frac{x-np_0}{\sigma}$ and $\sigma^2 = np_0(1-p_0)$.
\end{corollary}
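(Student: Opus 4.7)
The plan is to prove Corollary~\ref{cor_w_growth_w} by splitting the interval $[np_0, n]$ into the moderate deviation regime $x - np_0 \leq c_0 \sigma^2$ (equivalently $z \leq c_0 \sigma$) and the large deviation regime $x - np_0 > c_0 \sigma^2$ (equivalently $z > c_0 \sigma$), and in each regime invoking the appropriate part of Lemma~\ref{lem_w_quadratic} together with an elementary comparison between $z^2$ and $\sigma z \log(1+z/\sigma)$.

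In the moderate deviation regime $z \in [0, c_0 \sigma]$, Lemma~\ref{lem_w_quadratic}.(1) and Lemma~\ref{lem_w_quadratic}.(2) together give $w(x) \asymp (x-np_0)^2/\sigma^2 = z^2$. It then suffices to verify that $z^2 \asymp \sigma z \log(1+z/\sigma)$ on this range. Since $z/\sigma \in [0, c_0]$, we have $\log(1+z/\sigma) \asymp z/\sigma$ by the standard two-sided bound $\frac{t}{1+t} \leq \log(1+t) \leq t$ valid for $t \geq 0$ (with $c_0$ absorbed into the implicit constants). Multiplying by $\sigma z$ yields $\sigma z \log(1+z/\sigma) \asymp z^2$, which combined with $w(x) \asymp z^2$ gives the desired equivalence.

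In the large deviation regime $x \in [np_0 + c_0 \sigma^2, n]$, Lemma~\ref{lem_w_quadratic}.(3) directly provides
\[
w(x) \asymp \sigma \,\frac{x-np_0}{\sigma} \log\!\left(1 + \frac{x-np_0}{\sigma^2}\right) = \sigma z \log\!\left(1 + \frac{z}{\sigma}\right),
\]
which is precisely what we want. Gluing the two regimes at the boundary $z = c_0 \sigma$ poses no difficulty since both expressions $w(x)$ and $\sigma z \log(1+z/\sigma)$ are continuous in $x$, and the multiplicative constants from the two cases can be absorbed into a single pair of constants (the minimum of the lower constants and the maximum of the upper constants). There is no serious obstacle here — the only point requiring any care is making sure that the $\asymp$ in the moderate regime is genuinely two-sided, which is already guaranteed by parts (1) and (2) of Lemma~\ref{lem_w_quadratic} with matching quadratic upper and lower bounds. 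This completes the sketch.
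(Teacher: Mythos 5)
Your proposal is correct and takes exactly the route the paper intends: the paper simply asserts that the corollary "follows by combining the properties from Lemma~\ref{lem_w_quadratic}" without writing out the details, and you have supplied precisely those details. You correctly split at $z=c_0\sigma$, use parts (1) and (2) of Lemma~\ref{lem_w_quadratic} for the two-sided quadratic bound in the moderate regime, part (3) directly in the large-deviation regime, and the elementary equivalence $\log(1+t)\asymp t$ on $[0,c_0]$ to reconcile the quadratic form $z^2$ with $\sigma z\log(1+z/\sigma)$ near the origin.
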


\end{document}